\theoremstyle{plain}
\newtheorem{theorem}{Theorem}[section]
\newtheorem{proposition}[theorem]{Proposition}
\newtheorem{lemma}[theorem]{Lemma}
\newtheorem{corollary}[theorem]{Corollary}
\theoremstyle{definition}
\newtheorem{definition}[theorem]{Definition}
\newtheorem{remark}[theorem]{Remark}
\newcommand{\IN}{\ensuremath{\mathbb{N}}}
\newcommand{\nset}[1]{\ensuremath{[{#1}]}}
\newcommand{\card}[1]{\ensuremath{\lvert{#1}\rvert}}
\newcommand{\gendefault}{}
\newcommand{\gen}[2][\gendefault]{\ensuremath{\langle{#2}\rangle_{#1}}}
\newcommand{\clonegen}[1]{\gen[]{#1}}
\newcommand{\vect}[1]{\ensuremath{\mathbf{#1}}}
\newcommand{\lhs}{\hspace{2em}&\hspace{-2em}}
\newcommand{\clIntVal}[3]{\ensuremath{#1_{\ifthenelse{\equal{#2}{}}{\mathord{*}}{#2}\ifthenelse{\equal{#3}{}}{\mathord{*}}{#3}}}}
\newcommand{\clIntEq}[1]{\ensuremath{#1_{=}}}
\newcommand{\clIntNeq}[1]{\ensuremath{#1_{\neq}}}
\newcommand{\clIntLeq}[1]{\ensuremath{#1_{\leq}}}
\newcommand{\clIntGeq}[1]{\ensuremath{#1_{\geq}}}
\newcommand{\clIntNeqOO}[1]{\ensuremath{#1_{{\neq},00}}}
\newcommand{\clIntNeqII}[1]{\ensuremath{#1_{{\neq},11}}}
\newcommand{\clAll}{\ensuremath{\mathsf{\Omega}}}
\newcommand{\clEioo}{\ensuremath{\clIntNeqII{\clAll}}}
\newcommand{\clEioi}{\ensuremath{\clIntGeq{\clAll}}}
\newcommand{\clEiio}{\ensuremath{\clIntLeq{\clAll}}}
\newcommand{\clEiii}{\ensuremath{\clIntNeqOO{\clAll}}}
\newcommand{\clEq}{\ensuremath{\clIntEq{\clAll}}}
\newcommand{\clNeq}{\ensuremath{\clIntNeq{\clAll}}}
\newcommand{\clOX}{\ensuremath{\clIntVal{\clAll}{0}{}}}
\newcommand{\clIX}{\ensuremath{\clIntVal{\clAll}{1}{}}}
\newcommand{\clXO}{\ensuremath{\clIntVal{\clAll}{}{0}}}
\newcommand{\clXI}{\ensuremath{\clIntVal{\clAll}{}{1}}}
\newcommand{\clOXC}{\ensuremath{\clOX \cup \clVak}}
\newcommand{\clIXC}{\ensuremath{\clIX \cup \clVak}}
\newcommand{\clXOC}{\ensuremath{\clXO \cup \clVak}}
\newcommand{\clXIC}{\ensuremath{\clXI \cup \clVak}}
\newcommand{\clOO}{\ensuremath{\clIntVal{\clAll}{0}{0}}}
\newcommand{\clII}{\ensuremath{\clIntVal{\clAll}{1}{1}}}
\newcommand{\clOI}{\ensuremath{\clIntVal{\clAll}{0}{1}}}
\newcommand{\clIO}{\ensuremath{\clIntVal{\clAll}{1}{0}}}
\newcommand{\clOOC}{\ensuremath{\clOO \cup \clVak}}
\newcommand{\clIIC}{\ensuremath{\clII \cup \clVak}}
\newcommand{\clOICO}{\ensuremath{\clOI \cup \clVako}}
\newcommand{\clOICI}{\ensuremath{\clOI \cup \clVaki}}
\newcommand{\clOIC}{\ensuremath{\clOI \cup \clVak}}
\newcommand{\clIOCO}{\ensuremath{\clIO \cup \clVako}}
\newcommand{\clIOCI}{\ensuremath{\clIO \cup \clVaki}}
\newcommand{\clIOC}{\ensuremath{\clIO \cup \clVak}}
\newcommand{\clS}{\ensuremath{\mathsf{S}}}
\newcommand{\clSc}{\ensuremath{\clIntVal{\clS}{0}{1}}}
\newcommand{\clScneg}{\ensuremath{\clIntVal{\clS}{1}{0}}}
\newcommand{\clSM}{\ensuremath{\mathsf{SM}}}
\newcommand{\clSMneg}{\ensuremath{\overline{\clSM}}}
\newcommand{\clSmin}{\ensuremath{\clS^{-}}}
\newcommand{\clSmaj}{\ensuremath{\clS^{+}}}
\newcommand{\clSminNeq}{\ensuremath{\clIntNeq{\clSmin}}}
\newcommand{\clSmajNeq}{\ensuremath{\clIntNeq{\clSmaj}}}
\newcommand{\clSminOX}{\ensuremath{\clIntVal{\clSmin}{0}{}}}
\newcommand{\clSminXO}{\ensuremath{\clIntVal{\clSmin}{}{0}}}
\newcommand{\clSminOO}{\ensuremath{\clIntVal{\clSmin}{0}{0}}}
\newcommand{\clSminOI}{\ensuremath{\clIntVal{\clSmin}{0}{1}}}
\newcommand{\clSminIO}{\ensuremath{\clIntVal{\clSmin}{1}{0}}}
\newcommand{\clSminOICO}{\ensuremath{\clSminOI \cup \clVako}}
\newcommand{\clSminIOCO}{\ensuremath{\clSminIO \cup \clVako}}
\newcommand{\clSmajIX}{\ensuremath{\clIntVal{\clSmaj}{1}{}}}
\newcommand{\clSmajXI}{\ensuremath{\clIntVal{\clSmaj}{}{1}}}
\newcommand{\clSmajOI}{\ensuremath{\clIntVal{\clSmaj}{0}{1}}}
\newcommand{\clSmajIO}{\ensuremath{\clIntVal{\clSmaj}{1}{0}}}
\newcommand{\clSmajII}{\ensuremath{\clIntVal{\clSmaj}{1}{1}}}
\newcommand{\clSmajOICI}{\ensuremath{\clSmajOI \cup \clVaki}}
\newcommand{\clSmajIOCI}{\ensuremath{\clSmajIO \cup \clVaki}}
\newcommand{\clM}{\ensuremath{\mathsf{M}}}
\newcommand{\clMo}{\ensuremath{\clIntVal{\clM}{0}{}}}
\newcommand{\clMi}{\ensuremath{\clIntVal{\clM}{}{1}}}
\newcommand{\clMc}{\ensuremath{\clIntVal{\clM}{0}{1}}}
\newcommand{\clMneg}{\ensuremath{\overline{\clM}}}
\newcommand{\clMoneg}{\ensuremath{\clIntVal{\clMneg}{1}{}}}
\newcommand{\clMineg}{\ensuremath{\clIntVal{\clMneg}{}{0}}}
\newcommand{\clMcneg}{\ensuremath{\clIntVal{\clMneg}{1}{0}}}
\newcommand{\clUk}[1]{\ensuremath{\mathsf{U}^{#1}}}
\newcommand{\clU}{\ensuremath{\clUk{2}}}
\newcommand{\clTcU}{\ensuremath{\clIntVal{\clU}{0}{1}}}
\newcommand{\clTcUk}[1]{\ensuremath{\clIntVal{\clUk{#1}}{0}{1}}}
\newcommand{\clTcUCO}{\ensuremath{\clTcU \cup \clVako}}
\newcommand{\clMU}{\ensuremath{\mathsf{M}\clU}}
\newcommand{\clMcU}{\ensuremath{\clIntVal{\clMU}{0}{1}}}
\newcommand{\clMUk}[1]{\ensuremath{\mathsf{M}\clUk{#1}}}
\newcommand{\clMcUk}[1]{\ensuremath{\clIntVal{\clMUk{#1}}{0}{1}}}
\newcommand{\clWk}[1]{\ensuremath{\mathsf{W}^{#1}}}
\newcommand{\clW}{\ensuremath{\clWk{2}}}
\newcommand{\clTcW}{\ensuremath{\clIntVal{\clW}{0}{1}}}
\newcommand{\clTcWk}[1]{\ensuremath{\clIntVal{\clWk{#1}}{0}{1}}}
\newcommand{\clTcWCI}{\ensuremath{\clTcW \cup \clVaki}}
\newcommand{\clMW}{\ensuremath{\mathsf{M}\clW}}
\newcommand{\clMcW}{\ensuremath{\clIntVal{\clMW}{0}{1}}}
\newcommand{\clMWk}[1]{\ensuremath{\mathsf{M}\clWk{#1}}}
\newcommand{\clMcWk}[1]{\ensuremath{\clIntVal{\clMWk{#1}}{0}{1}}}
\newcommand{\clUneg}{\ensuremath{\overline{\clU}}}
\newcommand{\clTcUneg}{\ensuremath{\clIntVal{\clUneg}{1}{0}}}
\newcommand{\clTcUnegCI}{\ensuremath{\clTcUneg \cup \clVaki}}
\newcommand{\clMUneg}{\ensuremath{\overline{\clMU}}}
\newcommand{\clMcUneg}{\ensuremath{\clIntVal{\clMUneg}{1}{0}}}
\newcommand{\clWneg}{\ensuremath{\overline{\clW}}}
\newcommand{\clTcWneg}{\ensuremath{\clIntVal{\clWneg}{1}{0}}}
\newcommand{\clTcWnegCO}{\ensuremath{\clTcWneg \cup \clVako}}
\newcommand{\clMWneg}{\ensuremath{\overline{\clMW}}}
\newcommand{\clMcWneg}{\ensuremath{\clIntVal{\clMWneg}{1}{0}}}
\newcommand{\clUOO}{\ensuremath{\clIntVal{\clU}{0}{0}}}
\newcommand{\clWII}{\ensuremath{\clIntVal{\clW}{1}{1}}}
\newcommand{\clUnegII}{\ensuremath{\clIntVal{\clUneg}{1}{1}}}
\newcommand{\clWnegOO}{\ensuremath{\clIntVal{\clWneg}{0}{0}}}
\newcommand{\clUWneg}{\ensuremath{\clU \cap \clWneg}}
\newcommand{\clWUneg}{\ensuremath{\clW \cap \clUneg}}
\newcommand{\clRefl}{\ensuremath{\mathsf{R}}}  
\newcommand{\clReflOO}{\ensuremath{\clIntVal{\clRefl}{0}{0}}}
\newcommand{\clReflII}{\ensuremath{\clIntVal{\clRefl}{1}{1}}}
\newcommand{\clReflOOC}{\ensuremath{\clReflOO \cup \clVak}}
\newcommand{\clReflIIC}{\ensuremath{\clReflII \cup \clVak}}
\newcommand{\clVak}{\ensuremath{\mathsf{C}}}
\newcommand{\clVaka}[1]{\ensuremath{\clVak_{#1}}}
\newcommand{\clVako}{\ensuremath{\clVaka{0}}}
\newcommand{\clVaki}{\ensuremath{\clVaka{1}}}
\newcommand{\clEmpty}{\ensuremath{\mathsf{\emptyset}}}
\newcommand{\clL}{\ensuremath{\mathsf{L}}}
\newcommand{\clLc}{\ensuremath{\clIntVal{\clL}{0}{1}}}
\newcommand{\clLambda}{\ensuremath{\mathsf{\Lambda}}}
\newcommand{\clV}{\ensuremath{\mathsf{V}}}
\newcommand{\clUinf}{\ensuremath{\clUk{\infty}}}
\newcommand{\clWinf}{\ensuremath{\clWk{\infty}}}
\newcommand{\clIstar}{\ensuremath{\mathsf{I}^{*}}}
\newcommand{\clIc}{\ensuremath{\mathsf{J}}}
\newcommand{\id}{\ensuremath{\mathrm{id}}}
\newcommand{\cf}[2]{\ensuremath{\mathrm{c}^{({#1})}_{#2}}}
\DeclareMathOperator{\var}{var}
\DeclareMathOperator{\pr}{pr}
\newcommand{\arity}[1]{\ensuremath{\mathrm{ar}({#1})}}
\newcommand{\Str}[1]{\ensuremath{\mathrm{Str}({#1})}}
\newcommand{\closys}[1]{\ensuremath{\mathcal{L}_{#1}}}
\newcommand{\clProj}[1]{\ensuremath{\mathsf{J}_{#1}}}
\begin{document}
\title{Majority-closed minions of Boolean functions}

\author{Erkko Lehtonen}

\address%
   {Centro de Matem\'atica e Aplica\c{c}\~oes \\
    Faculdade de Ci\^encias e Tecnologia \\
    Universidade Nova de Lisboa \\
    Quinta da Torre \\
    2829-516 Caparica \\
    Portugal}

\date{\today}

\begin{abstract}
The 93 minions of Boolean functions stable under left composition with the clone of self\hyp{}dual monotone functions are described.
As an easy consequence, all $(C_1,C_2)$\hyp{}stable classes of Boolean functions are determined for an arbitrary clone $C_1$ and for any clone $C_2$ containing the clone of self\hyp{}dual monotone functions.
\end{abstract}

\maketitle


\section{Introduction}

We consider functions of several arguments from a set $A$ to another set $B$, that is, mappings of the form $f \colon A^n \to B$ for some positive integer $n$.
Given such a function $f$, the functions that can be obtained from $f$ by manipulation of arguments -- permutation of arguments, introduction or deletion of fictitious arguments, and identification of arguments -- are called \emph{minors} of $f$.
Classes of functions closed under formation of minors are called \emph{minor\hyp{}closed classes} or \emph{minions.}

Minions have been investigated from different points of view in the past decades.
In universal algebra, minions arise naturally as sets of operations induced by the terms of height $1$ on an algebra.
As an analogue of the classical Galois connection $\operatorname{Pol}$--$\operatorname{Inv}$ that characterizes clones, minor\hyp{}closed classes were characterized by Pippenger~\cite{Pippenger} as the closed classes of the Galois connection induced by the preservation relation between functions and relation pairs.
Minors and minions have recently emerged and played an important role in the analysis of the complexity of constraint satisfaction problems (CSP), especially in a new variant known as promise CSP (see the survey article by Barto, Bul\'in, Krokhin, and Opr\v{s}al \cite{BarBulKroOpr}).

Minions may satisfy additional closure conditions.
A well\hyp{}known example of this idea are clones, that is, classes of operations that contain all projections and are closed under composition.
Aichinger and Mayr \cite{AicMay} defined \emph{clonoids} as minor\hyp{}closed classes that are also stable under left composition with the operations of an algebra $\mathbf{B} = (B,F)$ (and hence under compositions with the clone of $\mathbf{B}$) and used them as a tool in showing that there is no infinite ascending chain of subvarieties of a finitely generated variety with an edge term.
In full generality, we can consider classes of functions that are stable under right compositions with a clone $C_1$ on $A$ and under left compositions with a clone $C_2$ on $B$, in brief, \emph{$(C_1,C_2)$\hyp{}stable} classes.
Couceiro and Foldes~\cite{CouFol-2009} characterized $(C_1,C_2)$\hyp{}stable classes with a specialization of Pippenger's Galois connection between functions and relation pairs where the two relations are limited to invariants of the clones $C_1$ and $C_2$.

This inevitably leads us to the problem of describing the lattice of $(C_1,C_2)$\hyp{}stable classes -- both the structure of the lattice and the classes themselves.
Let us denote by $\closys{(C_1,C_2)}$ the lattice of $(C_1,C_2)$\hyp{}stable classes.
It is known that there are uncountably many minions, even when $\card{A} = \card{B} = 2$ (in which case there are only countably many clones); hence an explicit description of all minions may be unattainable.
On the other hand, for some pairs of clones $C_1$ and $C_2$, there may be only a countable or finite number of $(C_1,C_2)$\hyp{}stable classes, and it may be possible to describe all of them.
A possible starting point for a systematical study of lattices of $(C_1,C_2)$\hyp{}stable classes is suggested by a recent result due to Sparks~\cite{Sparks-2019} that provides us with the cardinality of the lattice of clonoids with a two\hyp{}element target algebra.
It is noteworthy in this case that whether the closure system is finite, countably infinite, or uncountable depends only on the clone of the target algebra and not on the set $A$ (as long as $\card{A} > 1$).

Recall that an $n$\hyp{}ary operation $f \in \mathcal{O}_B$ with $n \geq 3$ is called a \emph{near\hyp{}unanimity operation} if $f(x, \dots, x, y, x, \dots, x) = x$ for all $x, y \in B$, where the single occurrence of $y$ can occur in any of the $n$ argument positions.
A ternary near\hyp{}unanimity operation is called a \emph{majority operation.}
A ternary operation $f \in \mathcal{O}_B$ is called a \emph{Mal'cev operation} if $f(y, y, x) = f(x, y, y) = x$ for all $x, y \in B$.

\begin{theorem}[{Sparks \cite[Theorem~1.3]{Sparks-2019}}]
\label{thm:Sparks}
Let $A$ be a finite set with $\card{A} > 1$, and let $B := \{0,1\}$.
Denote by $\clProj{A}$ the clone of projections on $A$, and let $C$ be a clone on $B$.
Then the following statements hold.
\begin{enumerate}[label={\upshape{(\roman*)}}]
\item\label{thm:Sparks:finite}
$\closys{(\clProj{A},C)}$ is finite if and only if $C$ contains a near\hyp{}unanimity operation.
\item\label{thm:Sparks:countable}
$\closys{(\clProj{A},C)}$ is countably infinite if and only if $C$ contains a Mal'cev operation but no majority operation.
\item\label{thm:Sparks:uncountable}
$\closys{(\clProj{A},C)}$ has the cardinality of the continuum if and only if $C$ contains neither a near\hyp{}unanimity operation nor a Mal'cev operation.
\end{enumerate}
\end{theorem}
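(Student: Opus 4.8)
My plan is to establish only the three ``if'' directions; the ``only if'' directions then come for free, since the three hypotheses on $C$ partition all clones on $B$ (over a two\hyp{}element set a clone contains a near\hyp{}unanimity operation if and only if it contains the majority operation $\mathrm{maj}$, which follows from Post's classification of Boolean clones) and since the three conclusions --- finite, countably infinite, continuum --- are mutually exclusive. I would first record the standard reformulation, due in essence to Aichinger and Mayr (via clonoids) and to Couceiro and Foldes (via the relational Galois connection): a class $\mathcal{K}$ is $(\clProj{A},C)$\hyp{}stable if and only if, for each $n \geq 1$, its arity\hyp{}$n$ part $\mathcal{K}^{(n)} := \mathcal{K} \cap B^{A^n}$ is a subuniverse of the power algebra $(B;C)^{A^n}$ and the family $(\mathcal{K}^{(n)})_{n}$ is closed under the reindexing operations induced by all maps $\sigma \colon \nset{n} \to \nset{m}$ (permutation, identification, addition and deletion of arguments); equivalently, $\mathcal{K}$ is the class of all functions preserving some set of relation pairs $(R,S)$ with $R$ a relation on $A$ and $S \in \operatorname{Inv} C$. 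In all three parts $\mathcal{K}$ is recovered from its family $(\mathcal{K}^{(n)})_{n}$.

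For part~\ref{thm:Sparks:finite}, suppose $C$ has a majority operation. By the Baker--Pixley theorem, every subuniverse of a power of $(B;C)$ is the solution set of its projections onto pairs of coordinates; applied to $\mathcal{K}^{(n)} \leq (B;C)^{A^n}$ this shows that $\mathcal{K}^{(n)}$ is determined by the binary relations $\{(f(\vect{a}),f(\vect{b})) : f \in \mathcal{K}^{(n)}\}$ as $(\vect{a},\vect{b})$ ranges over $(A^n)^2$. Closure under minors forces each such binary relation to depend only on the set $\{(\vect{a}_i,\vect{b}_i) : i \in \nset{n}\} \subseteq A^2$; hence $\mathcal{K}$ is completely described by the assignment sending each nonempty $D \subseteq A^2$ to the corresponding subuniverse of $(B;C)^2$. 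Both the domain and the codomain of that assignment are finite, so there are only finitely many such assignments and $\closys{(\clProj{A},C)}$ is finite.

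For part~\ref{thm:Sparks:countable}, assume $C$ has a Mal'cev operation but no majority operation; over $B = \{0,1\}$ this forces $C$ to be one of the affine clones. That $\closys{(\clProj{A},C)}$ is infinite is easy --- it already follows from part~\ref{thm:Sparks:finite} together with the absence of a near\hyp{}unanimity operation, or else one exhibits directly an infinite strictly ascending chain of stable classes built from the parities $x_1 \oplus \dots \oplus x_k$ and their minors. For the upper bound one uses that the finite algebra $(B;C)$ has a Mal'cev term, and hence has few subpowers and is finitely related: every subuniverse of $(B;C)^k$ has a generating set whose size is bounded by a linear function of $k$. Feeding this into the Couceiro--Foldes correspondence, one shows that every $(\clProj{A},C)$\hyp{}stable class is the class of functions preserving \emph{finitely many} relation pairs $(R,S)$ with $S \in \operatorname{Inv} C$; since there are only countably many such pairs across all arities, and hence only countably many finite sets of them, $\closys{(\clProj{A},C)}$ is at most countably infinite.

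For part~\ref{thm:Sparks:uncountable}, the bound $\leq 2^{\aleph_0}$ is immediate, since there are only continuum many classes of Boolean functions altogether. For the reverse inequality one embeds $(\mathcal{P}(\IN),\subseteq)$ into $\closys{(\clProj{A},C)}$ by producing an \emph{independent} sequence $(g_k)_{k \in \IN}$ of Boolean functions such that, for every $S \subseteq \IN$, the $(\clProj{A},C)$\hyp{}stable class generated by $\{g_k : k \in S\}$ contains $g_j$ exactly when $j \in S$; distinct subsets then give distinct stable classes. The point at which the hypothesis is used --- and, together with the countable upper bound above, the main obstacle of the whole proof --- is to show that closing $\{g_k : k \in S\}$ under minors and under left composition with $C$ never produces $g_j$ for $j \notin S$: this is exactly where one needs that $C$ fails to have few subpowers, and I would reduce it to the finitely many Boolean clones that are maximal among those having neither a near\hyp{}unanimity nor a Mal'cev operation and exhibit a suitable independent family explicitly for each. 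In both directions the difficulty is the same in spirit: to convert the single ternary Mal'cev identity (respectively its absence) into a statement that controls an arbitrary $(\clProj{A},C)$\hyp{}stable class simultaneously across all arities.
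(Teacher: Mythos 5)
First, a caveat: the paper does not prove Theorem~\ref{thm:Sparks} at all --- it is quoted from Sparks \cite{Sparks-2019} --- so there is no in-paper proof to compare yours against, and I can only assess your outline on its own terms. The one outright error is your parenthetical claim that on a two-element set a clone contains a near-unanimity operation if and only if it contains the majority operation. This is false: the clone $\clUk{3}$ of $1$-separating functions of rank $3$ contains the $4$-ary near-unanimity operation ``at least three out of four'' (any three tuples each having at most one zero coordinate among four must share a coordinate equal to $1$), yet it does not contain $\mu$, since $(0,1,1)$, $(1,0,1)$, $(1,1,0)$ are true points of $\mu$ whose meet is $\vect{0}$. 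The same holds for $\clUk{k}$, $\clWk{k}$, etc.\ with $k \geq 3$; indeed the concluding remarks of this very paper list these as clones containing a near-unanimity operation that are \emph{not} treated here precisely because they do not contain $\clSM$. This breaks two things. Your justification of the partition is wrong: parts \ref{thm:Sparks:finite} and \ref{thm:Sparks:countable} are disjoint not because near-unanimity forces majority, but because the clones with a Mal'cev operation and no majority operation are exactly those between $\clLc$ and $\clL$, and no linear operation is near-unanimity (the NU identities force every coefficient to vanish while their sum must be $1$). More seriously, your proof of part \ref{thm:Sparks:finite} silently strengthens the hypothesis to ``$C$ contains a majority operation'' before invoking the ternary Baker--Pixley theorem, so for $C = \clUk{3}$ the argument never starts. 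The repair is standard --- use the general Baker--Pixley theorem for a $(k{+}1)$-ary near-unanimity term, so that each $\mathcal{K}^{(n)}$ is determined by its projections onto $k$-element sets of coordinates, which minor-closure compresses into finitely many data independent of $n$ --- but it has to be said.

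Beyond that, parts \ref{thm:Sparks:countable} and \ref{thm:Sparks:uncountable} are declarations of intent rather than proofs. The assertion that every $(\clProj{A},C)$-stable class for affine $C$ is the preservation class of \emph{finitely many} relation pairs is essentially the countability claim itself and is given no argument; ``few subpowers'' bounds generating sets of each $\mathcal{K}^{(n)}$ separately but does not by itself yield a uniform finite description across all arities. And in part \ref{thm:Sparks:uncountable} the independent family $(g_k)_{k \in \IN}$ --- which is the entire content of the continuum lower bound --- is deferred to ``exhibit a suitable family explicitly for each'' of the maximal clones without near-unanimity or Mal'cev operations. As written, the proposal establishes neither the countable upper bound nor the uncountable lower bound; only the reduction framework and the (repairable) finiteness argument are in place.
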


\begin{figure}
\begin{center}
\scalebox{0.31}{%
\tikzstyle{every node}=[circle, draw, fill=black, scale=1, font=\LARGE]
\begin{tikzpicture}[baseline, scale=1]
   \node [label = below:$\clIc$] (Ic) at (0,-1) {};
   \node [label = above:$\clIstar$] (Istar) at (0,0.5) {};
   \node (I0) at (4.5,0.5) {};
   \node (I1) at (-4.5,0.5) {};
   \node (I) at (0,2) {};
   \node (Omega1) at (0,5) {};
   \node [label = below:$\clLc$] (Lc) at (0,7.5) {};
   \node (LS) at (0,9) {};
   \node (L0) at (3,9) {};
   \node (L1) at (-3,9) {};
   \node [label = above:$\clL$] (L) at (0,10.5) {};
   \node [label = below:$\clSM\,\,$] (SM) at (0,13.5) {};
   \node [label = left:$\clSc$] (Sc) at (0,15) {};
   \node [label = above:$\clS$] (S) at (0,16.5) {};
   \node [label = below:$\clMc$] (Mc) at (0,23) {};
   \node [label = left:$\clMo\,\,$] (M0) at (2,24) {};
   \node [label = right:$\,\,\clMi$] (M1) at (-2,24) {};
   \node [label = above:$\clM\,\,$] (M) at (0,25) {};
   \node (Lamc) at (7.2,6.7) {};
   \node (Lam1) at (5,7.5) {};
   \node (Lam0) at (8.7,7.5) {};
   \node [label = below:$\clLambda$] (Lam) at (6.5,8.3) {};
   \node (McUi) at (7.2,11.5) {};
   \node (MUi) at (8.7,13) {};
   \node (TcUi) at (10.2,12) {};
   \node [label = right:$\clUinf$] (Ui) at (11.7,13.5) {};
   \node (McU3) at (7.2,16) {};
   \node (MU3) at (8.7,17.5) {};
   \node (TcU3) at (10.2,16.5) {};
   \node (U3) at (11.7,18) {};
   \node [label = left:$\clMcU\,$] (McU2) at (7.2,19) {};
   \node [label = left:$\clMU\,$] (MU2) at (8.7,20.5) {};
   \node [label = right:$\clTcU$] (TcU2) at (10.2,19.5) {};
   \node [label = right:$\clU$] (U2) at (11.7,21) {};
   \node (Vc) at (-7.2,6.7) {};
   \node (V0) at (-5,7.5) {};
   \node (V1) at (-8.7,7.5) {};
   \node [label = below:$\clV$] (V) at (-6.5,8.3) {};
   \node (McWi) at (-7.2,11.5) {};
   \node (MWi) at (-8.7,13) {};
   \node (TcWi) at (-10.2,12) {};
   \node [label = left:$\clWinf$] (Wi) at (-11.7,13.5) {};
   \node (McW3) at (-7.2,16) {};
   \node (MW3) at (-8.7,17.5) {};
   \node (TcW3) at (-10.2,16.5) {};
   \node (W3) at (-11.7,18) {};
   \node [label = right:$\,\,\clMcW$] (McW2) at (-7.2,19) {};
   \node [label = right:$\clMW$] (MW2) at (-8.7,20.5) {};
   \node [label = left:$\clTcW$] (TcW2) at (-10.2,19.5) {};
   \node [label = left:$\clW$] (W2) at (-11.7,21) {};
   \node [label = above:$\clOI$] (Tc) at (0,28) {};
   \node [label = right:$\clOX$] (T0) at (5,29.5) {};
   \node [label = left:$\clXI$] (T1) at (-5,29.5) {};
   \node [label = above:$\clAll$] (Omega) at (0,31) {};
   \draw [thick] (Ic) -- (Istar) to[out=135,in=-135] (Omega1);
   \draw [thick] (I) -- (Omega1);
   \draw [thick] (Omega1) to[out=135,in=-135] (L);
   \draw [thick] (Ic) -- (I0) -- (I);
   \draw [thick] (Ic) -- (I1) -- (I);
   \draw [thick] (Ic) to[out=128,in=-134] (Lc);
   \draw [thick] (Ic) to[out=58,in=-58] (SM);
   \draw [thick] (I0) -- (L0);
   \draw [thick] (I1) -- (L1);
   \draw [thick] (Istar) to[out=60,in=-60] (LS);
   \draw [thick] (Ic) -- (Lamc);
   \draw [thick] (I0) -- (Lam0);
   \draw [thick] (I1) -- (Lam1);
   \draw [thick] (I) -- (Lam);
   \draw [thick] (Ic) -- (Vc);
   \draw [thick] (I0) -- (V0);
   \draw [thick] (I1) -- (V1);
   \draw [thick] (I) -- (V);
   \draw [thick] (Lamc) -- (Lam0) -- (Lam);
   \draw [thick] (Lamc) -- (Lam1) -- (Lam);
   \draw [thick] (Lamc) -- (McUi);
   \draw [thick] (Lam0) -- (MUi);
   \draw [thick] (Lam1) -- (M1);
   \draw [thick] (Lam) -- (M);
   \draw [thick] (Vc) -- (V0) -- (V);
   \draw [thick] (Vc) -- (V1) -- (V);
   \draw [thick] (Vc) -- (McWi);
   \draw [thick] (V0) -- (M0);
   \draw [thick] (V1) -- (MWi);
   \draw [thick] (V) -- (M);
   \draw [thick] (McUi) -- (TcUi) -- (Ui);
   \draw [thick] (McUi) -- (MUi) -- (Ui);
   \draw [thick,loosely dashed] (McUi) -- (McU3);
   \draw [thick,loosely dashed] (MUi) -- (MU3);
   \draw [thick,loosely dashed] (TcUi) -- (TcU3);
   \draw [thick,loosely dashed] (Ui) -- (U3);
   \draw [thick] (McU3) -- (TcU3) -- (U3);
   \draw [thick] (McU3) -- (MU3) -- (U3);
   \draw [thick] (McU3) -- (McU2);
   \draw [thick] (MU3) -- (MU2);
   \draw [thick] (TcU3) -- (TcU2);
   \draw [thick] (U3) -- (U2);
   \draw [thick] (McU2) -- (TcU2) -- (U2);
   \draw [thick] (McU2) -- (MU2) -- (U2);
   \draw [thick] (McU2) -- (Mc);
   \draw [thick] (MU2) -- (M0);
   \draw [thick] (TcU2) to[out=120,in=-25] (Tc);
   \draw [thick] (U2) -- (T0);
   \draw [thick] (McWi) -- (TcWi) -- (Wi);
   \draw [thick] (McWi) -- (MWi) -- (Wi);
   \draw [thick,loosely dashed] (McWi) -- (McW3);
   \draw [thick,loosely dashed] (MWi) -- (MW3);
   \draw [thick,loosely dashed] (TcWi) -- (TcW3);
   \draw [thick,loosely dashed] (Wi) -- (W3);
   \draw [thick] (McW3) -- (TcW3) -- (W3);
   \draw [thick] (McW3) -- (MW3) -- (W3);
   \draw [thick] (McW3) -- (McW2);
   \draw [thick] (MW3) -- (MW2);
   \draw [thick] (TcW3) -- (TcW2);
   \draw [thick] (W3) -- (W2);
   \draw [thick] (McW2) -- (TcW2) -- (W2);
   \draw [thick] (McW2) -- (MW2) -- (W2);
   \draw [thick] (McW2) -- (Mc);
   \draw [thick] (MW2) -- (M1);
   \draw [thick] (TcW2) to[out=60,in=-155] (Tc);
   \draw [thick] (W2) -- (T1);
   \draw [thick] (SM) -- (McU2);
   \draw [thick] (SM) -- (McW2);
   \draw [thick] (Lc) -- (LS) -- (L);
   \draw [thick] (Lc) -- (L0) -- (L);
   \draw [thick] (Lc) -- (L1) -- (L);
   \draw [thick] (Lc) to[out=120,in=-120] (Sc);
   \draw [thick] (LS) to[out=60,in=-60] (S);
   \draw [thick] (L0) -- (T0);
   \draw [thick] (L1) -- (T1);
   \draw [thick] (L) to[out=125,in=-125] (Omega);
   \draw [thick] (SM) -- (Sc) -- (S);
   \draw [thick] (Sc) to[out=142,in=-134] (Tc);
   \draw [thick] (S) to[out=42,in=-42] (Omega);
   \draw [thick] (Mc) -- (M0) -- (M);
   \draw [thick] (Mc) -- (M1) -- (M);
   \draw [thick] (Mc) to[out=120,in=-120] (Tc);
   \draw [thick] (M0) -- (T0);
   \draw [thick] (M1) -- (T1);
   \draw [thick] (M) to[out=55,in=-55] (Omega);
   \draw [thick] (Tc) -- (T0) -- (Omega);
   \draw [thick] (Tc) -- (T1) -- (Omega);
\end{tikzpicture}
}
\end{center}
\caption{Post's lattice.}
\label{fig:Post}
\end{figure}
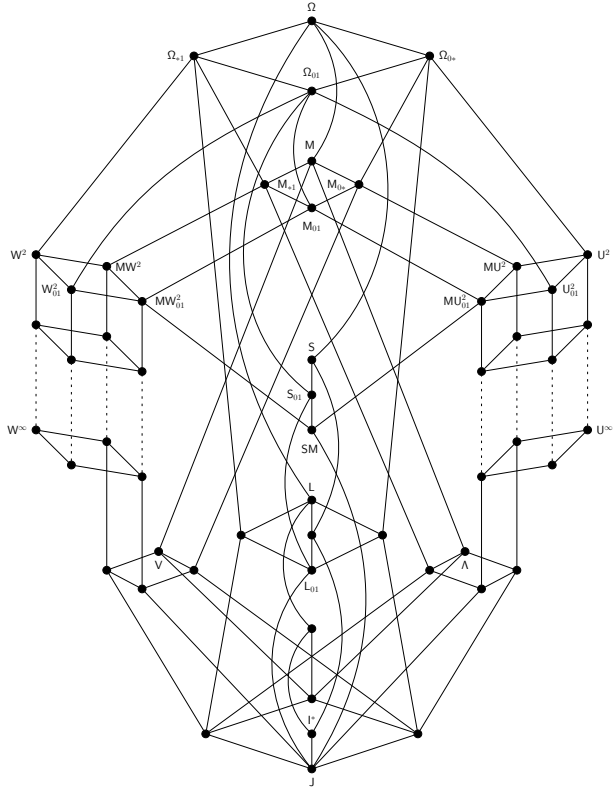

As a first attempt of describing lattices of $(C_1,C_2)$\hyp{}stable classes, we consider classes of Boolean functions (operations on $\{0,1\}$); this is the simplest nontrivial case, and Theorem~\ref{thm:Sparks} is applicable.
In this paper we focus on clones containing the majority operation $\mu$ on $\{0,1\}$ (the lattice of clones on $\{0,1\}$ is shown in Figure~\ref{fig:Post}).
Recall that $\mu$ generates the clone $\clSM$ of self\hyp{}dual monotone functions, and denote the clone of projections by $\clIc$.
Since $\mu$ is a near\hyp{}unanimity operation,
it follows from Sparks's result that there are only a finite number of $(\clIc,C)$\hyp{}stable classes of Boolean functions for any clone $C$ containing $\mu$.
Our goal is to refine this result and to explicitly describe the $(\clIc,C)$\hyp{}stable classes for every clone $C$ such that $\clSM \subseteq C$.

Regarding statement \ref{thm:Sparks:countable} of Theorem~\ref{thm:Sparks},
a clone $C$ on $\{0,1\}$ contains a Mal'cev operation but no majority operation if and only if $\clLc \subseteq C \subseteq \clL$, where $\clLc$ and $\clL$ denote the clone of idempotent linear functions and the clone of all linear functions, respectively.
This situation was completely described in \cite{CouLeh-Lcstability}; the case where $C = \clonegen{\mathord{+}}$ was settled earlier by Kreinecker~\cite[Theorem~3.12]{Kreinecker}.
As for statement \ref{thm:Sparks:uncountable}, a clone $C$ contains neither a near\hyp{}unanimity operation nor a Mal'cev operation if and only if $C$ is contained in one of the following clones:
$\clLambda = \clonegen{\mathord{\wedge}, 0, 1}$, $\clV = \clonegen{\mathord{\vee}, 0, 1}$, $\clIstar = \clonegen{\mathord{\neg}, 0}$, $\clWinf = \clonegen{\mathord{\rightarrow}}$, $\clUinf = \clonegen{\mathord{\nrightarrow}}$.

This paper is organized as follows.
In Section~\ref{sec:preliminaries}, we provide the basic notions and preliminary results that will be needed in the remaining sections.
Furthermore, we develop some tools that are applicable to the study of $(C_1,C_2)$\hyp{}stable classes also in a more general setting.
In Section~\ref{sec:Boolean}, we define properties of Boolean functions that are needed for presenting our results.
Section~\ref{sec:IcSM} is dedicated to our main result and its proof: a complete description of the $(\clIc,\clSM)$\hyp{}stable classes of Boolean functions.
The proof has two parts. Firstly, we show that the given classes are $(\clIc,\clSM)$\hyp{}stable; this is straightforward verification.
The more difficult part of the proof is to show that there are no further $(\clIc,\clSM)$\hyp{}stable classes.
From the description of the $(\clIc,\clSM)$\hyp{}stable classes, we can determine rather easily the $(C_1,C_2)$\hyp{}stable classes for clones $C_1$ and $C_2$, where $C_1$ is arbitrary and $\clSM \subseteq C_2$; this is done in Section~\ref{sec:C1C2}.
We conclude the paper with some comments on topics for further research in Section~\ref{sec:concluding}.


\section{Preliminaries}
\label{sec:preliminaries}

\subsection{General}

The set of nonnegative integers and the set of positive integers are denoted by $\IN$ and $\IN_{+}$, respectively.
For $n \in \IN$, let $\nset{n} := \{ \, i \in \IN \mid 1 \leq i \leq n \, \}$.

We denote tuples by bold letters and their components by the corresponding italic letters, e.g., $\vect{a} = (a_1, \dots, a_n)$.
Since an $n$\hyp{}tuple $\vect{a}$ is formally a mapping $\vect{a} \colon \nset{n} \to A$, we may compose $\vect{a}$ with a map $\sigma \colon \nset{m} \to \nset{n}$, and the resulting map $\vect{a} \circ \sigma \colon \nset{m} \to A$ is the $m$\hyp{}tuple $\vect{a} \circ \sigma = (a_{\sigma(1)}, \dots, a_{\sigma(m)})$; we write simply $\vect{a} \sigma$ for $\vect{a} \circ \sigma$.

We identify $n$\hyp{}tuples over $A$ with words of length $n$ over $A$.
For $a \in A$ and $n \in \IN$, $a^n$ stands for the word consisting of $n$ copies of $a$.

\subsection{Functions of several arguments, function class composition, minors, and stability}

\renewcommand{\gendefault}{(C_1,C_2)}

Let $A$ and $B$ be sets.
We consider \emph{functions of several arguments} from $A$ to $B$, that is, mappings $f \colon A^n \to B$ for some number $n \in \IN_{+}$ called the \emph{arity} of $f$.
Functions of several arguments from $A$ to $A$ are called \emph{operations} on $A$.
We denote the set of all functions of several arguments from $A$ to $B$ by $\mathcal{F}_{AB}$ and the set of all operations on $A$ by $\mathcal{O}_A$.
For any $C \subseteq \mathcal{F}_{AB}$ and $n \in \IN$, we denote by $C^{(n)}$ the set of all $n$\hyp{}ary members of $C$.
In particular, $\mathcal{F}^{(n)}_{AB}$ and $\mathcal{O}^{(n)}_A$ are the sets of all $n$\hyp{}ary functions of several arguments from $A$ to $B$ and the set of all $n$\hyp{}ary operations on $A$, respectively.

For $n \in \IN_{+}$ and $i \in \nset{n}$, the $i$\hyp{}th $n$\hyp{}ary \emph{projection} on $A$ is the operation $\pr^{(n)}_i \colon A^n \to A$, $(a_1, \dots, a_n) \mapsto a_i$.
The first unary projection $\pr^{(1)}_1$ is thus the identity map on $A$ and is also denoted by $\id_A$ or simply by $\id$ if the set $A$ is clear from the context.
We denote the set of all projections on $A$ by $\clProj{A}$.

Let $A$, $B$, and $C$ be nonempty sets, and let $f \in \mathcal{F}^{(n)}_{BC}$ and $g_1, \dots, g_n \in \mathcal{F}^{(m)}_{AB}$.
The \emph{composition} of $f$ with $g_1, \dots, g_n$ is the function $f(g_1, \dots, g_n) \in \mathcal{F}^{(m)}_{AC}$ defined by the rule $f(g_1, \dots, g_n)(\vect{a}) := f(g_1(\vect{a}), \dots, g_n(\vect{a}))$ for all $\vect{a} \in A^m$.
The notion of composition extends to function classes as follows.
Let $I \subseteq \mathcal{F}_{BC}$ and $J \subseteq \mathcal{F}_{AB}$.
The \emph{composition} of $I$ with $J$ is the class $IJ \subseteq \mathcal{F}_{AC}$ given by
\[
IJ := \{ \, f(g_1, \dots, g_n) \mid n, m \in \IN, \, f \in I^{(n)}, \, g_1, \dots, g_n \in J^{(m)} \, \}.
\]

Let $f \in \mathcal{F}^{(n)}_{AB}$, and let $\clProj{A}$ be the clone of projections on $A$.
The functions in $\{f\} \clProj{A}$ are called \emph{minors} of $f$.
Thus a function $g \in \mathcal{F}^{(m)}_{AB}$ is a minor of $f$ if and only if $g = f(\pr_{\sigma(1)}^{(m)}, \dots, \pr_{\sigma(n)}^{(m)})$ for some $\sigma \colon \nset{n} \to \nset{m}$.
We use the shorthand $f_\sigma$ for $f(\pr_{\sigma(1)}^{(m)}, \dots, \pr_{\sigma(n)}^{(m)})$.
Thus $f_\sigma(\vect{a}) = f(\vect{a} \sigma)$.
For a map $\sigma \colon \nset{n} \to \nset{m}$, we define the map $\underline{\sigma} \colon A^m \to A^n$ by $\underline{\sigma}(\vect{a}) := \vect{a} \sigma$.
Then we can also write $f_\sigma = f \circ \underline{\sigma}$.

As a further notational tool, we may specify a map $\sigma \colon \nset{n} \to \nset{m}$ by the word $\sigma(1) \sigma(2) \dots \sigma(n)$; thus we may write $f_{\sigma(1) \sigma(2) \dots \sigma(n)}$ for $f_\sigma$.
The arity of the minor is not explicit in this notation but will be clear from the context.

A set $C \subseteq \mathcal{O}_A$ is called a \emph{clone} on $A$ if $\clProj{A} \subseteq C$ and $C C \subseteq C$.
The set of all clones on $A$ constitutes a closure system. The smallest and the greatest clones on $A$ are the clone $\clProj{A}$ of projections and the clone $\mathcal{O}_A$ of all operations.
We denote by $\clonegen{F}$ the clone generated by $F$, i.e., the smallest clone on $A$ that contains $F$.

Let $F \subseteq \mathcal{F}_{AB}$, and let $C_1$ and $C_2$ be clones on $A$ and $B$, respectively.
We say that $F$ is \emph{stable under right composition with $C_1$} if $F C_1 \subseteq F$,
and we say that $F$ is \emph{stable under left composition with $C_2$} if $C_2 F \subseteq F$.
We say that $F$ is \emph{$(C_1,C_2)$\hyp{}stable} if it is stable under right composition with $C_1$ and stable under left composition with $C_2$.
The set of all $(C_1,C_2)$\hyp{}stable classes constitutes a closure system.
We denote by $\gen{F}$ the $(C_1,C_1)$\hyp{}stable class generated by $F$, i.e., the smallest $(C_1,C_2)$\hyp{}stable class containing $F$.

\begin{lemma}
\label{lem:stable-impl-stable}
Let $C_1$ and $C'_1$ be clones on $A$ and let $C_2$ and $C'_2$ by clones on $B$.
If $C_1 \subseteq C'_1$ and $C_2 \subseteq C'_2$, then every $(C'_1,C'_2)$\hyp{}stable class is $(C_1,C_2)$\hyp{}stable.
\end{lemma}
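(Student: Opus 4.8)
The plan is to unwind the definitions and observe that the two stability conditions are monotone in the respective clone parameters. Suppose $F$ is a $(C'_1, C'_2)$-stable class, so that $F C'_1 \subseteq F$ and $C'_2 F \subseteq F$. We want to conclude $F C_1 \subseteq F$ and $C_2 F \subseteq F$.

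For the right-composition part, I would argue that function class composition is monotone in its right argument: if $J \subseteq J'$, then $IJ \subseteq IJ'$, directly from the definition of $IJ$ as the set of all compositions $f(g_1, \dots, g_n)$ with $f \in I^{(n)}$ and $g_1, \dots, g_n \in J^{(m)}$, since every such tuple $g_1, \dots, g_n$ drawn from $J$ is also drawn from $J'$. Applying this with $I = F$, $J = C_1$, $J' = C'_1$ gives $F C_1 \subseteq F C'_1 \subseteq F$, where the last inclusion is the hypothesis that $F$ is stable under right composition with $C'_1$. Symmetrically, composition is monotone in its left argument, so $C_2 F \subseteq C'_2 F \subseteq F$. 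Hence $F$ is $(C_1, C_2)$-stable.

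Since the statement only quantifies over the inclusions $C_1 \subseteq C'_1$ and $C_2 \subseteq C'_2$ and makes no use of the clone axioms beyond what is already packaged in the notion of a $(C_1,C_2)$-stable class, there is essentially no obstacle here: the proof is a one-line monotonicity observation once the monotonicity of class composition is noted. The only thing to be mildly careful about is that we never need $C_1$ or $C_2$ to actually be clones for the argument to go through — the clone hypothesis is only there so that "$(C_1,C_2)$-stable class" is the intended notion — so I would simply state the monotonicity of $\cdot$ in each argument and chain the two inclusions. I do not expect to need any result prior to this lemma.
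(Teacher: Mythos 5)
Your proof is correct and is essentially the paper's own argument: both rely on the monotonicity of function class composition in each argument to obtain $F C_1 \subseteq F C'_1 \subseteq F$ and $C_2 F \subseteq C'_2 F \subseteq F$. Your additional observation that the clone hypotheses are not actually used is accurate but does not change the substance.
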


\begin{proof}
Let $F \subseteq \mathcal{F}_{AB}$, and assume that $F$ is $(C'_1,C'_2)$\hyp{}stable.
We have $F C_1 \subseteq F C'_1 \subseteq F$ and $C_2 F \subseteq C'_2 F \subseteq F$, so $F$ is $(C_1,C_2)$\hyp{}stable.
\end{proof}

\begin{lemma}
\label{lem:clones-stable}
Let $C$ and $K$ be clones on $A$.
Then the following statements hold:
\begin{enumerate}[label=\upshape{(\roman*)}]
\item\label{lem:clones-stable:R} $K C \subseteq K$ if and only if $C \subseteq K$.
\item\label{lem:clones-stable:L} $C K \subseteq K$ if and only if $C \subseteq K$.
\end{enumerate}
\end{lemma}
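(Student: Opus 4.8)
The statement to prove is Lemma~\ref{lem:clones-stable}: for clones $C$ and $K$ on $A$, we have $KC \subseteq K$ iff $C \subseteq K$, and dually $CK \subseteq K$ iff $C \subseteq K$. The plan is to prove each equivalence by two easy implications, exploiting the defining properties of a clone (containment of all projections, closure under composition). Both parts are genuinely routine; the only ``obstacle'' is bookkeeping with arities and projections, and keeping the two sides of the composition straight.

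For part~\ref{lem:clones-stable:R}, the forward direction is the substantive one: assume $KC \subseteq K$ and take any $f \in C^{(n)}$. Since $K$ is a clone it contains the $n$-ary projections $\pr^{(n)}_1, \dots, \pr^{(n)}_n \in K^{(n)}$, and $f = \pr^{(n)}_i(f, \dots, f)$ — more precisely, $\pr^{(n)}_i$ composed with $n$ copies of $f$ reproduces $f$ when $i$ is chosen appropriately; cleanly, $f = \pr^{(1)}_1(f)$, i.e. $\id_A \circ f$, and $\id_A = \pr^{(1)}_1 \in K^{(1)}$, so $f = \id_A(f) \in K^{(1)} C^{(n)} \subseteq KC \subseteq K$. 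Hence $C \subseteq K$. The converse direction: assume $C \subseteq K$; then $KC \subseteq KK \subseteq K$ because $K$ is closed under composition. This gives~\ref{lem:clones-stable:R}.

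Part~\ref{lem:clones-stable:L} is completely symmetric. For the forward direction, assume $CK \subseteq K$ and take $f \in C^{(n)}$. Then $f = f(\pr^{(n)}_1, \dots, \pr^{(n)}_n)$ with each $\pr^{(n)}_i \in K^{(n)} \subseteq K$, so $f \in C^{(n)} K^{(n)} \subseteq CK \subseteq K$; thus $C \subseteq K$. For the converse, $C \subseteq K$ gives $CK \subseteq KK \subseteq K$. I would write part~\ref{lem:clones-stable:L} tersely, noting it is proved ``analogously,'' or spell out the one-line identity $f = f(\pr^{(n)}_1, \dots, \pr^{(n)}_n)$ to make the use of projections explicit. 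No step here is hard; the care needed is purely in stating the projection identities correctly and in not conflating $KC$ with $CK$.
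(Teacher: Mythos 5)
Your proof is correct and follows essentially the same route as the paper: the converse directions via $KK \subseteq K$, and the forward directions via the identities $f = \id_A(f) \in KC$ and $f = f(\pr^{(n)}_1,\dots,\pr^{(n)}_n) \in CK$ (the paper merely phrases the forward direction contrapositively). No gaps.
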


\begin{proof}
If $C \subseteq K$, then $K C \subseteq K K \subseteq K$ and $C K \subseteq K K \subseteq K$.
Assume now that $C \nsubseteq K$.
Then there exists an $f \in C$ such that $f \notin K$.
Since $\id \in K$, we have $f = \id(f) \in K C$ and $f = f(\id_1, \dots, \id_n) \in C K$.
Therefore $K C \nsubseteq K$ and $C K \nsubseteq K$.
\end{proof}

\begin{lemma}[{Couceiro, Foldes \cite[Associativity Lemma]{CouFol-2007,CouFol-2009}}]
\label{lem:associativity}
Let $A$, $B$, $C$, and $D$ be arbitrary nonempty sets, and let $I \subseteq \mathcal{F}_{CD}$, $J \subseteq \mathcal{F}_{BC}$, and $K \subseteq \mathcal{F}_{AB}$.
The following statements hold.
\begin{enumerate}[label=\upshape{(\roman*)}]
\item $(IJ)K \subseteq I(JK)$.
\item If $J$ is stable under right composition with the clone of projections on $B$ (i.e., minor\hyp{}closed), then $(IJ)K = I(JK)$.
\end{enumerate}
\end{lemma}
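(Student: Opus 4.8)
The plan is to prove part (i) by a direct unfolding of the definition of function class composition, and then to obtain part (ii) by establishing the reverse inclusion $I(JK) \subseteq (IJ)K$, for which minor\hyp{}closedness of $J$ is exactly the ingredient needed to reconcile arities.

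For part (i), I would take an arbitrary member $h(k_1, \dots, k_m)$ of $(IJ)K$, where $h \in (IJ)^{(n)}$ and $k_1, \dots, k_n \in K^{(m)}$, and expand $h = f(g_1, \dots, g_p)$ with $f \in I^{(p)}$ and $g_1, \dots, g_p \in J^{(n)}$. Evaluating at an arbitrary tuple $\vect{a}$ and using the definition of composition twice gives
\[
h(k_1, \dots, k_n)(\vect{a}) = f\bigl(g_1(k_1, \dots, k_n)(\vect{a}), \dots, g_p(k_1, \dots, k_n)(\vect{a})\bigr).
\]
Since each $g_j(k_1, \dots, k_n) \in JK$ and $f \in I$, the function $h(k_1, \dots, k_n) = f\bigl(g_1(k_1, \dots, k_n), \dots, g_p(k_1, \dots, k_n)\bigr)$ lies in $I(JK)$, which proves the inclusion.

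For part (ii), it remains to show $I(JK) \subseteq (IJ)K$ under the hypothesis that $J$ is minor\hyp{}closed. I would start with a member $f(p_1, \dots, p_n)$ of $I(JK)$, where $f \in I^{(n)}$ and $p_1, \dots, p_n \in (JK)^{(\ell)}$; write each $p_i = g_i(k^i_1, \dots, k^i_{m_i})$ with $g_i \in J^{(m_i)}$ and $k^i_1, \dots, k^i_{m_i} \in K^{(\ell)}$ (note all the $k^i_j$ share the same arity $\ell$). The key step is to concatenate all these inner functions into a single list $k_1, \dots, k_s$ of length $s := m_1 + \dots + m_n$, with $k_{t_i + j} := k^i_j$ where $t_i := m_1 + \dots + m_{i-1}$, and to replace each $g_i$ by its $s$\hyp{}ary minor $(g_i)_{\sigma_i}$ along the injection $\sigma_i \colon \nset{m_i} \to \nset{s}$, $j \mapsto t_i + j$. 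Minor\hyp{}closedness of $J$ guarantees $(g_i)_{\sigma_i} \in J^{(s)}$, and by construction $(g_i)_{\sigma_i}(k_1, \dots, k_s) = g_i(k^i_1, \dots, k^i_{m_i}) = p_i$. Hence $h := f\bigl((g_1)_{\sigma_1}, \dots, (g_n)_{\sigma_n}\bigr) \in (IJ)^{(s)}$ satisfies $h(k_1, \dots, k_s) = f(p_1, \dots, p_n)$, placing the latter in $(IJ)K$.

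The only delicate point is the bookkeeping of arities in part (ii): the functions $g_i$ arising from the various $p_i$ need not have equal arities, nor need they refer to the same inner functions, so one must pad them up to a common arity via minors and track the index shifts $t_i$ carefully. This is where the minor\hyp{}closedness hypothesis is essential and where a counterexample to equality would otherwise arise; the computations themselves are routine once the indexing is set up.
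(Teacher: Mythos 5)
The paper states this lemma as an imported result of Couceiro and Foldes and gives no proof of its own, so there is nothing in the text to compare against; judged on its own merits, your argument is correct and is essentially the standard proof from the cited references. You correctly locate the only obstruction to associativity — the requirement in the definition of $IJ$ that all inner functions share a common arity — and the padding of each $g_i$ to an $s$\hyp{}ary minor along the injections $\sigma_i$ is exactly the right use of the minor\hyp{}closedness hypothesis; both the arity bookkeeping and the evaluation identities check out (the ``$h(k_1,\dots,k_m)$'' at the start of part (i) should read $h(k_1,\dots,k_n)$, but this is only a typo).
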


\begin{lemma}
\label{lem:F-closure}
Let $F \subseteq \mathcal{F}_{AB}$, and let $C_1$ and $C_2$ be clones on $A$ and $B$, respectively.
Then $\gen{F} = C_2 (F C_1)$.
\end{lemma}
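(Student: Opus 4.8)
The plan is to verify directly that $C_2(FC_1)$ has the three properties characterizing $\gen{F}$: it contains $F$, it is $(C_1,C_2)$-stable, and it is contained in every $(C_1,C_2)$-stable class that contains $F$.

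First I would check $F \subseteq C_2(FC_1)$. Since $C_1$ and $C_2$ are clones, $\clProj{A} \subseteq C_1$ and $\id_B \in C_2$. Hence every $f \in F^{(n)}$ equals $f(\pr^{(n)}_1, \dots, \pr^{(n)}_n) \in FC_1$, so $F \subseteq FC_1$; and every $g \in (FC_1)^{(m)}$ equals $\id_B(g) \in C_2(FC_1)$, so $FC_1 \subseteq C_2(FC_1)$. Together these give $F \subseteq C_2(FC_1)$.

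Next I would prove stability. The preparatory observation is that $FC_1$ is minor-closed: by part~(i) of Lemma~\ref{lem:associativity}, $(FC_1)\clProj{A} \subseteq F(C_1\clProj{A}) \subseteq F(C_1 C_1) \subseteq FC_1$, using $\clProj{A} \subseteq C_1$ and $C_1 C_1 \subseteq C_1$; likewise $C_2$ is minor-closed, being a clone. For stability under right composition with $C_1$, I then apply part~(ii) of Lemma~\ref{lem:associativity} with the minor-closed middle factor $FC_1$ to obtain $(C_2(FC_1))C_1 = C_2((FC_1)C_1) \subseteq C_2(F(C_1 C_1)) \subseteq C_2(FC_1)$. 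For stability under left composition with $C_2$, I apply part~(ii) with the minor-closed middle factor $C_2$ to obtain $C_2(C_2(FC_1)) = (C_2 C_2)(FC_1) \subseteq C_2(FC_1)$, using $C_2 C_2 \subseteq C_2$. Thus $C_2(FC_1)$ is $(C_1,C_2)$-stable.

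Finally, for minimality, let $G$ be any $(C_1,C_2)$-stable class with $F \subseteq G$. Then $FC_1 \subseteq GC_1 \subseteq G$ by right stability of $G$, and hence $C_2(FC_1) \subseteq C_2 G \subseteq G$ by left stability of $G$. Therefore $C_2(FC_1)$ is the least $(C_1,C_2)$-stable class containing $F$, that is, $\gen{F} = C_2(FC_1)$. The only point requiring attention is the bookkeeping with Lemma~\ref{lem:associativity}: one must check which factor is minor-closed before invoking the equality in part~(ii) and keep track of the inclusions $C_1 C_1 \subseteq C_1$ and $C_2 C_2 \subseteq C_2$. Since none of this is deep, I do not expect any real obstacle.
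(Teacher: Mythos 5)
Your proposal is correct and follows essentially the same route as the paper: both verify that $C_2(FC_1)$ contains $F$, establish $(C_1,C_2)$-stability via the Associativity Lemma (Lemma~\ref{lem:associativity}) together with $C_1C_1 \subseteq C_1$ and $C_2C_2 \subseteq C_2$, and then conclude minimality from the stability of a containing class. The only cosmetic difference is that you invoke part~(ii) of the Associativity Lemma (after checking that $FC_1$ is minor-closed) where the paper gets by with the inclusion from part~(i) for the right-stability chain; both are valid.
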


\begin{proof}
Since the clones $C_1$ and $C_2$ contain all projections, we have that $F \subseteq F C_1 \subseteq C_2 (F C_1)$.
Applying the Associativity Lemma (Lemma~\ref{lem:associativity}), we see that $C_2 (F C_1)$ is $(C_1,C_2)$\hyp{}stable (note that clones are minor\hyp{}closed and closed under composition):
\begin{gather*}
(C_2 (F C_1)) C_1 \subseteq C_2 ((F C_1) C_1) \subseteq C_2( F (C_1 C_1)) \subseteq C_2 (F C_1), \\
C_2 ( C_2 (F C_1)) = (C_2 C_2)(F C_1) \subseteq C_2 (F C_1).
\end{gather*}
Therefore $C_2 (F C_1)$ is a $(C_1,C_2)$\hyp{}stable class containing $F$; hence the inclusion $\gen{F} \subseteq C_2 (F C_1)$ holds.
The converse inclusion follows immediately from the $(C_1,C_2)$\hyp{}stability of $\gen{F}$:
\[
C_2 (F C_1) \subseteq C_2 (\gen{F} C_1) \subseteq C_2 \gen{F} \subseteq \gen{F}.
\qedhere
\]
\end{proof}

The following two helpful lemmata were proved in \cite{CouLeh-Lcstability}.
Here we employ the binary composition operation $\ast$ defined as follows:
if $f \in \mathcal{O}_A^{(n)}$ and $g \in \mathcal{O}_A^{(m)}$, then $f \ast g \in \mathcal{O}_A^{(m+n-1)}$ is defined by
\[
(f \ast g)(a_1, \dots, a_{m+n-1}) :=
f(g(a_1, \dots, a_m), a_{m+1}, \dots, a_{m+n-1}),
\]
for all $a_1, \dots, a_{m+n-1} \in A$.

\begin{lemma}
\label{lem:right-stab-gen}
Let $F \subseteq \mathcal{O}_A$.
Let $C$ be a clone on $A$, and let $G$ be a generating set of $C$.
Then the following conditions are equivalent.
\begin{enumerate}[label=\upshape{(\roman*)}]
\item\label{FCsubF} $F C \subseteq F$
\item\label{minorC} $F$ is minor\hyp{}closed and $f \ast g \in F$ whenever $f \in F$ and $g \in C$.
\item\label{minorG} $F$ is minor\hyp{}closed and $f \ast g \in F$ whenever $f \in F$ and $g \in G$.
\end{enumerate}
\end{lemma}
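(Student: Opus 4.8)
The plan is to establish the cycle of implications \ref{FCsubF} $\Rightarrow$ \ref{minorC} $\Rightarrow$ \ref{minorG} $\Rightarrow$ \ref{FCsubF}. The implication \ref{minorC} $\Rightarrow$ \ref{minorG} is trivial, since $G \subseteq C$. For \ref{FCsubF} $\Rightarrow$ \ref{minorC}, note that $\clProj{A} \subseteq C$, so $F \clProj{A} \subseteq F C \subseteq F$ shows $F$ is minor-closed; and for $f \in F^{(n)}$ and $g \in C^{(m)}$, the function $f \ast g$ is obtained by composing $f$ with the tuple $(g_1, \dots, g_n)$ where $g_1 = g(\pr^{(m+n-1)}_1, \dots, \pr^{(m+n-1)}_m)$ is a minor of $g$ (hence lies in $C$, as clones are minor-closed) and $g_i = \pr^{(m+n-1)}_{m+i-1}$ for $i \geq 2$, so $f \ast g \in F C \subseteq F$.

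The substantive direction is \ref{minorG} $\Rightarrow$ \ref{FCsubF}. Assume $F$ is minor-closed and closed under $\ast$ with second argument from $G$. I must show $f(g_1, \dots, g_n) \in F$ whenever $f \in F^{(n)}$ and $g_1, \dots, g_n \in C^{(m)}$. The first step is to reduce from arbitrary members of $C$ to iterated $\ast$-products of generators: since $G$ generates $C$ and the clone generated by $G$ can be built up from $G \cup \clProj{A}$ using composition, and since $\ast$ together with the formation of minors suffices to realize all clone compositions (this is a standard fact — every composition $h(h_1, \dots, h_k)$ can be expressed by a sequence of $\ast$-operations interleaved with variable manipulations), I can argue by induction on the construction of $g_1, \dots, g_n$ inside $C$. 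The base case is when each $g_i$ is a projection, where $f(g_1, \dots, g_n)$ is simply a minor of $f$, hence in $F$. For the inductive step, the key observation is that if $g_i = h \ast g'$ for some $h \in G$ and $g' \in C$ (or $g_i$ is a minor of such), then one can rewrite $f(g_1, \dots, g_n)$ by first forming $f \ast h$ (or an appropriate minor-adjusted version thereof), which lies in $F$ by hypothesis, thereby absorbing the outermost generator $h$ into $f$ and reducing the total complexity of the tuple; minor-closure of $F$ handles the bookkeeping of matching up argument positions.

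The main obstacle will be the combinatorial bookkeeping in the inductive step: one must set up a suitable complexity measure on tuples $(g_1, \dots, g_n)$ of members of $C$ (for instance, the sum of the sizes of $G$-term representations of the $g_i$), verify that peeling off an outermost generator via an $\ast$-operation strictly decreases this measure, and check that the identifications of variables needed to realize the general composition $f(g_1, \dots, g_n)$ as a minor of an $\ast$-built function are indeed legitimate minor formations. Care is needed because $\ast$ only substitutes into the \emph{first} argument of $f$, so substituting generators simultaneously into several argument positions of $f$ requires repeated applications of $\ast$ together with permutations of variables, and one must confirm that after all substitutions the resulting function, once the auxiliary variables are identified appropriately, is exactly $f(g_1, \dots, g_n)$. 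Once this reduction machinery is in place, the conclusion $F C \subseteq F$ follows, since $F C$ consists precisely of such compositions.
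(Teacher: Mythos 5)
The paper does not actually prove this lemma in situ---it is quoted from \cite{CouLeh-Lcstability}---so there is no in-paper argument to compare against; your plan is the standard one and is correct. Both nontrivial directions are handled properly: for \ref{FCsubF}$\Rightarrow$\ref{minorC} you exhibit $f \ast g$ as a composition of $f$ with a minor of $g$ and projections (all of which lie in $C$), and for \ref{minorG}$\Rightarrow$\ref{FCsubF} you induct on the total number of generator symbols in $G$\hyp{}term representations of $g_1, \dots, g_n$, peeling off an outermost generator and using permutation minors to bring the relevant argument into first position before applying $\ast$, with identification of the auxiliary variable blocks at the end. One phrasing slip worth fixing: a non\hyp{}projection $g_i \in C^{(m)}$ has the form $h(t_1, \dots, t_k)$ with $h \in G$ and $t_1, \dots, t_k \in C^{(m)}$, and is in general \emph{not} a minor of a single $h \ast g'$ (since $\ast$ substitutes into only one argument of $h$); but your inductive step---absorb $h$ into $f$ and replace $g_i$ by the $k$ functions $t_1, \dots, t_k$ in the tuple, which strictly decreases the symbol count---is exactly what the decomposition $g_i = h(t_1, \dots, t_k)$ supports, so the argument goes through.
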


\begin{lemma}
\label{lem:left-stab-gen}
Let $F \subseteq \mathcal{O}_A$.
Let $C$ be a clone on $A$, and let $G$ be a generating set of $C$.
Then the following conditions are equivalent.
\begin{enumerate}[label=\upshape{(\roman*)}]
\item\label{CFsubF} $C F \subseteq F$
\item\label{gf-C} $g(f_1, \dots, f_n) \in F$ whenever $g \in C^{(n)}$ and $f_1, \dots, f_n \in F^{(m)}$ for some $n, m \in \IN$.
\item\label{gf-G} $g(f_1, \dots, f_n) \in F$ whenever $g \in G^{(n)}$ and $f_1, \dots, f_n \in F^{(m)}$ for some $n, m \in \IN$.
\end{enumerate}
\end{lemma}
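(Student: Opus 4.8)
The plan is to prove the implications \ref{CFsubF} $\Rightarrow$ \ref{gf-C} $\Rightarrow$ \ref{gf-G} $\Rightarrow$ \ref{CFsubF}, only the last of which requires real work. The step \ref{CFsubF} $\Leftrightarrow$ \ref{gf-C} is a tautology: by the definition of the composition of function classes, $CF$ is precisely the set of all composites $g(f_1, \dots, f_n)$ with $g \in C^{(n)}$ and $f_1, \dots, f_n \in F^{(m)}$ for some $n, m \in \IN$, so $CF \subseteq F$ asserts exactly that every such composite belongs to $F$. The step \ref{gf-C} $\Rightarrow$ \ref{gf-G} is immediate from $G \subseteq C$. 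Thus everything reduces to showing \ref{gf-G} $\Rightarrow$ \ref{gf-C}, that is, that a closure property verified only for the generators in $G$ automatically propagates to all of $C$.

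To carry this out I would let $C'$ denote the set of all $g \in C$ such that $g(f_1, \dots, f_n) \in F$ for all $m \in \IN$ and all $f_1, \dots, f_n \in F^{(m)}$, where $n := \arity{g}$; then \ref{gf-C} is exactly the statement $C' = C$. Since $C' \subseteq C = \clonegen{G}$ holds by construction, it is enough to check that $C'$ is a clone containing $G$. The inclusion $G \subseteq C'$ is hypothesis \ref{gf-G}. The projections lie in $C'$ because $\pr^{(n)}_i(f_1, \dots, f_n) = f_i \in F^{(m)} \subseteq F$ whenever $f_1, \dots, f_n \in F^{(m)}$. For closure under composition, take $h \in C'$ of arity $k$, functions $g_1, \dots, g_k \in C'$ all of arity $n$, and $f_1, \dots, f_n \in F^{(m)}$; by associativity of composition of functions,
\[
\bigl( h(g_1, \dots, g_k) \bigr)(f_1, \dots, f_n) = h\bigl( g_1(f_1, \dots, f_n), \dots, g_k(f_1, \dots, f_n) \bigr),
\]
and each $g_j(f_1, \dots, f_n)$ is an $m$\hyp{}ary member of $F$ because $g_j \in C'$, so the right\hyp{}hand side is in $F$ because $h \in C'$. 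Hence $h(g_1, \dots, g_k) \in C'$, so $C'$ is a clone, and therefore $C = \clonegen{G} \subseteq C'$, giving $C' = C$.

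I do not anticipate a real obstacle; this is a standard "the good set is a subclone" argument. The one place to be careful is the arity bookkeeping in the composition step: all the inputs $f_1, \dots, f_n$ must be chosen of a common arity $m$, and one has to observe that the intermediate functions $g_j(f_1, \dots, f_n)$ are then again $m$\hyp{}ary before they can be substituted into $h$. It is also worth remarking, by way of contrast, that \ref{gf-C} carries no minor\hyp{}closure requirement, unlike the analogous condition in Lemma~\ref{lem:right-stab-gen}; this is because left composition with projections only re\hyp{}indexes argument positions and never produces new minors, so $\clProj{A} F = F$ automatically.
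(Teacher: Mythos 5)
Your proof is correct. Note that the paper does not actually prove this lemma; it is quoted from \cite{CouLeh-Lcstability}, so there is no in-paper argument to compare against. Your route is the standard one for such statements: \ref{CFsubF}$\Leftrightarrow$\ref{gf-C} is indeed just unwinding the definition of the class composition $CF$, \ref{gf-C}$\Rightarrow$\ref{gf-G} is trivial from $G \subseteq C$, and for \ref{gf-G}$\Rightarrow$\ref{gf-C} the ``good set'' $C'$ is shown to contain the projections and to be closed under composition, hence to be a clone containing $G$, hence to contain $\clonegen{G} = C$. The arity bookkeeping you flag is exactly the only point requiring care, and you handle it correctly: the intermediate functions $g_j(f_1,\dots,f_n)$ are all $m$\hyp{}ary members of $F$, so they are legitimate inputs to $h$. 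One cosmetic slip in your closing remark: left composition with a projection does not ``re\hyp{}index argument positions'' (that is what right composition does); it simply selects one of the inner functions, $\pr^{(n)}_i(f_1,\dots,f_n) = f_i$. The conclusion you draw from it, $\clProj{A}F = F$, is nevertheless correct, and this is also why no minor\hyp{}closure hypothesis appears here, in contrast with Lemma~\ref{lem:right-stab-gen}.
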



\subsection{Terms and algebras, stratified terms}

Let $X = \{x_1, x_2, \dots\}$ be a countably infinite set of \emph{variables,} and for each $n \in \IN$, let $X_n := \{x_1, \dots, x_n\}$.
Let $F$ be a set of \emph{function symbols,} and assume that $F$ is disjoint from $X$.
Assign to each $f \in F$ a natural number $\arity{f}$, called the \emph{arity} of $f$.
The map $\tau \colon F \to \IN$, $f \mapsto \arity{f}$, is called an (\emph{algebraic similarity}) \emph{type} (also known as a \emph{signature}).
\emph{Terms} of type $\tau$ over $X$ are defined as usual:
every variable $x_i \in X$ is a term of type $\tau$, and
if $f \in F$ is an $n$\hyp{}ary function symbol and $t_1, \dots, t_n$ are terms of type $\tau$, then $f(t_1, \dots, t_n)$ is a term of type $\tau$.
Terms are thus certain well\hyp{}formed words over the alphabet comprising the variables $X$, the function symbols $F$, and some punctuation (parentheses and comma).
Any subword of a term $t$ that is itself a term is called a \emph{subterm} of $t$.
If a term $t$ is of the form $f(t_1, \dots, t_n)$, we call it \emph{functional;}
$f$ is called the \emph{leading function symbol} of $t$, and the terms $t_1, \dots, t_n$ are called the \emph{immediate subterms} of $t$.
We call terms of type $\tau$ over $X$ also \emph{$F$\hyp{}terms}.
We denote the set of all $F$\hyp{}terms by $T(F)$.

The \emph{height} of a term $t$, denoted by $h(t)$, is defined recursively as follows:
variables have height $0$ ($h(x_i) := 0$ for every $x_i \in X$), and if
$t = f(t_1, \dots, t_n)$, then $h(t) := \max(h(t_1), \dots, h(t_n)) + 1$.

The set of variables occurring in a term $t$ is denoted by $\var(t)$.
If $t$ is a term with $\var(t) \subseteq X_n$ and $t_1, \dots, t_n$ are terms, then we define $t[t_1, \dots, t_n]$ as the term obtained by substitution of $t_i$ for $x_i$ in $t$, i.e., by simultaneously replacing each occurrence of $x_i$ in $t$ by the term $t_i$, for each $i \in \nset{n}$.

Let $G$ and $F$ be sets of function symbols, not necessarily disjoint.
A term $t \in T(G \cup F)$ is \emph{$(G,F)$\hyp{}stratified} if the following conditions hold:
\begin{enumerate}[label=(\roman*)]
\item for every subterm $t'$ of $t$ with $h(t') = 1$, the leading functional symbol of $t'$ belongs to $F$;
\item for every subterm $t'$ of $t$ with $h(t') \geq 2$, the leading functional symbol of $t'$ belongs to $G$ and no immediate subterm of $t'$ is a variable.
\end{enumerate}
In other words, $(G,F)$\hyp{}stratified terms are exactly the terms of the form $t[t_1,\dots,t_n]$ for some $t \in T(G)$ with $\var(t) \subseteq X_n$, $n \in \IN_{+}$, and $t_1, \dots, t_n \in T(F)$ such that $h(t_i) = 1$ for all $i \in \nset{n}$.
We denote by $\Str{G,F}$ the set of all $(G,F)$\hyp{}stratified terms in $T(G \cup F)$.

Given a type $\tau \colon F \to \IN$, an \emph{algebra} of type $\tau$ is a pair $\mathbf{A} = (A, F^\mathbf{A})$, where $A$ is a nonempty set called the \emph{universe} and $F^\mathbf{A} = (f^\mathbf{A})_{f \in F}$ is an indexed family of operations on $A$, where for each $f \in F$, the operation $f^\mathbf{A}$ has arity $\arity{f}$.
For an $n$\hyp{}ary term $t$ of type $\tau$ and an algebra $\mathbf{A} = (A,F)$ of type $\tau$, the \emph{term operation} induced by $t$ on $\mathbf{A}$, denoted by $t^\mathbf{A}$, is the $n$\hyp{}ary operation on $A$ that is defined recursively as follows.
If $t = x_i \in X_n$, then $t^\mathbf{A} := \pr^{(n)}_i$.
If $t = f(t_1, \dots, t_n)$, where $f$ is an $n$\hyp{}ary operation symbol and $t_1, \dots, t_n$ are terms, then $t^\mathbf{A} := f^\mathbf{A}(t_1^\mathbf{A}, \dots, t_n^\mathbf{A})$.
We extend the notation to sets of terms: if $T \subseteq T(F)$, then $T^\mathbf{A} := \{ \, t^\mathbf{A} \mid t \in T \, \}$.

Henceforth, we will regard functions as function symbols, and we give standard interpretation to terms of the corresponding type.
More precisely, we will assume that $F$ is a set of operations on $A$, and we consider the algebraic similarity type $\tau \colon F \to \IN$, $f \mapsto \arity{f}$, and the algebra $\mathbf{A} = (A,F^\mathbf{A})$, where $f^\mathbf{A} = f$ for each $f \in F$.
We will then consider term operations induced by $F$\hyp{}terms on $\mathbf{A}$.

It is well known that the set of term operations of an algebra is a clone, i.e., if $F \subseteq \mathcal{O}_A$ and $\mathbf{A} = (A,F)$, then $\clonegen{F} = T(F)^\mathbf{A}$.
In fact, every clone arises in this way.
We now present an analogous description of the term operations induced by stratified terms, which will be called \emph{stratified term operations.}

\begin{lemma}
\label{lem:gen-str}
Let $G$ and $F$ be sets of operations on $A$, let $C = \clonegen{G}$, and let $\clProj{A}$ be the clone of projections on $A$.
Then $\gen[(\clProj{A},C)]{F} = \Str{G,F}^\mathbf{A}$.
\end{lemma}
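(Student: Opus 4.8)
The plan is to prove the two inclusions $\gen[(\clProj{A},C)]{F} \subseteq \Str{G,F}^\mathbf{A}$ and $\Str{G,F}^\mathbf{A} \subseteq \gen[(\clProj{A},C)]{F}$ separately, using Lemma~\ref{lem:F-closure} to rewrite the left\hyp{}hand side as $C(F\clProj{A})$. The key bridge is the explicit syntactic description of stratified terms already recorded in the excerpt: every $t' \in \Str{G,F}$ has the form $t[t_1,\dots,t_n]$ with $t \in T(G)$, $\var(t) \subseteq X_n$, and each $t_i \in T(F)$ of height exactly $1$, i.e.\ $t_i = g_i(x_{j_1},\dots,x_{j_{k_i}})$ for some $g_i \in F$ and variables. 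Passing to term operations on $\mathbf{A}$, such a $t'$ induces exactly an operation of the form $h(f_1\sigma_1,\dots,f_n\sigma_n)$ where $h = t^\mathbf{A} \in T(G)^\mathbf{A} = \clonegen{G} = C$ and each $f_i \in F$, each $f_i\sigma_i$ a minor of $f_i$. This is precisely a member of $C(F\clProj{A})$: first form minors $f_i\sigma_i \in F\clProj{A}$, then left\hyp{}compose with $h \in C$. Conversely, any element of $C(F\clProj{A})$ arises this way. So the heart of the argument is translating between the word\hyp{}level description of stratified terms and the composition\hyp{}level description $C(F\clProj{A})$.

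For the inclusion $\Str{G,F}^\mathbf{A} \subseteq \gen[(\clProj{A},C)]{F}$, I would take an arbitrary $t' \in \Str{G,F}$, write it as $t[t_1,\dots,t_n]$ as above, and observe that $(t')^\mathbf{A} = t^\mathbf{A}\bigl((t_1)^\mathbf{A},\dots,(t_n)^\mathbf{A}\bigr)$. Here $t^\mathbf{A} \in T(G)^\mathbf{A} = C$ by the well\hyp{}known fact cited in the excerpt, and each $(t_i)^\mathbf{A}$, being induced by a height\hyp{}$1$ $F$\hyp{}term $g_i(x_{j_1},\dots,x_{j_{k_i}})$, equals $g_i$ composed with projections, hence lies in $F\clProj{A}$ — more precisely it is a minor of $g_i \in F$. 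One small wrinkle: the $(t_i)^\mathbf{A}$ may have different arities depending on which variables appear in $t_i$, but since $t^\mathbf{A}$ composes them over a common variable set $X_m$ (where $m$ is large enough to contain all variables of $t'$), we may freely add fictitious arguments to each $(t_i)^\mathbf{A}$ to bring them to the common arity $m$; adding fictitious arguments is a minor operation, so the result is still in $F\clProj{A}$. Thus $(t')^\mathbf{A} \in C(F\clProj{A}) = \gen[(\clProj{A},C)]{F}$ by Lemma~\ref{lem:F-closure}.

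For the reverse inclusion, I would start from an arbitrary element of $C(F\clProj{A})$, which by definition of function class composition has the form $h(p_1,\dots,p_n)$ where $h \in C^{(n)}$ and $p_1,\dots,p_n \in (F\clProj{A})^{(m)}$ for some $n,m$. Each $p_i$ is a minor of some $f_i \in F$, so $p_i = (f_i)_{\sigma_i} = f_i(\pr^{(m)}_{\sigma_i(1)},\dots,\pr^{(m)}_{\sigma_i(k_i)})$ for suitable $\sigma_i$. Since $C = \clonegen{G} = T(G)^\mathbf{A}$, there is a $G$\hyp{}term $s$ with $\var(s) \subseteq X_n$ and $s^\mathbf{A} = h$. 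Now form the $(G,F)$\hyp{}stratified term $s[u_1,\dots,u_n]$ where $u_i := f_i(x_{\sigma_i(1)},\dots,x_{\sigma_i(k_i)})$; each $u_i$ is an $F$\hyp{}term of height exactly $1$, so $s[u_1,\dots,u_n] \in \Str{G,F}$, and its induced operation on $\mathbf{A}$ is $s^\mathbf{A}((u_1)^\mathbf{A},\dots,(u_n)^\mathbf{A}) = h(p_1,\dots,p_n)$, as desired. The main obstacle — and it is a mild one — is bookkeeping around arities and degenerate cases: one must be careful that height\hyp{}$1$ $F$\hyp{}terms correspond exactly to minors of single members of $F$ (not allowing the outer $F$\hyp{}application to be absent), and conversely that the definition of $\Str{G,F}$ genuinely forces the two\hyp{}layer shape with no variables feeding directly into the $G$\hyp{}part above height $1$. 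These points are handled by the explicit characterization of stratified terms already quoted just before the lemma, so invoking it carefully is what makes the proof go through cleanly.
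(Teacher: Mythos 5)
Your proposal is correct and follows essentially the same route as the paper's proof: both directions translate between the decomposition $C(F\clProj{A})$ given by Lemma~\ref{lem:F-closure} and the syntactic shape $t[t_1,\dots,t_n]$ of stratified terms, with $t^\mathbf{A}\in C$ and the height-$1$ terms $t_i$ inducing minors of members of $F$. Your extra remark about padding the $(t_i)^\mathbf{A}$ with fictitious arguments to a common arity is a harmless bookkeeping point that the paper leaves implicit.
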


\begin{proof}
Assume first that $f \in \gen[(\clProj{A},C)]{F}$.
By Lemma~\ref{lem:F-closure}, $f \in C (F \clProj{A})$, so there are functions $g \in C$ and $h_1, \dots, h_n \in F \clProj{A}$ such that $f = g(h_1, \dots, h_n)$.
For each $i \in \nset{n}$, the function $h_i$ is a minor of some function from $F$, so there exists a terms $t_i \in T(F)$ of height $1$ representing $h_i$.
Since $g$ is a member of the clone generated by $G$, there exists a term $t \in T(G)$ representing $g$.
Then $t[t_1,\dots,t_n]$ is a $(G,F)$\hyp{}stratified term representing $g(h_1, \dots, h_n) = f$, so $f \in \Str{G,F}^\mathbf{A}$.

Conversely, assume that $f \in \Str{G,F}^\mathbf{A}$.
Then there exists a $(G,F)$\hyp{}stratified term $t[t_1,\dots,t_n]$ representing $f$; here $t \in T(G)$ and $t_1, \dots, t_n \in T(F)$ are terms of height $1$.
Then $t^\mathbf{A} \in \clonegen{G} = C$ and $t_1^\mathbf{A}, \dots, t_n^\mathbf{A} \in F \clProj{A}$, so $f = t^\mathbf{A}(t_1^\mathbf{A}, \dots, t_n^\mathbf{A}) \in C (F \clProj{A}) = \gen[(\clProj{A},C)]{F}$ by Lemma~\ref{lem:F-closure}.
\end{proof}

Stratified term operations may fail to constitute a clone.
The following two lemmata describe useful situations where this nevertheless happens.

\begin{lemma}
\label{lem:str-gen}
Let $G$ and $F$ be sets of operations on $A$, let $C = \clonegen{G}$ and $C' = \clonegen{G \cup F}$.
Then $\gen[(\clProj{A},C)]{F} = C'$ if and only if for every term $t \in T(G \cup F)$ there exists a $(G,F)$\hyp{}stratified term $t' \in \Str{G,F}$ such that $t^\mathbf{A} = (t')^\mathbf{A}$.
\end{lemma}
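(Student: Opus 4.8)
The plan is to prove the equivalence in Lemma~\ref{lem:str-gen} by relating both sides to the clone $C'$ and exploiting Lemma~\ref{lem:gen-str}, which already identifies $\gen[(\clProj{A},C)]{F}$ with $\Str{G,F}^\mathbf{A}$. First I would observe that since $C \subseteq C'$ and $F \subseteq C'$, the set $C'$ is a clone containing both; by Lemma~\ref{lem:clones-stable} it is stable under right composition with $\clProj{A}$ (trivially, being a clone) and under left composition with $C$. Together with $F \subseteq C'$ this gives $\gen[(\clProj{A},C)]{F} \subseteq C'$ unconditionally. Conversely, $C' = \clonegen{G \cup F} = T(G \cup F)^\mathbf{A}$ by the standard fact recalled just before the lemma. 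So the content of the equivalence is: $\Str{G,F}^\mathbf{A} = T(G\cup F)^\mathbf{A}$ if and only if every $(G\cup F)$-term can be ``restratified'' to a $(G,F)$-stratified term inducing the same term operation.

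For the backward direction, I would assume the restratification condition and show $T(G \cup F)^\mathbf{A} \subseteq \Str{G,F}^\mathbf{A}$: given $f \in C'$, pick a term $t \in T(G \cup F)$ with $t^\mathbf{A} = f$, apply the hypothesis to obtain $t' \in \Str{G,F}$ with $(t')^\mathbf{A} = t^\mathbf{A} = f$, and conclude $f \in \Str{G,F}^\mathbf{A}$. Combined with $\Str{G,F}^\mathbf{A} = \gen[(\clProj{A},C)]{F} \subseteq C' = T(G\cup F)^\mathbf{A}$ from the first paragraph, equality follows. For the forward direction, I would assume $\gen[(\clProj{A},C)]{F} = C'$, i.e.\ $\Str{G,F}^\mathbf{A} = T(G \cup F)^\mathbf{A}$. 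Take an arbitrary $t \in T(G \cup F)$; then $t^\mathbf{A} \in T(G\cup F)^\mathbf{A} = \Str{G,F}^\mathbf{A}$, so there is a $(G,F)$-stratified term $t'$ with $(t')^\mathbf{A} = t^\mathbf{A}$. This is exactly the asserted condition. One small point to handle carefully: a term $t' \in \Str{G,F}$ produced this way will have some fixed arity (say with variables among $X_m$), whereas $t$ may have variables among $X_n$ with $n \neq m$; but $\Str{G,F}$ is closed under adding or permuting fictitious variables (precompose the outer $T(G)$-skeleton appropriately, or note that $f$ as an abstract function of several arguments determines its representatives up to such minor adjustments), so one may take $t'$ to have $\var(t') \subseteq \var(t)$ or at least agree with $t$ as the same function of several arguments. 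I would phrase the lemma's conclusion so that ``$t^\mathbf{A} = (t')^\mathbf{A}$'' is understood as equality of operations of the same arity, which is how $T(\cdot)^\mathbf{A}$ is compared throughout.

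The main obstacle, such as it is, is essentially bookkeeping rather than a genuine difficulty: one must make sure that the ``same term operation'' comparison is arity-consistent and that the sets $\Str{G,F}^\mathbf{A}$ and $T(G\cup F)^\mathbf{A}$ are being compared as sets of functions of several arguments (so fictitious variables cause no trouble). Everything else reduces to: (a) $C'$ is the smallest clone containing $G \cup F$, hence the smallest $(\clProj{A},C)$-stable class containing $F$ is contained in it (giving one inclusion for free via Lemma~\ref{lem:gen-str} and Lemma~\ref{lem:clones-stable}), and (b) the restratification hypothesis is precisely what upgrades that inclusion to an equality. I expect the whole argument to be short, with the only care needed being the precise reading of the statement ``there exists a $(G,F)$-stratified term $t'$ such that $t^\mathbf{A} = (t')^\mathbf{A}$'' as an equality of operations.
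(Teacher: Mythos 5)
Your proposal is correct and follows essentially the same route as the paper's proof: identify $\gen[(\clProj{A},C)]{F}$ with $\Str{G,F}^\mathbf{A}$ via Lemma~\ref{lem:gen-str}, identify $C'$ with $T(G\cup F)^\mathbf{A}$, and reduce the equivalence to $\Str{G,F}^\mathbf{A} = T(G\cup F)^\mathbf{A}$, with the trivial inclusion $\Str{G,F}\subseteq T(G\cup F)$ giving one containment for free. The arity bookkeeping you flag is a fair point but is handled implicitly (by comparing term operations as functions of several arguments), exactly as in the paper.
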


\begin{proof}
Assume first that $\gen[(\clProj{A},C)]{F} = C'$,
We have $C' = T(G \cup F)^\mathbf{A}$ and $\gen[(\clProj{A},C)]{F} = \Str{G,F}^\mathbf{A}$ by Lemma~\ref{lem:gen-str}, so clearly for every $t \in T(G \cup F)$ there exists $t' \in \Str{G,F}$ such that $t^\mathbf{A} = (t')^\mathbf{A}$.

Assume now that for every $t \in T(G \cup F)$ there exists a $t' \in \Str{G,F}$ such that $t^\mathbf{A} = (t')^\mathbf{A}$.
Then $T(G \cup F)^\mathbf{A} \subseteq \Str{G,F}^\mathbf{A}$.
Since $\Str{G,F} \subseteq T(G \cup F)$, the converse inclusion $\Str{G,F}^\mathbf{A} \subseteq T(G \cup F)^\mathbf{A}$ clearly holds.
Then $\gen[(\clProj{A},C)]{F} = \Str{G,F}^\mathbf{A} = T(G \cup F)^\mathbf{A} = C'$.
\end{proof}

\begin{lemma}
\label{lem:str-terms}
Let $G$ and $F$ be sets of operations on $A$.
If
\begin{enumerate}[label=\upshape{(\roman*)}]
\item $\id_A \in F$, and
\item for every $\varphi \in F$ \textup{(}$n$\hyp{}ary\textup{)}, $\alpha \in G \cup F$ \textup{(}$m$\hyp{}ary\textup{)}, and $i \in \nset{n}$, there exists a term $t \in \Str{G,F}$ equivalent to
\[
\varphi(x_1, \dots, x_{i-1}, \alpha(x_i, \dots, x_{i+m-1}), x_{i+m}, \dots, x_{m+n-1})
\]
such that no variable is repeated in any subterm of $t$ of height $1$,
\end{enumerate}
then for every term $t \in T(G \cup F)$ there exists a term $t' \in \Str{G,F}$ such that $t^\mathbf{A} = (t')^\mathbf{A}$;
consequently $\gen[(\clProj{A},C)]{F} = \clonegen{G \cup F}$.
\end{lemma}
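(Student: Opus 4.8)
The plan is to show that under hypotheses (i) and (ii), every $F$-term over $G \cup F$ can be rewritten, without changing the induced operation, into a $(G,F)$-stratified term; the final clause then follows immediately from Lemma~\ref{lem:str-gen} together with the observation that $C' = \clonegen{G \cup F}$. I would proceed by induction on the structure (height) of $t \in T(G \cup F)$. The base case is a single variable $x_i$: here I invoke hypothesis (i), noting that $\id_A \in F$, so $x_i$ is equivalent to the height-$1$ term $\id_A(x_i) \in \Str{G,F}$, in which no variable is repeated in the unique height-$1$ subterm. It is convenient to strengthen the induction hypothesis to match the shape of hypothesis (ii): I claim that every $t \in T(G\cup F)$ is equivalent to some $t' \in \Str{G,F}$ \emph{in which no variable is repeated in any subterm of height $1$}. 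This strengthening is what makes the inductive step go through.

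For the inductive step, suppose $t = \alpha(s_1, \dots, s_m)$ with $\alpha \in G \cup F$ of arity $m$ and $s_1, \dots, s_m \in T(G \cup F)$ of smaller height. By the induction hypothesis each $s_j$ is equivalent to some $s_j' \in \Str{G,F}$ with no repeated variable in any height-$1$ subterm; replacing the $s_j$ by the $s_j'$ does not change $t^\mathbf{A}$, so we may assume each immediate subterm of $t$ is already stratified with the repetition property. Now $t = \alpha(s_1', \dots, s_m')$ need not itself be stratified: its height-$1$ subterms are those of the $s_j'$, which are fine, but the outer application of $\alpha$ may sit directly on top of a variable (if some $s_j'$ is a variable — but by the base-case normalization we may take $s_j' = \id_A(s_j')$, a height-$1$ $F$-subterm, so this is handled) or, more seriously, on top of height-$1$ subterms in a way that violates condition (ii) of stratification when $\alpha \in G$, or needs the leading symbol to be in $F$ when the result has height $1$. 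The key move is to push $\alpha$ down through the top layer of each $s_j'$ using hypothesis (ii): each $s_j'$ is of the form $t_j[\,\text{height-}1\text{ }F\text{-terms}\,]$ with $t_j \in T(G)$, and by repeatedly applying (ii) — which lets us absorb an application of any $\alpha \in G \cup F$ onto a height-$1$ $F$-subterm $\varphi(\dots)$, producing an equivalent stratified term with the no-repetition property — we can normalize $\alpha(s_1',\dots,s_m')$ into $\Str{G,F}$ while preserving the repetition invariant. Formally this is a secondary induction on the combined height of the $s_j'$ (equivalently on the structure of the $G$-skeleton), the atomic rewrite step at each stage being exactly the instance of hypothesis (ii) applied to the relevant $\varphi \in F$, $\alpha \in G \cup F$, and argument position $i$.

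Once the rewriting claim is established, we conclude: for every $t \in T(G \cup F)$ there is $t' \in \Str{G,F}$ with $t^\mathbf{A} = (t')^\mathbf{A}$, which is precisely the hypothesis of Lemma~\ref{lem:str-gen}; hence $\gen[(\clProj{A},C)]{F} = C' = \clonegen{G \cup F}$, as claimed. The main obstacle is the bookkeeping in the inductive step: one must carefully track that after pushing $\alpha$ down through a height-$1$ block via (ii), the newly created subterms are still stratified and still satisfy the no-variable-repetition condition, so that subsequent applications of (ii) remain legitimate; handling the case $\alpha \in G$ (where the outer layer must stay a $G$-term and no immediate subterm may be a variable) versus $\alpha \in F$ (where a height-$1$ result must have its leading symbol in $F$) needs a small case split, but in both cases the requisite rewrite is supplied verbatim by hypothesis (ii), so no genuinely new idea is needed beyond organizing the induction correctly.
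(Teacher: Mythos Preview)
Your plan has the right skeleton, but the inductive organization differs from the paper's in a way that creates a real gap. The paper inducts on the \emph{number of function symbols} in $t$, and in the critical case where the leading symbol $f$ lies in $F$ it does \emph{not} first stratify the immediate subterms. Instead it picks one functional immediate subterm $t_i = \alpha(u_1,\dots,u_m)$, applies hypothesis~(ii) once at the top to obtain a stratified $s'$ with no repeated variables in any height-$1$ block, and then substitutes the original $t_1,\dots,t_{i-1},u_1,\dots,u_m,t_{i+1},\dots,t_n$ into $s'$. Each height-$1$ block $q$ of $s'$ then becomes $q[\,\cdot\,]$, and precisely because $q$ has no repeated variables this substituted term uses each of the pieces at most once, so its function-symbol count is at most $1+(k-1)=k<k+1$. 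The induction hypothesis then applies to each such block. No secondary induction is needed, and no strengthened induction hypothesis about repetition in the output is required: the no-repetition clause is used \emph{once}, as a property of the term supplied by hypothesis~(ii), to guarantee the symbol count drops.

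By contrast, your plan first stratifies all the $s_j$ and then tries to ``push $\alpha$ down'' via a secondary induction on the ``combined height of the $s_j'$''. This is where the gap is: after one application of~(ii), the resulting $\psi$-blocks receive as arguments a subset of $\{s_1',\dots,s_{j-1}',r_1,\dots,r_k,s_{j+1}',\dots,s_m'\}$ (the $r_\ell$ being immediate subterms of $s_j'$), and the \emph{sum} of heights of these can exceed the original sum (e.g.\ if $s_j'$ has many immediate subterms of height only one less). So ``combined height'' in the natural reading does not decrease. A well-founded measure that does work is the multiset of heights under the Dershowitz--Manna order, but you do not say this, and the phrase ``equivalently on the structure of the $G$-skeleton'' does not pin it down. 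Two smaller issues: your strengthened induction hypothesis (that the output stratified term itself have no repeated variables in height-$1$ blocks) is unnecessary and adds bookkeeping; and when $\alpha\in G$ no appeal to~(ii) is needed at all, since $\alpha(s_1',\dots,s_m')$ is already stratified once the $s_j'$ are. The clean fix is to switch to the paper's measure (function-symbol count) and avoid pre-stratifying in the $F$-case.
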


\begin{proof}
We prove the claim by induction on the number of function symbols in a term $t \in T(G \cup F)$.
If $t$ has no function symbol, then $t = x_i \in X$, and we can choose $t' := \id_A(x_i)$.
If $t$ has one function symbol, then $t = f(x_{i_1}, \dots, x_{i_n})$ for some $f \in G \cup F$.
If $f \in F$, then $t \in \Str{G,F}$ and we are done.
If $f \in G$, then $t$ is equivalent to the term $f(\id_A(x_{i_1}), \dots, \id_A(x_{i_n}))$, which is $(G,F)$\hyp{}stratified.

Assume that the claim is true for every term with at most $k$ function symbols ($k \geq 1$).
Consider now the case when $t$ has $k + 1$ function symbols.
Then $t = f(t_1, \dots, t_n)$ for some $f \in G \cup F$.
By the induction hypothesis, for every $i \in \nset{n}$, there exists a term $t'_i \in \Str{G,F}$ such that $t_i^\mathbf{A} = (t'_i)^\mathbf{A}$.
If $f \in G$, then $t' := f(t'_1, \dots, t'_n) \in \Str{G,F}$ and $t^\mathbf{A} = (t')^\mathbf{A}$.

Assume now that $f \in F$.
Since $t$ contains at least two function symbols, at least one of the immediate subterms of $t$ is functional, say $t_i = \alpha(u_1, \dots, u_m)$ for some $\alpha \in G \cup F$.
Let
\[
s := f(x_1, \dots, x_{i-1}, \alpha(x_i, \dots, x_{i+m-1}), x_{i+m}, \dots, x_{m+n-1}).
\]
By our assumptions, there exists a term $s' \in \Str{G,F}$ such that $s^\mathbf{A} = (s')^\mathbf{A}$ and no variable is repeated in any subterm of $s'$ of height $1$.
We clearly have $t = s[t_1, \dots, t_{i-1}, u_1, \dots, u_m, t_{i+1}, \dots, t_n]$, and therefore $t$ is equivalent to the term $r := s'[t_1, \dots, t_{i-1}, u_1, \dots, u_m, t_{i+1}, \dots, t_n]$.
The term $r$ is not necessarily $(G,F)$\hyp{}stratified, but we obtain an equivalent $(G,F)$\hyp{}stratified term as follows.
Let $q$ be a subterm of $s'$ of height $1$, so $q = \varphi(x_{i_1}, \dots, x_{i_k})$ for some $\varphi \in F$ and the variables $x_{i_1}, \dots, x_{i_k}$ are distinct.
Then $q[t'_{i_1}, \dots, t'_{i_k}]$, where
\[
t'_j :=
\begin{cases}
t_j, & \text{if $1 \leq j \leq i - 1$,} \\
u_{j-i+1}, & \text{if $i \leq j \leq i + m - 1$,} \\
t_{j-m+1}, & \text{if $i + m \leq j \leq m + n - 1$,}
\end{cases}
\]
is a subterm of $r$ and it has fewer function symbols than $f$, so by the induction hypothesis there is an equivalent term $q' \in \Str{G,F}$.
By replacing $q[t'_{i_1}, \dots, t'_{i_k}]$ by $q'$, for each subterm $q$ of $s'$ of height $1$, we obtain the desired $(G,F)$\hyp{}stratified term equivalent to $t$.

It now follows from Lemma~\ref{lem:str-gen} that $\gen[(\clProj{A},C)]{F} = \clonegen{G \cup F}$.
\end{proof}

\begin{remark}
\label{rem:str-terms}
When verifying the conditions of Lemma~\ref{lem:str-terms}, it is not necessary to consider the cases when $\varphi = \id_A$ or $\alpha = \id_A$, because the equivalent term $t' \in \Str{G,F}$ always exists in these cases, as specified below:
\begin{gather*}
\id_A(\alpha(x_1, \dots, x_m)) \equiv \alpha(\id_A(x_1), \dots, \id_A(x_m)), \\
\varphi(x_1, \dots, x_{i-1}, \id_A(x_i), x_{i+1}, \dots, x_n) \equiv \varphi(\id_A(x_1), \dots, \id_A(x_n)), \\
\id_A(\id_A(x_1)) \equiv \id_A(x_1).
\end{gather*}

Moreover, we may be able to further reduce the number of cases to consider by making use of possible symmetries of $\varphi$.
More precisely, assume that the identity $\varphi(x_1, \dots, x_n) \approx \varphi(x_{\sigma(1)}, \dots, x_{\sigma(n)})$ is satisfied by $\mathbf{A}$ for some permutation $\sigma$ of $\nset{n}$.
If
\[
\varphi(x_1, \dots, x_{i-1}, \alpha(x_i, \dots, x_{i+m-1}), x_{i+m}, \dots, x_{m+n-1})
\]
is equivalent to $t \in \Str{G,F}$, then, for $j := \sigma^{-1}(i)$, the term
\[
\varphi(x_1, \dots, x_{j - 1}, \alpha(x_j, \dots, x_{j + m - 1}), x_{j + m}, \dots, x_{m+n-1})
\]
is equivalent to
\[
\varphi(x_{\zeta(1)}, \dots, x_{\zeta(i-1)}, \alpha(x_{\zeta(i)}, \dots, x_{\zeta(i+m-1)}), x_{\zeta(i+m)}, \dots, x_{\zeta(m+n-1)}),
\]
where $\zeta$ is the permutation of $\nset{m+n-1}$ given by the rule
\[
\zeta(k) =
\begin{cases}
\sigma(k) & \text{if $k \leq j - 1$ and $\sigma(k) \leq i - 1$,} \\
\sigma(k) + m - 1 & \text{if $k \leq j - 1$ and $\sigma(k) \geq i + 1$,} \\
k + i - j & \text{if $j \leq k \leq j + m - 1$,} \\
\sigma(k - m + 1) & \text{if $k \geq j + m$ and $\sigma(k) \leq i - 1$,} \\
\sigma(k - m + 1) + m - 1 & \text{if $k \geq j + m$ and $\sigma(k) \geq i + 1$,}
\end{cases}
\]
and this term is equivalent to $t[x_{\zeta(1)}, x_{\zeta(2)}, \dots, x_{\zeta(m+n-1)}] \in \Str{G,F}$.
\end{remark}


\section{Properties and classes of Boolean functions}
\label{sec:Boolean}

Operations on $\{0,1\}$ are called \emph{Boolean functions.}
We will often encounter the following well\hyp{}known Boolean functions, defined by the operation tables in Figure~\ref{fig:BFs}:
the constant functions $0$ and $1$,
identity $\id$,
negation $\neg$,
conjunction $\mathord{\wedge}$,
disjunction $\mathord{\vee}$,
addition $\mathord{+}$,
equivalence $\mathord{\leftrightarrow}$,
nonimplication $\mathord{\nrightarrow}$,
majority $\mu$,
and triple sum $\mathord{\oplus_3}$.

\begin{figure}
\newcommand{\FIGunary}{$\begin{array}[t]{c|cccc}
x_1 & 0 & 1 & \id & \neg \\
\hline
0 & 0 & 1 & 0 & 1 \\
1 & 0 & 1 & 1 & 0
\end{array}$}
\newcommand{\FIGbinary}{$\begin{array}[t]{cc|ccccc}
x_1 & x_2 & \wedge & \vee & + & \leftrightarrow & \nrightarrow \\
\hline
0 & 0 & 0 & 0 & 0 & 1 & 0 \\
0 & 1 & 0 & 1 & 1 & 0 & 0 \\
1 & 0 & 0 & 1 & 1 & 0 & 1 \\
1 & 1 & 1 & 1 & 0 & 1 & 0
\end{array}$}
\newcommand{\FIGternary}{$\begin{array}[t]{ccc|cc}
x_1 & x_2 & x_3 & \mu & \mathord{\oplus_3} \\
\hline
0 & 0 & 0 & 0 & 0 \\
0 & 0 & 1 & 0 & 1 \\
0 & 1 & 0 & 0 & 1 \\
0 & 1 & 1 & 1 & 0 \\
1 & 0 & 0 & 0 & 1 \\
1 & 0 & 1 & 1 & 0 \\
1 & 1 & 0 & 1 & 0 \\
1 & 1 & 1 & 1 & 1
\end{array}$}
\newlength{\FIGternaryheight}
\settototalheight{\FIGternaryheight}{\FIGternary}
\newlength{\FIGbinarywidth}
\settowidth{\FIGbinarywidth}{\FIGbinary}
\begin{minipage}[t][\FIGternaryheight][t]{\FIGbinarywidth}
\FIGunary
\vfill
\FIGbinary
\end{minipage}
\qquad\qquad
\FIGternary

\caption{Some well\hyp{}known Boolean functions.}
\label{fig:BFs}
\end{figure}

For $a \in \{0,1\}^n$, let $\overline{a} :=  1 - a$.
For $\vect{a} = (a_1, \dots, a_n) \in \{0,1\}^n$, let $\overline{\vect{a}} := (\overline{a_1}, \dots, \overline{a_n})$.
We regard the set $\{0,1\}$ totally ordered by the natural order $0 < 1$, which induces the direct product order on $\{0,1\}^n$.
The poset $(\{0,1\}^n, {\leq})$ constitutes a Boolean lattice, i.e., a complemented distributive lattice with least and greatest elements $\vect{0} = (0, \dots 0)$ and $\vect{1} = (1, \dots, 1)$, and with the map $\vect{a} \mapsto \overline{\vect{a}}$ being the complementation.

Denote by $\clAll$ the class of all Boolean functions.
For any $C \subseteq \clAll$ and
any $a, b \in \{0,1\}$, let
\begin{align*}
\clIntVal{C}{a}{} & := \{ \, f \in C \mid f(\vect{0}) = a \, \}, &
\clIntVal{C}{}{b} & := \{ \, f \in C \mid f(\vect{1}) = b \, \}, \\
\clIntVal{C}{a}{b} & := \clIntVal{C}{a}{} \cap \clIntVal{C}{}{b}, \\
\clIntEq{C} & := \{ \, f \in C \mid f(\vect{0}) = f(\vect{1}) \, \}, &
\clIntNeq{C} & := \{ \, f \in C \mid f(\vect{0}) \neq f(\vect{1}) \, \}, \\
\clIntLeq{C} & := \{ \, f \in C \mid f(\vect{0}) \leq f(\vect{1}) \, \}, &
\clIntGeq{C} & := \{ \, f \in C \mid f(\vect{0}) \geq f(\vect{1}) \, \}, \\
\clIntNeqOO{C} & := \clIntNeq{C} \cup \clIntVal{C}{0}{0}, &
\clIntNeqII{C} & := \clIntNeq{C} \cup \clIntVal{C}{1}{1}.
\end{align*}

Let $f \colon \{0,1\}^n \to \{0,1\}$.
The \emph{negation} $\overline{f}$, the \emph{inner negation} $f^\mathrm{n}$, and the \emph{dual} $f^\mathrm{d}$ of $f$ are the $n$\hyp{}ary Boolean functions given by the rules
$\overline{f}(\vect{a}) := \overline{f(\vect{a})}$, $f^\mathrm{n}(\vect{a}) := f(\overline{\vect{a}})$, and $f^\mathrm{d}(\vect{a}) := \overline{f(\overline{\vect{a}})}$, for all $\vect{a} \in \{0,1\}^n$.
A function $f \colon \{0,1\}^n \to \{0,1\}$ is \emph{reflexive} if $f = f^\mathrm{n}$, i.e., $f(\vect{a}) = f(\overline{\vect{a}})$ for all $\vect{a} \in \{0,1\}^n$, and $f$ is \emph{self\hyp{}dual} if $f = f^\mathrm{d}$, i.e., $f(\vect{a}) = \overline{f(\overline{\vect{a}})}$ for all $\vect{a} \in \{0,1\}^n$.
We denote by $\clRefl$ and $\clS$ the classes of all reflexive and all self\hyp{}dual functions, respectively.
For any $C \subseteq \clAll$, let $\overline{C} := \{ \, \overline{f} \mid f \in C \, \}$, $C^\mathrm{n} := \{ \, f^\mathrm{n} \mid f \in C \, \}$, and $C^\mathrm{d} := \{ \, f^\mathrm{d} \mid f \in C \, \}$.

Let $f, g \colon \{0,1\}^n \to \{0,1\}$.
We say that $f$ is a \emph{minorant} of $g$ if $f(\vect{a}) \leq g(\vect{a})$ for all $\vect{a} \in \{0,1\}^n$,
and we say that $f$ is a \emph{majorant} of $g$ if $f(\vect{a}) \geq g(\vect{a})$ for all $\vect{a} \in \{0,1\}^n$.
We denote by $\clSmin$ and $\clSmaj$ the classes of all minorants of self\hyp{}dual functions and all majorants of self\hyp{}dual functions, respectively.
Note that $\clS = \clSmin \cap \clSmaj$ and that $f \in \clSmin$ if and only if $f(\vect{a}) \wedge f(\overline{\vect{a}}) = 0$ for all $\vect{a} \in \{0,1\}^n$, and $f \in \clSmaj$ if and only if $f(\vect{a}) \vee f(\overline{\vect{a}}) = 1$ for all $\vect{a} \in \{0,1\}^n$.

A function $f \in \{0,1\}^n \to \{0,1\}$ is \emph{monotone} if $f(\vect{a}) \leq f(\vect{b})$ whenever $\vect{a} \leq \vect{b}$.
We denote by $\clM$ the class of all monotone functions.

A function $f \in \{0,1\}^n \to \{0,1\}$ is \emph{$1$\hyp{}separating of rank $k$} if for all $\vect{a}_1, \dots, \vect{a}_k \in f^{-1}(1)$ it holds that $\vect{a}_1 \wedge \dots \wedge \vect{a}_k \neq \vect{0}$,
and $f$ is \emph{$0$\hyp{}separating of rank $k$} if for all $\vect{a}_1, \dots, \vect{a}_k \in f^{-1}(0)$ it holds that $\vect{a}_1 \vee \dots \vee \vect{a}_k \neq \vect{1}$.
We denote by $\clUk{k}$ and $\clWk{k}$ the classes of all $1$\hyp{}separating functions of rank $k$ and all $0$\hyp{}separating functions of rank $k$.
We introduce the shorthands $\clSM$ for $\clS \cap \clM$, $\clMUk{k}$ for $\clM \cap \clUk{k}$, and $\clMWk{k}$ for $\clM \cap \clWk{k}$.

We denote by $\clVak$ the class of all constant functions, and we introduce the shorthands $\clVako := \clIntVal{\clVak}{0}{0}$ and $\clVaki := \clIntVal{\clVak}{1}{1}$.
We denote by $\clIc$ the class of all projections on $\{0,1\}$.

The clones of Boolean functions were described by Post~\cite{Post}.
In this paper, we will need only a handful of them, namely the clones $\clAll$, $\clOX$, $\clXI$, $\clOI$, $\clM$, $\clMo$, $\clMi$, $\clMc$, $\clS$, $\clSc$, $\clSM$, $\clU$, $\clTcU$, $\clMU$, $\clMcU$, $\clW$, $\clTcW$, $\clMW$, $\clMcW$, and $\clIc$ that were defined above.
The lattice of clones of Boolean functions, \emph{Post's lattice,} is presented in Figure~\ref{fig:Post}, and the above\hyp{}mentioned clones are indicated in the Hasse diagram.


\section{\texorpdfstring{$(\clIc,\clSM)$\hyp{}stable}{(J,SM)-stable} classes}
\label{sec:IcSM}

\renewcommand{\gendefault}{(\clIc,\clSM)}

\begin{theorem}
\label{thm:SM-stable}
There are precisely 93 $(\clIc,\clSM)$\hyp{}stable classes of Boolean functions, and they are the following:
\[
\begin{array}{l}
\clAll, \quad
\clEiio, \quad \clEioi, \quad \clEiii, \quad \clEioo, \\
\clOXC, \quad \clXOC, \quad \clOOC, \quad \clOIC, \quad \clOICO, \quad \clOICI, \\
\clIXC, \quad \clXIC, \quad \clIIC, \quad \clIOC, \quad \clIOCI, \quad \clIOCO, \\
\clNeq, \quad \clEq, \quad \clOX, \quad \clIX, \quad \clXO, \quad \clXI, \quad \clOO, \quad \clII, \quad \clOI, \quad \clIO, \\
\clM, \quad \clMo, \quad \clMi, \quad \clMc, \quad
\clMneg, \quad \clMoneg, \quad \clMineg, \quad \clMcneg, \\
\clSmin, \quad \clSminNeq, \quad \clSminOX, \quad \clSminXO, \quad \clSminOICO, \quad \clSminIOCO, \quad \clSminOI, \quad \clSminIO, \quad \clSminOO, \\
\clSmaj, \quad \clSmajNeq, \quad \clSmajIX, \quad \clSmajXI, \quad \clSmajIOCI, \quad \clSmajOICI, \quad \clSmajIO, \quad \clSmajOI, \quad \clSmajII, \\
\clS, \quad \clSc, \quad \clScneg, \quad \clSM, \quad \clSMneg, \\
\clU, \quad \clTcUCO, \quad \clTcU, \quad \clMU, \quad \clMcU, \quad \clUOO, \\
\clUneg, \quad \clTcUnegCI, \quad \clTcUneg, \quad \clMUneg, \quad \clMcUneg, \quad \clUnegII, \\
\clW, \quad \clTcWCI, \quad \clTcW, \quad \clMW, \quad \clMcW, \quad \clWII, \\
\clWneg, \quad \clTcWnegCO, \quad \clTcWneg, \quad \clMWneg, \quad \clMcWneg, \quad \clWnegOO, \quad
\clUWneg, \quad \clWUneg, \\
\clRefl, \quad \clReflOOC, \quad \clReflIIC, \quad \clReflOO, \quad \clReflII, \quad
\clVak, \quad \clVako, \quad \clVaki, \quad \clEmpty.
\end{array}
\]
\end{theorem}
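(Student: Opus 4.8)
The plan is to split the argument into the two halves announced in the introduction. The \emph{easy half} is to check that each of the 93 listed classes $F$ is $(\clIc,\clSM)$-stable. For right stability with $\clIc$ this amounts to checking that each class is minor-closed, which for the classes defined by the properties of Section~\ref{sec:Boolean} (reflexivity, self-duality, mono\-tonicity, $k$-separating, the various value constraints $f(\vect 0)=a$, $f(\vect1)=b$, etc.) is a routine verification: all the defining conditions are preserved under identification, permutation, and addition of arguments. For left stability with $\clSM$ I would invoke Lemma~\ref{lem:left-stab-gen}\ref{gf-G} with the single generator $\mu$ of $\clSM$: it suffices to check, for each listed class $F$, that $\mu(f_1,f_2,f_3)\in F$ whenever $f_1,f_2,f_3\in F^{(m)}$. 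Since $\mu$ is self-dual, monotone, idempotent, reflexivity-preserving in the relevant sense, and acts coordinatewise as the lattice median, each closure condition transfers to $\mu(f_1,f_2,f_3)$; e.g.\ if each $f_i\in\clSmin$ then $\mu(f_1,f_2,f_3)(\vect a)\wedge\mu(f_1,f_2,f_3)(\overline{\vect a})\le \bigvee_{i}(f_i(\vect a)\wedge f_i(\overline{\vect a}))=0$ using $\mu\le\vee$-type bounds, and similarly for $\clSmaj$, $\clM$, $\clUk k$, $\clWk k$, and the value constraints (these are determined by $f(\vect0),f(\vect1)$, on which $\mu$ again acts as median). One also checks $\emptyset$ and the $\clVak$-classes separately, which is immediate.

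The \emph{hard half} is completeness: every $(\clIc,\clSM)$-stable class equals one of the 93. Here the strategy is a lattice-theoretic sweep. For a class $F$, define its ``type'' by which of the basic Post clones above $F$ is contained in; more usefully, I would organize the argument around $\gen{F}$, the $(\clIc,\clSM)$-stable class generated by $F$, which by Lemma~\ref{lem:F-closure} equals $\clSM(F\clIc)$, i.e.\ the closure of $F$ under minors and under composing with $\mu$ on the outside. The plan is:
\begin{enumerate}[label=\upshape{(\arabic*)}]
\item classify singleton-generated classes $\gen{\{f\}}$ by running through ``small'' witnesses $f$ (low arity, or canonical representatives of the properties in Section~\ref{sec:Boolean}), showing each such $\gen{\{f\}}$ is on the list;
\item show the list is closed under joins in the lattice of $(\clIc,\clSM)$-stable classes, so that any $F$, being the join of its singleton-generated subclasses $\gen{\{f\}}$ over $f\in F$, again lies on the list.
\end{enumerate}
Step (2) reduces completeness to the \emph{finitely many} pairwise joins of listed classes, each a finite check. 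Step (1) is where the real work is: given an arbitrary $f$, one wants to pin down $\gen{\{f\}}$ from combinatorial invariants of $f$ — the pair $(f(\vect0),f(\vect1))$, whether $f$ is self-dual / a minorant or majorant of a self-dual function, whether $f$ is monotone, its separating ranks, whether it is reflexive — and show these invariants already force $\gen{\{f\}}$ to be the corresponding listed class. Concretely, if $f$ fails every nontrivial closure property (say $f$ is not in $\clSmin\cup\clSmaj\cup\clM\cup\clUinf\cup\clWinf\cup\clRefl$ and has $f(\vect0)\ne f(\vect1)$ is not forced), one argues $\mu$-composition of minors of $f$ produces ``enough'' functions to generate $\clAll$ (or $\clNeq$, $\clEq$, $\clOI$, $\dots$ according to the value constraints); whereas if $f$ does satisfy some of these properties, those properties are exactly the ones inherited by $\gen{\{f\}}$, and the listed classes are designed to be precisely the closure-closed intersections of these properties.

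The main obstacle I anticipate is step (1): proving the \emph{lower bounds}, i.e.\ that $\mu$-composition of minors of a given $f$ actually reaches every function in the claimed target class. For this I would exploit Lemma~\ref{lem:str-terms} and Remark~\ref{rem:str-terms}: taking $G=\{\mu\}$, so $C=\clSM$, these lemmata let one replace arbitrary $\{\mu\}\cup F$-terms by \emph{stratified} ones, which both identifies $\gen{\{f\}}$ with a set of stratified term operations (Lemma~\ref{lem:gen-str}) and, in the favourable cases, collapses the whole thing to the ordinary clone $\clonegen{\{\mu\}\cup\{f\}}$ (Lemma~\ref{lem:str-gen}). So for many $f$ — those for which the stratification rewriting goes through — $\gen{\{f\}}=\clonegen{\{\mu\}\cup\{f\}}$, a \emph{Post} clone, and Post's lattice (Figure~\ref{fig:Post}) tells us immediately which one; the corresponding ``$\clonegen{\,\cdot\,}$'' entries on the list ($\clAll,\clOI,\clS,\clSc,\clM,\clMc,\clU,\dots$) are exactly these. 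The residual work is the $f$ for which $\gen{\{f\}}$ is \emph{strictly} smaller than $\clonegen{\{\mu\}\cup\{f\}}$ — this is where the genuinely non-clone classes on the list ($\clNeq$, $\clEq$, the $\clVak$-augmented classes $\clIntVal{C}{a}{b}\cup\clVak$, $\clSMneg$, $\clMneg$, the reflexive classes, $\clUWneg$, $\clWUneg$, etc.) arise — and one must show that each such $f$ lands in one of those and not strictly between two list entries. I expect this to require a careful case analysis driven by the Hasse-diagram structure of the 93 classes (which itself needs to be worked out as part of the proof), using at each covering relation a single explicit witness function to certify that no intermediate stable class exists.
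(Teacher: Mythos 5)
Your overall architecture (verify stability of the 93 classes, then prove completeness by analysing generated classes inside a finite lattice) matches the paper's, and the first half of your argument is essentially correct. The completeness half, however, has a genuine gap. You rightly identify that the whole difficulty is proving lower bounds of the form $K\subseteq\gen{G}$ for small witness sets $G$, but the only tool you offer for this is the stratified-term machinery (Lemmata~\ref{lem:gen-str}--\ref{lem:str-terms}). That machinery can only ever show $\gen{G}$ equals a \emph{clone}, and even among the clones on the list the paper uses it for just a handful ($\clS$, $\clMcU$, $\clMU$, $\clU$, $\clMc$); for everything else --- all the non-clone classes such as $\clNeq$, $\clSmin$, $\clRefl$, $\clUWneg$, the $\clVak$-augmented classes, and even clones like $\clSc$ and $\clTcU$ for which the term rewriting is impractical --- a different and genuinely nontrivial device is needed, and your proposal does not contain it. This is the interpolation lemma (Lemma~\ref{lem:extend-SM}): any pair of subsets $T,F\subseteq\{0,1\}^n$ satisfying three pairwise non-comparability conditions is separated by some self-dual monotone function, proved by extending an upset of the cube to one of cardinality $2^{n-1}$ disjoint from its complemented image. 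From it one derives the bisectability criterion (Definition~\ref{def:helpful}, Lemma~\ref{lem:helpful:step}): if every pair of points can be ``agreed with'' by some $n$-ary minor of $G$ in the appropriate sense, then $\theta=h(\varphi_1,\dots,\varphi_N)$ for some $h\in\clSM$ applied to the family of all $n$-ary minors of $G$, whence $\theta\in\gen{G}$. Without this, step (1) of your plan cannot be carried out; you explicitly defer it (``this is where the real work is'').

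Two smaller points. Your step (2), closure of the list under joins, is not a cost-free finite check: computing $\gen{K_1\cup K_2}$ requires exactly the same kind of lower-bound argument, so it does not reduce the problem; the paper sidesteps joins entirely by proving, for each listed $K$, that any subset of $K$ meeting the complement of every lower cover of $K$ already generates $K$. And your suggestion that the absence of intermediate classes between a covering pair $C\subset K$ can be certified by ``a single explicit witness function'' is not correct as stated: one must show $\gen{C\cup\{f\}}=K$ for \emph{every} $f\in K\setminus C$, a universally quantified claim, which is precisely what the bisectability arguments (applied to minors extracted from an arbitrary such $f$) are designed to deliver.
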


The lattice of $(\clIc,\clSM)$\hyp{}stable classes is shown in Figure~\ref{fig:SM-stable}.
The Hasse diagram is drawn in such a way that the reflection along the central vertical axis corresponds to the automorphism $C \mapsto \overline{C}$.

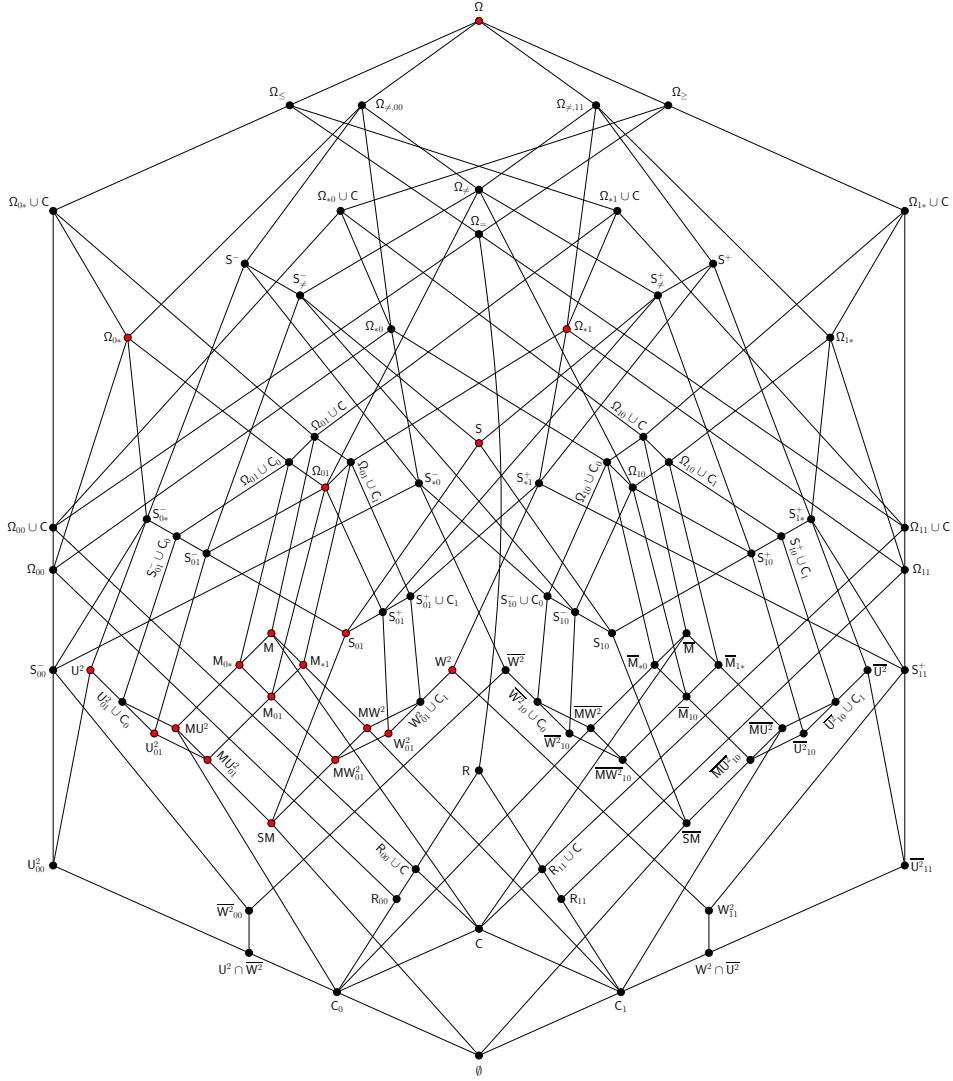
\begin{figure}
\begin{center}
\scalebox{0.28}{
\tikzstyle{every node}=[circle, draw, fill=black, scale=1, font=\huge]
\tikzstyle{clone}=[fill=red]
\begin{tikzpicture}[baseline, scale=1]
\coordinate (bottom) at (0,0);
\coordinate (sw) at ($(bottom)+(-20,9)$);
\coordinate (se) at ($(bottom)+(20,9)$);
\coordinate (top) at ($(bottom)+(0,49)$);
\coordinate (nw) at ($(top)+(-20,-9)$);
\coordinate (ne) at ($(top)+(20,-9)$);
\node (empty) at (bottom) {};
\draw ($(empty)+(270:0.7)$) node[draw=none,fill=none]{$\clEmpty$};
\node (D0C0) at ($(bottom)!1/3!(sw)$) {};
\draw ($(D0C0)+(270:0.7)$) node[draw=none,fill=none]{$\clVako$};
\node (D0C1) at ($(bottom)!1/3!(se)$) {};
\draw ($(D0C1)+(270:0.7)$) node[draw=none,fill=none]{$\clVaki$};
\node (D0) at ($(bottom)+(0,6)$) {};
\draw ($(D0)+(270:0.7)$) node[draw=none,fill=none]{$\clVak$};
\node (U2W2neg) at ($(bottom)!0.54!(sw)$) {};
\draw ($(U2W2neg)+(240:0.8)$) node[draw=none,fill=none]{$\clUWneg$};
\node (W2U2neg) at ($(bottom)!0.54!(se)$) {};
\draw ($(W2U2neg)+(300:0.8)$) node[draw=none,fill=none]{$\clWUneg$};
\node (U2E0) at (sw) {};
\draw ($(U2E0)+(180:0.8)$) node[draw=none,fill=none]{$\clUOO$};
\node (U2negE1) at (se) {};
\draw ($(U2negE1)+(0:0.8)$) node[draw=none,fill=none]{$\clUnegII$};
\node (W2C1) at ($(W2U2neg)+(0,2)$) {};
\draw ($(W2C1)+(0:0.9)$) node[draw=none,fill=none]{$\clWII$};
\node (W2negC0) at ($(U2W2neg)+(0,2)$) {};
\draw ($(W2negC0)+(180:0.9)$) node[draw=none,fill=none]{$\clWnegOO$};
\node[clone] (SM) at ($(bottom) + (-9.75,11)$) {};
\draw ($(SM)+(250:0.7)$) node[draw=none,fill=none]{$\clSM$};
\node (SMneg) at ($(bottom) + (9.75,11)$) {};
\draw ($(SMneg)+(290:0.7)$) node[draw=none,fill=none]{$\clSMneg$};
\node[clone] (Mc) at ($(SM)+(0,6)$) {};
\draw ($(Mc)+(275:0.8)$) node[draw=none,fill=none]{$\clMc$};
\node[clone] (M0) at ($(Mc)+(-1.5,1.5)$) {};
\draw ($(M0)+(170:0.8)$) node[draw=none,fill=none]{$\clMo$};
\node[clone] (M1) at ($(Mc)+(1.5,1.5)$) {};
\draw ($(M1)+(10:0.8)$) node[draw=none,fill=none]{$\clMi$};
\node[clone] (M) at ($(Mc)+(0,3)$) {};
\draw ($(M)+(260:0.7)$) node[draw=none,fill=none]{$\clM$};
\node[clone] (MU2) at ($(M0)+(-3,-3)$) {};
\draw ($(MU2)+(0:1)$) node[draw=none,fill=none]{$\clMU$};
\node[clone] (McU2) at ($(Mc)+(-3,-3)$) {};
\draw ($(McU2)+(350:1)$) node[draw=none,fill=none,rotate=315]{$\clMcU$};
\node[clone] (TcU2) at ($(McU2)+(-2.5,1.25)$) {};
\draw ($(TcU2)+(270:0.7)$) node[draw=none,fill=none]{$\clTcU$};
\node (TcU2D0) at ($(MU2)+(-2.5,1.25)$) {};
\draw ($(TcU2D0)+(225:0.55)$) node[draw=none,fill=none,rotate=315]{$\clTcUCO$};
\node[clone] (U2) at ($(TcU2D0)+(-1.5,1.5)$) {};
\draw ($(U2)+(180:0.6)$) node[draw=none,fill=none]{$\clU$};
\node[clone] (MW2) at ($(M1)+(3,-3)$) {};
\draw ($(MW2)+(75:0.8)$) node[draw=none,fill=none]{$\clMW$};
\node[clone] (McW2) at ($(Mc)+(3,-3)$) {};
\draw ($(McW2)+(315:0.9)$) node[draw=none,fill=none]{$\clMcW$};
\node[clone] (TcW2) at ($(McW2)+(2.5,1.25)$) {};
\draw ($(TcW2)+(325:0.9)$) node[draw=none,fill=none]{$\clTcW$};
\node (TcW2D0) at ($(MW2)+(2.5,1.25)$) {};
\draw ($(TcW2D0)+(315:0.55)$) node[draw=none,fill=none,rotate=45]{$\clTcWCI$};
\node[clone] (W2) at ($(TcW2D0)+(1.5,1.5)$) {};
\draw ($(W2)+(135:0.6)$) node[draw=none,fill=none]{$\clW$};
\node (Mcneg) at ($(SMneg)+(0,6)$) {};
\draw ($(Mcneg)+(275:0.8)$) node[draw=none,fill=none]{$\clMcneg$};
\node (M0neg) at ($(Mcneg)+(1.5,1.5)$) {};
\draw ($(M0neg)+(10:0.8)$) node[draw=none,fill=none]{$\clMoneg$};
\node (M1neg) at ($(Mcneg)+(-1.5,1.5)$) {};
\draw ($(M1neg)+(170:0.8)$) node[draw=none,fill=none]{$\clMineg$};
\node (Mneg) at ($(Mcneg)+(0,3)$) {};
\draw ($(Mneg)+(280:0.7)$) node[draw=none,fill=none]{$\clMneg$};
\node (MU2neg) at ($(M0neg)+(3,-3)$) {};
\draw ($(MU2neg)+(180:1)$) node[draw=none,fill=none]{$\clMUneg$};
\node (McU2neg) at ($(Mcneg)+(3,-3)$) {};
\draw ($(McU2neg)+(190:1.2)$) node[draw=none,fill=none,rotate=45]{$\clMcUneg$};
\node (TcU2neg) at ($(McU2neg)+(2.5,1.25)$) {};
\draw ($(TcU2neg)+(275:0.7)$) node[draw=none,fill=none]{$\clTcUneg$};
\node (TcU2negD0) at ($(MU2neg)+(2.5,1.25)$) {};
\draw ($(TcU2negD0)+(315:0.55)$) node[draw=none,fill=none,rotate=45]{$\clTcUnegCI$};
\node (U2neg) at ($(TcU2negD0)+(1.5,1.5)$) {};
\draw ($(U2neg)+(0:0.6)$) node[draw=none,fill=none]{$\clUneg$};
\node (MW2neg) at ($(M1neg)+(-3,-3)$) {};
\draw ($(MW2neg)+(100:0.8)$) node[draw=none,fill=none]{$\clMWneg$};
\node (McW2neg) at ($(Mcneg)+(-3,-3)$) {};
\draw ($(McW2neg)+(237:0.8)$) node[draw=none,fill=none]{$\clMcWneg$};
\node (TcW2neg) at ($(McW2neg)+(-2.5,1.25)$) {};
\draw ($(TcW2neg)+(215:0.7)$) node[draw=none,fill=none]{$\clTcWneg$};
\node (TcW2negD0) at ($(MW2neg)+(-2.5,1.25)$) {};
\draw ($(TcW2negD0)+(225:0.55)$) node[draw=none,fill=none,rotate=315]{$\clTcWnegCO$};
\node (W2neg) at ($(TcW2negD0)+(-1.5,1.5)$) {};
\draw ($(W2neg)+(45:0.7)$) node[draw=none,fill=none]{$\clWneg$};
\node[clone] (All) at (top) {};
\draw ($(All)+(90:0.7)$) node[draw=none,fill=none]{$\clAll$};
\node (Eiio) at ($(top)!4/9!(nw)$) {};
\draw ($(Eiio)+(135:0.8)$) node[draw=none,fill=none]{$\clEiio$};
\node (Eiii) at ($(top)+(-5.5,-4)$) {};
\draw ($(Eiii)+(0:1.3)$) node[draw=none,fill=none]{$\clEiii$};
\node (Eioo) at ($(top)+(5.5,-4)$) {};
\draw ($(Eioo)+(180:1.2)$) node[draw=none,fill=none]{$\clEioo$};
\node (Eioi) at ($(top)!4/9!(ne)$) {};
\draw ($(Eioi)+(45:0.8)$) node[draw=none,fill=none]{$\clEioi$};
\node (P1) at ($(top)+(0,-8)$) {};
\draw ($(P1)+(175:0.8)$) node[draw=none,fill=none]{$\clNeq$};
\node (P0) at ($(top)+(0,-10.1)$) {};
\draw ($(P0)+(90:0.6)$) node[draw=none,fill=none]{$\clEq$};
\node (C0D0) at (nw) {};
\draw ($(C0D0)+(160:1.2)$) node[draw=none,fill=none]{$\clOXC$};
\node[clone] (C0) at ($(C0D0)+(3.5,-6)$) {};
\draw ($(C0)+(180:0.7)$) node[draw=none,fill=none]{$\clOX$};
\node (C1D0) at (ne) {};
\draw ($(C1D0)+(20:1.2)$) node[draw=none,fill=none]{$\clIXC$};
\node (C1) at ($(C1D0)+(-3.5,-6)$) {};
\draw ($(C1)+(0:0.7)$) node[draw=none,fill=none]{$\clIX$};
\node (E1D0) at ($(top)+(6.5,-9)$) {};
\draw ($(E1D0)+(80:0.7)$) node[draw=none,fill=none]{$\clXIC$};
\node[clone] (E1) at ($(E1D0)+1.4*(-1.7,-4)$) {};
\draw ($(E1)+(10:0.8)$) node[draw=none,fill=none]{$\clXI$};
\node (E0D0) at ($(top)+(-6.5,-9)$) {};
\draw ($(E0D0)+(100:0.7)$) node[draw=none,fill=none]{$\clXOC$};
\node (E0) at ($(E0D0)+1.4*(1.7,-4)$) {};
\draw ($(E0)+(170:0.8)$) node[draw=none,fill=none]{$\clXO$};
\node[clone] (C0E1) at ($(M)+2.3*(1.1,3)$) {};
\draw ($(C0E1)+(104:0.8)$) node[draw=none,fill=none]{$\clOI$};
\node (C0E1D000) at ($(C0E1)+(-1.2-0.5,1.2)$) {};
\draw ($(C0E1D000)+(197:1.4)$) node[draw=none,fill=none,rotate=33]{$\clOICO$};
\node (C0E1D011) at ($(C0E1)+(1.2,1.2)$) {};
\draw ($(C0E1D011)+(313:1.3)$) node[draw=none,fill=none,rotate=294]{$\clOICI$};
\node (C0E1D0) at ($(C0E1)+(0-0.5,2*1.2)$) {};
\draw ($(C0E1D0)+(55:1.2)$) node[draw=none,fill=none,rotate=36]{$\clOIC$};
\node (C1E0) at ($(Mneg)+2.3*(-1.1,3)$) {};
\draw ($(C1E0)+(75:0.8)$) node[draw=none,fill=none]{$\clIO$};
\node (C1E0D011) at ($(C1E0)+(1.2+0.5,1.2)$) {};
\draw ($(C1E0D011)+(343:1.5)$) node[draw=none,fill=none,rotate=327]{$\clIOCI$};
\node (C1E0D000) at ($(C1E0)+(-1.2,1.2)$) {};
\draw ($(C1E0D000)+(226:1.25)$) node[draw=none,fill=none,rotate=66]{$\clIOCO$};
\node (C1E0D0) at ($(C1E0)+(0+0.5,2*1.2)$) {};
\draw ($(C1E0D0)+(125:1.2)$) node[draw=none,fill=none,rotate=323]{$\clIOC$};
\node[clone] (S) at ($(bottom)+(0,29)$) {};
\draw ($(S)+(90:0.7)$) node[draw=none,fill=none]{$\clS$};
\node[clone] (Sc) at ($(M) + (3.5,0)$) {};
\draw ($(Sc)+(315:0.7)$) node[draw=none,fill=none]{$\clSc$};
\node (Scneg) at ($(Mneg) + (-3.5,0)$) {};
\draw ($(Scneg)+(225:0.7)$) node[draw=none,fill=none]{$\clScneg$};
\node (Smin) at ($(top)+(-11,-11.5)$) {};
\draw ($(Smin)+(160:0.6)$) node[draw=none,fill=none]{$\clSmin$};
\node (Smaj) at ($(top)+(11,-11.5)$) {};
\draw ($(Smaj)+(20:0.6)$) node[draw=none,fill=none]{$\clSmaj$};
\node (SminP1) at ($(Smin)+(330:3)$) {};
\draw ($(SminP1)+(90:0.7)$) node[draw=none,fill=none]{$\clSminNeq$};
\node (SmajP1) at ($(Smaj)+(210:3)$) {};
\draw ($(SmajP1)+(90:0.7)$) node[draw=none,fill=none]{$\clSmajNeq$};
\node (SminC0) at ($(Sc)+(150:10.8)$) {};
\draw ($(SminC0)+(20:0.7)$) node[draw=none,fill=none]{$\clSminOX$};
\node (SmajC1) at ($(Scneg)+(30:10.8)$) {};
\draw ($(SmajC1)+(160:0.7)$) node[draw=none,fill=none]{$\clSmajIX$};
\node (SmajE1) at ($(C1E0D000)+(-3.2,-1)$) {};
\draw ($(SmajE1)+(160:0.7)$) node[draw=none,fill=none]{$\clSmajXI$};
\node (SminE0) at ($(C0E1D011)+(3.2,-1)$) {};
\draw ($(SminE0)+(20:0.7)$) node[draw=none,fill=none]{$\clSminXO$};
\node (SminC0E1) at ($(Sc)!0.70!(SminC0)$) {};
\draw ($(SminC0E1)+(200:0.7)$) node[draw=none,fill=none]{$\clSminOI$};
\node (SminC0E1D000) at ($(Sc)!0.85!(SminC0)$) {};
\draw ($(SminC0E1D000)+(230:1.3)$) node[draw=none,fill=none,rotate=70]{$\clSminOICO$};
\node (SmajC1E0D011) at ($(Scneg)!0.85!(SmajC1)$) {};
\draw ($(SmajC1E0D011)+(310:1.4)$) node[draw=none,fill=none,rotate=290]{$\clSmajIOCI$};
\node (SmajC1E0) at ($(Scneg)!0.70!(SmajC1)$) {};
\draw ($(SmajC1E0)+(340:0.7)$) node[draw=none,fill=none]{$\clSmajIO$};
\node (SmajC0E1) at ($(Sc)+(30:2)$) {};
\draw ($(SmajC0E1)+(340:0.7)$) node[draw=none,fill=none]{$\clSmajOI$};
\node (SmajC0E1D011) at ($(Sc)+(30:3.5)$) {};
\draw ($(SmajC0E1D011)+(350:1.3)$) node[draw=none,fill=none]{$\clSmajOICI$};
\node (SminC1E0D000) at ($(Scneg)+(150:3.5)$) {};
\draw ($(SminC1E0D000)+(190:1.2)$) node[draw=none,fill=none]{$\clSminIOCO$};
\node (SminC1E0) at ($(Scneg)+(150:2)$) {};
\draw ($(SminC1E0)+(200:0.7)$) node[draw=none,fill=none]{$\clSminIO$};
\node (SminC0E0) at ($(sw)+(0,9.25)$) {};
\draw ($(SminC0E0)+(180:0.7)$) node[draw=none,fill=none]{$\clSminOO$};
\node (SmajC1E1) at ($(se)+(0,9.25)$) {};
\draw ($(SmajC1E1)+(0:0.7)$) node[draw=none,fill=none]{$\clSmajII$};
\node (C0E0) at ($(sw)+(0,14)$) {};
\draw ($(C0E0)+(180:0.8)$) node[draw=none,fill=none]{$\clOO$};
\node (C0E0D0) at ($(C0E0)+(0,2)$) {};
\draw ($(C0E0D0)+(180:1.2)$) node[draw=none,fill=none]{$\clOOC$};
\node (C1E1) at ($(se)+(0,14)$) {};
\draw ($(C1E1)+(0:0.8)$) node[draw=none,fill=none]{$\clII$};
\node (C1E1D0) at ($(C1E1)+(0,2)$) {};
\draw ($(C1E1D0)+(0:1.2)$) node[draw=none,fill=none]{$\clIIC$};
\node (X1P0) at ($(bottom)+(0,13.5)$) {};
\draw ($(X1P0)+(180:0.6)$) node[draw=none,fill=none]{$\clRefl$};
\node (X1C0E0D0) at (intersection of D0--C0E0D0 and D0C0--X1P0) {};
\draw ($(X1C0E0D0)+(158:1.2)$) node[draw=none,fill=none,rotate=317]{$\clReflOOC$};
\node (X1C1E1D0) at (intersection of D0--C1E1D0 and D0C1--X1P0) {};
\draw ($(X1C1E1D0)+(22:1.2)$) node[draw=none,fill=none,rotate=43]{$\clReflIIC$};
\node (X1C0E0) at ($(D0C0)!0.42!(X1P0)$) {};
\draw ($(X1C0E0)+(180:0.8)$) node[draw=none,fill=none]{$\clReflOO$};
\node (X1C1E1) at ($(D0C1)!0.42!(X1P0)$) {};
\draw ($(X1C1E1)+(0:0.8)$) node[draw=none,fill=none]{$\clReflII$};
\foreach \u/\v in {
   empty/D0C0, empty/D0C1, D0C0/D0, D0C1/D0, empty/SM, empty/SMneg, SM/Sc, SMneg/Scneg, Sc/S, Scneg/S,
   D0/X1C0E0D0, D0/X1C1E1D0,
   D0C0/MU2, SM/McU2, SM/McW2,
   D0C0/U2W2neg, U2W2neg/U2E0, U2W2neg/W2negC0,
   X1C0E0/X1C0E0D0, X1C0E0D0/X1P0, X1C0E0/C0E0, X1C1E1/X1C1E1D0, X1C1E1D0/X1P0, X1C1E1/C1E1, 
   D0C0/X1C0E0, D0C1/X1C1E1,
   McU2/Mc, McU2/MU2, McU2/TcU2, MU2/M0, MU2/TcU2D0, TcU2/TcU2D0, TcU2D0/U2, McW2/Mc, McW2/MW2, McW2/TcW2, MW2/M1, MW2/TcW2D0, TcW2/TcW2D0, TcW2D0/W2, Mc/M0, Mc/M1, M0/M, M1/M, U2E0/U2, W2C1/W2,
   D0C1/MU2neg, SMneg/McU2neg, SMneg/McW2neg,
   D0C1/W2U2neg, W2U2neg/U2negE1, W2U2neg/W2C1,
   McU2neg/Mcneg, McU2neg/MU2neg, McU2neg/TcU2neg, MU2neg/M0neg, MU2neg/TcU2negD0, TcU2neg/TcU2negD0, TcU2negD0/U2neg, McW2neg/Mcneg, McW2neg/MW2neg, McW2neg/TcW2neg, MW2neg/M1neg, MW2neg/TcW2negD0, TcW2neg/TcW2negD0, TcW2negD0/W2neg, Mcneg/M0neg, Mcneg/M1neg, M0neg/Mneg, M1neg/Mneg, U2negE1/U2neg, W2negC0/W2neg,
   Mc/C0E1, U2/SminC0, W2/SmajE1, U2E0/SminC0E0, W2negC0/SminC0E0, SminC0E0/C0E0,
   Mcneg/C1E0, U2neg/SmajC1, W2neg/SminE0, U2negE1/SmajC1E1, W2C1/SmajC1E1, SmajC1E1/C1E1,
   M0/C0E1D000, M1/C0E1D011, M/C0E1D0, M0neg/C1E0D011, M1neg/C1E0D000, Mneg/C1E0D0,
   X1C0E0D0/C0E0D0, X1C1E1D0/C1E1D0,
   C0E0/C0, C0E0/E0, C0E1D000/C0, C0E1D011/E1, C0E1/P1, C1E0D011/C1, C1E0D000/E0, C1E0/P1, C1E1/C1, C1E1/E1,
   C0E0/C0E0D0, C1E1/C1E1D0, C0E1/C0E1D000, C0E1/C0E1D011, C0E1D000/C0E1D0, C0E1D011/C0E1D0, C1E0/C1E0D000, C1E0/C1E0D011, C1E0D000/C1E0D0, C1E0D011/C1E0D0,
   C0/C0D0, C1/C1D0, E0/E0D0, E1/E1D0,
   C0E0D0/C0D0, C0E0D0/E0D0, C1E1D0/C1D0, C1E1D0/E1D0, C0E1D0/C0D0, C0E1D0/E1D0, C1E0D0/C1D0, C1E0D0/E0D0,
   C0E0D0/P0, C1E1D0/P0,
   C0D0/Eiio, C1D0/Eioi, E0D0/Eioi, E1D0/Eiio,
   C0/Eiii, E0/Eiii, E1/Eioo, C1/Eioo,
   P0/Eiio, P0/Eioi, P1/Eiii, P1/Eioo,
   Eiio/All, Eiii/All, Eioo/All, Eioi/All,
   S/SminP1, S/SmajP1, SminP1/Smin, SmajP1/Smaj, SminP1/P1, SmajP1/P1,
   Sc/SminC0E1, Sc/SmajC0E1, Scneg/SmajC1E0, Scneg/SminC1E0,
   TcU2/SminC0E1, TcW2/SmajC0E1, TcU2neg/SmajC1E0, TcW2neg/SminC1E0,
   TcU2D0/SminC0E1D000, TcW2D0/SmajC0E1D011, TcU2negD0/SmajC1E0D011, TcW2negD0/SminC1E0D000,
   SminC0E0/SminC0, SminC0E0/SminE0, SmajC1E1/SmajC1, SmajC1E1/SmajE1,
   SminC0E1/SminC0E1D000, SmajC0E1/SmajC0E1D011, SminC1E0/SminC1E0D000, SmajC1E0/SmajC1E0D011,
   SminC0E1/C0E1, SminC0E1D000/C0E1D000, SminC0E1D000/SminC0, SmajC0E1/C0E1, SmajC0E1D011/C0E1D011, SmajC0E1D011/SmajE1,
   SmajC1E0/C1E0, SmajC1E0D011/C1E0D011, SmajC1E0D011/SmajC1, SminC1E0/C1E0, SminC1E0D000/C1E0D000, SminC1E0D000/SminE0,
   SminC0/C0, SminE0/E0, SmajE1/E1, SmajC1/C1,
   SminC0/Smin, SminE0/Smin, SmajE1/Smaj, SmajC1/Smaj,
   SminC0E1/SminP1, SminC1E0/SminP1, SmajC0E1/SmajP1, SmajC1E0/SmajP1,
   Smin/Eiii, Smaj/Eioo,
   D0/M, D0/Mneg,
   D0C0/MW2neg, D0C1/MW2%
}
{
   \draw [thick] (\u) -- (\v);
}
\draw [thick] (X1P0) to[out=82,in=278] (P0);
\end{tikzpicture}
}
\end{center}
\caption{$(\clIc,\clSM)$\hyp{}stable classes. Clones are highlighted in red.}
\label{fig:SM-stable}
\end{figure}

The remainder of this section is devoted to the proof of Theorem~\ref{thm:SM-stable}.
The proof comprises two main parts.
Firstly, we show that each class listed in the theorem is $(\clIc,\clSM)$\hyp{}stable; this is rather straightforward verification.
Secondly, we prove that there are no further $(\clIc,\clSM)$\hyp{}stable classes. This is the more complicated part of the proof.
The idea is to show that any set of Boolean functions generates one of the $(\clIc,\clSM)$\hyp{}stable classes listed in Theorem~\ref{thm:SM-stable}, namely the one suggested by Figure~\ref{fig:SM-stable}.
We develop some tools that allow us to easily determine what a given set of functions generates.

Without further ado, we now show that the classes listed in Theorem~\ref{thm:SM-stable} are $(\clIc,\clSM)$\hyp{}stable.
With the help of the following two lemmata, we can reduce the number of classes we need to consider.

\begin{lemma}
\label{lem:IcSM-stable-negation}
Let $C, F \subseteq \clAll$, and assume that 
$\gen{F} = C$.
Then also $\overline{C}$, $C^\mathrm{n}$, and $C^\mathrm{d}$ are $(\clIc,\clSM)$\hyp{}stable and
$\gen{\overline{F}} = \overline{C}$, $\gen{F^\mathrm{n}} = C^\mathrm{n}$, $\gen{F^\mathrm{d}} = C^\mathrm{d}$.
\end{lemma}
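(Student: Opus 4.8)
The plan is to show that the three operations $f \mapsto \overline{f}$ (negation), $f \mapsto f^{\mathrm{n}}$ (inner negation) and $f \mapsto f^{\mathrm{d}}$ (dualization) commute with the closure operator $F \mapsto \gen{F}$. Since $f^{\mathrm{d}} = \overline{f^{\mathrm{n}}}$, dualization is the composite of the other two, so it is enough to handle negation and inner negation. Using the description $\gen{F} = \clSM(F\clIc)$ from Lemma~\ref{lem:F-closure}, I would split each of these into two commutation facts: (a) passing to minors commutes with the operation, i.e.\ $\overline{F\clIc} = \overline{F}\,\clIc$ and $(F\clIc)^{\mathrm{n}} = F^{\mathrm{n}}\,\clIc$; and (b) left composition with $\clSM$ commutes with the operation, i.e.\ $\overline{\clSM\, X} = \clSM\,\overline{X}$ and $(\clSM\, X)^{\mathrm{n}} = \clSM\, X^{\mathrm{n}}$ for every $X \subseteq \clAll$.

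Fact (a) is immediate from the observation that forming minors commutes with these operations: for $g \in \clAll^{(n)}$ and $\sigma \colon \nset{n} \to \nset{m}$ one has $\overline{g_\sigma} = (\overline{g})_\sigma$, and $(g_\sigma)^{\mathrm{n}}(\vect{a}) = g(\overline{\vect{a}}\sigma) = g(\overline{\vect{a}\sigma}) = (g^{\mathrm{n}})_\sigma(\vect{a})$, so $(g_\sigma)^{\mathrm{n}} = (g^{\mathrm{n}})_\sigma$. Since $F\clIc$ is exactly the set of all minors of members of $F$, the identities in (a) follow.

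Fact (b) rests on two pointwise identities. The first, $\bigl(g(h_1, \dots, h_n)\bigr)^{\mathrm{n}} = g(h_1^{\mathrm{n}}, \dots, h_n^{\mathrm{n}})$, holds for \emph{every} $g \in \clAll^{(n)}$ and all $h_1, \dots, h_n$ of a common arity, and follows on evaluating both sides at $\vect{a}$ from $h_i^{\mathrm{n}}(\vect{a}) = h_i(\overline{\vect{a}})$. The second, $\overline{g(h_1, \dots, h_n)} = g(\overline{h_1}, \dots, \overline{h_n})$, holds \emph{for $g \in \clSM$}: setting $\vect{b} := (h_1(\vect{a}), \dots, h_n(\vect{a}))$, the self\hyp{}duality of $g$ gives $g(\overline{\vect{b}}) = \overline{g(\vect{b})}$, which is precisely this identity evaluated at $\vect{a}$. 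Letting $g$ range over $\clSM$ and the $h_i$ range over tuples of members of $X$ (and using that negation and inner negation are bijections of $\clAll$) yields (b). This is the one step that genuinely uses the hypothesis on $\clSM$, and it is worth stressing that it cannot be obtained from any ``closure of $\clSM$ under negation'': $\overline{\clSM}$ is not a clone --- it consists of the antitone self\hyp{}dual functions, and $\id \notin \overline{\clSM}$ --- and it is exactly the self\hyp{}duality of the members of $\clSM$ that lets the outer negation be absorbed into the composition.

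Combining (a) and (b), $\overline{\gen{F}} = \overline{\clSM(F\clIc)} = \clSM\,\overline{F\clIc} = \clSM(\overline{F}\,\clIc) = \gen{\overline{F}}$, and in the same way $(\gen{F})^{\mathrm{n}} = \gen{F^{\mathrm{n}}}$; then, using $F^{\mathrm{d}} = \overline{F^{\mathrm{n}}}$ and $(\gen{F})^{\mathrm{d}} = \overline{(\gen{F})^{\mathrm{n}}}$ together with the first two identities, $\gen{F^{\mathrm{d}}} = \overline{\gen{F^{\mathrm{n}}}} = \overline{(\gen{F})^{\mathrm{n}}} = (\gen{F})^{\mathrm{d}}$. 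Since each right\hyp{}hand side is a generated $(\clIc,\clSM)$\hyp{}stable class, the classes $\overline{\gen{F}}$, $(\gen{F})^{\mathrm{n}}$, $(\gen{F})^{\mathrm{d}}$ are $(\clIc,\clSM)$\hyp{}stable; substituting the hypothesis $\gen{F} = C$ then yields $\gen{\overline{F}} = \overline{C}$, $\gen{F^{\mathrm{n}}} = C^{\mathrm{n}}$, $\gen{F^{\mathrm{d}}} = C^{\mathrm{d}}$, and with them the $(\clIc,\clSM)$\hyp{}stability of $\overline{C}$, $C^{\mathrm{n}}$, $C^{\mathrm{d}}$. The main obstacle is getting the left\hyp{}composition identity for negation right, as explained above; the remaining steps are routine verification.
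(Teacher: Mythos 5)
Your proposal is correct and follows essentially the same route as the paper: the crucial facts in both are that minor formation commutes with negation and inner negation, and that the self\hyp{}duality of the outer function $g \in \clSM$ lets one absorb an outer negation into the inner arguments via $\overline{g(h_1,\dots,h_n)} = g(\overline{h_1},\dots,\overline{h_n})$. The only difference is one of packaging: you derive everything from the single commutation identity $\overline{\gen{F}} = \gen{\overline{F}}$ (using $\gen{F} = \clSM(F\,\clIc)$), whereas the paper verifies stability of $\overline{C}$ directly (closure under minors and under $\mu$) and proves the generation claim separately, but the underlying computations are identical.
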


\begin{proof}
In order to show that $\overline{C}$ is $(\clIc,\clSM)$\hyp{}stable, take three arbitrary members of $\overline{C}$; they are of the form $\overline{f}, \overline{g}, \overline{h}$ for some $f, g, h \in C$.
Then $\overline{f}_\sigma = (\overline{\id} \circ f) \circ \underline{\sigma} = \overline{\id} \circ (f \circ \underline{\sigma}) = \overline{f_\sigma} \in \overline{C}$ because $f_\sigma \in C$.
Moreover, $\mu(\overline{f}, \overline{g}, \overline{h}) = \overline{\mu(f,g,h)} \in \overline{C}$ because $\mu(f,g,h) \in C$.
We conclude that $\overline{C}$ is $(\clIc,\clSM)$\hyp{}stable.

In order to show that $\overline{C}$ is generated by $\overline{F}$, let $\overline{f} \in \overline{C}$ be $n$\hyp{}ary.
Then $f \in C$, so there is a function $g \in SM^{(m)}$, $h_1, \dots, h_m \in F$ and $n$\hyp{}ary minors $(h_i)_{\sigma_i}$ ($1 \leq i \leq m$) such that $f = g((h_1)_{\sigma_1}, \dots, (h_m)_{\sigma_m})$.
But then
\[
\begin{split}
\overline{f}
&
= \overline{g((h_1)_{\sigma_1}, \dots, (h_m)_{\sigma_m})}
= \overline{g}((h_1)_{\sigma_1}, \dots, (h_m)_{\sigma_m})
\\ &
= g(\overline{(h_1)_{\sigma_1}}, \dots, \overline{(h_m)_{\sigma_m}})
= g((\overline{h_1})_{\sigma_1}, \dots, (\overline{h_m})_{\sigma_m})
\in \gen{\overline{F}},
\end{split}
\]
where the third equality holds because $g$ is self\hyp{}dual.
Therefore $\overline{C} \subseteq \gen{\overline{F}}$.
Since $\overline{F} \subseteq \overline{C}$ and $\overline{C}$ is $(\clIc,\clSM)$\hyp{}stable, we have $\gen{\overline{F}} \subseteq \gen{\overline{C}} = \overline{C}$.

In order to show that $C^\mathrm{n}$ is $(\clIc,\clSM)$\hyp{}stable, take three arbitrary members of $C^\mathrm{n}$; they are of the form $f^\mathrm{n}, g^\mathrm{n}, h^\mathrm{n}$ for some $f, g, h \in C$.
Then
$(f^\mathrm{n})_\sigma(a_1, \dots, a_m) = f(\overline{a_{\sigma(1)}}, \dots, \overline{a_{\sigma(n)}}) = (f_\sigma)^\mathrm{n}(a_1, \dots, a_m)$, i.e., $(f^\mathrm{n})_\sigma = (f_\sigma)^\mathrm{n} \in C^\mathrm{n}$ because $f_\sigma \in C$.
Moreover, $\mu(f^\mathrm{n}, g^\mathrm{n}, h^\mathrm{n}) = (\mu(f, g, h))^\mathrm{n} \in C^\mathrm{n}$ because $\mu(f, g, h) \in C$.
We conclude that $C^\mathrm{n}$ is $(\clIc,\clSM)$\hyp{}stable.

In order to show that $C^\mathrm{n}$ is generated by $F^\mathrm{n}$, let $f^\mathrm{n} \in C^\mathrm{n}$ be $n$\hyp{}ary.
Then $f \in C$, so there is a function $g \in \clSM$, $h_1, \dots, h_m \in F$, and $n$\hyp{}ary minors $(h_i)_{\sigma_i}$ ($1 \leq i \leq m$) such that $f = g((h_1)_{\sigma_1}, \dots, (h_m)_{\sigma_m})$.
But then
\[
\begin{split}
f^\mathrm{n}
&
= (g((h_1)_{\sigma_1}, \dots, (h_m)_{\sigma_m}))^\mathrm{n}
= g(((h_1)_{\sigma_1})^\mathrm{n}, \dots, ((h_m)_{\sigma_m})^\mathrm{n})
\\ &
= g((h_1^\mathrm{n})_{\sigma_1}, \dots, (h_m^\mathrm{n})_{\sigma_m})
\in \gen{F^\mathrm{n}},
\end{split}
\]
so $C^\mathrm{n} \subseteq \gen{F^\mathrm{n}}$.
Since $F^\mathrm{n} \subseteq C^\mathrm{n}$ and $C^\mathrm{n}$ is $(\clIc,\clSM)$\hyp{}stable, we have $\gen{F^\mathrm{n}} \subseteq \gen{C^\mathrm{n}} = C^\mathrm{n}$.

The statements about $C^\mathrm{d}$ and $F^\mathrm{d}$ follow from the above, because $f^\mathrm{d} = \overline{f^\mathrm{n}} = (\overline{f})^\mathrm{n}$ for any $f \in \clAll$.
\end{proof}

\begin{lemma}
\label{lem:IcSM-stable-constants}
Let $C$ be a $(\clIc,\clSM)$\hyp{}stable class.
\begin{enumerate}[label=\upshape{(\roman*)}]
\item\label{lem:IcSM-stable-constants:0}
If $\{\mathord{\wedge}\} C \subseteq C$ then $C \cup \clVako$ is $(\clIc,\clSM)$\hyp{}stable.
\item\label{lem:IcSM-stable-constants:1}
If $\{\mathord{\vee}\} C \subseteq C$ then $C \cup \clVaki$ is $(\clIc,\clSM)$\hyp{}stable.
\item\label{lem:IcSM-stable-constants:01}
If $\{\mathord{\wedge}, \mathord{\vee}\} C \subseteq C$ then $C \cup \clVak$ is $(\clIc,\clSM)$\hyp{}stable.
\end{enumerate}
\end{lemma}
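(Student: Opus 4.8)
The plan is to use two reductions. For any class $F \subseteq \clAll$ we have $F\clIc \subseteq F$ if and only if $F$ is minor-closed, since $F\clIc$ is precisely the set of minors of the members of $F$; and, as $\clSM = \clonegen{\mu}$, Lemma~\ref{lem:left-stab-gen} gives $\clSM F \subseteq F$ if and only if $\mu(f_1,f_2,f_3) \in F$ whenever $f_1, f_2, f_3 \in F^{(m)}$ for some $m$. Hence, for each of the three statements it suffices to check that the enlarged class is minor-closed and closed under forming $\mu(f_1,f_2,f_3)$ from equal-arity triples of its members.

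Minor-closedness is immediate: $C$ is minor-closed by hypothesis; each of $\clVako$, $\clVaki$, $\clVak$ is minor-closed, because a minor of a constant function of value $c$ is again the constant function of value $c$; and a union of minor-closed classes is minor-closed.

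For closure under $\mu$ I would argue by cases according to how many of the three arguments lie in $C$ and how many in the adjoined class of constant functions, using that $\mu$ is totally symmetric so that the constant arguments may be assumed to occupy any prescribed positions. The computations reduce to the pointwise consequences of the identities $\mu(x,y,0) = x \wedge y$, $\mu(x,y,1) = x \vee y$, $\mu(x,0,0) = 0$, $\mu(x,1,1) = 1$, and $\mu(x,0,1) = x$, all read off from the table in Figure~\ref{fig:BFs}. For part~\ref{lem:IcSM-stable-constants:0}: three arguments in $C$ yields a member of $C$ since $\clSM C \subseteq C$; three arguments in $\clVako$ yields the constant $0$; two arguments in $C$ and one equal to the constant $0$ yields a conjunction of two equal-arity members of $C$, which lies in $C$ by the hypothesis $\{\mathord{\wedge}\}C \subseteq C$; and one argument in $C$ with two equal to the constant $0$ yields the constant $0$. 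Part~\ref{lem:IcSM-stable-constants:1} is the same with $\vee$ and $1$ in place of $\wedge$ and $0$. For part~\ref{lem:IcSM-stable-constants:01} one uses both the $\wedge$- and $\vee$-identities; the genuinely new configurations are those involving a $0$-constant and a $1$-constant simultaneously, handled by $\mu(x,0,1) = x$ when the remaining argument lies in $C$, and yielding a constant when the remaining argument is itself a constant.

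There is no real obstacle: the verification is a short finite case check, and the only point requiring care is to invoke the hypothesis $\{\mathord{\wedge}\}C \subseteq C$ (respectively $\{\mathord{\vee}\}C \subseteq C$) exactly in the configuration where a genuine conjunction (respectively disjunction) of two equal-arity members of $C$ arises, namely with two arguments in $C$ and one constant argument. For a shorter write-up one may instead prove~\ref{lem:IcSM-stable-constants:0} directly and then derive~\ref{lem:IcSM-stable-constants:1} from it via the negation automorphism of Lemma~\ref{lem:IcSM-stable-negation} --- De~Morgan turns $\{\mathord{\vee}\}C \subseteq C$ into $\{\mathord{\wedge}\}\overline{C} \subseteq \overline{C}$ and $\overline{C} \cup \clVako$ into $\overline{C \cup \clVaki}$ --- and then derive~\ref{lem:IcSM-stable-constants:01} by applying~\ref{lem:IcSM-stable-constants:1} to the already $(\clIc,\clSM)$-stable class $C \cup \clVako$, after noting $\{\mathord{\vee}\}(C \cup \clVako) \subseteq C \cup \clVako$.
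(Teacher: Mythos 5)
Your proof is correct and follows essentially the same route as the paper: reduce to minor-closedness plus closure under $\mu$ via Lemma~\ref{lem:left-stab-gen}, then do the case analysis on how many arguments are constants, using $\mu(x,y,0)=x\wedge y$, $\mu(x,y,1)=x\vee y$, and $\mu(x,0,1)=x$. The alternative derivation of parts (ii) and (iii) via the negation automorphism and iteration is a nice optional shortcut, but the core argument coincides with the paper's.
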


\begin{proof}
\ref{lem:IcSM-stable-constants:0}
Assume that $C$ is $(\clIc,\clSM)$\hyp{}stable and $\{\mathord{\wedge}\} C \subseteq C$.
Then $C \cup \clVako$ is clearly minor\hyp{}closed.
Since $\clSM = \clonegen{\mu}$, by Lemma~\ref{lem:left-stab-gen}, it remains to show that $C \cup \clVako$ is stable under left composition with $\{\mu\}$.
Let $f, g, h \in C \cup \clVako$, all $n$\hyp{}ary.
If $f$, $g$, and $h$ are all in $C$, then $\mu(f, g, h) \in C$ because $C$ is stable under left composition with $\clSM$.
If two of $f$, $g$, and $h$ are in $C$ and the third is in $\clVako$ (that is, it is a constant $0$ function), say $f, g \in C$ and $h = 0$, then $\mu(f,g,h) = \mu(f,g,0) = f \wedge g \in C$.
If (at least) two of $f$, $g$, and $h$ are in $\clVako$, say $g = 0$ and $h = 0$, then $\mu(f,g,h) = \mu(f,0,0) = 0 \in \clVako$.
We conclude that $C \cup \clVako$ is $(\clIc,\clSM)$\hyp{}stable.

\ref{lem:IcSM-stable-constants:1}
The proof is similar to that of \ref{lem:IcSM-stable-constants:0} and makes use of the fact that $\mu(f, g, 1) = f \vee g$.

\ref{lem:IcSM-stable-constants:01}
The proof is similar to the previous parts.
The only new case to consider is when $f, g, h \in C \cup \clVak$ and there are two different constant functions among $f$, $g$, and $h$, say $g = 0$ and $h = 1$.
Then $\mu(f,g,h) = \mu(f,0,1) = f \in C \cup \clVak$.
We conclude that $C \cup \clVak$ is $(\clIc,\clSM)$\hyp{}stable.
\end{proof}

Recall that an operation $f \in \mathcal{O}_A^{(n)}$ \emph{preserves} a relation $\rho \subseteq A^m$ if for all $(a_{i1}, \dots, a_{im}) \in \rho$ ($i \in \nset{n}$), we have $(f(a_{11}, \dots, a_{n1}), \dots, f(a_{1m}, \dots, a_{nm})) \in \rho$.

\begin{lemma}
\label{lem:mu-preserves}
The fuction $\mu$ preserves every binary relation on $\{0,1\}$.
\end{lemma}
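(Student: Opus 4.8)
The plan is to unfold the definition directly: fix a binary relation $\rho \subseteq \{0,1\}^2$ together with an arbitrary choice of pairs $(a_1, b_1), (a_2, b_2), (a_3, b_3) \in \rho$, and verify that $(\mu(a_1, a_2, a_3), \mu(b_1, b_2, b_3)) \in \rho$. The first reduction is routine and uses only that $\mu$ is a near\hyp{}unanimity operation: if two of the three chosen pairs coincide, say $(a_i, b_i) = (a_j, b_j)$ with $i \neq j$, then $\mu(a_1, a_2, a_3)$ returns the repeated value $a_i$ and $\mu(b_1, b_2, b_3)$ returns $b_i$, so the resulting pair is $(a_i, b_i) \in \rho$. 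Hence I may assume the three pairs are pairwise distinct; in particular $|\rho| \geq 3$. If $|\rho| = 4$, then $\rho = \{0,1\}^2$ and there is nothing to prove, so the only case needing attention is $|\rho| = 3$, where $\rho = \{0,1\}^2 \setminus \{(c,d)\}$ for a unique pair $(c,d)$ and the three chosen pairs are precisely the three elements of $\rho$.

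It then remains to handle the four relations of the form $\{0,1\}^2 \setminus \{(c,d)\}$, and here the fact that $\mu$ is self\hyp{}dual and monotone does the work. If $(c,d) \in \{(0,1), (1,0)\}$, then $\rho$ is one of the two order relations $\{(a,b) : a \leq b\}$ and $\{(a,b) : a \geq b\}$, which $\mu$ preserves because $\mu \in \clM$: from $a_i \leq b_i$ for $i = 1, 2, 3$ one gets $(a_1, a_2, a_3) \leq (b_1, b_2, b_3)$ componentwise and hence $\mu(a_1, a_2, a_3) \leq \mu(b_1, b_2, b_3)$. If $(c,d) = (1,1)$, so $\rho = \{(a,b) : a \wedge b = 0\}$, I would argue by contradiction: were $\mu(a_1, a_2, a_3) = \mu(b_1, b_2, b_3) = 1$, then at least two of the $a_i$ and at least two of the $b_i$ would equal $1$, and since any two two\hyp{}element subsets of $\{1,2,3\}$ intersect, some index $i$ would satisfy $a_i = b_i = 1$, contradicting $(a_i, b_i) \in \rho$. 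The remaining case $(c,d) = (0,0)$, i.e.\ $\rho = \{(a,b) : a \vee b = 1\}$, is symmetric under exchanging the roles of $0$ and $1$; alternatively it follows from the case $(c,d) = (1,1)$ together with $\mu \in \clS$, since $\mu(\overline{a_1}, \overline{a_2}, \overline{a_3}) = \overline{\mu(a_1, a_2, a_3)}$ and likewise for the $b_i$.

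I do not anticipate a real obstacle: the whole point is that near\hyp{}unanimity collapses the problem to the finitely many relations of size $3$, of which two are dispatched by monotonicity and two by a one\hyp{}line pigeonhole (or duality) argument, so the "hardest" step is merely keeping this small case analysis organized. It is, however, worth recording that the lemma genuinely exploits the two\hyp{}element domain and is not an instance of a general phenomenon: the median operation $m$ on the chain $0 < 1 < 2$ is a majority operation, yet it does not preserve the binary relation $\{(0,1), (1,2), (2,0)\}$, because $m(0,1,2) = 1 = m(1,2,0)$ and $(1,1)$ lies outside that relation. So some inspection of the small relations is to be expected, and the argument above keeps it to a minimum.
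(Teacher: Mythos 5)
Your proof is correct, but it is organized quite differently from the paper's. The paper gives a single uniform argument for every nonempty $\rho$: the majority value of $a_1,a_2,a_3$ is attained at (at least) two indices $p,q$, the majority value of $b_1,b_2,b_3$ at two indices $r,s$, and since any two two\hyp{}element subsets of $\nset{3}$ intersect, there is an index $i$ with $\mu(a_1,a_2,a_3)=a_i$ and $\mu(b_1,b_2,b_3)=b_i$, whence the output pair is $(a_i,b_i)\in\rho$. That is exactly the pigeonhole you deploy for the single subcase $\rho=\{\,(a,b) : a\wedge b=0\,\}$ --- and it works verbatim for all sixteen relations at once, so your preliminary reduction to $\lvert\rho\rvert=3$ and the four\hyp{}way case split (monotonicity for the two order relations, pigeonhole for one, duality for the last) are not needed. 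What your version buys is transparency about \emph{which} structural properties of $\mu$ handle which relations (monotonicity, self\hyp{}duality, the intersection argument), plus the genuinely useful closing observation that the statement is special to a two\hyp{}element domain, witnessed by the median on the chain $0<1<2$ failing to preserve $\{(0,1),(1,2),(2,0)\}$; the paper's version buys brevity and uniformity.
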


\begin{proof}
Every function preserves the empty relation, so
assume $\emptyset \neq \rho \subseteq \{0,1\}^2$, and let $(a_1, b_1), (a_2, b_2), (a_3, b_3) \in S$.
Then there exist $p, q, r, s \in \nset{3}$ such that $p \neq q$, $r \neq s$, $a_p = a_q$, and $b_r = b_s$.
We have $\mu(a_1, a_2, a_3) = a_p = a_q$ and $\mu(b_1, b_2, b_3) = b_r = b_s$.
Since $\{p, q\} \cap \{r, s\} \neq \emptyset$, there is an $i \in \{p, q\} \cap \{r, s\}$, and we have
$(\mu(a_1, a_2, a_3), \mu(b_1, b_2, b_3)) = (a_i, b_i) \in \rho$.
\end{proof}

\begin{proposition}
\label{prop:SM-stable:sufficiency}
The classes listed in Theorem~\ref{thm:SM-stable} are $(\clIc,\clSM)$\hyp{}stable.
\end{proposition}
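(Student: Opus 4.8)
The plan is to rely on the characterization that a class $F \subseteq \clAll$ is $(\clIc,\clSM)$\hyp{}stable if and only if $F$ is minor\hyp{}closed and $\mu(f_1,f_2,f_3) \in F$ whenever $f_1,f_2,f_3 \in F$ have the same arity; this is immediate from Lemma~\ref{lem:left-stab-gen} applied with the generating set $\{\mu\}$ of $\clSM = \clonegen{\mu}$, since $F \clIc \subseteq F$ is precisely the statement that $F$ is minor\hyp{}closed. I would then set up two reductions. First, by Lemma~\ref{lem:IcSM-stable-negation} the collection of $(\clIc,\clSM)$\hyp{}stable classes is closed under the commuting involutions $C \mapsto \overline{C}$, $C \mapsto C^\mathrm{n}$, and hence under $C \mapsto C^\mathrm{d}$; a routine inspection of the definitions shows that these three maps permute the $93$ classes of Theorem~\ref{thm:SM-stable} among themselves, so it suffices to verify $(\clIc,\clSM)$\hyp{}stability for one representative of each orbit. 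Second, since the $(\clIc,\clSM)$\hyp{}stable classes form a closure system, any class that is an intersection of already\hyp{}verified ones is again $(\clIc,\clSM)$\hyp{}stable, and Lemma~\ref{lem:IcSM-stable-constants} handles the classes obtained from a verified class by adjoining constant functions, once the relevant closure under $\mathord{\wedge}$ and/or $\mathord{\vee}$ is checked.

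After these reductions the verification splits into a handful of easy cases. The classes cut out by a condition on the pair $(f(\vect{0}), f(\vect{1}))$ --- namely $\clAll$, $\clEq$, $\clNeq$, $\clEiio$, $\clEioi$, $\clEiii$, $\clEioo$, and the eight classes $\clOX$, $\clIX$, $\clXO$, $\clXI$, $\clOO$, $\clOI$, $\clIO$, $\clII$ --- are minor\hyp{}closed because $f_\sigma$ takes the same values as $f$ at $\vect{0}$ and at $\vect{1}$, and they are closed under coordinatewise $\mu$ by Lemma~\ref{lem:mu-preserves}, which asserts that $\mu$ preserves the relevant binary relation on $\{0,1\}$. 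The clones appearing in the list that contain $\clSM$ --- these are $\clAll$, $\clOX$, $\clXI$, $\clOI$, $\clM$, $\clMo$, $\clMi$, $\clMc$, $\clS$, $\clSc$, $\clSM$, the inclusion $\clSM \subseteq C$ being immediate since self\hyp{}dual monotone functions preserve $0$ and $1$ --- are $(\clIc,\clSM)$\hyp{}stable by Lemma~\ref{lem:clones-stable}. The constant classes $\clVak$, $\clVako$, $\clVaki$ and the empty class $\clEmpty$ are trivial. For the remaining ``primitive'' classes, $\clRefl$ is dispatched by the one\hyp{}line observation that $f_\sigma$ and $\mu(f,g,h)$ inherit the property $f = f^\mathrm{n}$, while for $\clU$ (and, by symmetry, $\clW$, $\clUneg$, $\clWneg$, which lie in the same orbit) and for $\clSmin$ (and $\clSmaj$) one runs the pigeonhole argument from the proof of Lemma~\ref{lem:mu-preserves}: if $\mu(f,g,h)$ attains the value $1$ at two tuples $\vect{a},\vect{b}$, then some one of $f,g,h$ attains $1$ at both of them, so membership of $f,g,h$ in $\clU$ (respectively $\clSmin$, applied with $\vect{b} = \overline{\vect{a}}$) forces $\mu(f,g,h)$ into the class. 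Every remaining class on the list is an intersection of the classes just treated with intervals or clones, or is obtained from one of them by Lemma~\ref{lem:IcSM-stable-constants}.

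I do not expect a genuine mathematical obstacle here: all of these verifications are elementary, as the paper indicates. The real effort is organizational --- selecting orbit representatives so that every one of the $93$ classes is reached through one of the permitted reductions, and checking the easy pointwise closure properties needed to invoke Lemma~\ref{lem:IcSM-stable-constants} (for example $\clOX$, $\clOI$, $\clU$, $\clSmin$, $\clRefl$ are closed under $\mathord{\wedge}$, and $\clOX$, $\clOI$, $\clW$, $\clSmaj$, $\clRefl$ are closed under $\mathord{\vee}$, with the analogous obvious facts about $\clOO$, $\clII$, and the relevant intersections). Carrying this bookkeeping through covers all $93$ classes.
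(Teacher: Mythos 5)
Your proposal is correct and follows essentially the same route as the paper: reduce to a small set of classes via intersections (the closure-system property), the symmetries $C \mapsto \overline{C}, C^\mathrm{n}, C^\mathrm{d}$ of Lemma~\ref{lem:IcSM-stable-negation}, and the constant-adjunction Lemma~\ref{lem:IcSM-stable-constants}; then dispatch clones containing $\clSM$ by Lemma~\ref{lem:clones-stable}, the classes determined by $(f(\vect{0}),f(\vect{1}))$ by Lemma~\ref{lem:mu-preserves}, and $\clSmin$, $\clU$, $\clRefl$ by the same pigeonhole/direct checks the paper uses. The only cosmetic difference is that the paper organizes the reduction around the meet-irreducible classes read off from Figure~\ref{fig:SM-stable}, whereas you organize it around orbit representatives, and you verify $\clU$ directly rather than observing it is a clone containing $\clSM$; neither changes the substance.
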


\begin{proof}
Since intersections of $(\clIc,\clSM)$\hyp{}stable classes are $(\clIc,\clSM)$\hyp{}stable,
it is enough to prove the claim for the meet\hyp{}irreducible classes.
One can read off from Figure~\ref{fig:SM-stable} that the meet\hyp{}irreducible classes are
$\clAll$, $\clEiio$, $\clEioi$, $\clEiii$, $\clEioo$, $\clOXC$, $\clIXC$, $\clXOC$, $\clXIC$,
$\clSmin$, $\clSmaj$, $\clM$, $\clMneg$, $\clU$, $\clW$, $\clUneg$, $\clWneg$, $\clRefl$.
We can further simplify the task with the help of Lemma~\ref{lem:IcSM-stable-negation}, which asserts that if $C$ is $(\clIc,\clSM)$\hyp{}stable, then so are $\overline{C}$, $C^\mathrm{n}$, and $C^\mathrm{d}$.
Thus the only classes we need to consider are the following eight:
$\clAll$, $\clEiio$, $\clEiii$, $\clOXC$, $\clSmin$, $\clM$, $\clU$, $\clRefl$.

The classes $\clAll$, $\clM$, and $\clU$ are clones containing $\clSM$, so they are obviously $(\clIc,\clSM)$\hyp{}stable.
The same holds for the class $\clOX$, which is also closed under left composition with $\{\mathord{\wedge}, \mathord{\vee}\}$, so it follows by Lemma~\ref{lem:IcSM-stable-constants} that the class $\clOXC$ is $(\clIc,\clSM)$\hyp{}stable.

The classes $\clEiio$ and $\clEiii$ are clearly minor\hyp{}closed, because for any $f \in \clAll$ and any $\sigma \colon \nset{m} \to \nset{n}$, it holds that $f_\sigma(\vect{0}) = f(\vect{0} \sigma) = f(\vect{0})$ and $f_\sigma(\vect{1}) = f(\vect{1} \sigma) = f(\vect{1})$.
It is easy to see that they are also closed under left composition with $\{\mu\}$, because $\mu$ preserves every subset of $\{0,1\}^2$ by Lemma~\ref{lem:mu-preserves}.

Consider now the class $\clSmin$.
Let $f \in \clSmin$ and $\sigma \colon \nset{m} \to \nset{n}$.
Let $\vect{a} \in \{0,1\}^m$.
We have $f_\sigma(\vect{a}) = f(\vect{a} \sigma)$ and $f_\sigma(\overline{\vect{a}}) = f(\overline{\vect{a}} \sigma)$.
Since $f \in \clSmin$, we have $f(\vect{a} \sigma) \wedge f(\overline{\vect{a} \sigma}) = 0$, and since $\overline{\vect{a} \sigma} = \overline{\vect{a}} \sigma$, it follows that $f_\sigma(\vect{a}) \wedge f_\sigma(\overline{\vect{a}}) = 0$, so $f_\sigma \in \clSmin$.
Let now $f, g, h \in \clSmin$, all $n$\hyp{}ary, and let $\varphi := \mu(f, g, h)$.
We need to show that $\varphi(\vect{a}) \wedge \varphi(\overline{\vect{a}}) = 0$ for every $\vect{a} \in \{0,1\}^n$.
Let $\vect{a} \in \{0,1\}^n$.
If $\varphi(\vect{a}) = 0$, then $\varphi(\vect{a}) \wedge \varphi(\overline{\vect{a}}) = 0$.
Assume that $\varphi(\vect{a}) = 1$.
Then at least two of $f(\vect{a})$, $g(\vect{a})$, and $h(\vect{a})$ are equal to $1$.
Since $f, g, h \in \clSmin$, we have $f(\vect{a}) \wedge f(\overline{\vect{a}}) = 0$, $g(\vect{a}) \wedge f(\overline{\vect{a}}) = 0$, $h(\vect{a}) \wedge h(\overline{\vect{a}}) = 0$, so at least two of $f(\overline{\vect{a}})$, $g(\overline{\vect{a}})$, and $h(\overline{\vect{a}})$ are equal to $0$.
Consequently $\varphi(\overline{\vect{a}}) = 0$, so $\varphi(\vect{a}) \wedge \varphi(\overline{\vect{a}}) = 1 \wedge 0 = 0$ also in this case.

Finally, consider the class $\clRefl$.
Let $f \in \clRefl$ and $\sigma \colon \nset{m} \to \nset{n}$.
We have $f_\sigma(\vect{a}) = f(\vect{a} \sigma) = f(\overline{\vect{a} \sigma}) = f(\overline{\vect{a}} \sigma) = f_\sigma(\overline{\vect{a}})$ for every $\vect{a} \in \{0,1\}^m$, so $f_\sigma \in \clRefl$.
Let now $f, g, h \in \clRefl$, all $n$\hyp{}ary, and let $\varphi := \mu(f, g, h)$.
We have $\varphi(\vect{a}) = \mu(f(\vect{a}), g(\vect{a}), h(\vect{a})) = \mu(f(\overline{\vect{a}}), g(\overline{\vect{a}}), h(\overline{\vect{a}})) = \varphi(\overline{\vect{a}})$ for every $\vect{a} \in \{0,1\}^n$, so $\varphi \in \clRefl$.
\end{proof}

It remains to show that there are no further $(\clIc,\clSM)$\hyp{}stable classes than the ones listed in Theorem~\ref{thm:SM-stable}.
To this end, we show that each class $K$ is generated by any subset $F$ of $K$ that is not included in any proper subclass of $K$, i.e., for each proper subclass $C$ of $K$, the set $F$ contains an element of $K \setminus C$.
Since there are only a finite number of classes, every proper subclass of $K$ is included in a lower cover of $K$, and therefore it is sufficient to consider only lower covers of $K$.
This will be established in the propositions that comprise the remainder of this section.

Among our main tools are stratified terms and Lemma~\ref{lem:str-terms}, which provides a fairly simple way to deal with classes that are clones.
Lemma~\ref{lem:str-terms} is a powerful, generic tool, but, unfortunately, sometimes it may be a bit tricky to build all the stratified terms that are required for the application of the lemma.
We need to develop another tool, tailored for $(\clIc,\clSM)$\hyp{}stability, that is easier to use in such cases and is applicable also for classes that are not clones.

For any $S \subseteq \{0,1\}^n$, let
$\overline{S} := \{ \overline{\vect{a}} \mid \vect{a} \in S \}$,
${\uparrow} S := \{ \vect{u} \in \{0,1\}^n \mid \exists \vect{a} \in S \colon \vect{a} \leq \vect{u} \}$,
${\downarrow} S := \{ \vect{u} \in \{0,1\}^n \mid \exists \vect{a} \in S \colon \vect{u} \leq \vect{a} \}$.

\begin{lemma}
\label{lem:extend-SM}
Let $T$ and $F$ be subsets of $\{0,1\}^n$ \textup{(}possibly empty\textup{)} such that $\vect{u} \nleq \overline{\vect{u}'}$ for all $\vect{u}, \vect{u}' \in T$, $\overline{\vect{v}} \nleq \vect{v}'$ for all $\vect{v}, \vect{v}' \in F$, and $\vect{u} \nleq \vect{v}$ for all $\vect{u} \in T$ and $\vect{v} \in F$.
Then the following statements hold.
\begin{enumerate}[label=\upshape{(\roman*)}]
\item\label{lem:extend-SM:intersection}
${\uparrow} T \cap {\downarrow} F = \emptyset$,
${\uparrow} T \cap {\downarrow} \overline{T} = \emptyset$ and ${\uparrow} \overline{F} \cap {\downarrow} F = \emptyset$.
\item\label{lem:extend-SM:upset}
There exists an upset $U$ of $\{0,1\}^n$ of cardinality $2^{n-1}$ such that $T \subseteq U$, $F \subseteq \overline{U}$, and $U \cap \overline{U} = \emptyset$.
\item\label{lem:extend-SM:function}
Consequently, there exists an $n$\hyp{}ary function $f \in \clSM$ such that $f(\vect{a}) = 1$ for all $\vect{a} \in T$ and $f(\vect{b}) = 0$ for all $\vect{b} \in F$.
\end{enumerate}
\end{lemma}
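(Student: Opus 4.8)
The plan is to prove the three statements in turn, each feeding into the next; the carrier of the argument is part~\ref{lem:extend-SM:upset}, whereas parts~\ref{lem:extend-SM:intersection} and~\ref{lem:extend-SM:function} are short.

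For part~\ref{lem:extend-SM:intersection}: if $\vect{w} \in {\uparrow} T \cap {\downarrow} F$ then $\vect{u} \leq \vect{w} \leq \vect{v}$ for some $\vect{u} \in T$ and $\vect{v} \in F$, whence $\vect{u} \leq \vect{v}$, contradicting the third hypothesis; the other two equalities follow in exactly the same manner from the first and second hypotheses. I would also record here the identity ${\uparrow} \overline{F} \cap {\downarrow} \overline{T} = \emptyset$ (if $\overline{\vect{v}} \leq \vect{w} \leq \overline{\vect{u}}$ then $\vect{u} \leq \vect{v}$, again contradicting the third hypothesis), since it is needed below.

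For part~\ref{lem:extend-SM:upset}, set $V := {\uparrow}(T \cup \overline{F}) = {\uparrow} T \cup {\uparrow} \overline{F}$. Using the identity $\overline{{\uparrow} S} = {\downarrow} \overline{S}$ one gets $\overline{V} = {\downarrow} \overline{T} \cup {\downarrow} F$, and distributing $V \cap \overline{V}$ into four intersections shows $V \cap \overline{V} = \emptyset$ by part~\ref{lem:extend-SM:intersection} together with the extra identity just noted; so $V$ is an upset with $V \cap \overline{V} = \emptyset$ and $T \cup \overline{F} \subseteq V$. Now choose $U$ to be a maximal element, with respect to inclusion, of the (finite and nonempty, as it contains $V$) family of upsets $W$ with $V \subseteq W$ and $W \cap \overline{W} = \emptyset$. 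The crux is to show $U \cup \overline{U} = \{0,1\}^n$; once this is known, $U \cap \overline{U} = \emptyset$ forces $\card{U} = 2^{n-1}$, and $T \subseteq V \subseteq U$ and $\overline{F} \subseteq V \subseteq U$ yield $F \subseteq \overline{U}$. Suppose $Z := \{0,1\}^n \setminus (U \cup \overline{U})$ is nonempty and observe $\overline{Z} = Z$; pick a $\leq$-maximal element $\vect{a}$ of $Z$. Every $\vect{b} > \vect{a}$ lies in $U \cup \overline{U}$ by maximality of $\vect{a}$, and it cannot lie in $\overline{U}$: otherwise $\overline{\vect{b}} \in U$ with $\overline{\vect{b}} < \overline{\vect{a}}$, so the upset property puts $\overline{\vect{a}} \in U$, contradicting $\overline{\vect{a}} \in Z$. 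Hence ${\uparrow} \vect{a} \setminus \{\vect{a}\} \subseteq U$, so $U \cup \{\vect{a}\}$ is again an upset, and a check of the cross-terms gives $(U \cup \{\vect{a}\}) \cap (\overline{U} \cup \{\overline{\vect{a}}\}) = \emptyset$, contradicting the maximality of $U$.

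For part~\ref{lem:extend-SM:function}, let $f \colon \{0,1\}^n \to \{0,1\}$ be the indicator of $U$, i.e., $f(\vect{a}) = 1$ if and only if $\vect{a} \in U$. As $U$ is an upset, $f$ is monotone; as $U$ and $\overline{U}$ partition $\{0,1\}^n$, we have $f(\overline{\vect{a}}) = \overline{f(\vect{a})}$ for all $\vect{a}$, so $f$ is self-dual; and $T \subseteq U$, $F \subseteq \overline{U}$ give $f(\vect{a}) = 1$ for $\vect{a} \in T$ and $f(\vect{b}) = 0$ for $\vect{b} \in F$. Thus $f \in \clS \cap \clM = \clSM$. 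The main obstacle is exactly the extension step in part~\ref{lem:extend-SM:upset}: showing that a maximal complement-free upset extending $V$ must meet every complementary pair, the subtle point being that adjoining a maximal uncovered element $\vect{a}$ preserves both the upset property and disjointness from the complement.
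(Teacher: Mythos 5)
Your proof is correct and follows essentially the same route as the paper: part (i) by the same contradiction arguments, part (ii) by extending the upset ${\uparrow}(T\cup\overline{F})$ through repeatedly adjoining a maximal element not yet covered by $U\cup\overline{U}$ (you phrase this as a maximality argument where the paper runs an induction on $2^{n-1}-\card{S}$, but the key verification is identical), and part (iii) by taking the indicator function of $U$. Your explicit note that ${\uparrow}\overline{F}\cap{\downarrow}\overline{T}=\emptyset$ also follows from the hypotheses is a small but welcome patch of a point the paper leaves implicit when it expands $S\cap\overline{S}$ into four intersections.
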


\begin{proof}
\ref{lem:extend-SM:intersection}
Suppose, to the contrary, that there is an $\vect{a} \in {\uparrow} T \cap {\downarrow} F$.
Then there exist $\vect{u} \in T$ and $\vect{v} \in F$ such that $\vect{u} \leq \vect{a} \leq \vect{v}$, which contradicts the assumption that $\vect{u} \nleq \vect{v}$.
We conclude that ${\uparrow} T \cap {\downarrow} F = \emptyset$.

Now suppose, to the contrary, that there is an $\vect{a} \in {\uparrow} T \cap {\downarrow} \overline{T}$.
Then there exist $\vect{u} \in T$ and $\vect{v} \in \overline{T}$ such that $\vect{u} \leq \vect{a} \leq \vect{v}$.
Since $\vect{v} = \overline{\overline{\vect{v}}}$ and $\overline{\vect{v}} \in T$, this contradicts our hypothesis on $T$.
We conclude that ${\uparrow} T \cap {\downarrow} \overline{T} = \emptyset$, and a similar proof shows that ${\uparrow} \overline{F} \cap {\downarrow} F = \emptyset$.

\ref{lem:extend-SM:upset}
Note that the cardinality of any subset $S \subseteq \{0,1\}^n$ satisfying $S \cap \overline{S} = \emptyset$ is bounded above by $2^{n-1}$.
We claim that if $S$ is an upset of $\{0,1\}^n$ such that $S \cap \overline{S} = \emptyset$, then there exists an upset $U$ of cardinality $2^{n-1}$ such that $S \subseteq U$ and $U \cap \overline{U} = \emptyset$.
We prove the claim by induction on the number $2^{n-1} - \card{S}$.
The basis of induction is the case when $\card{S} = 2^{n-1}$; in this case the claim is trivial.
Suppose the claim holds when $\card{S} \geq m$ ($m \leq 2^{n-1}$).
Consider now an upset $S$ with $\card{S} = m - 1$ satisfying $S \cap \overline{S} = \emptyset$.
Let $\vect{w}$ be a maximal element of $\{0,1\}^n \setminus (S \cup \overline{S})$.
We claim that $S' := S \cup \{\vect{w}\}$ is an upset of $\{0,1\}^n$ satisfying $S' \cap \overline{S'} = \emptyset$.
Since $\vect{w} \in \{0,1\}^n \setminus (S \cup \overline{S})$, we also have $\overline{\vect{w}} \in \{0,1\}^n \setminus (S \cup \overline{S})$;
therefore
\[
\begin{split}
S' \cap \overline{S'}
&
= (S \cup \{\vect{w}\}) \cap (\overline{S} \cup \{\overline{\vect{w}}\})
\\ &
= (S \cap \overline{S}) \cup (S \cap \{\overline{\vect{w}}\}) \cup (\{\vect{w}\} \cap \overline{S}) \cup (\{\vect{w}\} \cap \{\overline{\vect{w}}\})
= \emptyset.
\end{split}
\]
It remains to show that $S'$ is an upset.
Since $S$ is an upset, we only need to verify that $\vect{u} \in S'$ for every $\vect{u} \in \{0,1\}^n$ such that $\vect{w} \leq \vect{u}$.
If $\vect{u} = \vect{w}$ then we are done, so assume that $\vect{w} < \vect{u}$.
Since $\vect{w}$ is a maximal element of $\{0,1\}^n \setminus (S \cup \overline{S})$, it follows that $\vect{u} \in S \cup \overline{S}$.
Suppose, to the contrary, that $\vect{u} \in \overline{S}$.
Since $\overline{S}$ is a downset and $\vect{w} < \vect{u}$, it follows that $\vect{w} in \overline{S}$.
But this is a contradiction because $\vect{w} \in \{0,1\}^n \setminus (S \cup \overline{S})$.
We conclude that $\vect{u} \in S \subseteq S'$, as desired.

We have shown that $S'$ is an upset satisfying $S' \cap \overline{S'} = \emptyset$.
By the induction hypothesis, there exists an upset $U$ of cardinality $2^{n-1}$ such that $S' \subseteq U$ and $U \cap \overline{U} = \emptyset$; this choice is also good for the given upset $S$ because $S \subseteq S'$.

The statement follows by considering the upset $S := {\uparrow} (T \cup \overline{F})$.
By part \ref{lem:extend-SM:intersection}, it satisfies
\[
\begin{split}
S \cap \overline{S}
&
= {\uparrow} (T \cup \overline{F}) \cap \overline{{\uparrow} (T \cup \overline{F})}
= ({\uparrow} T \cup {\uparrow} \overline{F}) \cap \overline{({\uparrow} T \cup {\uparrow} \overline{F})}
\\ &
= ({\uparrow} T \cup {\uparrow} \overline{F}) \cap (\overline{{\uparrow} T} \cup \overline{{\uparrow} \overline{F}})
= ({\uparrow} T \cup {\uparrow} \overline{F}) \cap ({\downarrow} \overline{T} \cup {\downarrow} F)
\\ &
= ({\uparrow} T \cap {\downarrow} \overline{T}) \cup ({\uparrow} T \cap {\downarrow} F) \cup ({\uparrow} \overline{F} \cap {\downarrow} \overline{T}) \cup ({\uparrow} \overline{F} \cap {\downarrow} F)
= \emptyset,
\end{split}
\]
so there exists an upset $U$ of cardinality $2^{n-1}$ such that $S \subseteq U$ and $U \cap \overline{U} = \emptyset$.
It clearly holds that $T \subseteq S \subseteq U$.
Furthermore $\overline{F} \subseteq S \subseteq U$, so $F \subseteq \overline{U}$.

\ref{lem:extend-SM:function}
Let $U$ be the upset provided by part \ref{lem:extend-SM:upset}, and define the function $f \colon \{0,1\}^n \to \{0,1\}$ by the rule $f(\vect{a}) = 1$ if $\vect{a} \in U$ and $f(\vect{a}) = 0$ if $\vect{a} \in \overline{U}$.
Since $U \cap \overline{U} = \emptyset$, the function is well defined and self\hyp{}dual.
Since $\card{U} = \card{\overline{U}} = 2^{n-1}$, it follows that $U \cup \overline{U} = \{0,1\}^n$; therefore $f$ is a total function.
Since $U$ is an upset, the function is monotone, and since $T \subseteq U$ and $F \subseteq \overline{U}$, the function takes the prescribed values on $T$ and $F$.
\end{proof}

\begin{definition}
\label{def:helpful}
Let $G$ be a set of Boolean functions, and for each $n \in \IN$, denote by $G_n$ the set of all $n$\hyp{}ary minors of functions in $G$.
Let $f$ be an $n$\hyp{}ary Boolean function.
We say that $f$ is \emph{$G$\hyp{}bisectable} if
the following three conditions hold:
\begin{enumerate}[label={\upshape(\Alph*)}]
\item\label{helpful:true} For all $\vect{a}, \vect{a}' \in f^{-1}(1)$ there exists a $\tau \in G_n$ such that $\tau(\vect{a}) = \tau(\vect{a}') = 1$.
\item\label{helpful:false} For all $\vect{b}, \vect{b}' \in f^{-1}(0)$ there exists a $\tau \in G_n$ such that $\tau(\vect{b}) = \tau(\vect{b}') = 0$.
\item\label{helpful:both} For all $\vect{a} \in f^{-1}(1)$ and for all $\vect{b} \in f^{-1}(0)$ there exists a $\tau \in G_n$ such that $\tau(\vect{a}) = 1$ and $\tau(\vect{b}) = 0$.
\end{enumerate}
We say that a class $C \subseteq \clAll$ is \emph{$G$\hyp{}bisectable} if every function in $C$ is $G$\hyp{}bisectable.
\end{definition}

\begin{lemma}
\label{lem:helpful:step}
Let $G \subseteq \clAll$ and $f \in \clAll$.
If $f$ is $G$\hyp{}bisectable, then $f \in \gen{G}$.
\end{lemma}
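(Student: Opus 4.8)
The plan is to exhibit $f$ explicitly as a composition $\psi(\tau_1, \dots, \tau_k)$, where $\tau_1, \dots, \tau_k$ are $n$\hyp{}ary minors of functions in $G$ and $\psi$ is a $k$\hyp{}ary self\hyp{}dual monotone function. Since $\clIc$ is the clone of projections, $G\clIc$ is precisely the set of all minors of members of $G$, so any such composition lies in $\clSM(G\clIc)$, and $\clSM(G\clIc) = \gen{G}$ by Lemma~\ref{lem:F-closure}. Hence producing $\psi$ and the $\tau_i$ with $f = \psi(\tau_1,\dots,\tau_k)$ completes the proof.

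To build the $\tau_i$, I would set $A := f^{-1}(1)$ and $B := f^{-1}(0)$, subsets of the finite set $\{0,1\}^n$ with $A \cup B = \{0,1\}^n$. For each pair $(\vect{a},\vect{a}') \in A \times A$, use condition~\ref{helpful:true} to pick a minor in $G_n$ that is $1$ at both $\vect{a}$ and $\vect{a}'$; for each pair in $B \times B$, use~\ref{helpful:false} to pick one that is $0$ at both; for each pair in $A \times B$, use~\ref{helpful:both} to pick one that is $1$ at the first entry and $0$ at the second. Let $\tau_1, \dots, \tau_k$ enumerate the resulting finite collection of minors (here $k \geq 1$, since $A \cup B \neq \emptyset$ forces at least one diagonal pair). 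Define $\Phi \colon \{0,1\}^n \to \{0,1\}^k$ by $\Phi(\vect{x}) := (\tau_1(\vect{x}), \dots, \tau_k(\vect{x}))$, and put $T := \Phi(A)$ and $F := \Phi(B)$.

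The key step is to verify that $T$ and $F$, viewed as subsets of $\{0,1\}^k$, satisfy the hypotheses of Lemma~\ref{lem:extend-SM}. Unwinding the definitions: the requirement $\vect{u} \nleq \vect{v}$ for $\vect{u} \in T$ and $\vect{v} \in F$ amounts to saying that for all $\vect{a} \in A$, $\vect{b} \in B$ there is a coordinate $i$ with $\tau_i(\vect{a}) = 1$ and $\tau_i(\vect{b}) = 0$, which holds because we inserted a witness from~\ref{helpful:both}; the requirement $\vect{u} \nleq \overline{\vect{u}'}$ for $\vect{u}, \vect{u}' \in T$ unwinds to: for all $\vect{a}, \vect{a}' \in A$ some coordinate $i$ has $\tau_i(\vect{a}) = \tau_i(\vect{a}') = 1$, which holds by~\ref{helpful:true}; symmetrically $\overline{\vect{v}} \nleq \vect{v}'$ for $\vect{v}, \vect{v}' \in F$ corresponds to~\ref{helpful:false}. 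Taking $\vect{a} = \vect{a}'$ (resp.\ $\vect{b} = \vect{b}'$) in these conditions is legitimate and is exactly what guarantees that every vector of $T$ has a $1$\hyp{}entry and every vector of $F$ has a $0$\hyp{}entry. Then Lemma~\ref{lem:extend-SM}\ref{lem:extend-SM:function} supplies a $k$\hyp{}ary $\psi \in \clSM$ with $\psi \equiv 1$ on $T$ and $\psi \equiv 0$ on $F$.

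Finally, set $h := \psi(\tau_1, \dots, \tau_k)$, so that $h(\vect{x}) = \psi(\Phi(\vect{x}))$ for all $\vect{x} \in \{0,1\}^n$. If $\vect{x} \in A$, then $\Phi(\vect{x}) \in T$, so $h(\vect{x}) = 1 = f(\vect{x})$; if $\vect{x} \in B$, then $\Phi(\vect{x}) \in F$, so $h(\vect{x}) = 0 = f(\vect{x})$. Since $A \cup B = \{0,1\}^n$, we get $h = f$, and therefore $f = h \in \clSM(G\clIc) = \gen{G}$. I expect the only real obstacle to be the bookkeeping in the third paragraph: because Lemma~\ref{lem:extend-SM} is stated in terms of the order and complementation on $\{0,1\}^k$, one must carefully translate each order inequality into the correct pattern of zeros and ones among the $\tau_i$ and keep the diagonal cases in view; the degenerate situations $A = \emptyset$ (so $f$ is the constant $0$ function) or $B = \emptyset$ need no separate treatment, since the corresponding hypothesis of Lemma~\ref{lem:extend-SM} then becomes vacuous.
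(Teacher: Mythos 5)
Your proposal is correct and follows essentially the same route as the paper's proof: encode each point of $\{0,1\}^n$ by its vector of values under $n$\hyp{}ary minors of $G$, check via conditions \ref{helpful:true}--\ref{helpful:both} that the images of $f^{-1}(1)$ and $f^{-1}(0)$ satisfy the hypotheses of Lemma~\ref{lem:extend-SM}, and compose the resulting function from $\clSM$ with the minors, invoking Lemma~\ref{lem:F-closure}. The only (immaterial) difference is that the paper uses the full set $G_n$ of $n$\hyp{}ary minors as coordinates rather than a selected finite family of witnesses.
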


\begin{proof}
Let $f \in C$ with $\arity{f} = n$, and assume that $f$ is $G$\hyp{}bisectable.
Let $N$ be the cardinality of $G_n$ (there are only a finite number of $n$\hyp{}ary Boolean functions, so the set $G_n$ is certainly finite),
and assume that $\varphi_1, \dots, \varphi_N$ is an enumeration of the functions in $G_n$ in some fixed order.
Let $\varphi \colon \{0,1\}^n \to \{0,1\}^N$, $\varphi(\vect{a}) := (\varphi_1(\vect{a}), \dots, \varphi_N(\vect{a}))$.
Let $T := \varphi(f^{-1}(1))$ and $F := \varphi(f^{-1}(0))$.
Let $\vect{u}, \vect{v} \in T$; then there exist $\vect{a}, \vect{a}' \in f^{-1}(1)$ such that $f(\vect{a}) = \vect{u}$ and $f(\vect{a}') = \vect{v}$.
By condition \ref{helpful:true}, there exists an index $i \in \nset{N}$ such that $\varphi_i(\vect{a}) = \varphi_i(\vect{a}') = 1$; hence $\vect{u} = \varphi(\vect{a}) \nleq \overline{\varphi(\vect{a}')} = \overline{\vect{v}}$.
Similarly, if $\vect{u}, \vect{v} \in F$, then there exist $\vect{b}, \vect{b}' \in f^{-1}(0)$ such that $f(\vect{b}) = \vect{u}$ and $f(\vect{b}') = \vect{v}$.
By condition \ref{helpful:false}, there exists an index $j \in \nset{N}$ such that $\varphi_j(\vect{b}) = \varphi_j(\vect{b}') = 0$; hence $\overline{\vect{u}} = \overline{\varphi(\vect{b})} \nleq \varphi(\vect{b}') = \vect{v}$.
If $\vect{u} \in T$ and $\vect{v} \in F$, then there exist $\vect{a} \in f^{-1}(1)$ and $\vect{b} \in f^{-1}(0)$ such that $f(\vect{a}) = \vect{u}$ and $f(\vect{b}) = \vect{v}$.
By condition \ref{helpful:both}, there exists an index $k \in \nset{N}$ such that $\varphi_k(\vect{a}) = 1$ and $\varphi_k(\vect{b}) = 0$; hence $\vect{u} = \varphi(\vect{a}) \nleq \varphi(\vect{b}) = \vect{v}$.
Therefore the sets $T$ and $F$ satisfy the hypotheses of Lemma~\ref{lem:extend-SM}, and it follows that there exists an $N$\hyp{}ary function $h \in \clSM$ such that $T \subseteq h^{-1}(1)$ and $F \subseteq h^{-1}(0)$.
Then it holds that $f = h \circ \varphi = h(\varphi_1, \dots, \varphi_N)$.
Since $h \in \clSM$ and $\varphi_1, \dots, \varphi_N \in G \, \clIc$, we conclude with the help of Lemma~\ref{lem:F-closure} that $f \in \clSM (G \, \clIc) = \gen{G}$.
\end{proof}

\begin{lemma}
\label{lem:helpful}
Let $K$ be a $(\clIc,\clSM)$\hyp{}stable class, $F \subseteq K$, and $G \subseteq \gen{F}$.
\begin{enumerate}[label=\upshape{(\roman*)}]
\item\label{lem:helpful:general}
If $K \subseteq \gen{G}$, then $\gen{F} = K$.
\item\label{lem:helpful:bisectable}
If $K$ is $G$\hyp{}bisectable, then $\gen{F} = K$.
\end{enumerate}
\end{lemma}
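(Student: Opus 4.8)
The plan is to derive both statements by a short containment-chasing argument, exploiting the fact that the $(\clIc,\clSM)$\hyp{}stable classes form a closure system, so that $\gen{\cdot}$ is a closure operator (extensive, monotone, idempotent), with the additional characterization that $\gen{H} = H$ precisely when $H$ is $(\clIc,\clSM)$\hyp{}stable, and that $\gen{H}$ is the smallest $(\clIc,\clSM)$\hyp{}stable class containing $H$.

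For part~\ref{lem:helpful:general}, I would first note that $\gen{F} \subseteq K$: this is immediate from $F \subseteq K$ and the $(\clIc,\clSM)$\hyp{}stability of $K$, by minimality of $\gen{F}$ among $(\clIc,\clSM)$\hyp{}stable classes containing $F$. Next, from the hypothesis $G \subseteq \gen{F}$ and the fact that $\gen{F}$ is itself $(\clIc,\clSM)$\hyp{}stable, the same minimality gives $\gen{G} \subseteq \gen{F}$ (equivalently, apply monotonicity and idempotency: $\gen{G} \subseteq \gen{\gen{F}} = \gen{F}$). Chaining these two inclusions with the remaining hypothesis $K \subseteq \gen{G}$ yields $K \subseteq \gen{G} \subseteq \gen{F} \subseteq K$, whence $\gen{F} = K$.

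For part~\ref{lem:helpful:bisectable}, the plan is simply to reduce to part~\ref{lem:helpful:general} via Lemma~\ref{lem:helpful:step}: if $K$ is $G$\hyp{}bisectable, then by definition every $f \in K$ is $G$\hyp{}bisectable, so Lemma~\ref{lem:helpful:step} gives $f \in \gen{G}$ for each such $f$; hence $K \subseteq \gen{G}$, and part~\ref{lem:helpful:general} applies to conclude $\gen{F} = K$.

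I do not expect any genuine obstacle here: the only care needed is in keeping the directions of the inclusions straight and citing the correct property of the closure operator at each step. All of the substantive content has already been absorbed into Lemma~\ref{lem:extend-SM} and, decisively, into Lemma~\ref{lem:helpful:step}, whose proof constructs the self\hyp{}dual monotone function $h$ witnessing $f = h(\varphi_1,\dots,\varphi_N) \in \clSM(G\,\clIc) = \gen{G}$; the present lemma is the convenient packaging of those facts for repeated use in the propositions that follow.
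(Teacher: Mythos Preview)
Your proposal is correct and follows essentially the same approach as the paper: for part~\ref{lem:helpful:general} the paper likewise chains $K \subseteq \gen{G} \subseteq \gen{\gen{F}} = \gen{F} \subseteq \gen{K} = K$ using monotonicity and idempotency of the closure operator, and for part~\ref{lem:helpful:bisectable} it reduces to part~\ref{lem:helpful:general} via Lemma~\ref{lem:helpful:step} exactly as you do.
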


\begin{proof}
\ref{lem:helpful:general}
Since $G \subseteq \gen{F}$, it follows by the general properties of closure operators that $\gen{G} \subseteq \gen{\gen{F}} = \gen{F}$.
Since $F \subseteq K$ and $K$ is $(\clIc,\clSM)$\hyp{}stable, we have $\gen{F} \subseteq \gen{K} = K$.
Putting these inclusions together with the assumption $K \subseteq \gen{G}$, we obtain $K \subseteq \gen{G} \subseteq \gen{F} \subseteq K$.

\ref{lem:helpful:bisectable}
We have $K \subseteq \gen{G}$ by Lemma~\ref{lem:helpful:step}, so the statement follows by part \ref{lem:helpful:general}.
\end{proof}

We now have the tools at hand for showing that each class listed in Theorem~\ref{thm:SM-stable} is generated by any subset that is suggested by Figure~\ref{fig:SM-stable}.
We start from the bottom of the lattice and proceed gradually upwards.

\begin{proposition}
\label{prop:empty}
$\gen{\emptyset} = \clEmpty$.
\end{proposition}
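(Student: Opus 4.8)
The statement is essentially immediate from Lemma~\ref{lem:F-closure}, which gives $\gen{\emptyset} = \clSM(\emptyset\,\clIc)$. First I would observe that composition of function classes annihilates the empty class: by the very definition of $IJ$, forming $f(g_1,\dots,g_n)$ requires selecting members $g_1,\dots,g_n$ from $J$ (and a member $f$ from $I$), so if either factor is empty the composite class is empty. Hence $\emptyset\,\clIc = \emptyset$, and then $\clSM\,\emptyset = \emptyset = \clEmpty$, so $\gen{\emptyset} = \clEmpty$.

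Alternatively, and perhaps more transparently, I would argue directly that $\clEmpty$ is the smallest $(\clIc,\clSM)$-stable class: it is contained in every class of Boolean functions, in particular in every $(\clIc,\clSM)$-stable class, and it is itself $(\clIc,\clSM)$-stable since $\clEmpty\,\clIc = \emptyset = \clEmpty$ and $\clSM\,\clEmpty = \emptyset = \clEmpty$. Being the least $(\clIc,\clSM)$-stable class containing $\emptyset$, it equals $\gen{\emptyset}$ by definition of the generated class. There is no real obstacle here; the only thing to be careful about is the (trivial) point that class composition with an empty factor yields the empty class, which is what makes both formulations go through.
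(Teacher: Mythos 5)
Your argument is correct and matches the paper's (the paper simply declares the proposition trivial): the key observation that class composition with an empty factor yields $\clEmpty$, so that $\clEmpty$ is itself $(\clIc,\clSM)$\hyp{}stable and hence the least such class, is exactly the content being left implicit. Either of your two formulations is a valid way of spelling this out.
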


\begin{proof}
Trivial.
\end{proof}

\begin{proposition}
\label{prop:constants}
\leavevmode
\begin{enumerate}[label=\upshape{(\roman*)}]
\item\label{prop:constants:D0Ca}
Let $a \in \{0,1\}$.
For any $f \in \clVaka{a}$, we have $\gen{f} = \clVaka{a}$.
\item\label{prop:constants:D0}
For any $f, g \in \clVak$ with $f \notin \clVako$ and $g \notin \clVaki$, we have $\gen{f,g} = \clVak$.
\end{enumerate}
\end{proposition}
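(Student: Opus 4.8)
The plan is to reduce both statements to Lemma~\ref{lem:F-closure}, which gives $\gen{F} = \clSM(F\clIc)$, together with one elementary fact: every $g \in \clSM$ satisfies $g(\vect{0}) = 0$ and $g(\vect{1}) = 1$. This holds because $\clSM \subseteq \clOI$ (which can be read off Post's lattice in Figure~\ref{fig:Post}), or directly: self-duality of $g$ forces $g(\vect{0}) = \overline{g(\vect{1})}$, hence $g(\vect{0}) \neq g(\vect{1})$, and then monotonicity forces $g(\vect{0}) = 0$ and $g(\vect{1}) = 1$. In particular, for any $a \in \{0,1\}$ and any $g \in \clSM^{(k)}$ we have $g(a, \dots, a) = a$.

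For (i), I would fix $a \in \{0,1\}$ and an $n$-ary $f \in \clVaka{a}$, and first observe that $f\clIc = \clVaka{a}$: any minor $f_\sigma$ satisfies $f_\sigma(\vect{x}) = f(\vect{x}\sigma) = a$, so $f_\sigma$ is the constant-$a$ function of its arity, and since for every $m \in \IN_{+}$ there is a map $\sigma \colon \nset{n} \to \nset{m}$, every constant-$a$ function arises in this way. Next I would check $\clSM\,\clVaka{a} = \clVaka{a}$: if $g \in \clSM^{(k)}$ and $c_1, \dots, c_k \in \clVaka{a}$ are $m$-ary, then $g(c_1, \dots, c_k)(\vect{x}) = g(a, \dots, a) = a$, so the composite is again the constant-$a$ function of arity $m$; conversely $\clVaka{a} = \clIc\,\clVaka{a} \subseteq \clSM\,\clVaka{a}$ since $\clIc \subseteq \clSM$. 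Combining these, $\gen{f} = \clSM(f\clIc) = \clSM\,\clVaka{a} = \clVaka{a}$.

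For (ii), note that $f \in \clVak \setminus \clVako$ forces $f$ to be a constant-$1$ function and $g \in \clVak \setminus \clVaki$ forces $g$ to be a constant-$0$ function. Exactly as above, $\{f\}\clIc$ is the set of all constant-$1$ functions and $\{g\}\clIc$ is the set of all constant-$0$ functions, so $\{f,g\}\clIc = \clVak$. Then $\gen{f,g} = \clSM(\{f,g\}\clIc) = \clSM\,\clVak$, and the same preservation property yields $\clSM\,\clVak = \clVak$ (a composite of a self-dual monotone function with constant functions is constant, and every constant function $c$ equals $\id(c) \in \clSM\,\clVak$); hence $\gen{f,g} = \clVak$. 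The argument is routine; the only step needing a moment's care is the identity $f\clIc = \clVaka{a}$, which rests on the existence of a map $\nset{n} \to \nset{m}$ for every target arity $m$ and on the $0$/$1$-preservation of the members of $\clSM$.
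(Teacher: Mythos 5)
Your proof is correct and rests on the same two observations as the paper's: every constant function of value $a$ (of every arity) is a minor of $f$, and left composition with $\clSM$ preserves constant values because $\clSM \subseteq \clOI$. The only cosmetic difference is that you compute $\gen{f} = \clSM(f\,\clIc)$ explicitly via Lemma~\ref{lem:F-closure}, whereas the paper gets the upper bound $\gen{f} \subseteq \clVaka{a}$ by citing the already-established $(\clIc,\clSM)$-stability of $\clVaka{a}$ from Proposition~\ref{prop:SM-stable:sufficiency}; both routes are sound.
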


\begin{proof}
\ref{prop:constants:D0Ca}
We have $f = \cf{n}{a}$ for some $n \in \IN_{+}$.
We obtain all constant functions taking value $a$ as minors of $f$, so
$\clVaka{a} \subseteq \gen{f} \subseteq \clVaka{a}$.

\ref{prop:constants:D0}
We have $f = \cf{m}{1}$ and $g = \cf{n}{0}$ for some $m, n \in \IN_{+}$.
By part \ref{prop:constants:D0Ca}, $\gen{f} = \clVaki$ and $\gen{g} = \clVako$, so
$\clVak = \clVako \cup \clVaki = \gen{g} \cup \gen{f} \subseteq \gen{f,g} \subseteq \clVak$.
\end{proof}

\begin{lemma}
\label{lem:apu:minors}
\leavevmode
\begin{enumerate}[label=\upshape{(\roman*)}]
\item\label{lem:apu:minors:01}
If $f \in \clOI$, then $\id$ is a minor of $f$.
\item\label{lem:apu:minors:10}
If $f \in \clIO$, then $\neg$ is a minor of $f$.
\item\label{lem:apu:minors:00}
If $f \in \clOO$, then $0$ is a minor of $f$.
\item\label{lem:apu:minors:11}
If $f \in \clII$, then $1$ is a minor of $f$.
\end{enumerate}
\end{lemma}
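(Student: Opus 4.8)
The plan is to treat all four parts uniformly by passing to the \emph{diagonal minor} of $f$, that is, the unary function obtained by identifying all arguments of $f$ into a single variable. Concretely, write $n := \arity{f}$ and let $\sigma \colon \nset{n} \to \nset{1}$ be the (unique) constant map. Then $g := f_\sigma$ is a unary minor of $f$, and for $a \in \{0,1\}$ we have $g(a) = f(a \sigma) = f(a^n)$; in particular $g(0) = f(\vect{0})$ and $g(1) = f(\vect{1})$, since $0^n = \vect{0}$ and $1^n = \vect{1}$. So the single function $g$ records exactly the pair of values $(f(\vect{0}), f(\vect{1}))$ that defines which of the four classes $f$ belongs to.

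It then remains only to read off the four cases from the defining conditions of the classes $\clOI$, $\clIO$, $\clOO$, $\clII$. If $f \in \clOI$, then $g(0) = f(\vect{0}) = 0$ and $g(1) = f(\vect{1}) = 1$, so $g = \id$. If $f \in \clIO$, then $g(0) = 1$ and $g(1) = 0$, so $g = \neg$. If $f \in \clOO$, then $g(0) = g(1) = 0$, so $g$ is the unary constant function $0$. If $f \in \clII$, then $g(0) = g(1) = 1$, so $g$ is the unary constant function $1$. In each case $g$ is a minor of $f$, which is the assertion.

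There is no substantive obstacle here; the lemma is an immediate consequence of the definitions, and the only point worth stating explicitly is the convention that a unary minor obtained by collapsing all arguments of $f$ evaluates $f$ along the diagonal, hence has its two values equal to $f(\vect{0})$ and $f(\vect{1})$ respectively. (In the write-up one could even avoid introducing $\sigma$ and simply say ``the minor $f_{1 \cdots 1}$'' in the word notation for minors introduced earlier.)
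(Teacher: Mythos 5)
Your proof is correct and coincides with the paper's argument: both identify all arguments of $f$ to obtain the unary diagonal minor $g$ with $g(a) = f(a,\dots,a)$, and then read off $g(0) = f(\vect{0})$ and $g(1) = f(\vect{1})$ to identify $g$ as $\id$, $\neg$, $0$, or $1$ in the respective cases.
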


\begin{proof}
By identifying all arguments, we obtain the unary minor $f'$ of $f$ that satisfies $f'(a) = f(a, \dots, a)$ for $a \in \{0,1\}$.
\end{proof}

\begin{proposition}
\label{prop:SM}
For any $f \in \clSM$, we have $\gen{f} = \clSM$.
\end{proposition}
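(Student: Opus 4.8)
The plan is to prove the two inclusions separately, the nontrivial one being $\clSM \subseteq \gen{f}$. The inclusion $\gen{f} \subseteq \clSM$ is immediate: $\clSM$ is a clone containing $\clSM$, hence minor-closed and stable under left composition with $\clSM$, i.e.\ $(\clIc,\clSM)$-stable, and it contains $f$, so it contains $\gen{f}$.

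For the reverse inclusion, the key observation is that $\id$ is a minor of $f$. Indeed, since $f$ is monotone we have $f(\vect{0}) \leq f(\vect{1})$, and since $f$ is self-dual we have $f(\vect{1}) = \overline{f(\vect{0})}$; together these force $f(\vect{0}) = 0$ and $f(\vect{1}) = 1$, that is, $f \in \clOI$ (this is also visible in Post's lattice, as $\clSM \subseteq \clMc \subseteq \clOI$). By Lemma~\ref{lem:apu:minors}\ref{lem:apu:minors:01}, $\id$ is then a minor of $f$, so $\id \in \{f\}\clIc \subseteq \gen{f}$.

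It remains to deduce $\clSM \subseteq \gen{f}$. Since the set $\{f\}\clIc$ of minors of $f$ is itself minor-closed and contains $\id$, it contains every projection $\pr^{(m)}_i$; hence $\clIc \subseteq \{f\}\clIc$. Using Lemma~\ref{lem:F-closure},
\[
\gen{f} = \clSM(\{f\}\clIc) \supseteq \clSM\,\clIc \supseteq \clSM,
\]
the last inclusion holding because each $g \in \clSM^{(m)}$ equals $g(\pr^{(m)}_1, \dots, \pr^{(m)}_m) \in \clSM\,\clIc$. Together with the first step this gives $\gen{f} = \clSM$.

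I do not expect any real obstacle here; the only point one must not overlook is that $f \in \clOI$ holds automatically for every self-dual monotone $f$, after which the argument is a one-line manipulation with the closure operator. (If one prefers to exhibit the general machinery, one can instead take $G := \clIc \subseteq \gen{f}$ and verify that $\clSM$ is $\clIc$-bisectable: for a self-dual monotone $h$, any $\vect{a}, \vect{a}' \in h^{-1}(1)$ satisfy $\vect{a} \wedge \vect{a}' \neq \vect{0}$, since $\vect{a} \wedge \vect{a}' = \vect{0}$ would give $\vect{a} \leq \overline{\vect{a}'}$ and hence $h(\vect{a}) \leq h(\overline{\vect{a}'}) = \overline{h(\vect{a}')} = 0$; dually any $\vect{b}, \vect{b}' \in h^{-1}(0)$ satisfy $\vect{b} \vee \vect{b}' \neq \vect{1}$; and a point of $h^{-1}(1)$ is incomparable with a point of $h^{-1}(0)$, so $\pr^{(n)}_i$ separates them for a suitable $i$. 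Lemma~\ref{lem:helpful}\ref{lem:helpful:bisectable} then yields $\gen{f} = \clSM$.)
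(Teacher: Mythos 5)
Your proof is correct and follows essentially the same route as the paper: both hinge on the observation that $f \in \clSM \subseteq \clOI$ forces $\id$ to be a minor of $f$ (Lemma~\ref{lem:apu:minors}), after which Lemma~\ref{lem:F-closure} gives $\clSM = \clSM\,\clIc \subseteq \gen{f}$. Your extra justification that $f(\vect{0})=0$ and $f(\vect{1})=1$, and the optional bisectability detour, are fine but not needed beyond what the paper already records.
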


\begin{proof}
We have $\id \in \gen{f}$ by Lemma~\ref{lem:apu:minors}\ref{lem:apu:minors:01}, and consequently $\clSM = \clSM \, \clIc = \clSM (\{\id\} \clIc) = \gen{\id} \subseteq \gen{f} \subseteq \clSM$.
\end{proof}

\begin{lemma}
\label{lem:apu:minors:01-M}
If $f \in \clOI \setminus \clM$, then $f$ has a ternary minor $f'$ satisfying $f'(0,0,0) = 0$, $f'(1,0,0) = 1$, $f'(1,1,0) = 0$, $f'(1,1,1) = 1$.
\end{lemma}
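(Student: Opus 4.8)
The plan is to read off from the failure of monotonicity a four\hyp{}element chain in $\{0,1\}^n$ (where $n := \arity{f}$) along which $f$ takes the values $0, 1, 0, 1$, and then to obtain $f'$ by collapsing this chain onto the chain $(0,0,0) < (1,0,0) < (1,1,0) < (1,1,1)$ of $\{0,1\}^3$ via a suitable identification of variables.

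First I would use $f \notin \clM$ to fix tuples $\vect{a}, \vect{b} \in \{0,1\}^n$ with $\vect{a} \leq \vect{b}$, $f(\vect{a}) = 1$, and $f(\vect{b}) = 0$. Since $f \in \clOI$ we have $f(\vect{0}) = 0$ and $f(\vect{1}) = 1$, so comparing the values of $f$ along $\vect{0} \leq \vect{a} \leq \vect{b} \leq \vect{1}$ forces each of these inequalities to be strict; hence $\vect{0} < \vect{a} < \vect{b} < \vect{1}$ and $f$ takes the values $0, 1, 0, 1$ at these four tuples, in order.

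Next I would define $\sigma \colon \nset{n} \to \nset{3}$ by setting $\sigma(i) := 1$ if $a_i = 1$, $\sigma(i) := 2$ if $a_i = 0$ and $b_i = 1$, and $\sigma(i) := 3$ if $b_i = 0$; this is well defined precisely because $\vect{a} \leq \vect{b}$ excludes the combination $a_i = 1$, $b_i = 0$. Put $f' := f_\sigma$, a ternary minor of $f$. Recalling that $f_\sigma(\vect{c}) = f(\vect{c}\sigma)$ and that $\vect{c}\sigma$ carries a $1$ in position $i$ exactly when $c_{\sigma(i)} = 1$, I would verify: $(0,0,0)\sigma = \vect{0}$ and $(1,1,1)\sigma = \vect{1}$; the $1$\hyp{}entries of $(1,0,0)\sigma$ are the indices $i$ with $\sigma(i) = 1$, that is, with $a_i = 1$, so $(1,0,0)\sigma = \vect{a}$; and the $1$\hyp{}entries of $(1,1,0)\sigma$ are the indices $i$ with $\sigma(i) \in \{1,2\}$, which (using $\vect{a} \leq \vect{b}$) are exactly the indices $i$ with $b_i = 1$, so $(1,1,0)\sigma = \vect{b}$. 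Therefore $f'(0,0,0) = f(\vect{0}) = 0$, $f'(1,0,0) = f(\vect{a}) = 1$, $f'(1,1,0) = f(\vect{b}) = 0$, and $f'(1,1,1) = f(\vect{1}) = 1$, as required.

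I do not expect any genuine obstacle: the whole argument is the choice of $\sigma$ together with the routine evaluations above. As a sanity check one may note that $\sigma$ is even surjective, since $\sigma^{-1}(1)$, $\sigma^{-1}(2)$, $\sigma^{-1}(3)$ are nonempty by $\vect{0} < \vect{a}$, $\vect{a} < \vect{b}$, $\vect{b} < \vect{1}$, respectively; but surjectivity is not needed for the statement.
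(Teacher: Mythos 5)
Your proof is correct and follows essentially the same route as the paper: extract a chain $\vect{0} < \vect{a} < \vect{b} < \vect{1}$ with $f$-values $0,1,0,1$ and collapse it to a ternary minor by identifying the three blocks of coordinates determined by $(a_i,b_i)$. The only cosmetic difference is that you write the identification map $\sigma$ explicitly, whereas the paper first permutes the arguments so that $\vect{a} = 1^i0^{j+k}$ and $\vect{b} = 1^{i+j}0^k$ and then identifies consecutive blocks.
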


\begin{proof}
Since $f \in \clOI$, we have $f(\vect{0}) = 0$ and $f(\vect{1}) = 1$.
Since $f \notin \clM$, there exist tuples $\vect{a}, \vect{b} \in \{0,1\}^n$ such that $\vect{0} < \vect{a} < \vect{b} <  \vect{1}$ and $1 = \vect{a} > \vect{b} = 0$.
Without loss of generality, we may assume that $\vect{a} = 1^i 0^{j+k}$ and $\vect{b} = 1^{i+j} 0^k$ for some $i, j, k \in \IN_{+}$.
Let $f'$ be the ternary minor obtained from $f$ by identifying three blocks of arguments: the first $i$ arguments, the next $j$ arguments, and the last $k$ arguments.
Then $f'(0,0,0) = f(\vect{0}) = 0$, $f'(1,0,0) = f(\vect{a}) = 1$, $f'(1,1,0) = f(\vect{b}) = 0$, and $f'(1,1,1) = f(\vect{1}) = 1$.
\end{proof}

\begin{proposition}
\label{prop:Sc}
For any $f \in \clSc \setminus \clSM$, we have $\gen{f} = \clSc$.
\end{proposition}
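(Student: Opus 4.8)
The plan is to apply Lemma~\ref{lem:helpful}\ref{lem:helpful:bisectable} with $K := \clSc$, which is a clone containing $\clSM$ and hence $(\clIc,\clSM)$\hyp{}stable, to the singleton $\{f\}$, and to a set $G \subseteq \gen{f}$ chosen so that $\clSc$ is $G$\hyp{}bisectable. The first step is to extract the members of $G$. Since $\clSc \subseteq \clOI$ and $f \in \clS \setminus \clM$ (because $f \in \clS$ and $f \notin \clSM = \clS \cap \clM$), Lemma~\ref{lem:apu:minors}\ref{lem:apu:minors:01} gives $\id \in \gen{f}$, whence $\clSM = \gen{\id} \subseteq \gen{f}$ by Proposition~\ref{prop:SM}; moreover $f \in \clOI \setminus \clM$, so Lemma~\ref{lem:apu:minors:01-M} yields a ternary minor $f'$ of $f$ with $f'(0,0,0) = 0$, $f'(1,0,0) = 1$, $f'(1,1,0) = 0$, $f'(1,1,1) = 1$. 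Being a minor of a member of the clone $\clSc$, the function $f'$ is self\hyp{}dual, so its value is determined on every argument except the complementary pair $(0,1,0)$, $(1,0,1)$; inspecting the truth table shows that $f' = \mathord{\oplus_3}$ if $f'(0,1,0) = 1$ and $f' = \mu(x_1,\neg x_2,x_3)$ if $f'(0,1,0) = 0$. Set $G := \clSM \cup \{f'\} \subseteq \gen{f}$.

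It remains to show that $\clSc$ is $G$\hyp{}bisectable, after which $\gen{f} = \clSc$ follows from Lemma~\ref{lem:helpful}\ref{lem:helpful:bisectable}. Fix $h \in \clSc$ of arity $n$ and verify conditions \ref{helpful:true}--\ref{helpful:both} of Definition~\ref{def:helpful}; here $\clSM^{(n)} \subseteq G_n$, and every function in $G_n$ is self\hyp{}dual (minors of self\hyp{}dual functions are self\hyp{}dual). Since $h$ and the members of $G_n$ are self\hyp{}dual, condition \ref{helpful:false} follows from \ref{helpful:true} by replacing the two tuples with their complements, so only \ref{helpful:true} and \ref{helpful:both} need attention. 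For \ref{helpful:true}, let $\vect{a}, \vect{a}' \in h^{-1}(1)$. If there is a coordinate $i$ with $a_i = a'_i = 1$ (which in particular occurs when one of $\vect{a}, \vect{a}'$ equals $\vect{1}$), then $T := \{\vect{a}, \vect{a}'\}$ and $F := \emptyset$ satisfy the hypotheses of Lemma~\ref{lem:extend-SM} (note $\vect{a}, \vect{a}' \neq \vect{0}$), which supplies a $\tau \in \clSM^{(n)} \subseteq G_n$ with $\tau(\vect{a}) = \tau(\vect{a}') = 1$. Otherwise $\vect{a} \wedge \vect{a}' = \vect{0}$; as $h(\vect{a}) = h(\vect{a}') = 1$ rules out $\vect{a}' = \overline{\vect{a}}$, the supports of $\vect{a}$ and $\vect{a}'$ are disjoint but do not cover all coordinates, so there are three distinct coordinates $i, j, k$ with $(a_i, a_j, a_k) = (1,0,0)$ and $(a'_i, a'_j, a'_k) = (0,1,0)$. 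For \ref{helpful:both}, let $\vect{a} \in h^{-1}(1)$ and $\vect{b} \in h^{-1}(0)$. If $\vect{a} \nleq \vect{b}$, then $T := \{\vect{a}\}$ and $F := \{\vect{b}\}$ satisfy the hypotheses of Lemma~\ref{lem:extend-SM} (note $\vect{a} \neq \vect{0}$ and $\vect{b} \neq \vect{1}$), yielding a $\tau \in \clSM^{(n)} \subseteq G_n$ with $\tau(\vect{a}) = 1$ and $\tau(\vect{b}) = 0$. Otherwise $\vect{0} < \vect{a} < \vect{b} < \vect{1}$, and there are three distinct coordinates $i, m, l$ with $(a_i, a_m, a_l) = (0,1,0)$ and $(b_i, b_m, b_l) = (1,1,0)$.

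In the two remaining subcases one feeds the chosen coordinates into a suitable minor of $f'$, and the point is that both possibilities for $f'$ provide the minors needed: $\mathord{\oplus_3}$ itself satisfies $\mathord{\oplus_3}(1,0,0) = \mathord{\oplus_3}(0,1,0) = 1$ and $\mathord{\oplus_3}(1,1,0) = 0$, while the minors $\mu(y_1, \neg y_3, y_2)$ and $\mu(y_2, \neg y_1, y_3)$ of $\mu(x_1,\neg x_2,x_3)$ take the value $1$ at $(1,0,0)$ and $(0,1,0)$, respectively the values $1$ and $0$ at $(0,1,0)$ and $(1,1,0)$. Substituting the coordinate\hyp{}variables $x_i, x_j, x_k$ for $y_1, y_2, y_3$ in the appropriate minor (respectively $x_i, x_m, x_l$) produces an $n$\hyp{}ary minor $\tau$ of $f'$, hence a member of $G_n$, that witnesses \ref{helpful:true} (respectively \ref{helpful:both}) in the remaining subcase. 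This completes the verification that $\clSc$ is $G$\hyp{}bisectable, and therefore $\gen{f} = \clSc$.

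I expect the main obstacle to be this last step: producing, from the hypotheses on $h$ alone, three distinct coordinates carrying the prescribed pattern, and arranging the argument so that the same two small functions $\mathord{\oplus_3}$ and $\mu(x_1, \neg x_2, x_3)$ dispose of every subcase; apart from that, only the routine truth\hyp{}table checks for those two functions are involved.
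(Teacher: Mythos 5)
Your proof is correct and takes essentially the same route as the paper's: extract the ternary minor $f'$ via Lemma~\ref{lem:apu:minors:01-M}, exploit the self\hyp{}duality of $f'$ (as a minor of a member of the clone $\clSc$), and verify that $\clSc$ is $G$\hyp{}bisectable by a coordinate\hyp{}pattern case analysis before invoking Lemma~\ref{lem:helpful}. The differences are only cosmetic and all check out: the paper takes $G = \{f'\}$ and uses the projections $\id_i \leq f'$ where you enlarge $G$ to $\clSM \cup \{f'\}$ and invoke Lemma~\ref{lem:extend-SM}, it verifies condition~\ref{helpful:false} directly rather than deducing it from condition~\ref{helpful:true} by self\hyp{}duality, and it works with the prescribed values of $f'$ without classifying $f'$ as $\mathord{\oplus_3}$ or $\mu(x_1,\neg x_2,x_3)$.
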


\begin{proof}
By Lemma~\ref{lem:apu:minors:01-M}, there is a ternary minor $f' \leq f$ such that $f'(0,0,0) = 0$, $f'(1,0,0) = 1$, $f'(1,1,0) = 0$, $f(1,1,1)$.
Moreover, since $\clSc$ is minor\hyp{}closed, the function $f'$ is self\hyp{}dual, so $f'(0,1,1) = 0$ and $f'(0,0,1) = 1$.
By Lemma~\ref{lem:helpful} it suffices to show that $\clSc$ is $G$\hyp{}bisectable for $G := \{f'\}$.
Note that $\id \leq f'$.
So, let $\theta \in \clSc$.
We verify that conditions \ref{helpful:true}, \ref{helpful:false}, and \ref{helpful:both} of Definition~\ref{def:helpful} are satisfied.

Condition \ref{helpful:true}:
Let $\vect{a}, \vect{a}' \in \theta^{-1}(1)$.
We have $\vect{a} \neq \overline{\vect{a}'}$ and $\vect{0} \notin \{\vect{a}, \vect{a}'\}$.
Then one of the following cases must occur, for some $i$, $j$, $k$:
(i) $\big( \begin{smallmatrix} a_i \\ a'_i \end{smallmatrix} \big) = \big( \begin{smallmatrix} 1 \\ 1 \end{smallmatrix} \big)$, in which case $\id_i(\vect{a}) = \id_i(\vect{a}') = 1$;
(ii) $\big( \begin{smallmatrix} a_i & a_j & a_k \\ a'_i & a'_j & a'_k \end{smallmatrix} \big) = \big( \begin{smallmatrix} 0 & 1 & 0 \\ 0 & 0 & 1 \end{smallmatrix} \big)$, in which case $f'_{jik}(\vect{a}) = f'_{jik}(\vect{a}') = 1$.

Condition \ref{helpful:false}:
Let $\vect{b}, \vect{b}' \in \theta^{-1}(0)$.
We have $\vect{b} \neq \overline{\vect{b}'}$ and $\vect{1} \notin \{\vect{b}, \vect{b}'\}$.
Then one of the following cases must occur, for some $i$, $j$, $k$:
(i) $\big( \begin{smallmatrix} b_i \\ b'_i \end{smallmatrix} \big) = \big( \begin{smallmatrix} 0 \\ 0 \end{smallmatrix} \big)$, in which case $\id_i(\vect{b}) = \id_i(\vect{b}') = 0$;
(ii) $\big( \begin{smallmatrix} b_i & b_j & b_k \\ b'_i & b'_j & b'_k \end{smallmatrix} \big) = \big( \begin{smallmatrix} 1 & 0 & 1 \\ 1 & 1 & 0 \end{smallmatrix} \big)$, in which case $f'_{kij}(\vect{b}) = f'_{kij}(\vect{b}') = 0$.

Condition \ref{helpful:both}:
Let $\vect{a} \in \theta^{-1}(1)$ and $\vect{b} \in \theta^{-1}(0)$.
We have $\vect{0} \neq \vect{a} \neq \vect{b} \neq \vect{1}$.
Then one of the following cases must occur, for some $i$, $j$, $k$:
(i) $\big( \begin{smallmatrix} a_i \\ b_i \end{smallmatrix} \big) = \big( \begin{smallmatrix} 1 \\ 0 \end{smallmatrix} \big)$, in which case $\id_i(\vect{a}) = 1$, $\id_i(\vect{b}) = 0$;
(ii) $\big( \begin{smallmatrix} a_i & a_j & a_k \\ b_i & b_j & b_k \end{smallmatrix} \big) = \big( \begin{smallmatrix} 0 & 1 & 0 \\ 1 & 1 & 0 \end{smallmatrix} \big)$, in which case $f'_{jik}(\vect{a}) = 1$, $f'_{jik}(\vect{b}) = 0$.
\end{proof}

\begin{proposition}
\label{prop:S}
For any $f, g \in \clS$ with $f \notin \clSc$ and $g \notin \clScneg$, we have\linebreak $\gen{f,g} = \clS$.
\end{proposition}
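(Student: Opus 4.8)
The plan is to reduce matters to Proposition~\ref{prop:Sc}. Since $f$ and $g$ are self\hyp{}dual, we have $f(\vect{0}) = \overline{f(\vect{1})}$ and $g(\vect{0}) = \overline{g(\vect{1})}$, so every self\hyp{}dual function lies in exactly one of $\clSc$ and $\clScneg$; consequently the hypotheses say precisely that $f \in \clScneg$ (hence $f \in \clIO$) and $g \in \clSc$ (hence $g \in \clOI$). I would first record two easy consequences: by Lemma~\ref{lem:apu:minors}\ref{lem:apu:minors:10}, $\neg$ is a minor of $f$, so $\neg \in \gen{f,g}$; and by Lemma~\ref{lem:apu:minors}\ref{lem:apu:minors:01}, $\id$ is a minor of $g$, hence all projections are minors of $g$.

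The key step is to exhibit a member of $\clSc \setminus \clSM$ inside $\gen{f,g}$. I would take the ternary function $h$ with $h(a_1,a_2,a_3) := \mu(\overline{a_1},a_2,a_3)$. By Lemma~\ref{lem:F-closure}, $\gen{f,g} = \clSM(\{f,g\}\clIc)$, and $h$ belongs to this class because it is $\mu$ (which lies in $\clSM$) composed with the ternary minor $(a_1,a_2,a_3) \mapsto \overline{a_1}$ of $f$ and with the projections $\pr^{(3)}_2, \pr^{(3)}_3$, which are minors of $g$. A direct check gives $h(\vect{0}) = 0$ and $h(\vect{1}) = 1$; moreover $h$ is self\hyp{}dual because $\mu$ commutes with complementation; and $h$ is not monotone, for instance $h(0,1,0) = 1 > 0 = h(1,1,0)$. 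Hence $h \in \clSc \setminus \clSM$.

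To conclude, I would apply Proposition~\ref{prop:Sc} to $h$, obtaining $\gen{h} = \clSc$; since $h \in \gen{f,g}$, this yields $\clSc \subseteq \gen{f,g}$. On the other hand $\gen{f,g} \subseteq \clS$ since $\clS$ is $(\clIc,\clSM)$\hyp{}stable and contains $f$ and $g$, while $f \in \gen{f,g} \setminus \clSc$; thus $\clSc \subsetneq \gen{f,g} \subseteq \clS$. Reading off from Figure~\ref{fig:SM-stable} that $\clS$ covers $\clSc$ in the lattice of $(\clIc,\clSM)$\hyp{}stable classes, we get $\gen{f,g} = \clS$. Equivalently, the last two steps can be packaged via Lemma~\ref{lem:helpful}\ref{lem:helpful:general} applied with $G := \clSc \cup \{f\}$.

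The main obstacle is the middle step, namely the choice of $h$: since $g$ may itself be monotone, neither $g$ nor its minors can leave $\clSM$, so monotonicity must be broken using $\neg$ (obtained from $f$), while the result must stay self\hyp{}dual — which is exactly why $h = \mu(\overline{a_1},a_2,a_3)$ works, as $\mu$ is self\hyp{}dual. Verifying that this $h$ is expressible as $\mu$ applied to minors of $f$ and $g$, and checking the three defining properties of $\clSc \setminus \clSM$, are the only nonroutine points; the rest is bookkeeping in the Hasse diagram.
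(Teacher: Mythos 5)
The construction of $h = \mu(\neg_1, \pr^{(3)}_2, \pr^{(3)}_3)$, the verification that $h \in \clSc \setminus \clSM$, and the deduction $\clSc = \gen{h} \subseteq \gen{f,g}$ via Proposition~\ref{prop:Sc} are all correct, but your final step is circular. You pass from $\clSc \subsetneq \gen{f,g} \subseteq \clS$ to $\gen{f,g} = \clS$ by ``reading off from Figure~\ref{fig:SM-stable} that $\clS$ covers $\clSc$ in the lattice of $(\clIc,\clSM)$\hyp{}stable classes.'' At this stage of the argument only Proposition~\ref{prop:SM-stable:sufficiency} is available: the listed classes are known to \emph{be} stable, but it is not yet known that they are the \emph{only} stable classes. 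That no $(\clIc,\clSM)$\hyp{}stable class lies strictly between $\clSc$ and $\clS$ is part of what Theorem~\ref{thm:SM-stable} asserts, and the chain of propositions to which Proposition~\ref{prop:S} belongs is precisely what proves it; you cannot invoke the completed lattice inside one of those propositions. Your alternative packaging via Lemma~\ref{lem:helpful}\ref{lem:helpful:general} with $G := \clSc \cup \{f\}$ has the same gap, since that lemma requires the hypothesis $\clS \subseteq \gen{G}$, which is exactly the assertion left unproved.

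What is missing is a direct proof that every self\hyp{}dual function lies in $\gen{\id,\neg}$ (you already have $\id \leq g$ and $\neg \leq f$, as does the paper). The paper supplies this with the stratified\hyp{}term machinery: since $\clS = \clonegen{\mu,\neg}$, Lemma~\ref{lem:str-terms} and Remark~\ref{rem:str-terms} applied to the identities $\neg(\neg(x_1)) \equiv \id(x_1)$ and $\neg(\mu(x_1,x_2,x_3)) \equiv \mu(\neg(x_1),\neg(x_2),\neg(x_3))$ give $\gen{\id,\neg} = \clonegen{\mu,\neg} = \clS$. Alternatively, your argument can be repaired without stratified terms by checking that $\clS$ is $\{\id,\neg\}$\hyp{}bisectable: for $\theta \in \clS$ and $\vect{a}, \vect{a}' \in \theta^{-1}(1)$, self\hyp{}duality gives $\vect{a} \neq \overline{\vect{a}'}$, so some coordinate agrees and one of $\id_i, \neg_i$ takes value $1$ on both; condition~\ref{helpful:false} is symmetric, and condition~\ref{helpful:both} needs only $\vect{a} \neq \vect{b}$; Lemma~\ref{lem:helpful}\ref{lem:helpful:bisectable} then finishes. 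With either repair the detour through $h$ and Proposition~\ref{prop:Sc} becomes unnecessary.
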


\begin{proof}
We have $f \in \clScneg$ and $g \in \clSc$.
By Lemma~\ref{lem:apu:minors}, we have $\id \leq g$ and $\neg \leq f$.
By Lemma~\ref{lem:helpful}\ref{lem:helpful:general}, it suffices to show that $\clS \subseteq \gen{\id, \neg}$.
Since $\clS = \clonegen{\mu, \neg}$, this follows by Lemma~\ref{lem:str-terms} and Remark~\ref{rem:str-terms}
from the following equivalences of terms:
\begin{gather*}
\neg(\neg(x_1)) \equiv \id(x_1), \\
\neg(\mu(x_1, x_2, x_3)) \equiv \mu(\neg(x_1), \neg(x_2), \neg(x_3)).
\qedhere
\end{gather*}
\end{proof}

\begin{proposition}
\label{prop:R}
\leavevmode
\begin{enumerate}[label=\upshape{(\roman*)}]
\item\label{prop:R:R00}
For any $f \in \clReflOO \setminus \clVako$, we have $\gen{f} = \clReflOO$.
\item\label{prop:R:R00C}
For any $f, g \in \clReflOOC$ such that $f \notin \clReflOO$ and $g \notin \clVak$, we have $\gen{f, g} = \clReflOOC$.
\item\label{prop:R:R}
For any $f, g \in \clRefl$ such that $f \notin \clReflOOC$ and $g \notin \clReflIIC$, we have $\gen{f, g} = \clRefl$.
\end{enumerate}
\end{proposition}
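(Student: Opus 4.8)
The plan is to establish part~\ref{prop:R:R00} in full and then obtain~\ref{prop:R:R00C} and~\ref{prop:R:R} cheaply from it, from Proposition~\ref{prop:constants}, and from the negation symmetry of Lemma~\ref{lem:IcSM-stable-negation}; throughout, $\clReflOO$, $\clReflOOC$, and $\clRefl$ are $(\clIc,\clSM)$-stable by Proposition~\ref{prop:SM-stable:sufficiency}. For~\ref{prop:R:R00}, I would first observe that $\mathord{+} \in \gen{f}$: since $f$ is not the constant~$0$, choose $\vect{c} \in f^{-1}(1)$; because $f(\vect{0}) = f(\vect{1}) = 0$ we have $\vect{c} \notin \{\vect{0},\vect{1}\}$, so identifying into one variable all arguments at which $\vect{c}$ is~$1$ and into another all arguments at which $\vect{c}$ is~$0$ produces a binary minor of $f$ taking the values $0,1,1,0$ at $(0,0),(1,0),(0,1),(1,1)$ --- using $f(\overline{\vect{c}}) = f(\vect{c})$ by reflexivity --- which is exactly $\mathord{+}$. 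By Lemma~\ref{lem:helpful}\ref{lem:helpful:bisectable} applied with $K = \clReflOO$, $F = \{f\}$, $G = \{\mathord{+}\}$, it then suffices to show that $\clReflOO$ is $\{\mathord{+}\}$-bisectable; here the $n$-ary minors of $\mathord{+}$ are precisely the functions $x_i + x_j$ with $i \neq j$ together with the constant~$0$.

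The bisectability check is the core of the argument. Fix an $n$-ary $\theta \in \clReflOO$; then $\theta^{-1}(1) \subseteq \{0,1\}^n \setminus \{\vect{0},\vect{1}\}$, and no element of $\theta^{-1}(0)$ is the complement of an element of $\theta^{-1}(1)$. Condition~\ref{helpful:false} is immediate by taking $\tau$ to be the constant~$0$. For condition~\ref{helpful:true} I would show that any $\vect{a},\vect{a}' \in \{0,1\}^n \setminus \{\vect{0},\vect{1}\}$ share a coordinate pair $i \neq j$ on which both differ: partitioning $\nset{n}$ into the four blocks determined by the value of $(a_k,a_k') \in \{0,1\}^2$, the absence of such a pair would force one of the blocks with $a_k = 0$ together with one of the blocks with $a_k = 1$ to be empty, which puts $\vect{a}$ or $\vect{a}'$ into $\{\vect{0},\vect{1}\}$, a contradiction; then $\tau := x_i + x_j$ works. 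For condition~\ref{helpful:both}, given $\vect{a} \in \theta^{-1}(1)$ and $\vect{b} \in \theta^{-1}(0)$, I would seek $i \neq j$ with $a_i \neq a_j$ but $b_i = b_j$: if no such pair existed, $\vect{b}$ would be constant on $\{k : a_k = 0\}$ and constant on $\{k : a_k = 1\}$ with opposite values, hence $\vect{b} \in \{\vect{a},\overline{\vect{a}}\}$, contradicting $\theta(\vect{b}) = 0 \neq 1 = \theta(\vect{a}) = \theta(\overline{\vect{a}})$; then again $\tau := x_i + x_j$ works. This yields $\gen{f} = \clReflOO$.

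For~\ref{prop:R:R00C}, the hypotheses force $f \in \clReflOOC \setminus \clReflOO = \clVaki$ and $g \in \clReflOOC \setminus \clVak = \clReflOO \setminus \clVako$, so $\gen{f} = \clVaki$ by Proposition~\ref{prop:constants}\ref{prop:constants:D0Ca} and $\gen{g} = \clReflOO$ by~\ref{prop:R:R00}; since $\clVako \subseteq \clReflOO$ one has $\clReflOOC = \clReflOO \cup \clVaki = \gen{g} \cup \gen{f} \subseteq \gen{f,g}$, and the reverse inclusion holds because $\clReflOOC$ is $(\clIc,\clSM)$-stable. For~\ref{prop:R:R}, reflexivity gives $f(\vect{0}) = f(\vect{1})$, so $f \notin \clReflOOC$ forces $f \in \clReflII \setminus \clVaki$ and, symmetrically, $g \in \clReflOO \setminus \clVako$; then $\gen{g} = \clReflOO$ by~\ref{prop:R:R00}, while applying Lemma~\ref{lem:IcSM-stable-negation} to $\overline{f} \in \clReflOO \setminus \clVako$ gives $\gen{f} = \overline{\clReflOO} = \clReflII$, and since every reflexive function belongs to $\clReflOO \cup \clReflII$ we conclude $\clRefl = \gen{g} \cup \gen{f} \subseteq \gen{f,g} \subseteq \clRefl$.

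The step I expect to be the main obstacle is the verification of conditions~\ref{helpful:true} and~\ref{helpful:both} of $\{\mathord{+}\}$-bisectability in part~\ref{prop:R:R00}; the remaining ingredients (condition~\ref{helpful:false}, the minor computation producing $\mathord{+}$, and the reductions in the other two parts) are routine.
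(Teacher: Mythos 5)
Your proposal is correct and follows essentially the same route as the paper: extract $\mathord{+}$ as a minor of $f$ using reflexivity, verify that $\clReflOO$ is $\{\mathord{+}\}$-bisectable via Lemma~\ref{lem:helpful}, and then reduce parts \ref{prop:R:R00C} and \ref{prop:R:R} to part \ref{prop:R:R00} together with Proposition~\ref{prop:constants} and Lemma~\ref{lem:IcSM-stable-negation}. The only difference is cosmetic: where the paper enumerates the possible $2\times 2$ patterns of coordinates in conditions \ref{helpful:true} and \ref{helpful:both}, you justify the existence of the required index pair by a block-partition argument, which amounts to the same case analysis.
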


\begin{proof}
\ref{prop:R:R00}
Since $f \in \clOO$, we have $f(\vect{0}) = f(\vect{1}) = 0$.
Since $f \notin \clVako$, there exists a tuple $\vect{a}$ such that $f(\vect{a}) = 1$.
Moreover, since $f \in \clRefl$, we have $f(\overline{\vect{a}}) = 1$.
By identifying arguments in a suitable way, we obtain $+$ as a minor of $f$.
By Lemma~\ref{lem:helpful}, it suffices to show that $\clReflOO$ is $G$\hyp{}bisectable for $G := \{\mathord{+}\}$.
So, let $\theta \in \clReflOO$.

Condition \ref{helpful:true}:
Let $\vect{a}, \vect{a}' \in \theta^{-1}(1)$.
We have $\{\vect{a}, \vect{a}'\} \cap \{\vect{0}, \vect{1}\} = \emptyset$.
Then one of the following cases must occur, for some $i$, $j$:
(i) $\big( \begin{smallmatrix} a_i & a_j \\ a'_i & a'_j \end{smallmatrix} \big) = \big( \begin{smallmatrix} 0 & 1 \\ 0 & 1 \end{smallmatrix} \big)$, in which case $\mathord{+}_{ij}(\vect{a}) = \mathord{+}_{ij}(\vect{a}') = 1$;
(i) $\big( \begin{smallmatrix} a_i & a_j \\ a'_i & a'_j \end{smallmatrix} \big) = \big( \begin{smallmatrix} 0 & 1 \\ 1 & 0 \end{smallmatrix} \big)$, in which case $\mathord{+}_{ij}(\vect{a}) = \mathord{+}_{ij}(\vect{a}') = 1$.

Condition \ref{helpful:false} holds because $0 \leq \mathord{+} \in G$.

Condition \ref{helpful:both}:
Let $\vect{a} \in \theta^{-1}(1)$ and $\vect{b} \in \theta^{-1}(0)$.
We have $\vect{a} \neq \vect{b}$, $\vect{a} \neq \overline{\vect{b}}$, $\vect{a} \notin \{\vect{0}, \vect{1}\}$.
Then one of the following cases must occur, for some $i$, $j$:
(i) $\big( \begin{smallmatrix} a_i & a_j \\ b_i & b_j \end{smallmatrix} \big) = \big( \begin{smallmatrix} 0 & 1 \\ 1 & 1 \end{smallmatrix} \big)$, in which case $\mathord{+}_{ij}(\vect{a}) = 1$, $\mathord{+}_{ij}(\vect{b}) = 0$;
(ii) $\big( \begin{smallmatrix} a_i & a_j \\ b_i & b_j \end{smallmatrix} \big) = \big( \begin{smallmatrix} 1 & 0 \\ 0 & 0 \end{smallmatrix} \big)$, in which case $\mathord{+}_{ij}(\vect{a}) = 1$, $\mathord{+}_{ij}(\vect{a}') = 0$.

\ref{prop:R:R00C}
We have $f \in \clVaki$ and $g \in \clReflOO \setminus \clVak$.
By Lemma~\ref{prop:constants}\ref{prop:constants:D0Ca} and part \ref{prop:R:R00}, it holds that $\gen{f} = \clVaki$ and $\gen{g} = \clReflOO$.
Therefore
$\clReflOOC
= \clReflOO \cup \clVaki
= \gen{f} \cup \gen{g}
\subseteq \gen{f, g}
\subseteq \clReflOOC$.

\ref{prop:R:R}
We have $f \in \clReflII \setminus \clVak$ and $g \in \clReflOO \setminus \clVak$, so
by part \ref{prop:R:R00} and Lemma~\ref{lem:IcSM-stable-negation},
$\gen{g} = \clReflOO$ and $\gen{f} = \clReflII$.
Therefore
$\clRefl
= \clReflOO \cup \clReflII
= \gen{f} \cup \gen{g}
\subseteq \gen{f, g}
\subseteq \clRefl$.
\end{proof}

\begin{lemma}
\label{lem:apu:minors:Smin00-C}
If $f \in \clSminOO \setminus \clVak$, then $\mathord{\nrightarrow}$ is a minor of $f$.
\end{lemma}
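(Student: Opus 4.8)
The plan is to imitate the argument of Lemma~\ref{lem:apu:minors:01-M}: single out one tuple on which $f$ takes the value $1$, use membership in $\clSmin$ to pin down the value of $f$ on its complement, and then identify the arguments of $f$ into two blocks according to the pattern of that tuple so as to reproduce the operation table of $\mathord{\nrightarrow}$.

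Concretely, I would proceed as follows. Since $f \in \clSminOO$, we have $f(\vect{0}) = f(\vect{1}) = 0$; since $f \notin \clVak$, the function $f$ is not identically $0$, so there is a tuple $\vect{a}$ with $f(\vect{a}) = 1$, and necessarily $\vect{a} \neq \vect{0}$ and $\vect{a} \neq \vect{1}$. Because $f \in \clSmin$, the identity $f(\vect{a}) \wedge f(\overline{\vect{a}}) = 0$ together with $f(\vect{a}) = 1$ forces $f(\overline{\vect{a}}) = 0$. As $\vect{a}$ is neither $\vect{0}$ nor $\vect{1}$, after a suitable permutation of the arguments we may assume $\vect{a} = 1^k 0^{n-k}$ with $1 \leq k \leq n-1$, and then $\overline{\vect{a}} = 0^k 1^{n-k}$. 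Let $f'$ be the binary minor of $f$ obtained by identifying the first $k$ arguments into one block and the remaining $n-k$ arguments into another, i.e., $f' := f_{1^k 2^{n-k}}$. Evaluating on the four binary tuples gives $f'(0,0) = f(\vect{0}) = 0$, $f'(1,1) = f(\vect{1}) = 0$, $f'(1,0) = f(\vect{a}) = 1$, and $f'(0,1) = f(\overline{\vect{a}}) = 0$, which is precisely the table of $\mathord{\nrightarrow}$ in Figure~\ref{fig:BFs}. Hence $\mathord{\nrightarrow} = f'$ is a minor of $f$.

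I do not expect any genuine obstacle: this is the same kind of elementary ``read off a small minor'' argument used in Lemmas~\ref{lem:apu:minors} and~\ref{lem:apu:minors:01-M}. The two points worth a moment's care are that both blocks of identified arguments are nonempty, which is exactly what $\vect{a} \notin \{\vect{0},\vect{1}\}$ guarantees, and that the four tuples $\vect{0}$, $\vect{1}$, $\vect{a}$, $\overline{\vect{a}}$ on which $f'$ must be evaluated are precisely those whose $f$-value has already been determined.
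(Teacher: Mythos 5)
Your proof is correct and follows exactly the same route as the paper: locate a tuple $\vect{a}$ with $f(\vect{a})=1$, use $f\in\clSmin$ to force $f(\overline{\vect{a}})=0$, and identify the arguments into the two blocks determined by $\vect{a}$ to read off the table of $\mathord{\nrightarrow}$. The extra care you take (noting $\vect{a}\notin\{\vect{0},\vect{1}\}$ so both blocks are nonempty, and checking all four values of the binary minor) is exactly the right bookkeeping, merely spelled out more fully than in the paper.
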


\begin{proof}
Since $f \in \clOO$, we have $f(\vect{0}) = f(\vect{1}) = 0$.
Since $f \notin \clVak$, there is a tuple $\vect{a}$ such that $f(\vect{a}) = 1$.
Since $f \in \clSmin$, we must have $f(\overline{\vect{a}}) = 0$.
Assume, without loss of generality, that $\vect{a} = 1^i 0^{n-i}$.
By identifying the first $i$ arguments and the last $n-i$ arguments, we obtain $\mathord{\nrightarrow}$ as a minor of $f$.
\end{proof}

\begin{proposition}
\label{prop:UWneg}
For any $f \in (\clUWneg) \setminus \clVako$, we have $\gen{f} = \clUWneg$.
\end{proposition}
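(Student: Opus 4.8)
The plan is to invoke Lemma~\ref{lem:helpful}\ref{lem:helpful:bisectable} with $K := \clUWneg$ (which is $(\clIc,\clSM)$\hyp{}stable by Proposition~\ref{prop:SM-stable:sufficiency}), $F := \{f\}$, and generating set $G := \{\mathord{\nrightarrow}\}$. Thus it suffices to show two things: that $\mathord{\nrightarrow} \in \gen{f}$, and that $\clUWneg$ is $\{\mathord{\nrightarrow}\}$\hyp{}bisectable.

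First I would record the combinatorial description of $\clUWneg = \clU \cap \clWneg$: since $\clWneg = \overline{\clW}$ and $(\overline{h})^{-1}(0) = h^{-1}(1)$, a function $h$ lies in $\clUWneg$ if and only if $\vect{a} \wedge \vect{a}' \neq \vect{0}$ and $\vect{a} \vee \vect{a}' \neq \vect{1}$ for all $\vect{a}, \vect{a}' \in h^{-1}(1)$. Taking $\vect{a}' = \vect{a}$ shows that every tuple in $h^{-1}(1)$ is nonconstant, so $h(\vect{0}) = h(\vect{1}) = 0$; taking $\vect{a}' = \overline{\vect{a}}$ shows $h(\overline{\vect{a}}) = 0$ whenever $h(\vect{a}) = 1$, so $h \in \clSmin$. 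Hence $\clUWneg \subseteq \clSminOO$. In particular $f \in \clSminOO$, and since $f \in \clOO$ and $f \notin \clVako$ we have $f \notin \clVak$; so Lemma~\ref{lem:apu:minors:Smin00-C} applies and $\mathord{\nrightarrow}$ is a minor of $f$, i.e.\ $\mathord{\nrightarrow} \in \gen{f}$.

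Next I would verify conditions \ref{helpful:true}, \ref{helpful:false}, \ref{helpful:both} of Definition~\ref{def:helpful} for an arbitrary $n$\hyp{}ary $\theta \in \clUWneg$. Here $G_n = \{\, \mathord{\nrightarrow}_{ij} \mid i, j \in \nset{n} \,\}$, where $\mathord{\nrightarrow}_{ij}(\vect{x}) = x_i \wedge \overline{x_j}$ for $i \neq j$ and $\mathord{\nrightarrow}_{ii} = \cf{n}{0}$. For \ref{helpful:true}: given $\vect{a}, \vect{a}' \in \theta^{-1}(1)$, the description above yields a coordinate $i$ with $a_i = a'_i = 1$ and a coordinate $j$ with $a_j = a'_j = 0$, necessarily distinct, and $\mathord{\nrightarrow}_{ij}$ takes the value $1$ at both $\vect{a}$ and $\vect{a}'$. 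Condition \ref{helpful:false} is immediate since $\cf{n}{0} \in G_n$. For \ref{helpful:both}: given $\vect{a} \in \theta^{-1}(1)$ and $\vect{b} \in \theta^{-1}(0)$, the tuple $\vect{a}$ is nonconstant and $\vect{a} \neq \vect{b}$, so either some coordinate $k$ satisfies $a_k = 1$, $b_k = 0$ (pair $k$ with any coordinate on which $\vect{a}$ equals $0$), or some coordinate $k$ satisfies $a_k = 0$, $b_k = 1$ (pair any coordinate on which $\vect{a}$ equals $1$ with $k$); in both cases the resulting $\mathord{\nrightarrow}_{ij}$ is $1$ at $\vect{a}$ and $0$ at $\vect{b}$. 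This gives $\{\mathord{\nrightarrow}\}$\hyp{}bisectability of $\clUWneg$, and Lemma~\ref{lem:helpful}\ref{lem:helpful:bisectable} then yields $\gen{f} = \clUWneg$.

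I do not expect a genuine obstacle: $\clVako$ is the only lower cover of $\clUWneg$ in Figure~\ref{fig:SM-stable}, so the single minor $\mathord{\nrightarrow}$ should be enough. The only step needing a little care is condition \ref{helpful:both}, where one must use both the nonconstancy of $\vect{a}$ and the inequality $\vect{a} \neq \vect{b}$ to locate the appropriate pair of coordinates.
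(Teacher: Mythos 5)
Your proposal is correct and follows essentially the same route as the paper: extract $\mathord{\nrightarrow}$ as a minor of $f$ via Lemma~\ref{lem:apu:minors:Smin00-C} (using $\clUWneg \subseteq \clSminOO$, which the paper leaves implicit and you rightly justify), then verify that $\clUWneg$ is $\{\mathord{\nrightarrow}\}$\hyp{}bisectable and apply Lemma~\ref{lem:helpful}. Your case analysis for condition \ref{helpful:both} is just a reorganization of the paper's choice of indices $i$ with $a_i \neq b_i$ and $j$ with $a_j \neq a_i$, so there is nothing substantive to distinguish the two arguments.
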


\begin{proof}
Let $f \in (\clUWneg) \setminus \clVako$.
By Lemma~\ref{lem:apu:minors:Smin00-C}, we have $\mathord{\nrightarrow} \leq f$.
By Lemma~\ref{lem:helpful} it suffices to show that $\clUWneg$ is $G$\hyp{}bisectable for $G := \{\mathord{\nrightarrow}\}$.
So, let $\theta \in \clUWneg$.

Condition \ref{helpful:true}:
Let $\vect{a}, \vect{a}' \in \theta^{-1}(1)$.
Since $\theta \in \clU$, there exists an $i$ such that $a_i = a'_i = 1$, and since $\theta \in \clWneg$, there exists a $j$ such that $a_j = a'_j = 0$, and we have $\mathord{\nrightarrow_{ij}}(\vect{a}) = \mathord{\nrightarrow_{ij}}(\vect{b}) = 1$.

Condition \ref{helpful:false} holds because $0 \leq \mathord{\nrightarrow} \in G$.

Condition \ref{helpful:both}:
Let $\vect{a} \in \theta^{-1}(1)$ and $\vect{b} \in \theta^{-1}(0)$.
We have $\vect{a} \neq \vect{b}$ and $\vect{a} \notin \{\vect{0}, \vect{1}\}$.
Then there exist an $i$ such that $a_i \neq b_i$
and a $j$ such that $a_j \neq a_i$.
Then we have $\tau(\vect{a}) = 1$ and $\tau(\vect{b}) = 0$ for some $\tau \in \{\mathord{\nrightarrow}_{ij}, \mathord{\nrightarrow}_{ji}\}$.
\end{proof}

\begin{proposition}
\label{prop:UC}
For any $f \in \clUOO \setminus (\clUWneg)$, we have $\gen{f} = \clUOO$.
\end{proposition}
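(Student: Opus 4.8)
The plan is to apply Lemma~\ref{lem:helpful}\ref{lem:helpful:bisectable} with $K := \clUOO$, $F := \{f\}$, and $G := \{g\}$ for a carefully chosen ternary minor $g$ of $f$. Since $f \in \clUOO \subseteq \clU$ and $f \notin \clUWneg = \clU \cap \clWneg$, we have $f \notin \clWneg$, so there are $\vect{a}_1, \vect{a}_2 \in f^{-1}(1)$ with $\vect{a}_1 \vee \vect{a}_2 = \vect{1}$. Because $f \in \clU$ the meet $\vect{a}_1 \wedge \vect{a}_2$ is nonzero, and because $f(\vect{1}) = 0$ neither $\vect{a}_i$ is $\vect{1}$; hence, after permuting arguments, $\vect{a}_1 = 1^i 1^j 0^k$ and $\vect{a}_2 = 1^i 0^j 1^k$ with $i,j,k \geq 1$. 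Identifying these three blocks of arguments yields a ternary minor $g$ of $f$ with $g(1,1,0) = g(1,0,1) = 1$ and $g(0,0,0) = g(1,1,1) = 0$; as $g$ inherits membership in $\clUOO$ (hence in $\clU$), it follows that $g(0,0,1) = g(0,1,0) = 0$ as well, so the constant $0$ and the function $\mathord{\nrightarrow}$, with either order of its two variables, are minors of $g$. In particular $g \in \gen{f}$, so the hypotheses of Lemma~\ref{lem:helpful} are met and it remains only to check $G$-bisectability of $\clUOO$.

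So let $\theta \in \clUOO$ be arbitrary; I would verify conditions \ref{helpful:true}, \ref{helpful:false}, \ref{helpful:both} of Definition~\ref{def:helpful}. Condition \ref{helpful:false} is immediate since the constant $0$ is a minor of $g$. For condition \ref{helpful:both}, given $\vect{a} \in \theta^{-1}(1)$ and $\vect{b} \in \theta^{-1}(0)$, note $\vect{a} \notin \{\vect{0},\vect{1}\}$ because $\theta(\vect{0}) = \theta(\vect{1}) = 0$; splitting on whether $\vect{b} \in \{\vect{0},\vect{1}\}$, and otherwise on a coordinate at which $\vect{a}$ and $\vect{b}$ differ, one finds in each case indices $p,q$ with $a_p = 1$, $a_q = 0$, and $b_p = 0$ or $b_q = 1$, so that the minor $\mathord{\nrightarrow}_{pq}$ of $g$ satisfies $\mathord{\nrightarrow}_{pq}(\vect{a}) = 1$ and $\mathord{\nrightarrow}_{pq}(\vect{b}) = 0$. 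For condition \ref{helpful:true}, given $\vect{a}, \vect{a}' \in \theta^{-1}(1)$, use $\theta \in \clU$ to fix a coordinate $i$ with $a_i = a'_i = 1$ and, using $\vect{a} \neq \vect{1}$, a coordinate $j$ with $a_j = 0$; if $a'_j = 0$ then $\mathord{\nrightarrow}_{ij}$ works, and otherwise pick (using $\vect{a}' \neq \vect{1}$) a coordinate $k$ with $a'_k = 0$, whereupon either $a_k = 0$, so that $\mathord{\nrightarrow}_{ik}$ works, or $a_k = 1$, in which case $(a_i,a_k,a_j) = (1,1,0)$ and $(a'_i,a'_k,a'_j) = (1,0,1)$, so the minor $g_{ikj}$ satisfies $g_{ikj}(\vect{a}) = g_{ikj}(\vect{a}') = 1$. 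With all three conditions verified, Lemma~\ref{lem:helpful}\ref{lem:helpful:bisectable} yields $\gen{f} = \clUOO$.

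The main obstacle is condition \ref{helpful:true} in the last subcase, where $\vect{a}$ and $\vect{a}'$ share a $1$-coordinate but no $0$-coordinate among the relevant positions: no minor of the form $\mathord{\nrightarrow}_{pq}$ can then take value $1$ at both tuples, and this is exactly why the ternary $g$ — rather than merely $\mathord{\nrightarrow}$, which on its own generates only $\clUWneg$ by Proposition~\ref{prop:UWneg} — is indispensable. Some care is needed to route every separation through the values of $g$ that we have actually pinned down, namely $g(1,1,0) = g(1,0,1) = 1$ and $g(0,0,0) = g(1,1,1) = g(0,0,1) = g(0,1,0) = 0$, and never through the undetermined values $g(1,0,0)$ and $g(0,1,1)$.
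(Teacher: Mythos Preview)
Your proof is correct and follows essentially the same approach as the paper: you extract the same ternary minor (your $g$ is the paper's $f'$, with identical pinned values), and verify $G$-bisectability of $\clUOO$ via the same case analysis, with condition~\ref{helpful:both} handled as in Proposition~\ref{prop:UWneg} and the key subcase of condition~\ref{helpful:true} resolved by the pattern $(1,1,0)/(1,0,1)$. The only cosmetic difference is that you derive $g(0,0,1)=g(0,1,0)=0$ from $g\in\clU$ after passing to the minor, whereas the paper obtains these from $f(\overline{\vect{a}})=f(\overline{\vect{b}})=0$ before identifying arguments.
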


\begin{proof}
Let $f \in \clUOO \setminus (\clUWneg)$.
Since $f \in \clOO$, we have $f(\vect{0}) = f(\vect{1}) = 0$.
Since $f \notin \clWneg$, there exist tuples $\vect{a}$ and $\vect{b}$ such that $f(\vect{a}) = f(\vect{b}) = 1$ and $\vect{a} \vee \vect{b} = \vect{1}$.
Since $f \in \clU$, it holds that $\vect{a} \wedge \vect{b} \neq \vect{0}$ and $f(\overline{\vect{a}}) = f(\overline{\vect{b}}) = 0$.
By identifying arguments in a suitable way, we see that $f$ has a ternary minor $f'$ satisfying
$f'(0,0,0) = 0$,
$f'(0,0,1) = 0$,
$f'(0,1,0) = 0$,
$f'(1,1,1) = 0$,
$f'(1,1,0) = 1$,
$f'(1,0,1) = 1$.
By Lemma~\ref{lem:helpful} it suffices to show that $\clUOO$ is $G$\hyp{}bisectable for $G := \{f'\}$.
Note that $\mathord{\nrightarrow} = f'_{112}$.
So, let $\theta \in \clUOO$.

Condition \ref{helpful:true}:
Let $\vect{a}, \vect{a}' \in \theta^{-1}(1)$.
We have $\vect{a} \wedge \vect{a}' \neq \vect{0}$ and $\{\vect{a}, \vect{a}'\} \cap \{\vect{0}, \vect{1}\} = \emptyset$.
Then one of the following cases must occur, for some $i$, $j$, $k$:
(i) $\big( \begin{smallmatrix} a_i & a_j \\ a'_i & a'_j \end{smallmatrix} \big) = \big( \begin{smallmatrix} 1 & 0 \\ 1 & 0 \end{smallmatrix} \big)$, in which case $\mathord{\nrightarrow}_{ij}(\vect{a}) = \mathord{\nrightarrow}_{ij}(\vect{a}') = 1$;
(ii) $\big( \begin{smallmatrix} a_i & a_j & a_k \\ a'_i & a'_j & a'_k \end{smallmatrix} \big) = \big( \begin{smallmatrix} 1 & 0 & 1 \\ 1 & 1 & 0 \end{smallmatrix} \big)$, in which case  $f'_{ijk}(\vect{a}) = f'_{ijk}(\vect{a}') = 1$.

Condition \ref{helpful:false} holds because $0 \leq f' \in G$.

Condition \ref{helpful:both}: Shown as in the proof of Proposition~\ref{prop:UWneg}.
\end{proof}

\begin{proposition}
\label{prop:McU2}
For any $f \in \clMcU \setminus \clSM$, we have $\gen{f} = \clMcU$.
\end{proposition}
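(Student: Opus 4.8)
The plan is to apply Lemma~\ref{lem:helpful}\ref{lem:helpful:bisectable} with $G := \{\mathord{\wedge}\}$. Concretely, I will first show that $\mathord{\wedge}$ is a minor of every $f \in \clMcU \setminus \clSM$, so that $\mathord{\wedge} \in \gen{f}$, and then show that the whole class $\clMcU$ is $\{\mathord{\wedge}\}$\hyp{}bisectable; the equality $\gen{f} = \clMcU$ then follows at once from Lemma~\ref{lem:helpful}\ref{lem:helpful:bisectable}.

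For the first step, let $f \in \clMcU \setminus \clSM$ be $n$\hyp{}ary. By the definition of $\clMcU$ we have $f(\vect 0) = 0$ and $f(\vect 1) = 1$. Since $f$ is not self\hyp{}dual, there is a tuple $\vect a$ with $f(\vect a) = f(\overline{\vect a})$. This common value cannot be $1$: if $f(\vect a) = f(\overline{\vect a}) = 1$, then $\vect a \wedge \overline{\vect a} = \vect 0$ would contradict $f \in \clU$ ($1$\hyp{}separation of rank $2$). Hence $f(\vect a) = f(\overline{\vect a}) = 0$, and moreover $\vect a \notin \{\vect 0, \vect 1\}$ since $f(\vect 0) = 0 \ne 1 = f(\vect 1)$. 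Permuting arguments, we may assume $\vect a = 1^i 0^{n-i}$ with $1 \le i \le n-1$; identifying the first $i$ arguments and the last $n-i$ arguments then yields a binary minor $g$ of $f$ with $g(0,0) = f(\vect 0) = 0$, $g(1,0) = f(\vect a) = 0$, $g(0,1) = f(\overline{\vect a}) = 0$, and $g(1,1) = f(\vect 1) = 1$, i.e., $g = \mathord{\wedge}$. Thus $\mathord{\wedge} \in \gen{f}$.

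For the second step, fix $\theta \in \clMcU$ of arity $m$, and note that the $m$\hyp{}ary minors of $\mathord{\wedge}$ are precisely the functions $x_i \wedge x_j$ with $i, j \in \nset{m}$; in particular all projections $\id_i = x_i \wedge x_i$ occur among them. I verify the three conditions of Definition~\ref{def:helpful}. For \ref{helpful:true}: if $\vect a, \vect a' \in \theta^{-1}(1)$ then $\vect a \wedge \vect a' \ne \vect 0$ because $\theta \in \clU$, so $a_i = a'_i = 1$ for some $i$, and $\id_i$ takes value $1$ at both. For \ref{helpful:false}: if $\vect b, \vect b' \in \theta^{-1}(0)$ then $\vect b \ne \vect 1$ and $\vect b' \ne \vect 1$ (since $\theta(\vect 1) = 1$), so there are $i, j$ with $b_i = 0$ and $b'_j = 0$, whence $(x_i \wedge x_j)(\vect b) = 0$ and $(x_i \wedge x_j)(\vect b') = 0$. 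For \ref{helpful:both}: if $\vect a \in \theta^{-1}(1)$ and $\vect b \in \theta^{-1}(0)$, then $\vect a \nleq \vect b$ by monotonicity of $\theta$, so $a_i = 1$ and $b_i = 0$ for some $i$, and $\id_i(\vect a) = 1$, $\id_i(\vect b) = 0$. Hence $\clMcU$ is $\{\mathord{\wedge}\}$\hyp{}bisectable.

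Since $\{\mathord{\wedge}\} \subseteq \gen{f}$ and $f \in \clMcU$, Lemma~\ref{lem:helpful}\ref{lem:helpful:bisectable} gives $\gen{f} = \clMcU$. The only point requiring any care is the observation that the failure of self\hyp{}duality of $f$ is necessarily witnessed at a pair of complementary tuples on which $f$ vanishes --- this is exactly where $1$\hyp{}separation enters --- which is what guarantees that $f$ has $\mathord{\wedge}$, rather than just some weaker binary function, as a minor; the verification of the bisectability conditions is then entirely routine.
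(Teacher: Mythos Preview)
Your proof is correct. The first step---extracting $\mathord{\wedge}$ as a minor of $f$---matches the paper's argument exactly. For the second step, however, the paper takes a different route: it invokes Lemma~\ref{lem:helpful}\ref{lem:helpful:general} together with the stratified-terms machinery (Lemma~\ref{lem:str-terms}), using that $\clMcU = \clonegen{\mu, \mathord{\wedge}}$ and exhibiting explicit term equivalences to show $\gen{\id, \mathord{\wedge}} = \clMcU$. You instead use Lemma~\ref{lem:helpful}\ref{lem:helpful:bisectable} and verify the three bisectability conditions directly for $G = \{\mathord{\wedge}\}$, exploiting the $1$-separation property for~\ref{helpful:true}, the fact that $\vect{1}$ is a true point for~\ref{helpful:false}, and monotonicity for~\ref{helpful:both}. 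Your approach is arguably the more economical one here: it avoids hunting for stratified-term identities and uses only the defining properties of $\clMcU$. The paper's approach, on the other hand, is more uniform with its treatment of the other clone cases and makes the connection to $\clonegen{\mu, \mathord{\wedge}}$ explicit.
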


\begin{proof}
Let $f \in \clMcU \setminus \clSM$.
Since $f \in \clOI$, we have $f(\vect{0}) = 0$ and $f(\vect{1}) = 1$.
Since $f \notin \clS$, there exists a tuple $\vect{u}$ such that $f(\vect{u}) = f(\overline{\vect{u}})$. 
Since $\vect{u} \wedge \overline{\vect{u}} = \vect{0}$ and $f \in \clU$, we must have $f(\vect{u}) = f(\overline{\vect{u}}) = 0$.
By identifying arguments in a suitable way, we obtain $\wedge$ as a minor of $f$.
Note that $\id \leq \mathord{\wedge}$.

By Lemma~\ref{lem:helpful}\ref{lem:helpful:general}, it suffices to show that $\clMcU \subseteq \gen{\id, \mathord{\wedge}}$.
Since $\clMcU = \clonegen{\mu, \mathord{\wedge}}$, this follows by Lemma~\ref{lem:str-terms} and Remark~\ref{rem:str-terms}
from the following equivalences of terms:
\begin{gather*}
\mathord{\wedge}(\mu(x_1, x_2, x_3), x_4) \equiv \mu(\mathord{\wedge}(x_1, x_4), \mathord{\wedge}(x_2, x_4), \mathord{\wedge}(x_3, x_4)), \\
\mathord{\wedge}(\mathord{\wedge}(x_1, x_2), x_3) \equiv \mu(\mathord{\wedge}(x_1, x_2), \mathord{\wedge}(x_1, x_3), \mathord{\wedge}(x_2, x_3)).
\qedhere
\end{gather*}
\end{proof}

\begin{proposition}
\label{prop:MU2}
For any $f, g \in \clMU$ with $f \notin \clMcU$ and $g \notin \clVako$, we have $\gen{f,g} = \clMU$.
\end{proposition}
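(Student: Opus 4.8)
The plan is to follow the same template as Proposition~\ref{prop:R}\ref{prop:R:R00C}, where a class of the form $K' \cup \clVaka{a}$ is generated by a witness lying in $K'$ together with a witness lying outside $\clVaka{a}$. First I would record the structural decomposition $\clMU = \clMcU \cup \clVako$, the two pieces being disjoint. Indeed, if $h \in \clMU$ then $h(\vect{0}) = 0$, since otherwise $\vect{0} \in h^{-1}(1)$ and $\vect{0} \wedge \vect{0} = \vect{0}$ would contradict $1$\hyp{}separation of rank $2$; by monotonicity $h(\vect{1}) \in \{0,1\}$, and if $h(\vect{1}) = 0$ then monotonicity forces $h(\vect{a}) = 0$ for all $\vect{a}$, i.e.\ $h$ is the constant $0$ function, while if $h(\vect{1}) = 1$ then $h \in \clMcU$. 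Consequently the hypotheses force $f$ to be a constant $0$ function (so $f \in \clVako$) and $g \in \clMcU$.

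Next I would show that $\mathord{\wedge} \in \gen{f, g}$. By Proposition~\ref{prop:constants}\ref{prop:constants:D0Ca} we have $\gen{f} = \clVako$, so the binary constant $\cf{2}{0}$ lies in $\gen{f, g}$. Since $g \in \clMcU \subseteq \clOI$, Lemma~\ref{lem:apu:minors}\ref{lem:apu:minors:01} gives $\id$ as a minor of $g$, whence $\clSM = \gen{\id} \subseteq \gen{g} \subseteq \gen{f, g}$, arguing exactly as in the proof of Proposition~\ref{prop:SM}; in particular $\mu$ and the binary projections belong to $\gen{f, g}$. As $\gen{f, g}$ is stable under left composition with $\clSM \ni \mu$, it therefore contains $\mu(\pr^{(2)}_1, \pr^{(2)}_2, \cf{2}{0}) = \mathord{\wedge}$.

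Finally, $\mathord{\wedge} \in \clMcU \setminus \clSM$ (it is monotone, idempotent, and $1$\hyp{}separating of rank $2$, but not self\hyp{}dual), so Proposition~\ref{prop:McU2} yields $\clMcU = \gen{\mathord{\wedge}} \subseteq \gen{f, g}$. Combining this with $\clVako = \gen{f} \subseteq \gen{f, g}$ and the decomposition above gives $\clMU = \clMcU \cup \clVako \subseteq \gen{f, g}$; the reverse inclusion is immediate since $\clMU$ is $(\clIc,\clSM)$\hyp{}stable (being a clone containing $\clSM$) and contains both $f$ and $g$. I do not expect a genuine obstacle: the proof is bookkeeping following the established template. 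The one point that needs care is that $g$ is not assumed to lie outside $\clSM$, so Proposition~\ref{prop:McU2} cannot be invoked on $g$ directly; the device of manufacturing $\mathord{\wedge}$ inside $\gen{f,g}$ by feeding the constant $\cf{2}{0}$ coming from $f$ into a majority coming from $g$ sidesteps this, and the same device will recur in the analogous steps higher up in the lattice.
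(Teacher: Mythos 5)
Your proof is correct, but it takes a different route from the paper's. The paper reduces, via Lemma~\ref{lem:helpful}\ref{lem:helpful:general}, to showing $\clMU \subseteq \gen{\id, 0}$, and establishes this with the stratified-term machinery of Lemma~\ref{lem:str-terms}: since $\clMU = \clonegen{\mu, 0}$, it suffices to exhibit the two term equivalences $0(0(x_1)) \equiv \mu(0(x_1), 0(x_1), 0(x_1))$ and $0(\mu(x_1,x_2,x_3)) \equiv \mu(0(x_1),0(x_2),0(x_3))$. You instead exploit the decomposition $\clMU = \clMcU \cup \clVako$ and reuse Proposition~\ref{prop:McU2}; the key observation that $g$ need not lie outside $\clSM$ (so that $\gen{g} = \clMcU$ cannot be concluded directly) is exactly the obstruction that presumably led the author to abandon the union template here, and your device of manufacturing $\mathord{\wedge} = \mu(\pr^{(2)}_1, \pr^{(2)}_2, \cf{2}{0})$ inside $\gen{f,g}$ — the same identity the paper uses in Lemma~\ref{lem:IcSM-stable-constants} and Proposition~\ref{prop:M}\ref{prop:M:M0} — sidesteps it cleanly. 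What each approach buys: the paper's argument is self-contained and uniform with the other clone cases handled by Lemma~\ref{lem:str-terms}, whereas yours avoids any new term rewriting by leaning on the already-proven generation result for the lower cover $\clMcU$, at the cost of one extra composition trick. All the individual steps you use ($\gen{\id} = \clSM$, $\gen{f} = \clVako$, $\mathord{\wedge} \in \clMcU \setminus \clSM$, the disjointness of $\clMcU$ and $\clVako$) check out against the paper.
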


\begin{proof}
Since $\clMU = \clMcU \cup \clVako$ and $\clMcU \cap \clVako = \emptyset$, we have $f \in \clVako$ and $g \in \clMcU$.
Therefore $0 \leq f$ and $\id \leq g$.
By Lemma~\ref{lem:helpful}\ref{lem:helpful:general}, it suffices to show that $\clMU \subseteq \gen{\id, 0}$.
Since $\clMU = \clonegen{\mu, 0}$, this follows by Lemma~\ref{lem:str-terms} and Remark~\ref{rem:str-terms}
from the following equivalences of terms:
\begin{gather*}
0(0(x_1)) \equiv \mu(0(x_1), 0(x_1), 0(x_1)), \\
0(\mu(x_1, x_2, x_3)) \equiv \mu(0(x_1), 0(x_2), 0(x_3)).
\qedhere
\end{gather*}
\end{proof}

\begin{proposition}
\label{prop:TcU2}
For any $f \in \clTcU \setminus \clMcU$, we have $\gen{f} = \clTcU$.
\end{proposition}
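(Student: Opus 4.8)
The plan is to use the bisectability machinery (Lemma~\ref{lem:helpful}\ref{lem:helpful:bisectable}) rather than stratified terms: the natural non-monotone generator of $\clTcU$ produces awkwardly nested terms, whereas the bisectability conditions are easy to check here. Let $f \in \clTcU \setminus \clMcU$. First I would note that $\clMcU = \clM \cap \clTcU$ and $\clTcU \subseteq \clOI$, so $f \in \clOI \setminus \clM$, and Lemma~\ref{lem:apu:minors:01-M} produces a ternary minor $f'$ of $f$ with $f'(0,0,0) = 0$, $f'(1,0,0) = 1$, $f'(1,1,0) = 0$, $f'(1,1,1) = 1$. Since $\clTcU$ is minor\hyp{}closed, $f' \in \clTcU \subseteq \clU$, so $f'$ is $1$\hyp{}separating of rank $2$; combined with $f'(1,0,0) = 1$ this forces $f'(0,y,z) = 0$ for all $y,z$ (otherwise $(1,0,0)$ and a tuple with a $0$ in the first coordinate would be two preimages of $1$ with disjoint supports). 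Together with $f'(1,1,0) = 0$ and $f'(1,1,1) = 1$ this yields the minor identities $f'(x,x,x) = x$ and $f'(x,x,y) = x \wedge y$, so $\id$, $\mathord{\wedge}$, and all minors $f'_\sigma$ are available as minors of $f'$, hence of $f$.

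Next, set $G := \{f'\} \subseteq \gen{f}$. By Lemma~\ref{lem:helpful}\ref{lem:helpful:bisectable} it suffices to show that every $\theta \in \clTcU$ is $G$\hyp{}bisectable, i.e., satisfies conditions \ref{helpful:true}, \ref{helpful:false}, \ref{helpful:both} of Definition~\ref{def:helpful}; throughout I would use $\theta(\vect 0) = 0$ and $\theta(\vect 1) = 1$. For \ref{helpful:true}: given $\vect a, \vect a' \in \theta^{-1}(1)$, since $\theta \in \clU$ there is a coordinate $i$ with $a_i = a'_i = 1$, and $\id_i$ takes value $1$ on both. For \ref{helpful:false}: given $\vect b, \vect b' \in \theta^{-1}(0)$, neither equals $\vect 1$, so choosing $i$ with $b_i = 0$ and $j$ with $b'_j = 0$ makes $\mathord{\wedge}_{ij}$ vanish on both. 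For \ref{helpful:both}: given $\vect a \in \theta^{-1}(1)$ and $\vect b \in \theta^{-1}(0)$, if $\vect a \nleq \vect b$ pick $i$ with $a_i = 1$, $b_i = 0$ and use $\id_i$; otherwise $\vect a < \vect b$ (they are distinct), and since $\vect a \neq \vect 0$, $\vect a < \vect b$, and $\vect b \neq \vect 1$ we may choose coordinates $i$, $k$, $j$ with $a_i = b_i = 1$, $a_k = 0 < 1 = b_k$, $a_j = b_j = 0$, so that the minor $f'_{ikj}$ satisfies $f'_{ikj}(\vect a) = f'(1,0,0) = 1$ and $f'_{ikj}(\vect b) = f'(1,1,0) = 0$. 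This establishes $G$\hyp{}bisectability of $\clTcU$, whence $\gen{f} = \clTcU$.

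The only delicate point is condition \ref{helpful:both} in the case $\vect a < \vect b$: this is exactly where the non\hyp{}monotonicity of $f$ is exploited, and one must check that coordinates $i$, $k$, $j$ realizing the prescribed pattern on $\vect a$ and $\vect b$ genuinely exist --- which they do precisely because $\vect a$ is a nonzero preimage of $1$ lying strictly below a preimage of $0$ that is distinct from $\vect 1$. The remaining verifications are routine inspections of the values of the ternary minor $f'$ and of the projection and conjunction minors it provides.
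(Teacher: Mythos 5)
Your proof is correct and follows essentially the same route as the paper: the same ternary minor from Lemma~\ref{lem:apu:minors:01-M}, the same deduction that $f'$ vanishes on all tuples with first coordinate $0$ (hence $\id \leq f'$ and $\mathord{\wedge} = f'_{112}$), and the same $G$\hyp{}bisectability check with $G = \{f'\}$, including the identical witnesses for conditions \ref{helpful:true}--\ref{helpful:both}. The only cosmetic difference is that you organize condition \ref{helpful:both} by the dichotomy $\vect a \nleq \vect b$ versus $\vect a < \vect b$, whereas the paper lists the corresponding coordinate patterns directly.
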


\begin{proof}
By Lemma~\ref{lem:apu:minors:01-M}, $f$ has a ternary minor $f'$ satisfying $f'(0,0,0) = 0$, \linebreak $f'(1,0,0) = 1$, $f'(1,1,0) = 0$, $f'(1,1,1) = 1$.
Since $\clTcU$ is minor\hyp{}closed, we have $f' \in \clTcU$, and therefore $f'(\vect{u}) = 0$ for all $\vect{u} \leq (0,1,1)$, that is,
$f'(0,1,0) = 0$, $f'(0,0,1) = 0$, and $f'(0,1,1) = 0$.
By Lemma~\ref{lem:helpful} it suffices to show that $\clTcU$ is $G$\hyp{}bisectable for $G := \{f'\}$.
Note that $\id \leq f'$ and $\mathord{\wedge} = f'_{112}$.
So, let $\theta \in \clTcU$.

Condition \ref{helpful:true}:
Let $\vect{a}, \vect{a}' \in \theta^{-1}(1)$.
We have $\vect{a} \wedge \vect{a}' \neq \vect{0}$.
Therefore $\id_i(\vect{a}) = \id_i(\vect{a}') = 1$ for some $i$.

Condition \ref{helpful:false}:
Let $\vect{b}, \vect{b}' \in \theta^{-1}(0)$.
Then $\vect{b}$ and $\vect{b}'$ are distinct from $\vect{1}$, so there exist indices $i$ and $j$ such that $b_i = b'_j = 0$.
Then we have $\mathord{\wedge}_{ij}(\vect{b}) = \mathord{\wedge}_{ij}(\vect{b}') = 0$.

Condition \ref{helpful:both}:
Let $\vect{a} \in \theta^{-1}(1)$ and $\vect{b} \in \theta^{-1}(0)$.
We have $\vect{a} \neq \vect{b}$, $\vect{a} \neq \vect{0}$, $\vect{b} \neq \vect{1}$.
Then one of the following cases must occur, for some $i$, $j$, $k$:
(i) $\big( \begin{smallmatrix} a_i \\ b_i \end{smallmatrix} \big) = \big( \begin{smallmatrix} 1 \\ 0 \end{smallmatrix} \big)$, in which case $\id_i(\vect{a}) = 1$, $\id_i(\vect{b}) = 0$;
(ii) $\big( \begin{smallmatrix} a_i & a_j & a_k \\ b_i & b_j & b_k \end{smallmatrix} \big) = \big( \begin{smallmatrix} 0 & 1 & 0 \\ 1 & 1 & 0 \end{smallmatrix} \big)$, in which case $f'_{jik}(\vect{a}) = 1$, $f'_{jik}(\vect{b}) = 0$.
\end{proof}

\begin{proposition}
\label{prop:TcU2C}
For any $f, g \in \clTcUCO$ with $f \notin \clTcU$ and $g \notin \clMU$, we have $\gen{f,g} = \clTcUCO$.
\end{proposition}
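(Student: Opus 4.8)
The plan is to follow the same pattern used for Propositions~\ref{prop:MU2} and \ref{prop:R:R00C}: locate $f$ and $g$ precisely inside $\clTcUCO$ using the hypotheses, and then invoke results already established for the two lower covers $\clTcU$ and $\clMU$ of $\clTcUCO$.

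First I would pin down where $f$ and $g$ live. Since $\clTcUCO = \clTcU \cup \clVako$ and these two classes are disjoint (a constant\hyp{}$0$ function fails $f(\vect 1) = 1$, hence lies outside $\clTcU$), the hypothesis $f \notin \clTcU$ forces $f \in \clVako$, i.e.\ $f$ is a constant\hyp{}$0$ function. On the other hand, by Proposition~\ref{prop:MU2} we have $\clMU = \clMcU \cup \clVako$ with $\clMcU \cap \clVako = \emptyset$, and $\clMU \subseteq \clTcUCO$; hence $g \in \clTcUCO$ together with $g \notin \clMU$ forces $g \in \clTcU$ and $g \notin \clMcU$, that is, $g \in \clTcU \setminus \clMcU$.

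It then remains to assemble the conclusion. By Proposition~\ref{prop:constants}\ref{prop:constants:D0Ca}, $\gen{f} = \clVako$, and by Proposition~\ref{prop:TcU2}, $\gen{g} = \clTcU$. Monotonicity of the closure operator $\gen{\cdot}$ gives
\[
\clTcUCO = \clTcU \cup \clVako = \gen{g} \cup \gen{f} \subseteq \gen{f, g},
\]
while the reverse inclusion $\gen{f,g} \subseteq \clTcUCO$ holds because $f, g \in \clTcUCO$ and $\clTcUCO$ is $(\clIc,\clSM)$\hyp{}stable by Proposition~\ref{prop:SM-stable:sufficiency}.

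There is essentially no hard step here: the substantive work was already done in Proposition~\ref{prop:TcU2}. The only points needing (routine) care are the disjointness assertions $\clTcU \cap \clVako = \emptyset$ and $\clMcU \cap \clVako = \emptyset$, together with the inclusion $\clMU \subseteq \clTcUCO$, since these are exactly what makes the two hypotheses pin $f$ and $g$ down to the spots $\clVako$ and $\clTcU \setminus \clMcU$ respectively; once that bookkeeping is in place, the argument is immediate.
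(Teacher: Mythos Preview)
Your proposal is correct and follows essentially the same route as the paper: identify $f \in \clVako$ and $g \in \clTcU \setminus \clMcU$ from the disjointness of $\clTcU$ and $\clVako$ together with $\clVako \subseteq \clMU$, then invoke Propositions~\ref{prop:constants}\ref{prop:constants:D0Ca} and~\ref{prop:TcU2} and take the union. The only cosmetic difference is that you justify the bookkeeping in slightly more detail (and cite the inclusion $\clMU \subseteq \clTcUCO$, which is true but not actually needed once you have $\clMU = \clMcU \cup \clVako$).
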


\begin{proof}
Since $\clTcU$ and $\clVako$ are disjoint and $\clVako \subseteq \clMU$, we have $f \in \clVako$ and $g \in \clTcU \setminus \clMcU$.
By Propositions~\ref{prop:constants}\ref{prop:constants:D0Ca} and \ref{prop:TcU2}, we have $\gen{f} = \clVako$ and $\gen{g} = \clTcU$.
Consequently,
$\clTcUCO
= \gen{g} \cup \gen{f}
\subseteq \gen{f, g}
\subseteq \clTcUCO$.
\end{proof}

\begin{proposition}
\label{prop:U2}
For any $f, g \in \clU$ with $f \notin \clUOO$ and $g \notin \clTcUCO$, we have $\gen{f,g} = \clU$.
\end{proposition}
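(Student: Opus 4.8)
The plan is to show that $\id$ and $\mathord{\nrightarrow}$ are minors of $f$ and $g$ respectively, and that $\clU$ is generated by these two functions; the latter will be handled by the bisectability criterion of Lemma~\ref{lem:helpful}\ref{lem:helpful:bisectable}.

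First I would record some elementary facts about $\clU$. A $1$-separating function of rank $2$ cannot take the value $1$ at $\vect 0$, and cannot take the value $1$ at both $\vect a$ and $\overline{\vect a}$; hence $\clU \subseteq \clOX$ and $\clU \subseteq \clSmin$. In particular $\clU = \clTcU \cup \clUOO$ with the two parts disjoint, and $\clUOO \subseteq \clSminOO$. Now $f \in \clU$ and $f \notin \clUOO$ force $f \in \clTcU$, so $f(\vect 0) = 0$ and $f(\vect 1) = 1$, and Lemma~\ref{lem:apu:minors}\ref{lem:apu:minors:01} gives $\id \leq f$. Similarly $g \in \clU$ and $g \notin \clTcUCO = \clTcU \cup \clVako$ force $g \in \clUOO \setminus \clVako$; since $g(\vect 0) = 0$ we have $g \notin \clVaki$ as well, so $g \in \clSminOO \setminus \clVak$ and Lemma~\ref{lem:apu:minors:Smin00-C} gives $\mathord{\nrightarrow} \leq g$. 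Thus $\{\id, \mathord{\nrightarrow}\} \subseteq \gen{f, g}$, and by Lemma~\ref{lem:helpful}\ref{lem:helpful:bisectable} it suffices to prove that $\clU$ is $\{\id, \mathord{\nrightarrow}\}$-bisectable.

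To do that I would fix an $n$-ary $\theta \in \clU$ and check the three conditions of Definition~\ref{def:helpful} for $G := \{\id, \mathord{\nrightarrow}\}$, observing that $G_n$ contains all projections $\pr^{(n)}_i$, all nonimplication minors $\mathord{\nrightarrow}_{ij}$ with $i \neq j$, and the constant $0$ (as $\mathord{\nrightarrow}_{11}$). Condition \ref{helpful:false} is immediate from the presence of the constant $0$ in $G_n$. For condition \ref{helpful:true}, given $\vect a, \vect a' \in \theta^{-1}(1)$, $1$-separation of rank $2$ supplies a coordinate $i$ with $a_i = a'_i = 1$, and $\pr^{(n)}_i$ does the job.

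The only step needing care --- and the main obstacle --- is condition \ref{helpful:both}. Given $\vect a \in \theta^{-1}(1)$ and $\vect b \in \theta^{-1}(0)$, note $\vect a \neq \vect 0$ (as $\theta \in \clOX$) and $\vect a \neq \vect b$. If $\vect a \nleq \vect b$, then some projection separates the two. If $\vect a < \vect b$, I would pick a coordinate $i$ with $a_i = 1$ and a coordinate $j$ with $a_j = 0 < 1 = b_j$ (such a $j$ exists because $\vect a < \vect b$, and necessarily $i \neq j$); then $\mathord{\nrightarrow}_{ij}(\vect a) = a_i \wedge \overline{a_j} = 1$ while $\mathord{\nrightarrow}_{ij}(\vect b) = b_i \wedge \overline{b_j} = 0$. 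This completes the verification, and $\gen{f, g} = \clU$ follows. I expect no further difficulties: the decomposition $\clU = \clTcU \cup \clUOO$ is routine, and the minor extractions are immediate from Lemmata~\ref{lem:apu:minors} and \ref{lem:apu:minors:Smin00-C}; both functions $f$ and $g$ are genuinely needed, since $f$ alone cannot supply $\mathord{\nrightarrow}$ (it may equal $\id$) and $g$ alone cannot supply $\id$.
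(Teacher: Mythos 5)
Your proof is correct, and it takes a genuinely different route from the paper for the key step. Both arguments begin identically: $f \in \clU \setminus \clUOO$ forces $f \in \clTcU$, whence $\id \leq f$, and $g \in \clUOO \setminus \clVako \subseteq \clSminOO \setminus \clVak$ yields $\mathord{\nrightarrow} \leq g$ via Lemma~\ref{lem:apu:minors:Smin00-C}. The divergence is in how $\clU \subseteq \gen{\id, \mathord{\nrightarrow}}$ is established. The paper invokes Lemma~\ref{lem:helpful}\ref{lem:helpful:general} together with the stratified-term machinery (Lemma~\ref{lem:str-terms} and Remark~\ref{rem:str-terms}), using $\clU = \clonegen{\mu, \mathord{\nrightarrow}}$ and exhibiting four explicit term equivalences that rewrite nested compositions of $\mathord{\nrightarrow}$ and $\mu$ into stratified form. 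You instead apply the bisectability criterion of Lemma~\ref{lem:helpful}\ref{lem:helpful:bisectable}, checking the three conditions of Definition~\ref{def:helpful} for $G = \{\id, \mathord{\nrightarrow}\}$; your verification is sound, including the observations that the constant $0$ arises as $\mathord{\nrightarrow}_{11} \in G_n$ (settling condition \ref{helpful:false}), that $1$\hyp{}separation of rank $2$ settles condition \ref{helpful:true}, and that the case split $\vect{a} \nleq \vect{b}$ versus $\vect{a} < \vect{b}$ settles condition \ref{helpful:both} with a projection or a suitable $\mathord{\nrightarrow}_{ij}$. The trade\hyp{}off is the usual one for this paper: the stratified\hyp{}term route requires discovering somewhat nonobvious identities such as $\mathord{\nrightarrow}(x_1, \mathord{\nrightarrow}(x_2, x_3)) \equiv \mu(\id(x_1), \id(x_3), \mathord{\nrightarrow}(x_1, x_2))$, whereas your combinatorial route replaces that search with a short pointwise case analysis; the paper itself uses your style of argument for several neighbouring classes (e.g.\ $\clTcU$, $\clUOO$, $\clUWneg$), so your proof is very much in the spirit of the surrounding propositions.
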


\begin{proof}
We have $f \in \clTcU$, so $\id \leq f$.
Since $g \in \clUOO \setminus \clVak \subseteq \clSminOO \setminus \clVak$, we have $\mathord{\nrightarrow} \leq g$ by Lemma~\ref{lem:apu:minors:Smin00-C}.
By Lemma~\ref{lem:helpful}\ref{lem:helpful:general}, it suffices to show that $\clU \subseteq \gen{\id, \mathord{\nrightarrow}}$.
Since $\clU = \clonegen{\mu, \mathord{\nrightarrow}}$, this follows by Lemma~\ref{lem:str-terms} and Remark~\ref{rem:str-terms}
from the following equivalences of terms:
\begin{gather*}
\mathord{\nrightarrow}(\mu(x_1, x_2, x_3), x_4) \equiv \mu(\mathord{\nrightarrow}(x_1, x_4), \mathord{\nrightarrow}(x_2, x_4), \mathord{\nrightarrow}(x_3, x_4)), \\
\mathord{\nrightarrow}(x_1, \mu(x_2, x_3, x_4)) \equiv \mu(\mathord{\nrightarrow}(x_1, x_2), \mathord{\nrightarrow}(x_1, x_3), \mathord{\nrightarrow}(x_1, x_4)), \\
\mathord{\nrightarrow}(\mathord{\nrightarrow}(x_1, x_2), x_3) \equiv \mu(\mathord{\nrightarrow}(x_1, x_2), \mathord{\nrightarrow}(x_1, x_3), \mathord{\nrightarrow}(x_2, x_1)), \\
\mathord{\nrightarrow}(x_1, \mathord{\nrightarrow}(x_2, x_3)) \equiv \mu(\id(x_1), \id(x_3), \mathord{\nrightarrow}(x_1, x_2)).
\qedhere
\end{gather*}
\end{proof}

\begin{proposition}
\label{prop:M}
\leavevmode
\begin{enumerate}[label=\upshape{(\roman*)}]
\item\label{prop:M:Mc}
For any $f, g \in \clMc$ with $f \notin \clMcU$ and $g \notin \clMcW$, we have $\gen{f, g} = \clMc$.
\item\label{prop:M:M0}
For any $f, g \in \clMo$ with $f \notin \clMc$ and $g \notin \clMU$, we have $\gen{f, g} = \clMo$.
\item\label{prop:M:M}
For any $f, g, h \in \clM$ with $f \notin \clMo$, $g \notin \clMi$, and $h \notin \clVak$, we have \linebreak $\gen{f, g, h} = \clM$.
\end{enumerate}
\end{proposition}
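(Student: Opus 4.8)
The plan is to treat all three parts uniformly, following the template of the earlier clone propositions (cf.\ Propositions~\ref{prop:McU2} and~\ref{prop:MU2}): from the hypotheses one reads off which lower covers of the target clone each given function avoids, extracts a small explicit set of minors generating it, and then concludes by combining Lemma~\ref{lem:helpful}\ref{lem:helpful:general} with Lemma~\ref{lem:str-terms} and Remark~\ref{rem:str-terms}. The arithmetic that makes this work is that $\mathord{\wedge}(x_1,x_2) = \mu(x_1,x_2,0(x_1))$ and $\mathord{\vee}(x_1,x_2) = \mu(x_1,x_2,1(x_1))$, whence $\clMc = \clonegen{\mu,\mathord{\wedge},\mathord{\vee}}$, $\clMo = \clonegen{\mu,\mathord{\vee},0}$, and $\clM = \clonegen{\mu,0,1}$ (the standard Post generating sets of these clones).

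For part~\ref{prop:M:Mc}, the lower covers of $\clMc$ in Figure~\ref{fig:SM-stable} are $\clMcU$ and $\clMcW$, so the hypotheses give $f \in \clMc \setminus \clMcU$ and $g \in \clMc \setminus \clMcW$. Since $f$ is monotone, idempotent, and not $1$\hyp{}separating, there exist $\vect{a},\vect{b} \in f^{-1}(1)$ with $\vect{a} \wedge \vect{b} = \vect{0}$; the binary minor of $f$ obtained by identifying to one variable all arguments at which $\vect{a}$ is $1$ (together with those at which $\vect{a}$ and $\vect{b}$ both vanish) and to a second variable all arguments at which $\vect{b}$ is $1$ equals $\mathord{\vee}$, and dually $\mathord{\wedge}$ is a minor of $g$. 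As $\id$ is a minor of $\mathord{\wedge}$, we obtain $\{\id,\mathord{\wedge},\mathord{\vee}\} \subseteq \gen{f,g}$, and by Lemma~\ref{lem:helpful}\ref{lem:helpful:general} (with $K = \clMc$) it suffices to show $\clMc \subseteq \gen{\id,\mathord{\wedge},\mathord{\vee}}$. This follows from Lemma~\ref{lem:str-terms} with $G = \{\mu\}$ and $F = \{\id,\mathord{\wedge},\mathord{\vee}\}$: discarding by Remark~\ref{rem:str-terms} the cases $\varphi = \id$ and $\alpha = \id$, using commutativity of $\mathord{\wedge},\mathord{\vee}$ to restrict to $i = 1$, and using the duality $\mathord{\wedge}\leftrightarrow\mathord{\vee}$ (with $\mu$ self\hyp{}dual) to halve the remaining work, one is left with the equivalences
\begin{gather*}
\mathord{\wedge}(\mu(x_1,x_2,x_3),x_4) \equiv \mu(\mathord{\wedge}(x_1,x_4),\mathord{\wedge}(x_2,x_4),\mathord{\wedge}(x_3,x_4)), \\
\mathord{\wedge}(\mathord{\wedge}(x_1,x_2),x_3) \equiv \mu(\mathord{\wedge}(x_1,x_2),\mathord{\wedge}(x_1,x_3),\mathord{\wedge}(x_2,x_3)), \\
\mathord{\wedge}(\mathord{\vee}(x_1,x_2),x_3) \equiv \mu(\mathord{\vee}(x_1,x_2),\mathord{\wedge}(x_1,x_3),\mathord{\wedge}(x_2,x_3)),
\end{gather*}
each verified by a truth\hyp{}table check and each $(G,F)$\hyp{}stratified with no repeated variable in any subterm of height~$1$.

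For part~\ref{prop:M:M0}, note that $\clMo = \clMc \cup \clVako$ (a monotone $0$\hyp{}preserving function with value $1$ at $\vect{1}$ is idempotent, hence in $\clMc$; one with value $0$ at $\vect{1}$ is the constant $0$). Hence $f \in \clMo \setminus \clMc = \clVako$ contributes the minor $0$, and $g \in \clMo \setminus \clMU = \clMc \setminus \clMcU$ contributes the minors $\mathord{\vee}$ and $\id$; so $\{\id,\mathord{\vee},0\} \subseteq \gen{f,g}$ and Lemma~\ref{lem:helpful}\ref{lem:helpful:general} together with Lemma~\ref{lem:str-terms} (with $G = \{\mu\}$, $F = \{\id,\mathord{\vee},0\}$) yields $\gen{f,g} = \clMo$. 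For part~\ref{prop:M:M}, $\clM \setminus \clMo = \clVaki$ and $\clM \setminus \clMi = \clVako$ (a monotone function with value $1$ at $\vect{0}$, resp.\ value $0$ at $\vect{1}$, is constant), so $f$ contributes $1$, $g$ contributes $0$, and the non\hyp{}constant monotone $h$, which satisfies $h(\vect{0}) = 0$ and $h(\vect{1}) = 1$, contributes $\id$ as its unary diagonal minor; then $\{\id,0,1\} \subseteq \gen{f,g,h}$ and Lemma~\ref{lem:str-terms} with $F = \{\id,0,1\}$ finishes the argument. The stratified terms needed here are immediate: for $\varphi \in \{0,1\}$ one uses constant\hyp{}valued $\mu$\hyp{}terms over the leaves $0(x_1),0(x_2),\dots$ (resp.\ $1(x_1),1(x_2),\dots$), such as $0(\mu(x_1,x_2,x_3)) \equiv \mu(0(x_1),0(x_2),0(x_3))$ and $0(0(x_1)) \equiv 0(x_1)$, and for $\varphi = \mathord{\vee}$ one uses the two $\mathord{\vee}$\hyp{}analogues of the first two displayed identities above, together with the absorption $\mathord{\vee}(0(x_1),x_2) \equiv \mu(\id(x_2),\id(x_2),\id(x_1))$.

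I expect no genuine obstacle: each step mirrors arguments already made in this section, and the only real work is keeping track of exactly which stratified\hyp{}term equivalences Lemma~\ref{lem:str-terms} demands and checking each by a truth table; the mixed identity $\mathord{\wedge}(\mathord{\vee}(x_1,x_2),x_3) \equiv \mu(\mathord{\vee}(x_1,x_2),\mathord{\wedge}(x_1,x_3),\mathord{\wedge}(x_2,x_3))$ in part~\ref{prop:M:Mc} is the only one not completely routine to spot.
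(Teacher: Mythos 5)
Your proof is correct and, for part~\ref{prop:M:Mc}, follows the paper's argument essentially verbatim: extract $\mathord{\vee}$ from $f$ and $\mathord{\wedge}$ from $g$ by identifying arguments along the witnessing tuples, then reduce to $\clMc \subseteq \gen{\id,\mathord{\wedge},\mathord{\vee}}$ via Lemma~\ref{lem:helpful}\ref{lem:helpful:general} and verify Lemma~\ref{lem:str-terms}; your identity $\mathord{\wedge}(\mathord{\vee}(x_1,x_2),x_3) \equiv \mu(\mathord{\vee}(x_1,x_2),\mathord{\wedge}(x_1,x_3),\mathord{\wedge}(x_2,x_3))$ differs from the paper's $\mu(\mathord{\wedge}(x_1,x_3),\mathord{\wedge}(x_2,x_3),\id(x_3))$ but is equally valid, and your duality reduction to the three $\mathord{\wedge}$\hyp{}leading identities is sound (though note Remark~\ref{rem:str-terms} itself only licenses argument permutations of $\varphi$, so the $\mathord{\wedge}\leftrightarrow\mathord{\vee}$ dualization is an extra observation you are supplying, not something the Remark covers). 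The only organizational divergence is in parts~\ref{prop:M:M0} and~\ref{prop:M:M}: you rerun Lemma~\ref{lem:str-terms} with the generating sets $\{\id,\mathord{\vee},0\}$ and $\{\id,0,1\}$, whereas the paper avoids any further stratified\hyp{}term bookkeeping by observing $\mu(\id_1,\id_2,0)=\mathord{\wedge}$ and $\mu(\id_1,\id_2,1)=\mathord{\vee}$ and then writing $\clMo = \clMc \cup \clVako$ and $\clM = \clMc \cup \clVak$ as unions of classes already generated by part~\ref{prop:M:Mc} and Proposition~\ref{prop:constants}; the paper's route is shorter, yours is self\hyp{}contained, and both are correct.
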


\begin{proof}
\ref{prop:M:Mc}
Since $f \in \clOI$, we have $f(\vect{0}) = 0$ and $f(\vect{1}) = 1$.
Since $f \notin \clU$, there exist tuples $\vect{a}$ and $\vect{b}$ such that $f(\vect{a}) = f(\vect{b}) = 1$ and $\vect{a} \wedge \vect{b} = \vect{0}$, i.e., $\vect{b} \leq \overline{\vect{a}}$.
Since $f \in \clM$, we have $f(\overline{\vect{a}}) = 1$.
By identifying arguments, we obtain $\vee$ as a minor of $f$, i.e., $\mathord{\vee} \in \gen{f}$.
In a similar way we can show that $\mathord{\wedge} \in \gen{g}$.
Note that $\id$ is a minor of $\vee$ and $\wedge$.
By Lemma~\ref{lem:helpful}\ref{lem:helpful:general}, it suffices to show that $\clMc \subseteq \gen{\id, \mathord{\wedge}, \mathord{\vee}}$.
Since $\clMc = \clonegen{\mu, \mathord{\wedge}, \mathord{\vee}}$, this follows by Lemma~\ref{lem:str-terms} and Remark~\ref{rem:str-terms}
from the following equivalences of terms:
\begin{gather*}
\mathord{\wedge}(\mathord{\wedge}(x_1, x_2), x_3) \equiv \mu(\mathord{\wedge}(x_1, x_2), \mathord{\wedge}(x_1, x_3), \mathord{\wedge}(x_2, x_3)), \\
\mathord{\wedge}(\mathord{\vee}(x_1, x_2), x_3) \equiv \mu(\mathord{\wedge}(x_1, x_3), \mathord{\wedge}(x_2, x_3), \id(x_3)), \\
\mathord{\wedge}(\mu(x_1, x_2, x_3), x_4) \equiv \mu(\mathord{\wedge}(x_1, x_4), \mathord{\wedge}(x_2, x_4), \mathord{\wedge}(x_3, x_4)), \\
\mathord{\vee}(\mathord{\vee}(x_1, x_2), x_3) \equiv \mu(\mathord{\vee}(x_1, x_2), \mathord{\vee}(x_1, x_3), \mathord{\vee}(x_2, x_3)), \\
\mathord{\vee}(\mathord{\wedge}(x_1, x_2), x_3) \equiv \mu(\mathord{\vee}(x_1, x_3), \mathord{\vee}(x_2, x_3), \id(x_3)), \\
\mathord{\vee}(\mu(x_1, x_2, x_3), x_4) \equiv \mu(\mathord{\vee}(x_1, x_4), \mathord{\vee}(x_2, x_4), \mathord{\vee}(x_3, x_4)).
\end{gather*}

\ref{prop:M:M0}
Since $f \in \clMo \setminus \clMc$, we have $f = 0$.
Since $g \notin \clU$, there exist tuples $\vect{a}$ and $\vect{b}$ such that $g(\vect{a}) = g(\vect{b}) = 1$ and $\vect{a} \wedge \vect{b} = \vect{0}$, i.e., $\vect{b} \leq \overline{\vect{a}}$.
Since $g \in \clM$, we have $g(\overline{\vect{a}}) = g(\vect{1}) = 1$.
Since $g \in \clOX$, we have $g(\vect{0}) = 0$.
It follows that $\id$ and $\mathord{\vee}$ are minors of $g$.
Since $\mu(\id_1, \id_2, 0) = \mathord{\wedge}$, we have $\mathord{\wedge} \in \gen{f, g}$.
It now follows from Proposition~\ref{prop:constants}\ref{prop:constants:D0Ca} and part \ref{prop:M:Mc} that
$\clMo
= \clMc \cup \clVako
= \gen{\mathord{\wedge}, \mathord{\vee}} \cup \gen{0}
\subseteq \gen{f, g}
\subseteq \clMo$.

\ref{prop:M:M}
We have $f = 1$, $g = 0$, and $h \in \clMc$, so $\id \leq h$.
Since $\mu(\id_1, \id_2, 0) = \mathord{\wedge}$ and $\mu(\id_1, \id_2, 1) = \mathord{\vee}$, we have $\mathord{\wedge}, \mathord{\vee} \in \gen{f, g, h}$.
It follows from Proposition~\ref{prop:constants}\ref{prop:constants:D0} and part \ref{prop:M:Mc} that
$\clM
= \clMc \cup \clVak
= \gen{\mathord{\wedge}, \mathord{\vee}} \cup \gen{0, 1}
\subseteq \gen{f, g, h}
\subseteq \clM$.
\end{proof}

\begin{proposition}
\label{prop:Smin00}
\leavevmode
\begin{enumerate}[label=\upshape{(\roman*)}]
\item\label{prop:Smin00:Smin00}
For any $f, g \in \clSminOO$ with $f \notin \clUOO$ and $g \notin \clWnegOO$, we have $\gen{f, g} = \clSminOO$.
\item\label{prop:Smin00:Smin01}
For any $f, g \in \clSminOI$ with $f \notin \clTcU$ and $g \notin \clSc$, we have $\gen{f, g} = \clSminOI$.
\end{enumerate}
\end{proposition}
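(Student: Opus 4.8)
The plan is to derive both parts from Lemma~\ref{lem:helpful}\ref{lem:helpful:bisectable}: in each case I extract from $f$ and $g$ a finite set $G\subseteq\gen{f,g}$ and verify that the whole class in question ($\clSminOO$, resp.\ $\clSminOI$) is $G$\hyp{}bisectable, i.e.\ that conditions \ref{helpful:true}, \ref{helpful:false}, \ref{helpful:both} of Definition~\ref{def:helpful} hold for every $\theta$ in it. The two hypotheses in each part rule out exactly the two lower covers of the target class (for $\clSminOO$ these are $\clUOO$ and $\clWnegOO$; for $\clSminOI$ they are $\clTcU$ and $\clSc$), and the whole point of the argument is that the minors witnessing the failure of membership in those covers are precisely the functions needed to make bisectability work.

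\textit{Part (i).} Since $f\in\clSminOO\setminus\clUOO$ and $\clSminOO\subseteq\clOO$, $\clUOO=\clU\cap\clOO$, we have $f\notin\clU$ and $f$ is not constant; pick $\vect{a},\vect{b}\in f^{-1}(1)$ with $\vect{a}\wedge\vect{b}=\vect{0}$. As $f\in\clSmin$ and $f(\vect{0})=f(\vect{1})=0$, the coordinate blocks where $(\vect{a},\vect{b})$ is $(1,0)$, $(0,1)$, $(0,0)$ are all nonempty, and identifying them gives a ternary minor $f'\in\gen{f}$ with $f'(1,0,0)=f'(0,1,0)=1$ and $f'(0,0,0)=f'(1,1,1)=f'(0,1,1)=f'(1,0,1)=0$ (the two entries $f'(1,1,0)$, $f'(0,0,1)$ are not determined and will not be used). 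Dually, $g\notin\clWneg$ yields $\vect{a}',\vect{b}'\in g^{-1}(1)$ with $\vect{a}'\vee\vect{b}'=\vect{1}$ and a ternary minor $g'\in\gen{g}$ with $g'(1,0,1)=g'(0,1,1)=1$ and $g'$ vanishing at $\vect{0}$, $\vect{1}$, $(0,1,0)$, $(1,0,0)$. Note $0=f'_{111}\in\gen{f}$ and $\mathord{\nrightarrow}=f'_{122}\in\gen{f}$. Take $G:=\{f',g',\mathord{\nrightarrow}\}$. For $\theta\in\clSminOO$: condition \ref{helpful:false} is immediate since $0\in G_n$; condition \ref{helpful:both} is handled by $\mathord{\nrightarrow}$ exactly as in the proof of Proposition~\ref{prop:UWneg} (using $\theta(\vect{0})=\theta(\vect{1})=0$); for condition \ref{helpful:true} one considers $\vect{u},\vect{v}\in\theta^{-1}(1)$ and splits on whether $\vect{u}\wedge\vect{v}=\vect{0}$ and whether $\vect{u}\vee\vect{v}=\vect{1}$. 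If neither, a coordinate where both are $1$ together with one where both are $0$ feeds $\mathord{\nrightarrow}$. If $\vect{u}\wedge\vect{v}=\vect{0}$ but $\vect{u}\vee\vect{v}\ne\vect{1}$, the coordinate patterns $(1,0),(0,1),(0,0)$ all occur and $f'$ evaluated along them gives $1$ on both (using $f'(1,0,0)=f'(0,1,0)=1$); symmetrically $g'$ handles $\vect{u}\vee\vect{v}=\vect{1}$, $\vect{u}\wedge\vect{v}\ne\vect{0}$. The remaining case $\vect{v}=\overline{\vect{u}}$ is impossible because $\theta\in\clSmin$.

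\textit{Part (ii).} Since $g\in\clSminOI\setminus\clSc$ and $\clSminOI\subseteq\clOI$, $g$ is not self\hyp{}dual, so there is $\vect{u}$ with $g(\vect{u})=g(\overline{\vect{u}})$, which is $0$ because $g\in\clSmin$; as $g(\vect{0})=0$, $g(\vect{1})=1$, both coordinate blocks of $\vect{u}$ are nonempty and identifying them gives $\mathord{\wedge}\in\gen{g}$ (hence $\id\in\gen{g}$ as a minor of $\mathord{\wedge}$). Since $f\in\clSminOI\setminus\clTcU=\clSminOI\setminus\clU$, the argument of Part (i) produces a ternary minor $f'\in\gen{f}$ with $f'(1,0,0)=f'(0,1,0)=1$, $f'(0,0,0)=0$, $f'(1,1,1)=1$, $f'(0,1,1)=f'(1,0,1)=0$. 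Put $G:=\{f',\mathord{\wedge},\id\}$. For $\theta\in\clSminOI$: in condition \ref{helpful:true}, a shared $1$\hyp{}coordinate of $\vect{u},\vect{v}\in\theta^{-1}(1)$ gives $\id_i$, and otherwise $\vect{u}\wedge\vect{v}=\vect{0}$, forcing $\vect{u}\vee\vect{v}\ne\vect{1}$ by $\theta\in\clSmin$, so the patterns $(1,0),(0,1),(0,0)$ occur and $f'$ works; in condition \ref{helpful:false}, a shared $0$\hyp{}coordinate of $\vect{b},\vect{b}'\in\theta^{-1}(0)$ gives $\id_i$, and otherwise $\vect{b}\vee\vect{b}'=\vect{1}$, which (using $\theta(\vect{1})=1$) forces coordinates with patterns $(1,0)$ and $(0,1)$ to occur, and $\mathord{\wedge}$ along them gives $0$ on both; in condition \ref{helpful:both} with $\vect{a}\in\theta^{-1}(1)$, $\vect{b}\in\theta^{-1}(0)$, a coordinate with $a_i=1$, $b_i=0$ gives $\id_i$, and otherwise $\vect{a}<\vect{b}$, so (using $\theta(\vect{1})=1$) patterns $(1,1),(0,0),(0,1)$ all occur, and $f'$ along them separates via $f'(1,0,0)=1$ and $f'(1,0,1)=0$. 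Conclude by Lemma~\ref{lem:helpful}\ref{lem:helpful:bisectable}.

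The main obstacle is the verification of the three bisectability conditions: the only genuinely delicate case in each is the one where the two tuples under consideration are complementary (or satisfy $\vect{a}\le\vect{b}$) — exactly the configurations on which $\clUOO$, $\clWnegOO$ (resp.\ $\clTcU$, $\clSc$) fail to separate. These are resolved either because $\clSmin$\hyp{}membership of $\theta$ excludes the configuration, or because the non\hyp{}monotone minor $f'$ (resp.\ $g'$), which exists precisely because $f\notin\clU$ (resp.\ $g\notin\clWneg$ or $g\notin\clS$), separates it; the bookkeeping to watch is that one uses only the truth\hyp{}table entries of $f'$, $g'$ that the extraction actually forces, never the two ambiguous ones.
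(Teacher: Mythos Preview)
Your proposal is correct and follows essentially the same approach as the paper: extract the ternary minors $f'$ and $g'$ witnessing failure of $\clU$ (resp.\ $\clWneg$, $\clS$), then verify $G$\hyp{}bisectability via the same case analysis on column patterns of the pair of tuples. The only cosmetic differences are that you include $\mathord{\nrightarrow}$ and $\id$ explicitly in $G$ (the paper just notes they are minors of $f'$), and in Part~(ii) condition~\ref{helpful:false} you split on whether a shared $0$\hyp{}coordinate exists whereas the paper uses $\mathord{\wedge}_{ij}$ uniformly with possibly distinct $i,j$; both routes are valid.
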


\begin{proof}
\ref{prop:Smin00:Smin00}
Since $f \in \clOO$, we have $f(\vect{0}) = f(\vect{1}) = 0$.
Since $f \notin \clU$, there exist tuples $\vect{a}$ and $\vect{b}$ such that $f(\vect{a}) = f(\vect{b}) = 1$ and $\vect{a} \wedge \vect{b} = \vect{0}$, i.e., $\vect{a} \leq \overline{\vect{b}}$.
Since $f \in \clSmin$, we have $f(\overline{\vect{a}}) = f(\overline{\vect{b}}) = 0$.
Consequently, $\vect{a} \neq \overline{\vect{b}}$; therefore $\vect{a} < \overline{\vect{b}}$, so $\vect{a} \vee \vect{b} \neq \vect{1}$.
Similarly, $g \in \clSminOO \setminus \clWnegOO$ implies that $g(\vect{0}) = g(\vect{1}) = 0$ and there exist tuples $\vect{c}$ and $\vect{d}$ such that $g(\vect{c}) = g(\vect{d}) = 1$, $\vect{c} \vee \vect{d} = \vect{1}$, $\vect{c} \wedge \vect{d} \neq \vect{0}$.
By identifying arguments, we obtain ternary minors $f' \leq f$ and $g' \leq g$ satisfying
$f'(0,0,0) = 0$, $f'(1,0,0) = 1$, $f'(0,1,0) = 1$,
$f'(1,1,1) = 0$, $f'(0,1,1) = 0$, $f'(1,0,1) = 0$,
$g'(0,0,0) = 0$, $g'(1,0,0) = 0$, $g'(0,1,0) = 0$,
$g'(1,1,1) = 0$, $g'(0,1,1) = 1$, $g'(1,0,1) = 1$.
By Lemma~\ref{lem:helpful} it suffices to show that $\clSminOO$ is $G$\hyp{}bisectable with $G := \{f', g'\}$.
Note that $\mathord{\nrightarrow} = f'_{122}$.
So, let $\theta \in \clSminOO$.

Condition \ref{helpful:true}:
Let $\vect{a}, \vect{a}' \in \theta^{-1}(1)$.
We have $\vect{a} \neq \overline{\vect{a}'}$ and $\{\vect{a}, \vect{a}'\} \cap \{\vect{0}, \vect{1}\} = \emptyset$.
Then one of the following cases must occur, for some $i$, $j$, $k$:
(i) $\big( \begin{smallmatrix} a_i & a_j \\ a'_i & a'_j \end{smallmatrix} \big) = \big( \begin{smallmatrix} 1 & 0 \\ 1 & 0 \end{smallmatrix} \big)$, in which case $\mathord{\nrightarrow}_{ij}(\vect{a}) = \mathord{\nrightarrow}_{ij}(\vect{a}') = 1$;
(ii) $\big( \begin{smallmatrix} a_i & a_j & a_k \\ a'_i & a'_j & a'_k \end{smallmatrix} \big) = \big( \begin{smallmatrix} 0 & 1 & 0 \\ 0 & 0 & 1 \end{smallmatrix} \big)$, in which case $f'_{kji}(\vect{a}) = f'_{kji}(\vect{a}) = 1$;
(iii) $\big( \begin{smallmatrix} a_i & a_j & a_k \\ a'_i & a'_j & a'_k \end{smallmatrix} \big) = \big( \begin{smallmatrix} 1 & 0 & 1 \\ 1 & 1 & 0 \end{smallmatrix} \big)$, in which case $g'_{kji}(\vect{a}) = g'_{kji}(\vect{a}) = 1$.

Condition \ref{helpful:false} holds because $0 \leq f' \in G$.

Condition \ref{helpful:both}: Shown as in the proof of Proposition~\ref{prop:UWneg}.

\ref{prop:Smin00:Smin01}
Since $f \in \clOI$, we have $f(\vect{0}) = 0$ and $f(\vect{1}) = 1$.
Since $f \notin \clU$, there exist tuples $\vect{a}$ and $\vect{b}$ such that $f(\vect{a}) = f(\vect{b}) = 1$ and $\vect{a} \wedge \vect{b} = \vect{0}$.
Since $f \in \clSmin$, we must have $f(\overline{\vect{a}}) = f(\overline{\vect{b}}) = 0$; hence $\vect{a} \neq \overline{\vect{b}}$.
By identifying arguments, we obtain a ternary minor $f' \leq f$ satisfying
$f'(1,1,1) = 1$, $f'(1,0,0) = 1$, $f'(0,1,0) = 1$,
$f'(0,0,0) = 0$, $f'(0,1,1) = 0$, $f'(1,0,1) = 0$.
Since $g \in \clOI$, we have $g(\vect{0}) = 0$ and $g(\vect{1}) = 1$.
Since $g \notin \clSc$, there exists a tuple $\vect{u}$ such that $g(\vect{u}) = g(\overline{\vect{u}}) =: a$.
Since $g \in \clSmin$, we must have $a = 0$.
By identifying arguments, we get $\mathord{\wedge}$ as a minor of $g$.
By Lemma~\ref{lem:helpful} it suffices to show that $\clSminOI$ is $G$\hyp{}bisectable with $G := \{f', \mathord{\wedge}\}$.
Note that $\id \leq f'$.
So, let $\theta \in \clSminOI$.

Condition \ref{helpful:true}:
Let $\vect{a}, \vect{a}' \in \theta^{-1}(1)$.
We have $\vect{a} \neq \overline{\vect{a}'}$ and $\vect{a} \neq \vect{0} \neq \vect{a}'$.
Then one of the following cases must occur, for some $i$, $j$, $k$:
(i) $\big( \begin{smallmatrix} a_i \\ a'_i \end{smallmatrix} \big) = \big( \begin{smallmatrix} 1 \\ 1 \end{smallmatrix} \big)$, in which case $\id_i(\vect{a}) = \id_i(\vect{a}') = 1$;
(ii) $\big( \begin{smallmatrix} a_i & a_j & a_k \\ a'_i & a'_j & a'_k \end{smallmatrix} \big) = \big( \begin{smallmatrix} 0 & 1 & 0 \\ 0 & 0 & 1 \end{smallmatrix} \big)$, in which case $f'_{kji}(\vect{a}) = f'_{kji}(\vect{a}') = 1$.

Condition \ref{helpful:false}:
Let $\vect{b}, \vect{b}' \in \theta^{-1}(0)$.
We have $\vect{b} \neq \vect{1} \neq \vect{b}'$.
Then there exist indices $i$ and $j$ such that $b_i = b'_j = 0$.
Therefore $\mathord{\wedge_{ij}}(\vect{b}) = \mathord{\wedge_{ij}}(\vect{b}') = 0$.

Condition \ref{helpful:both}:
Let $\vect{a} \in \theta^{-1}(1)$ and $\vect{b} \in \theta^{-1}(0)$.
We have $\vect{0} \neq \vect{a} \neq \vect{b} \neq \vect{1}$.
Then one of the following cases must occur, for some $i$, $j$, $k$:
(i) $\big( \begin{smallmatrix} a_i \\ b_i \end{smallmatrix} \big) = \big( \begin{smallmatrix} 1 \\ 0 \end{smallmatrix} \big)$, in which case $\id_i(\vect{a}) = 1$, $\id_i(\vect{b}) = 0$;
(ii) $\big( \begin{smallmatrix} a_i & a_j & a_k \\ b_i & b_j & b_k \end{smallmatrix} \big) = \big( \begin{smallmatrix} 0 & 1 & 0 \\ 1 & 1 & 0 \end{smallmatrix} \big)$, in which case $f'_{kji}(\vect{a}) = 1$, $f'_{kji}(\vect{b}) = 0$.
\end{proof}

\begin{proposition}
\label{prop:Smin01C0}
For any $f, g \in \clSminOICO$ with $f \notin \clSminOI$ and $g \notin \clTcUCO$, we have $\gen{f, g} = \clSminOICO$.
\end{proposition}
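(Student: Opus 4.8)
The plan is to reuse the template of Propositions~\ref{prop:TcU2C} and \ref{prop:M}\ref{prop:M:M0}. Since $\clSminOICO = \clSminOI \cup \clVako$ and these two summands are disjoint (every member of $\clSminOI$ takes the value $1$ at $\vect{1}$, every member of $\clVako$ the value $0$), the hypothesis $f \notin \clSminOI$ forces $f \in \clVako$, i.e.\ $f$ is a constant $0$ function. Moreover $\clU \subseteq \clSmin$ (noted right after the definition of $\clSmin$), so $\clTcU \subseteq \clSminOI$ and hence $\clTcUCO = \clTcU \cup \clVako$; therefore $g \notin \clTcUCO$ forces $g \in \clSminOI$ and $g \notin \clTcU$.

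Next I would extract the small functions needed to invoke the already established Proposition~\ref{prop:Smin00}\ref{prop:Smin00:Smin01}. By Proposition~\ref{prop:constants}\ref{prop:constants:D0Ca} we have $\gen{f} = \clVako$, so all constant $0$ functions lie in $\gen{f,g}$; and since $g \in \clOI$, Lemma~\ref{lem:apu:minors}\ref{lem:apu:minors:01} gives that $\id$ is a minor of $g$, so $\id \in \gen{f,g}$. As $\gen{f,g}$ is stable under left composition with $\clSM$ and $\mu \in \clSM$, it follows that $\mathord{\wedge} = \mu(\id_1, \id_2, 0) \in \gen{f,g}$. A direct check shows $\mathord{\wedge} \in \clSminOI$ (it is idempotent and $\mathord{\wedge}(a,b) \wedge \mathord{\wedge}(\overline{a},\overline{b}) = 0$ for all $a,b$) and $\mathord{\wedge} \notin \clSc$ (it is not self\hyp{}dual).

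Finally I would apply Proposition~\ref{prop:Smin00}\ref{prop:Smin00:Smin01} to the pair $g,\mathord{\wedge} \in \clSminOI$, with $g \notin \clTcU$ and $\mathord{\wedge} \notin \clSc$; this yields $\gen{g,\mathord{\wedge}} = \clSminOI$, and since $g,\mathord{\wedge} \in \gen{f,g}$ we get $\clSminOI \subseteq \gen{f,g}$. Together with $\clVako = \gen{f} \subseteq \gen{f,g}$ this gives $\clSminOICO = \clSminOI \cup \clVako \subseteq \gen{f,g}$, while the reverse inclusion $\gen{f,g} \subseteq \clSminOICO$ is immediate because $\clSminOICO$ is $(\clIc,\clSM)$\hyp{}stable (Proposition~\ref{prop:SM-stable:sufficiency}) and contains both $f$ and $g$. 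I do not expect a genuine obstacle: essentially all of the combinatorial content is carried by Proposition~\ref{prop:Smin00}\ref{prop:Smin00:Smin01}, and what remains is just the membership bookkeeping — checking that $f$ is constant $0$, that $g \in \clSminOI \setminus \clTcU$, and that $\mathord{\wedge} \in \clSminOI \setminus \clSc$ so that the earlier proposition applies.
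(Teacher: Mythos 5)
Your proof is correct, but it takes a different route from the paper's. The paper proves this proposition by re-running the bisectability machinery: it extracts from $g$ the ternary minor $g'$ constructed in the proof of Proposition~\ref{prop:Smin00}\ref{prop:Smin00:Smin01} and then verifies directly that the whole class $\clSminOICO$ is $G$\hyp{}bisectable for $G := \{0, g'\}$ (condition \ref{helpful:false} being trivial because $0 \in G$, and conditions \ref{helpful:true} and \ref{helpful:both} being either vacuous for $\theta \in \clVako$ or identical to the earlier verification for $\theta \in \clSminOI$), after which Lemma~\ref{lem:helpful} finishes the job. You instead reuse the \emph{statement} of Proposition~\ref{prop:Smin00}\ref{prop:Smin00:Smin01} as a black box: you synthesize $\mathord{\wedge} = \mu(\id_1,\id_2,0) \in \gen{f,g}$ from $\id \leq g$ and the constant $0$ (the same trick the paper uses in Proposition~\ref{prop:M}\ref{prop:M:M0}), check $\mathord{\wedge} \in \clSminOI \setminus \clSc$, apply the earlier proposition to the pair $(g,\mathord{\wedge})$ to get $\clSminOI \subseteq \gen{f,g}$, and take the union with $\clVako = \gen{f}$. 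All the membership bookkeeping you do checks out ($\clSminOI$ and $\clVako$ are disjoint; $\clTcU \subseteq \clSminOI$ since $\clU \subseteq \clSmin$, so $g \in \clSminOI \setminus \clTcU$; $\mathord{\wedge}$ is a minorant of a self\hyp{}dual function but not self\hyp{}dual). Your version avoids repeating the bisectability verification at the cost of the extra step of manufacturing $\mathord{\wedge}$; the paper's version avoids that synthesis but must re-address the bisectability conditions for the enlarged class. Both are legitimate, and yours is arguably the cleaner reduction.
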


\begin{proof}
We have $f = 0$.
Since $g \in \clSminOI \setminus \clTcU$, the argument in the proof of Proposition~\ref{prop:Smin00}\ref{prop:Smin00:Smin01} shows that $g$ has a ternary minor $g'$ satisfying
$g'(1,1,1) = 1$, \linebreak $g'(1,0,0) = 1$, $g'(0,1,0) = 1$,
$g'(0,0,0) = 0$, $g'(0,1,1) = 0$, $g'(1,0,1) = 0$.
By Lemma~\ref{lem:helpful} it suffices to show that $\clSminOICO$ is $G$\hyp{}bisectable with $G := \{0, g'\}$.
Note that $\id \leq g'$.
So, let $\theta \in \clSminOICO$.

Condition \ref{helpful:false} holds because $0 \in G$.
Conditions \ref{helpful:true} and \ref{helpful:both} hold vacuously if $\theta \in \clVako$.
That conditions \ref{helpful:true} and \ref{helpful:both} are satisfied for $\theta \in \clSminOI$ is shown in the same way as in the proof of Proposition~\ref{prop:Smin00}\ref{prop:Smin00:Smin01}.
\end{proof}

\begin{proposition}
\label{prop:Smin0X}
For any $f, g, h \in \clSminOX$ with $f \notin \clSminOO$, $g \notin \clSminOICO$, $h \notin \clU$, we have $\gen{f, g, h} = \clSminOX$.
\end{proposition}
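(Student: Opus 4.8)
The approach will be the one used for the neighbouring classes in this section, most closely Proposition~\ref{prop:Smin00}: extract low\hyp{}arity minors from $f$, $g$, and $h$, assemble them into a set $G \subseteq \gen{f,g,h}$, and then apply Lemma~\ref{lem:helpful}\ref{lem:helpful:bisectable} by verifying that $\clSminOX$ is $G$\hyp{}bisectable.

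First I would unwind the hypotheses. Since every member of $\clSminOX$ vanishes at $\vect{0}$ and $\clSminOICO = \clSminOI \cup \clVako$, the assumption $f \notin \clSminOO$ forces $f \in \clSminOI$, the assumption $g \notin \clSminOICO$ forces $g \in \clSminOO \setminus \clVako$, and $h \notin \clU$ means that $h$ is not $1$\hyp{}separating of rank $2$, so there are $\vect{a}, \vect{b} \in h^{-1}(1)$ with $\vect{a} \wedge \vect{b} = \vect{0}$. Now Lemma~\ref{lem:apu:minors}\ref{lem:apu:minors:01} gives $\id \in \gen{f}$, Lemma~\ref{lem:apu:minors:Smin00-C} gives $\mathord{\nrightarrow} \in \gen{g}$, and identifying the arguments of $h$ according to the coordinate patterns of the pair $(\vect{a},\vect{b})$ (the blocks being the support of $\vect{a}$, the support of $\vect{b}$, and the remaining coordinates, the last block dropped if empty) yields a minor $h'$ of $h$, of arity at most $3$, with $h'(1,0,0) = h'(0,1,0) = 1$. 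I then set $G := \{\id, \mathord{\nrightarrow}, h'\} \subseteq \gen{f,g,h}$.

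The remaining task is to check conditions \ref{helpful:true}--\ref{helpful:both} of Definition~\ref{def:helpful} for an arbitrary $\theta \in \clSminOX$. Condition \ref{helpful:false} is immediate because the constant $0$ is a minor of $\mathord{\nrightarrow} \in G$. For condition \ref{helpful:true}, if $\vect{a}, \vect{a}' \in \theta^{-1}(1)$ then $\vect{a}, \vect{a}' \neq \vect{0}$ and $\vect{a} \neq \overline{\vect{a}'}$, since $\theta$ is a minorant of a self\hyp{}dual function; if some coordinate $i$ has $a_i = a'_i = 1$ use $\id_i$, and otherwise $\vect{a}$ and $\vect{a}'$ turn out to be incomparable with a common zero coordinate, so a suitable minor of $h'$ (sending a $(1,0)$\hyp{}, a $(0,1)$\hyp{}, and, if present, a $(0,0)$\hyp{}coordinate of $(\vect{a},\vect{a}')$ to its arguments) takes value $1$ on both. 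For condition \ref{helpful:both}, let $\vect{a} \in \theta^{-1}(1)$ and $\vect{b} \in \theta^{-1}(0)$; then $\vect{a} \neq \vect{0}$ and $\vect{a} \neq \vect{b}$, so either some coordinate $i$ has $a_i = 1$ and $b_i = 0$, in which case $\id_i$ works, or $\vect{a} < \vect{b}$, in which case, picking $i$ with $a_i = b_i = 1$ and $j$ with $a_j = 0 < 1 = b_j$, the minor $\mathord{\nrightarrow}_{ij}$ has value $1$ at $\vect{a}$ and $0$ at $\vect{b}$. Lemma~\ref{lem:helpful}\ref{lem:helpful:bisectable} then yields $\gen{f,g,h} = \clSminOX$.

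The one step that needs genuine care — though it is more a matter of bookkeeping than of real difficulty — is condition \ref{helpful:true}: one must check that when $\vect{a}, \vect{a}'$ have no common $1$\hyp{}coordinate, the inequality $\vect{a} \neq \overline{\vect{a}'}$ does force the simultaneous presence of a $(1,0)$\hyp{}, a $(0,1)$\hyp{}, and a $(0,0)$\hyp{}coordinate (so that the required minor of $h'$ exists), and one must treat separately the degenerate case $\vect{a} \vee \vect{b} = \vect{1}$ for the $h$\hyp{}witnesses, where $h'$ is only binary but still satisfies $h'(1,0) = h'(0,1) = 1$, which is all that is used.
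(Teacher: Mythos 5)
Your proof is correct and follows essentially the same route as the paper's: extract $\id$ from $f$, $\mathord{\nrightarrow}$ from $g$, and a ternary minor $h'$ from $h$ with $h'(1,0,0)=h'(0,1,0)=1$, then verify that $\clSminOX$ is $G$\hyp{}bisectable for $G=\{\id,\mathord{\nrightarrow},h'\}$, with exactly the same case analysis for conditions \ref{helpful:true}--\ref{helpful:both}. The only difference is that the degenerate case $\vect{a}\vee\vect{b}=\vect{1}$ you guard against cannot actually occur (since $h\in\clSmin$ forces $h(\overline{\vect{a}})=0$, hence $\vect{b}\neq\overline{\vect{a}}$, so the third block is nonempty); your binary fallback would work anyway, so this extra caution is harmless.
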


\begin{proof}
We have $f \in \clSminOI$, so $\id \leq f$ by Lemma~\ref{lem:apu:minors}\ref{lem:apu:minors:01}, and we have $g \in \clSminOO \setminus \clVako$, so $\mathord{\nrightarrow} \leq g$ by Lemma~\ref{lem:apu:minors:Smin00-C}.
Since $h \notin \clU$, there exist tuples $\vect{a}$ and $\vect{b}$ such that $h(\vect{a}) = h(\vect{b}) = 1$ and $\vect{a} \wedge \vect{b} = \vect{0}$; moreover $\vect{a}$ and $\vect{b}$ are distinct from $\vect{0}$ because $h \in \clOX$.
Since $h \in \clSmin$, $h(\overline{\vect{a}}) = h(\overline{\vect{b}}) = 0$ and consequently $\vect{a} \neq \overline{\vect{b}}$.
Therefore $h$ has a ternary minor $h'$ satisfying
$h'(0,1,1) = 0$, $h'(1,0,1) = 0$,
$h'(1,0,0) = 1$, $h'(0,1,0) = 1$, $h'(0,0,0) = 0$.
By Lemma~\ref{lem:helpful} it suffices to show that $\clSminOX$ is $G$\hyp{}bisectable with $G := \{\id, \mathord{\nrightarrow}, h'\}$.
Note that $0 \leq \mathord{\nrightarrow}$.
So, let $\theta \in \clSminOX$.

Condition \ref{helpful:true}:
Let $\vect{a}, \vect{a}' \in \theta^{-1}(1)$.
We have $\vect{a} \neq \overline{\vect{a}'}$ and $\vect{a} \neq \vect{0} \neq \vect{a}'$.
Then one of the following cases must occur, for some $i$, $j$, $k$:
(i) $\big( \begin{smallmatrix} a_i \\ a'_i \end{smallmatrix} \big) = \big( \begin{smallmatrix} 1 \\ 1 \end{smallmatrix} \big)$, in which case $\id_i(\vect{a}) = \id_i(\vect{a}') = 1$;
(ii) $\big( \begin{smallmatrix} a_i & a_j & a_k \\ a'_i & a'_j & a'_k \end{smallmatrix} \big) = \big( \begin{smallmatrix} 0 & 1 & 0 \\ 0 & 0 & 1 \end{smallmatrix} \big)$, in which case $h'_{kji}(\vect{a}) = h'_{kji}(\vect{a}') = 1$.

Condition \ref{helpful:false} holds because $0 \leq \mathord{\nrightarrow} \in G$.

Condition \ref{helpful:both}:
Let $\vect{a} \in \theta^{-1}(1)$ and $\vect{b} \in \theta^{-1}(0)$.
We have $\vect{0} \neq \vect{a} \neq \vect{b}$.
Then one of the following cases must occur, for some $i$, $j$:
(i) $\big( \begin{smallmatrix} a_i \\ b_i \end{smallmatrix} \big) = \big( \begin{smallmatrix} 1 \\ 0 \end{smallmatrix} \big)$, in which case $\id_i(\vect{a}) = 1$, $\id_i(\vect{b}) = 0$;
(ii) $\big( \begin{smallmatrix} a_i & a_j \\ b_i & b_j \end{smallmatrix} \big) = \big( \begin{smallmatrix} 0 & 1 \\ 1 & 1 \end{smallmatrix} \big)$, in which case $\mathord{\nrightarrow}_{ji}(\vect{a}) = 1$, $\mathord{\nrightarrow}_{ji}(\vect{b}) = 0$.
\end{proof}

\begin{proposition}
\label{prop:SminNeq}
For any $f, g, h \in \clSminNeq$ with $f \notin \clSminOI$, $g \notin \clSminIO$, $h \notin \clS$, we have $\gen{f, g, h} = \clSminNeq$.
\end{proposition}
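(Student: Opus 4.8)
The plan is to invoke Lemma~\ref{lem:helpful}\ref{lem:helpful:bisectable} with $K := \clSminNeq$, $F := \{f, g, h\}$, and a three-element set $G$ of minors of $f$, $g$, $h$. First I note that $\clSminNeq$ is the disjoint union of $\clSminOI$ and $\clSminIO$, so the hypotheses $f \notin \clSminOI$ and $g \notin \clSminIO$ yield $f \in \clSminIO$ and $g \in \clSminOI$. By Lemma~\ref{lem:apu:minors}, $\neg$ is then a minor of $f$ and $\id$ is a minor of $g$, so $\{\id, \neg\} \subseteq \gen{f, g, h}$.

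Next I extract a non\hyp{}self\hyp{}dual binary function from $h$. Since $h \in \clSmin$ but $h \notin \clS$, there is a tuple $\vect{u}$ with $h(\vect{u}) = h(\overline{\vect{u}})$, and $h(\vect{u}) \wedge h(\overline{\vect{u}}) = 0$ forces $h(\vect{u}) = h(\overline{\vect{u}}) = 0$; moreover $\vect{u} \notin \{\vect{0}, \vect{1}\}$ because $h(\vect{0}) \neq h(\vect{1})$. After permuting arguments we may take $\vect{u} = 1^i 0^{n-i}$ with $0 < i < n$; identifying the first $i$ and the last $n - i$ arguments produces a binary minor $h'$ of $h$ with $h'(1, 0) = h(\vect{u}) = 0$ and $h'(0, 1) = h(\overline{\vect{u}}) = 0$ (concretely $h'$ is conjunction or the NOR function, but only these two values will be used). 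Thus $h' \in \gen{f, g, h}$, and I put $G := \{\id, \neg, h'\}$.

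It remains to check that $\clSminNeq$ is $G$\hyp{}bisectable. Fix an $n$\hyp{}ary $\theta \in \clSminNeq$; then $G_n$ contains all $x_i$, all $\neg x_i$, and all $h'_{ij}$. For $\vect{a}, \vect{a}' \in \theta^{-1}(1)$ the relation $\theta(\vect{a}) \wedge \theta(\overline{\vect{a}}) = 0$ rules out $\vect{a}' = \overline{\vect{a}}$, so $a_i = a'_i$ for some $i$, and $x_i$ or $\neg x_i$ (according to this common value) is the required minor; similarly, for $\vect{a} \in \theta^{-1}(1)$ and $\vect{b} \in \theta^{-1}(0)$ we have $\vect{a} \neq \vect{b}$, and the coordinate in which they differ again provides $x_i$ or $\neg x_i$. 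The crucial case is $\vect{b}, \vect{b}' \in \theta^{-1}(0)$: if $b_i = b'_i$ for some $i$ we are done as before, and otherwise $\vect{b}' = \overline{\vect{b}}$, where $\theta(\vect{0}) \neq \theta(\vect{1})$ forces $\vect{b} \notin \{\vect{0}, \vect{1}\}$, so some coordinate $i$ has $b_i = 1$ and some coordinate $j$ has $b_j = 0$; then $h'_{ij}(\vect{b}) = h'(1, 0) = 0$ and $h'_{ij}(\overline{\vect{b}}) = h'(0, 1) = 0$. This verifies conditions \ref{helpful:true}, \ref{helpful:false}, and \ref{helpful:both} of Definition~\ref{def:helpful}, and Lemma~\ref{lem:helpful}\ref{lem:helpful:bisectable} gives $\gen{f, g, h} = \clSminNeq$. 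The only real obstacle is the antipodal pair $\{\vect{b}, \overline{\vect{b}}\}$ in $\theta^{-1}(0)$, which no self\hyp{}dual minor can separate downward; this is precisely why the hypothesis $h \notin \clS$ is needed and why the minor $h'$ with $h'(1, 0) = h'(0, 1) = 0$ does the job, while the rest of the bisectability check is routine.
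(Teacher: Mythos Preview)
Your proof is correct and follows essentially the same approach as the paper's: both extract $\id$, $\neg$, and a binary minor $h'$ with $h'(1,0)=h'(0,1)=0$, set $G=\{\id,\neg,h'\}$, and verify $G$\hyp{}bisectability of $\clSminNeq$ via the same case analysis, with $h'$ handling the antipodal pair in $\theta^{-1}(0)$.
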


\begin{proof}
We have $f \in \clSminIO$ and $g \in \clSminOI$, so $\neg \leq f$ and $\id \leq g$.
Since $h \notin \clS$, there exists a tuple $\vect{a}$ such that $h(\vect{a}) = h(\overline{\vect{a}}) =: a$; since $h \in \clSmin$, we must have $a = 0$; note that $\vect{a} \notin \{\vect{0}, \vect{1}\}$ because $h(\vect{0}) \neq h(\vect{1})$.
Thus $h$ has a binary minor $h'$ that satisfies $h'(0,1) = h'(1,0) = 0$.
By Lemma~\ref{lem:helpful} it suffices to show that $\clSminNeq$ is $G$\hyp{}bisectable with $G := \{\id, \neg, h'\}$.
So, let $\theta \in \clSminNeq$.

Condition \ref{helpful:true}:
Let $\vect{a}, \vect{a}' \in \theta^{-1}(1)$.
We have $\vect{a} \neq \overline{\vect{a}'}$, so there is an $i$ such that $a_i = a'_i$.
Then $\tau(\vect{a}) = \tau(\vect{a}') = 1$ for some $\tau \in \{\id_i, \neg_i\}$.

Condition \ref{helpful:false}:
Let $\vect{b}, \vect{b}' \in \theta^{-1}(0)$.
We have $\{\vect{b}, \vect{b}'\} \neq \{\vect{0}, \vect{1}\}$.
If there is an $i$ such that $b_i = b'_i$, then $\tau(\vect{b}) = \tau(\vect{b}') = 0$ for some $\tau \in \{\id_i, \neg_i\}$.
Otherwise there exist $j$ and $k$ such that $b_j = b'_k = 1$, $b'_j = b_k = 0$, and we have $h'_{jk}(\vect{b}) = h'_{jk}(\vect{b}') = 0$.

Condition \ref{helpful:both}
Let $\vect{a} \in \theta^{-1}(1)$ and $\vect{b} \in \theta^{-1}(0)$.
We have $\vect{a} \neq \vect{b}$, so there exists an $i$ such that $a_i \neq b_i$.
Then $\tau(\vect{a}) = 1$ and $\tau(\vect{b}) = 0$ for some $\tau \in \{\id_i, \neg_i\}$.
\end{proof}

\begin{proposition}
\label{prop:Smin}
For any $f, g, h \in \clSmin$ with $f \notin \clSminNeq$, $g \notin \clSminOX$, $h \notin \clSminXO$, we have $\gen{f, g, h} = \clSmin$.
\end{proposition}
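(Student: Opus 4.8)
The plan is to apply Lemma~\ref{lem:helpful}\ref{lem:helpful:bisectable}: I will exhibit a set $G \subseteq \gen{f,g,h}$ for which $\clSmin$ is $G$\hyp{}bisectable, and this gives $\gen{f,g,h} = \clSmin$ at once. First I would pin down the location of $f$, $g$, $h$ inside $\clSmin$. Since every $\theta \in \clSmin$ satisfies $\theta(\vect 0) \wedge \theta(\vect 1) = 0$, we have $\clSmin = \clSminOO \cup \clSminOI \cup \clSminIO$ (a disjoint union), with $\clSminNeq = \clSminOI \cup \clSminIO$, $\clSminOX = \clSminOO \cup \clSminOI$, and $\clSminXO = \clSminOO \cup \clSminIO$. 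Hence $f \notin \clSminNeq$ forces $f \in \clSminOO$, $g \notin \clSminOX$ forces $g(\vect 0) = 1$ and thus $g \in \clSminIO$, and $h \notin \clSminXO$ forces $h(\vect 1) = 1$ and thus $h \in \clSminOI$.

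Next I would produce the generating set. By Lemma~\ref{lem:apu:minors}, identifying all arguments of $f$, $g$, $h$ yields, respectively, the constant $0$ (as $f \in \clOO$), $\neg$ (as $g \in \clIO$), and $\id$ (as $h \in \clOI$); these are minors, so $G := \{0, \id, \neg\} \subseteq \gen{f,g,h}$.

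The main step is to check that $\clSmin$ is $G$\hyp{}bisectable. Fix an $n$\hyp{}ary $\theta \in \clSmin$; then $G_n$ consists of the constant $0$, the projections $\pr^{(n)}_i$, and the negated projections $\vect a \mapsto \overline{a_i}$. The observation used throughout is that $\theta \in \clSmin$ means no tuple of $\theta^{-1}(1)$ is the complement of another tuple of $\theta^{-1}(1)$. For condition~\ref{helpful:true}: given $\vect a, \vect a' \in \theta^{-1}(1)$ we have $\vect a' \neq \overline{\vect a}$, so $a_i = a'_i$ for some $i$, and $\pr^{(n)}_i$ (if that common value is $1$) or its negation (if it is $0$) takes value $1$ on both. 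Condition~\ref{helpful:false} holds using the constant $0 \in G_n$. For condition~\ref{helpful:both}: given $\vect a \in \theta^{-1}(1)$ and $\vect b \in \theta^{-1}(0)$ we have $\vect a \neq \vect b$, hence $a_i \neq b_i$ for some $i$, and $\pr^{(n)}_i$ or its negation separates $\vect a$ from $\vect b$ with the prescribed values. Thus every function in $\clSmin$ is $G$\hyp{}bisectable, and Lemma~\ref{lem:helpful}\ref{lem:helpful:bisectable} concludes the proof.

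The point that needs care is the choice of tool rather than any single computation. Since $\clSmin$ contains all self\hyp{}dual functions together with the constant $0$, if it were a clone it would contain $\clonegen{\clS \cup \{0\}} = \clAll$ (as $\clS$ is a coatom of Post's lattice), contradicting $1 \notin \clSmin$; so $\clSmin$ is not a clone and the stratified\hyp{}term machinery of Lemma~\ref{lem:str-terms} is unavailable here. The bisectability tool of Lemma~\ref{lem:helpful} replaces it, and the only subtle bookkeeping is that the constant $0$ must be retained in $G$ so that condition~\ref{helpful:false} can still be met when $\theta^{-1}(0)$ happens to contain a complementary pair of tuples.
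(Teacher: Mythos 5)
Your proof is correct and follows essentially the same route as the paper: you identify $f \in \clSminOO$, $g \in \clSminIO$, $h \in \clSminOI$, extract the minors $0$, $\neg$, $\id$, and verify that $\clSmin$ is $\{0,\id,\neg\}$\hyp{}bisectable via exactly the same three case checks. The added remarks on why the stratified\hyp{}term tool is unavailable are accurate but not needed for the argument.
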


\begin{proof}
We have $f \in \clSminOO$, $g \in \clSminIO$, and $h \in \clSminOI$, so $0 \leq f$, $\neg \leq g$, and $\id \leq h$.
By Lemma~\ref{lem:helpful} it suffices to show that $\clSmin$ is $G$\hyp{}bisectable with $G := \{0, \id, \neg\}$.
So, let $\theta \in \clSmin$.

Condition \ref{helpful:true}:
Let $\vect{a}, \vect{a}' \in \theta^{-1}(1)$.
We have $\vect{a} \neq \overline{\vect{a}'}$, so there is an $i$ such that $a_i = a'_i$.
Then $\tau(\vect{a}) = \tau(\vect{a}') = 1$ for some $\tau \in \{\id_i, \neg_i\}$.

Condition \ref{helpful:false} holds because $0 \in G$.

Condition \ref{helpful:both}:
Let $\vect{a} \in \theta^{-1}(1)$ and $\vect{b} \in \theta^{-1}(0)$.
Then $\vect{a} \neq \vect{b}$, so there is an $i$ such that $a_i \neq b_i$,
and we have $\tau(\vect{a}) = 1$ and $\tau(\vect{b}) = 0$ for some $\tau \in \{\id_i, \neg_i\}$.
\end{proof}

\begin{proposition}
\label{prop:00}
\leavevmode
\begin{enumerate}[label=\upshape{(\roman*)}]
\item\label{prop:00:00}
For any $f, g \in \clOO$ with $f \notin \clSminOO$ and $g \notin \clReflOO$, we have $\gen{f, g} = \clOO$.
\item\label{prop:00:00C}
For any $f, g \in \clOOC$ with $f \notin \clOO$ and $g \notin \clReflOOC$, we have $\gen{f, g} = \clOOC$.
\end{enumerate}
\end{proposition}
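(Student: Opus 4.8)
The plan is to follow the pattern of the preceding propositions: for each of the two target classes $K$, produce a finite set $G$ of Boolean functions contained in $\gen{\cdot}$ and then invoke Lemma~\ref{lem:helpful}\ref{lem:helpful:bisectable} after checking that $K$ is $G$-bisectable.

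For part~\ref{prop:00:00}, I would first show that $\mathord{+}$ and $\mathord{\nrightarrow}$ are minors of $f$ and $g$, respectively. Since $f \in \clOO$ and $f \notin \clSminOO$, we have $f \notin \clSmin$, so there is a tuple $\vect a$ (necessarily distinct from $\vect 0$ and $\vect 1$) with $f(\vect a) = f(\overline{\vect a}) = 1$; identifying the arguments of $f$ into the two blocks on which $\vect a$ is constant produces a binary minor whose four values are forced to be those of $\mathord{+}$ (it is $0$ on $\vect 0$ and $\vect 1$ because $f \in \clOO$). Likewise, $g \in \clOO \setminus \clReflOO$ gives $g \notin \clRefl$, so some $\vect b \notin \{\vect 0, \vect 1\}$ has $g(\vect b) \neq g(\overline{\vect b})$; after possibly interchanging $\vect b$ and $\overline{\vect b}$ and identifying blocks, we obtain $\mathord{\nrightarrow}$ as a minor of $g$. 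Hence $G := \{\mathord{+}, \mathord{\nrightarrow}\} \subseteq \gen{f, g}$, and it remains to check that $\clOO$ is $G$-bisectable. Let $\theta \in \clOO$; then $\{\vect 0, \vect 1\} \subseteq \theta^{-1}(0)$ and every tuple in $\theta^{-1}(1)$ avoids $\vect 0$ and $\vect 1$. Condition~\ref{helpful:false} holds because the constant~$0$ is a minor of $\mathord{\nrightarrow}$. For condition~\ref{helpful:true}, given $\vect a, \vect a' \in \theta^{-1}(1)$ I would classify the ``columns'' $(a_k, a'_k) \in \{0,1\}^2$; a short case analysis using only the fact that $\vect a, \vect a'$ are $\neq \vect 0, \vect 1$ shows that either the column types $(1,1)$ and $(0,0)$ both occur, or the column types $(1,0)$ and $(0,1)$ both occur, and in the first case a suitable minor of $\mathord{\nrightarrow}$ is $1$ on both $\vect a$ and $\vect a'$, while in the second case a minor of $\mathord{+}$ is. Condition~\ref{helpful:both} is dealt with by a similar, easier case analysis on the columns of $(\vect a, \vect b)$, for which minors of $\mathord{\nrightarrow}$ alone suffice. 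Lemma~\ref{lem:helpful}\ref{lem:helpful:bisectable} then yields $\gen{f, g} = \clOO$.

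For part~\ref{prop:00:00C}, note $\clOOC = \clOO \cup \clVaki$ (as $\clVako \subseteq \clOO$), and that the hypotheses force $f$ to be a constant-$1$ function, so $\gen{f} = \clVaki$ by Proposition~\ref{prop:constants}\ref{prop:constants:D0Ca}, and force $g \in \clOO \setminus \clReflOO$ (since $\clVako \subseteq \clReflOO$). As in part~\ref{prop:00:00}, both $\mathord{\nrightarrow}$ and its argument-swapped variant are minors of $g$, hence lie in $\gen{f, g}$; since $\gen{f, g}$ is closed under left composition with $\clSM = \clonegen{\mu}$ and $\mu(p, q, 1) = p \vee q$, the function $\mathord{+} = \mu(\mathord{\nrightarrow}_{12}, \mathord{\nrightarrow}_{21}, \cf{2}{1})$ belongs to $\gen{f, g}$ as well. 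Now $\mathord{+} \in \clOO \setminus \clSminOO$ and $\mathord{\nrightarrow} \in \clOO \setminus \clReflOO$, so part~\ref{prop:00:00} applied to these two functions gives $\clOO = \gen{\mathord{+}, \mathord{\nrightarrow}} \subseteq \gen{f, g}$; together with $\clVaki = \gen{f} \subseteq \gen{f, g}$ and the $(\clIc,\clSM)$-stability of $\clOOC$ (which contains $f$ and $g$), this gives $\clOOC = \clOO \cup \clVaki \subseteq \gen{f, g} \subseteq \clOOC$.

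The main obstacle is condition~\ref{helpful:true} in part~\ref{prop:00:00}: this is the only place where no single function bisects all of $\clOO$ and $\mathord{+}$ must genuinely be added to $G$ alongside $\mathord{\nrightarrow}$; getting the column-type case analysis exactly right, and compatible with the constraints imposed by $\vect a, \vect a' \neq \vect 0, \vect 1$, is the crux. For part~\ref{prop:00:00C} the key point is the small identity $\mathord{+} = \mu(\mathord{\nrightarrow}_{12}, \mathord{\nrightarrow}_{21}, \cf{2}{1})$, which recovers $\mathord{+}$ from $\mathord{\nrightarrow}$ and the constant~$1$ and thereby reduces part~\ref{prop:00:00C} to part~\ref{prop:00:00}.
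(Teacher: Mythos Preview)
Your proposal is correct and follows essentially the same approach as the paper: in part~\ref{prop:00:00} you extract the minors $\mathord{+}$ and $\mathord{\nrightarrow}$ and verify $G$-bisectability of $\clOO$ with $G=\{\mathord{+},\mathord{\nrightarrow}\}$ via the same column-type case analysis, and in part~\ref{prop:00:00C} you use the identical identity $\mathord{+}=\mu(\mathord{\nrightarrow}_{12},\mathord{\nrightarrow}_{21},1)$ to reduce to part~\ref{prop:00:00} together with Proposition~\ref{prop:constants}.
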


\begin{proof}
\ref{prop:00:00}
Since $f \in \clOO$, we have $f(\vect{0}) = f(\vect{1}) = 0$; since $f \notin \clSmin$, there exists a tuple $\vect{a}$ such that $f(\vect{a}) = f(\overline{\vect{a}}) = 1$.
Therefore $\mathord{+} \leq f$.
Since $g \in \clOO$, we have $g(\vect{0}) = g(\vect{1}) = 0$; since $g \notin \clRefl$, there exists a tuple $\vect{b}$ such that $g(\vect{b}) \neq g(\overline{\vect{b}})$.
Therefore $\mathord{\nrightarrow} \leq g$.
By Lemma~\ref{lem:helpful} it suffices to show that $\clOO$ is $G$\hyp{}bisectable with $G := \{\mathord{+}, \mathord{\nrightarrow}\}$.
Note that $0$ is a minor of both $\mathord{+}$ and $\mathord{\nrightarrow}$.
So, let $\theta \in \clOO$.

Condition \ref{helpful:true}:
Let $\vect{a}, \vect{a}' \in \theta^{-1}(1)$.
We have $\{\vect{a}, \vect{a}'\} \cap \{\vect{0}, \vect{1}\} = \emptyset$.
Then one of the following cases must occur, for some $i$, $j$:
(i) $\big( \begin{smallmatrix} a_i & a_j \\ a'_i & a'_j \end{smallmatrix} \big) = \big( \begin{smallmatrix} 1 & 0 \\ 1 & 0 \end{smallmatrix} \big)$, in which case $\mathord{\nrightarrow}_{ij}(\vect{a}) = \mathord{\nrightarrow}_{ij}(\vect{a}') = 1$;
(ii) $\big( \begin{smallmatrix} a_i & a_j \\ a'_i & a'_j \end{smallmatrix} \big) = \big( \begin{smallmatrix} 0 & 1 \\ 1 & 0 \end{smallmatrix} \big)$, in which case $\mathord{+}_{ij}(\vect{a}) = \mathord{+}_{ij}(\vect{a}') = 1$.

Condition \ref{helpful:false} holds because $0 \leq \mathord{+} \in G$.

Condition \ref{helpful:both}: Shown as in the proof of Proposition~\ref{prop:UWneg}.

\ref{prop:00:00C}
We have $f = 1$.
Since $g \in \clOO$, we have $g(\vect{0}) = g(\vect{1}) = 0$;
since $g \notin \clRefl$, there exists a tuple $\vect{a}$ such that $g(\vect{a}) \neq g(\overline{\vect{a}})$.
Therefore $\mathord{\nrightarrow} \leq g$.
Clearly also $0 \leq g$.
We have $\mu(\mathord{\nrightarrow}, \mathord{\nrightarrow}_{21}, 1) = \mathord{+}$, so $\mathord{+} \in \gen{\mathord{\nrightarrow}, 1}$.
Part \ref{prop:00:00} and Proposition~\ref{prop:constants}\ref{prop:constants:D0} yield
$\clOOC
= \gen{\mathord{+}, \mathord{\nrightarrow}} \cup \gen{0, 1}
\subseteq \gen{f, g}
\subseteq \clOOC$.
\end{proof}

\begin{proposition}
\label{prop:01}
\leavevmode
\begin{enumerate}[label=\upshape{(\roman*)}]
\item\label{prop:01:01}
For any $f, g, h \in \clOI$ with $f \notin \clSminOI$, $g \notin \clSmajOI$, and $h \notin \clMc$, we have $\gen{f, g, h} = \clOI$.
\item\label{prop:01:01C0}
For any $f, g, h \in \clOICO$ with $f \notin \clOI$, $g \notin \clSminOICO$, and $h \notin \clMo$, we have $\gen{f, g, h} = \clOICO$.
\item\label{prop:01:01C}
For any $f, g, h \in \clOIC$ with $f \notin \clOICO$, $g \notin \clOICI$, and $h \notin \clM$, we have $\gen{f, g, h} = \clOIC$.
\end{enumerate}
\end{proposition}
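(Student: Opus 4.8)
The plan is to prove the three parts in order, reducing \ref{prop:01:01C0} and \ref{prop:01:01C} to \ref{prop:01:01}, and to establish \ref{prop:01:01} through the bisectability criterion (Lemma~\ref{lem:helpful}\ref{lem:helpful:bisectable}) rather than through stratified terms; the stratified-term route (Lemma~\ref{lem:str-terms}) is impractical here because the only concrete function one can pull out of $h$ is the ternary minor of Lemma~\ref{lem:apu:minors:01-M}, which is pinned down on only four of its eight inputs, so there are no term identities to feed into Lemma~\ref{lem:str-terms}.

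For part \ref{prop:01:01} I would first observe that $\gen{f,g,h}$ contains $\mathord{\vee}$, $\mathord{\wedge}$, and a suitable ternary minor $h'$ of $h$: from $f \in \clOI \setminus \clSminOI$ we get $f(\vect{0})=0$, $f(\vect{1})=1$ and a tuple $\vect{a} \notin \{\vect{0},\vect{1}\}$ with $f(\vect{a}) = f(\overline{\vect{a}}) = 1$, and identifying the arguments in the support of $\vect{a}$ with the remaining arguments turns $f$ into the binary minor $\mathord{\vee}$; symmetrically, $g \in \clOI \setminus \clSmajOI$ yields $\mathord{\wedge}$ as a minor of $g$; and $h \in \clOI \setminus \clMc$ means $h \in \clOI \setminus \clM$, so Lemma~\ref{lem:apu:minors:01-M} supplies a ternary minor $h'$ with $h'(0,0,0)=0$, $h'(1,0,0)=1$, $h'(1,1,0)=0$, $h'(1,1,1)=1$. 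It then suffices to check that $\clOI$ is $\{\mathord{\vee},\mathord{\wedge},h'\}$-bisectable. Conditions \ref{helpful:true} and \ref{helpful:false} of Definition~\ref{def:helpful} are routine: for $\theta \in \clOI$, any $\vect{a},\vect{a}' \in \theta^{-1}(1)$ are both nonzero, so picking $i$ with $a_i=1$ and $j$ with $a'_j=1$ gives $\mathord{\vee}_{ij}(\vect{a}) = \mathord{\vee}_{ij}(\vect{a}') = 1$; dually any $\vect{b},\vect{b}' \in \theta^{-1}(0)$ differ from $\vect{1}$, so choosing coordinates witnessing this gives a minor of $\mathord{\wedge}$ equal to $0$ on both. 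The substantive case is \ref{helpful:both}: for $\vect{a} \in \theta^{-1}(1)$, $\vect{b} \in \theta^{-1}(0)$ one has $\vect{a} \neq \vect{0}$, $\vect{b} \neq \vect{1}$, $\vect{a} \neq \vect{b}$; if some coordinate $i$ has $a_i=1$, $b_i=0$ a projection works, and otherwise $\vect{a} < \vect{b}$ and one must locate three \emph{distinct} coordinates $i,j,k$ with $(a_i,a_j,a_k) = (1,0,0)$ and $(b_i,b_j,b_k) = (1,1,0)$ — using $a_i=1$ (hence $b_i=1$), $a_j=0<1=b_j$, and $b_k=0$ (hence $a_k=0$) — so that $h'_{ijk}(\vect{a}) = h'(1,0,0) = 1$ and $h'_{ijk}(\vect{b}) = h'(1,1,0) = 0$. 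Producing those three distinct coordinates from the constraints $\vect{a} \neq \vect{0}$, $\vect{b} \neq \vect{1}$, $\vect{a} < \vect{b}$ is the only genuinely delicate point, and is where I expect the main (though still minor) obstacle to lie.

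For part \ref{prop:01:01C0}, writing $\clOICO = \clOI \cup \clVako$ and $\clSminOICO = \clSminOI \cup \clVako$ as disjoint unions, the hypotheses force $f = 0$, $g \in \clOI \setminus \clSminOI$, and $h \in \clOI \setminus \clMc$. As above $\mathord{\vee} \in \gen{g}$ and $h$ has a ternary minor $h'$ as in \ref{prop:01:01}; since $\id$ is a minor of $h'$, the identity $\mathord{\wedge} \equiv \mu(\id,\id,0)$ (left composition with $\mu \in \clSM$) places $\mathord{\wedge}$ in $\gen{f,g,h}$. Now $\mathord{\vee},\mathord{\wedge},h'$ satisfy the hypotheses of \ref{prop:01:01}, so $\gen{\mathord{\vee},\mathord{\wedge},h'} = \clOI$, while $\gen{f} = \clVako$ by Proposition~\ref{prop:constants}\ref{prop:constants:D0Ca}; hence $\clOICO = \clOI \cup \clVako = \gen{\mathord{\vee},\mathord{\wedge},h'} \cup \gen{f} \subseteq \gen{f,g,h} \subseteq \clOICO$. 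Part \ref{prop:01:01C} follows the same pattern: $\clOIC = \clOI \cup \clVako \cup \clVaki$ forces $f = 1$, $g = 0$, $h \in \clOI \setminus \clM$; taking the ternary minor $h'$ of $h$ and using $\id \leq h'$ gives both $\mathord{\wedge} \equiv \mu(\id,\id,0)$ and $\mathord{\vee} \equiv \mu(\id,\id,1)$ in $\gen{f,g,h}$, so \ref{prop:01:01} yields $\gen{\mathord{\vee},\mathord{\wedge},h'} = \clOI$, and together with $\gen{f} = \clVaki$ and $\gen{g} = \clVako$ we conclude $\clOIC = \gen{\mathord{\vee},\mathord{\wedge},h'} \cup \gen{f} \cup \gen{g} \subseteq \gen{f,g,h} \subseteq \clOIC$.
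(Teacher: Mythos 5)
Your proposal is correct and follows essentially the same route as the paper: part \ref{prop:01:01} is proved by extracting $\mathord{\vee}$ from $f$, $\mathord{\wedge}$ from $g$, and the ternary minor $h'$ from $h$ via Lemma~\ref{lem:apu:minors:01-M}, then checking $G$\hyp{}bisectability of $\clOI$ (your worry about finding three distinct coordinates in condition \ref{helpful:both} dissolves immediately, since $a_i=1\neq 0=a_j$, $b_j=1\neq 0=b_k$ and $a_i=1\neq 0=a_k$ force $i,j,k$ pairwise distinct), and parts \ref{prop:01:01C0} and \ref{prop:01:01C} are reductions using the constants. If anything, your treatment of \ref{prop:01:01C0} is more explicit than the paper's, which writes $\gen{g,h}\cup\gen{0}$ even though $g$ and $h$ alone need not supply anything outside $\clSmaj$; your use of $\mathord{\wedge}=\mu(\id,\id,0)$ is exactly the missing ingredient, and your reduction of \ref{prop:01:01C} to part \ref{prop:01:01} is an acceptable substitute for the paper's direct bisectability check with $G=\{0,1,h'\}$.
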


\begin{proof}
\ref{prop:01:01}
Since $f \in \clOI$, we have $f(\vect{0}) = 0$ and $f(\vect{1}) = 1$; since $f \notin \clSmin$, there exists a tuple $\vect{a}$ such that $f(\vect{a}) = f(\overline{\vect{a}}) = 1$. Therefore $\mathord{\vee} \leq f$.
Since $g \in \clOI$, we have $g(\vect{0}) = 0$ and $g(\vect{1}) = 1$; since $g \notin \clSmaj$, there exists a tuple $\vect{b}$ such that $g(\vect{b}) = g(\overline{\vect{b}}) = 0$. Therefore $\mathord{\wedge} \leq g$.
Since $h \in \clOI \setminus \clM$, it follows from Lemma~\ref{lem:apu:minors:01-M} that $h$ has a ternary minor $h'$ that satisfies $h'(0,0,0) = 0$, $h'(1,0,0) = 1$, $h'(1,1,0) = 0$, $h'(1,1,1) = 1$.
By Lemma~\ref{lem:helpful} it suffices to show that $\clOI$ is $G$\hyp{}bisectable with $G := \{\mathord{\vee}, \mathord{\wedge}, h'\}$.
Note that $\id$ is a minor of $f$, $g$, and $h$.
So, let $\theta \in \clOI$.

Condition \ref{helpful:true}:
Let $\vect{a}, \vect{a}' \in \theta^{-1}(1)$.
We have $\vect{a} \neq \vect{0} \neq \vect{a}'$, so there exist $i$ and $j$ such that $a_i = a'_j = 1$.
Then $\mathord{\vee}_{ij}(\vect{a}) = \mathord{\vee}_{ij}(\vect{a}') = 1$.

Condition \ref{helpful:false}
Let $\vect{b}, \vect{b}' \in \theta^{-1}(0)$.
We have $\vect{b} \neq \vect{1} \neq \vect{b}'$, so there exist $i$ and $j$ such that $b_i = b'_j = 0$.
Then $\mathord{\wedge}_{ij}(\vect{b}) = \mathord{\wedge}_{ij}(\vect{b}') = 0$.

Condition \ref{helpful:both}
Let $\vect{a} \in \theta^{-1}(1)$ and $\vect{b} \in \theta^{-1}(0)$.
We have $\vect{0} \neq \vect{a} \neq \vect{b} \neq \vect{1}$.
Then one of the following cases must occur, for some $i$, $j$, $k$:
(i) $\big( \begin{smallmatrix} a_i \\ b_i \end{smallmatrix} \big) = \big( \begin{smallmatrix} 1 \\ 0 \end{smallmatrix} \big)$, in which case $\id_i(\vect{a}) = 1$, $\id_i(\vect{b}) = 0$;
(ii) $\big( \begin{smallmatrix} a_i & a_j & a_k \\ b_i & b_j & b_k \end{smallmatrix} \big) = \big( \begin{smallmatrix} 0 & 1 & 0 \\ 1 & 1 & 0 \end{smallmatrix} \big)$, in which case $h'_{jik}(\vect{a}) = 1$, $h'_{jik}(\vect{b}) = 0$.

\ref{prop:01:01C0}
We have $f = 0$, $g \in \clOI \setminus \clSminOI$, $h \in \clOI \setminus \clMc$.
By Proposition~\ref{prop:constants}\ref{prop:constants:D0Ca} and part \ref{prop:01:01}, we have
$\clOICO = \gen{g,h} \cup \gen{0} \subseteq \gen{f,g,h} \subseteq \clOICO$.

\ref{prop:01:01C}
We have $f = 1$ and $g = 0$.
By Lemma~\ref{lem:apu:minors:01-M}, $h$ has a ternary minor $h'$ that satisfies $h'(0,0,0) = 0$, $h'(1,0,0) = 1$, $h'(1,1,0) = 0$, $h'(1,1,1) = 1$.
By Lemma~\ref{lem:helpful} it suffices to show that $\clOIC$ is $G$\hyp{}bisectable with $G := \{0, 1, h'\}$.
Note that $\id \leq h'$.
So, let $\theta \in \clOIC$.

Conditions \ref{helpful:true} and \ref{helpful:false} hold because $0, 1 \in G$.
If $\theta \in \clVak$, then condition \ref{helpful:both} holds vacuously.
In the case when $\theta \in \clOI$, condition \ref{helpful:both} can be shown to hold in the same way as in the proof of statement \ref{prop:01:01}.
\end{proof}

\begin{proposition}
\label{prop:0X}
\leavevmode
\begin{enumerate}[label=\upshape{(\roman*)}]
\item\label{prop:0X:0X}
For any $f, g, h \in \clOX$ with $f \notin \clOO$, $g \notin \clSminOX$, $h \notin \clOICO$, we have $\gen{f, g, h} = \clOX$.
\item\label{prop:0X:0XC}
For any $f, g, h \in \clOXC$ with $f \notin \clOX$, $g \notin \clOOC$, $h \notin \clOIC$, we have $\gen{f, g, h} = \clOXC$.
\end{enumerate}
\end{proposition}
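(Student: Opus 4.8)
The plan is, in both parts, to apply Lemma~\ref{lem:helpful}\ref{lem:helpful:bisectable}: from the hypotheses on $f$, $g$, $h$ I read off a small set $G$ of functions of small arity lying in $\gen{f,g,h}$, and then check that the target class ($\clOX$, resp.\ $\clOXC$) is $G$\hyp{}bisectable. Throughout I use that $\clOX = \clOO \cup \clOI$ is the partition of the $0$\hyp{}preserving functions according to their value at $\vect{1}$, that the constant $0$ is a minor of $\mathord{+}$ and of $\mathord{\nrightarrow}$, and that $\mathord{\nrightarrow}_{ij}$ computes $x_i \wedge \overline{x_j}$.

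For part~\ref{prop:0X:0X}: from $f \in \clOX \setminus \clOO = \clOI$ I get $\id \leq f$ by Lemma~\ref{lem:apu:minors}\ref{lem:apu:minors:01}. Since $g \in \clOX$, the hypothesis $g \notin \clSminOX$ just says $g \notin \clSmin$, so there is a tuple $\vect{a} \notin \{\vect{0}, \vect{1}\}$ with $g(\vect{a}) = g(\overline{\vect{a}}) = 1$; identifying the block of coordinates where $\vect{a}$ equals $1$ and the block where it equals $0$ produces a binary minor $g'$ of $g$ with $g'(0,0) = 0$, $g'(1,0) = g'(0,1) = 1$, $g'(1,1) = g(\vect{1})$, whence $g' \in \{\mathord{+}, \mathord{\vee}\}$. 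Likewise $h \in \clOX \setminus \clOICO$ means $h \in \clOO \setminus \clVako$, and then either $\mathord{+} \leq h$ (if $h \notin \clSmin$, as in the proof of Proposition~\ref{prop:00}\ref{prop:00:00}) or $\mathord{\nrightarrow} \leq h$ (if $h \in \clSmin$, by Lemma~\ref{lem:apu:minors:Smin00-C}). Consequently $\gen{f,g,h}$ contains $G_1 := \{\id, \mathord{+}\}$ whenever $\mathord{+}$ is produced by $g$ or $h$, and otherwise it contains $G_2 := \{\id, \mathord{\vee}, \mathord{\nrightarrow}\}$. I would finish by checking that $\clOX$ is both $G_1$\hyp{}bisectable and $G_2$\hyp{}bisectable: for $n$\hyp{}ary $\theta \in \clOX$ we have $\vect{0} \notin \theta^{-1}(1)$, condition~\ref{helpful:false} uses the constant $0$, condition~\ref{helpful:true} uses a projection $\id_i$ when two true points share a $1$\hyp{}coordinate and $\mathord{+}_{ij}$ (resp.\ $\mathord{\vee}_{ij}$) with $a_i = a'_j = 1$ otherwise, and condition~\ref{helpful:both} uses a projection unless the true point lies strictly below the false point, in which case $\mathord{+}_{ij}$ (resp.\ $\mathord{\nrightarrow}_{ij}$) with $a_i = b_i = 1$ and $a_j = 0 < 1 = b_j$ separates them. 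Lemma~\ref{lem:helpful}\ref{lem:helpful:bisectable} then gives $\gen{f,g,h} = \clOX$.

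For part~\ref{prop:0X:0XC}: $f \in \clOXC \setminus \clOX$ forces $f$ to be the constant $1$, so $\gen{f} = \clVaki$ by Proposition~\ref{prop:constants}\ref{prop:constants:D0Ca}; $g \notin \clOOC$ forces $g \in \clOI$, giving $\id \leq g$; and $h \notin \clOIC$ forces $h \in \clOO \setminus \clVako$, giving $\mathord{+} \leq h$ or $\mathord{\nrightarrow} \leq h$ as above. In the second subcase I recover $\mathord{+}$ from the constant $1$: since $\mu(p,q,1) = p \vee q$, left\hyp{}stability puts $\mu(\mathord{\nrightarrow}_{12}, \mathord{\nrightarrow}_{21}, \cf{2}{1}) \in \gen{f,g,h}$, and this binary function is $\mathord{+}$. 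Hence $G := \{\id, \mathord{+}, 1\} \subseteq \gen{f,g,h}$, and $\clOXC = \clOX \cup \clVaki$ is $G$\hyp{}bisectable: its $\clOX$\hyp{}members are already $\{\id, \mathord{+}\}$\hyp{}bisectable by the previous paragraph, and for the constant $1$ conditions~\ref{helpful:false} and \ref{helpful:both} are vacuous while \ref{helpful:true} is witnessed by the constant $1 \in G$. Lemma~\ref{lem:helpful}\ref{lem:helpful:bisectable} again gives $\gen{f,g,h} = \clOXC$.

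The main obstacle is the residual subcase of part~\ref{prop:0X:0X} in which $g$ supplies only $\mathord{\vee}$ and $h$ supplies only $\mathord{\nrightarrow}$: there no constant $1$ is available, so the one\hyp{}line identity $\mathord{+} = \mathord{\vee}(\mathord{\nrightarrow}_{12}, \mathord{\nrightarrow}_{21})$ cannot be used (it is a composition that need not remain in the minion), and one must verify $G_2$\hyp{}bisectability of $\clOX$ directly. The only delicate point there is condition~\ref{helpful:both} with a true point below a false point, which is precisely where the inner negation hidden in $\mathord{\nrightarrow}_{ij} = x_i \wedge \overline{x_j}$ is needed; the rest is a routine finite case analysis.
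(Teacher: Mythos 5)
Your proof is correct and follows essentially the same route as the paper: extract $\id$ from $f$ and suitable binary minors from $g$ and $h$, then conclude via $G$\hyp{}bisectability and Lemma~\ref{lem:helpful}\ref{lem:helpful:bisectable}. The only cosmetic differences are that the paper avoids your case split $G_1$/$G_2$ by working with the partially determined binary minors $g'$ and $h'$ directly (their unspecified values $g'(1,1)$ and $h'(1,0)$ are never used in the bisectability check, so one verification covers all cases), and that in part~\ref{prop:0X:0XC} it reduces to part~\ref{prop:0X:0X} together with Proposition~\ref{prop:constants} rather than re\hyp{}verifying bisectability of $\clOXC$.
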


\begin{proof}
\ref{prop:0X:0X}
We have $f \in \clOI$, so $\id \leq f$.
Since $g \in \clOX$, we have $g(\vect{0}) = 0$;
since $g \notin \clSmin$, there exists a tuple $\vect{a}$ such that $g(\vect{a}) = g(\overline{\vect{a}}) = 1$.
Therefore $g$ has a binary minor $g'$ satisfying $g'(0,1) = 1$, $g'(1,0) = 1$, $g'(0,0) = 0$.
We have $h \in \clOO \setminus \clVako$.
Thus $h(\vect{0}) = h(\vect{1}) = 0$ and there exists a tuple $\vect{b}$ with $h(\vect{b}) = 1$.
Therefore $h$ has a binary minor $h'$ satisfying $h'(0,0) = 0$, $h'(1,1) = 0$, $h'(0,1) = 1$.
By Lemma~\ref{lem:helpful} it suffices to show that $\clOX$ is $G$\hyp{}bisectable with $G := \{\id, g', h'\}$.
Note that $0 \leq h'$.
So, let $\theta \in \clOX$.

Condition \ref{helpful:true}:
Let $\vect{a}, \vect{a}' \in \theta^{-1}(1)$.
We have $\vect{a} \neq \vect{0} \neq \vect{a}'$.
Then one of the following cases must occur, for some $i$, $j$:
(i) $\big( \begin{smallmatrix} a_i \\ a'_i \end{smallmatrix} \big) = \big( \begin{smallmatrix} 1 \\ 1 \end{smallmatrix} \big)$, in which case $\id_i(\vect{a}) = \id_i(\vect{a}') = 1$;
(ii) $\big( \begin{smallmatrix} a_i & a_j \\ a'_i & a'_j \end{smallmatrix} \big) = \big( \begin{smallmatrix} 0 & 1 \\ 1 & 0 \end{smallmatrix} \big)$, in which case $g'_{ij}(\vect{a}) = g'_{ij}(\vect{a}') = 1$.

Condition \ref{helpful:false} holds because $0 \leq h' \in G$.

Condition \ref{helpful:both}:
Let $\vect{a} \in \theta^{-1}(1)$ and $\vect{b} \in \theta^{-1}(0)$.
We have $\vect{0} \neq \vect{a} \neq \vect{b}$.
Then one of the following cases must occur, for some $i$, $j$:
(i) $\big( \begin{smallmatrix} a_i \\ b_i \end{smallmatrix} \big) = \big( \begin{smallmatrix} 1 \\ 0 \end{smallmatrix} \big)$, in which case $\id_i(\vect{a}) = 1$, $\id_i(\vect{b}) = 0$;
(ii) $\big( \begin{smallmatrix} a_i & a_j \\ b_i & b_j \end{smallmatrix} \big) = \big( \begin{smallmatrix} 0 & 1 \\ 1 & 1 \end{smallmatrix} \big)$, in which case $h'_{ij}(\vect{a}) = 1$, $h'_{ij}(\vect{b}) = 0$.

\ref{prop:0X:0XC}
We have $f = 1$, and $g \in \clOI$, so $\id \leq g$.
Since $h \in \clOO \setminus \clVak$, $h$ has a binary minor $h'$ satisfying $h'(0,0) = 0$, $h'(1,1) = 0$, $h'(0,1) = 1$ (see the proof of part \ref{prop:0X:0X}).
We have also $0 \leq h$.
Note that $\mu(h', h'_{21}, 1) = \mathord{+}$, so $\mathord{+} \in \gen{f, g, h}$.
By part \ref{prop:0X:0X} and Proposition~\ref{prop:constants}\ref{prop:constants:D0}, we have $\gen{0,1} = \clVak$ and $\gen{\id, \mathord{+}, h} = \clOX$.
Therefore
$\clOXC
= \gen{\id, \mathord{+}, h} \cup \gen{0,1}
\subseteq \gen{f, g, h}
\subseteq \clOXC$.
\end{proof}

\begin{proposition}
\label{prop:NeqEq}
\leavevmode
\begin{enumerate}[label=\upshape{(\roman*)}]
\item\label{prop:NeqEq:Neq}
For any $e, f, g, h \in \clNeq$ with $e \notin \clOI$, $f \notin \clIO$, $g \notin \clSminNeq$, $h \notin \clSmajNeq$, we have $\gen{e, f, g, h} = \clNeq$.
\item\label{prop:NeqEq:Eq}
For any $f, g, h \in \clEq$ with $f \notin \clOOC$, $g \notin \clIIC$, $h \notin \clRefl$, we have $\gen{f, g, h} = \clEq$.
\end{enumerate}
\end{proposition}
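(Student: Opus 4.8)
The plan is to treat \ref{prop:NeqEq:Neq} and \ref{prop:NeqEq:Eq} separately, using in each case that the hypotheses say precisely that the given set escapes every lower cover of $\clNeq$ (namely $\clOI$, $\clIO$, $\clSminNeq$, $\clSmajNeq$) resp.\ of $\clEq$ (namely $\clOOC$, $\clIIC$, $\clRefl$) in Figure~\ref{fig:SM-stable}. Both arguments follow the template of the earlier propositions: first extract a convenient $G$ contained in the generated class from the hypotheses, then finish with Lemma~\ref{lem:helpful}. In both parts the ``lower-dimensional'' information comes from identifying arguments to obtain binary minors of the generators.

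For \ref{prop:NeqEq:Neq}: since $\clNeq = \clOI \cup \clIO$ disjointly, $e \notin \clOI$ forces $e \in \clIO$ and $f \notin \clIO$ forces $f \in \clOI$, so $\neg \leq e$ and $\id \leq f$ by Lemma~\ref{lem:apu:minors}. From $g \in \clNeq \setminus \clSmin$ there is a tuple $\vect{a} \notin \{\vect{0},\vect{1}\}$ with $g(\vect{a}) = g(\overline{\vect{a}}) = 1$, so identifying arguments yields a binary minor $g'$ of $g$ with $g'(0,1) = g'(1,0) = 1$; for any such binary $p$ one has $\mu(p(x_1,x_2),x_1,x_2) = \mathord{\vee}$ (it equals $0$ at $\vect{0}$, $1$ at $\vect{1}$, and $1$ off the diagonal), so $\mathord{\vee} \in \gen{e,f,g,h}$. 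Dually, $h \in \clNeq \setminus \clSmaj$ gives a binary minor $h'$ with $h'(0,1) = h'(1,0) = 0$ and $\mu(h'(x_1,x_2),x_1,x_2) = \mathord{\wedge} \in \gen{e,f,g,h}$. Since $\neg \leq e$, the identities $\mu(\mathord{\vee}(x_1,x_2),\neg x_1,\neg x_2) = \overline{\mathord{\wedge}}$ and $\mu(\mathord{\wedge}(x_1,x_2),\neg x_1,\neg x_2) = \overline{\mathord{\vee}}$ put $\overline{\mathord{\wedge}},\overline{\mathord{\vee}}$ into $\gen{e,f,g,h}$ as well. I would then apply Lemma~\ref{lem:helpful}\ref{lem:helpful:bisectable} with $G := \{\mathord{\vee},\mathord{\wedge},\overline{\mathord{\vee}},\overline{\mathord{\wedge}}\}$, verifying that $\clNeq$ is $G$-bisectable: for $\theta \in \clNeq$ split on $\theta \in \clOI$ (so $\theta^{-1}(1)$ avoids $\vect{0}$ and $\theta^{-1}(0)$ avoids $\vect{1}$) versus $\theta \in \clIO$ (the reverse), then for condition \ref{helpful:true} use a minor of $\mathord{\vee}$ resp.\ $\overline{\mathord{\wedge}}$, for \ref{helpful:false} a minor of $\mathord{\wedge}$ resp.\ $\overline{\mathord{\vee}}$, and for \ref{helpful:both} a projection $\id_i = \mathord{\vee}_{ii}$ or negation $\neg_i = \overline{\mathord{\vee}}_{ii}$ at a coordinate where the two tuples differ.

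For \ref{prop:NeqEq:Eq}: write $\clEq = \clOO \cup \clII$ disjointly; the hypotheses give $f \in \clII$ and $g \in \clOO$, both non-constant, and $h \in \clEq$ non-reflexive. By Lemma~\ref{lem:apu:minors}, $0 \leq g$ and $1 \leq f$. Identifying arguments produces binary minors: $f'$ of $f$ with $f'(0,0) = f'(1,1) = 1$, $f'(1,0) = 0$; $g'$ of $g$ with $g'(0,0) = g'(1,1) = 0$, $g'(1,0) = 1$; and, after replacing the witnessing tuple of $h$ by its complement if necessary, $h'$ of $h$ with $h'(1,0) = 1$, $h'(0,1) = 0$, $h'(0,0) = h'(1,1)$. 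The key identities (each independent of the one undetermined entry of $f'$, $g'$, $h'$) are
\[
\mu(g'(x_1,x_2),g'(x_2,x_1),1) = \mathord{+}, \qquad \mu(f'(x_1,x_2),f'(x_2,x_1),0) = \mathord{\leftrightarrow},
\]
\[
\mu(h'(x_1,x_2),\mathord{+}(x_1,x_2),0) = \mathord{\nrightarrow}, \qquad \mu(h'(x_2,x_1),\mathord{\leftrightarrow}(x_1,x_2),1) = \overline{\mathord{\nrightarrow}},
\]
so $\mathord{+},\mathord{\leftrightarrow},\mathord{\nrightarrow},\overline{\mathord{\nrightarrow}} \in \gen{f,g,h}$. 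Now $\mathord{+} \in \clOO \setminus \clSminOO$ and $\mathord{\nrightarrow} \in \clOO \setminus \clReflOO$, so $\gen{\mathord{+},\mathord{\nrightarrow}} = \clOO$ by Proposition~\ref{prop:00}\ref{prop:00:00}; applying the automorphism $C \mapsto \overline{C}$ (Lemma~\ref{lem:IcSM-stable-negation}) gives $\gen{\mathord{\leftrightarrow},\overline{\mathord{\nrightarrow}}} = \gen{\overline{\mathord{+}},\overline{\mathord{\nrightarrow}}} = \overline{\clOO} = \clII$. Hence, with $G := \{\mathord{+},\mathord{\nrightarrow},\mathord{\leftrightarrow},\overline{\mathord{\nrightarrow}}\}$, we get $\clEq = \clOO \cup \clII \subseteq \gen{G}$, and Lemma~\ref{lem:helpful}\ref{lem:helpful:general} yields $\gen{f,g,h} = \clEq$.

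The computational parts are routine: verifying the listed term identities and checking the bisectability conditions are just evaluations of small Boolean functions on all inputs. The one point requiring care is the bookkeeping of ``sign variants'' — each binary minor extracted from a generator is pinned down only on three of its four inputs, so the $\mu$-compositions (with the projections, the negations, or the constants $0,1$) must be arranged so that the output no longer depends on the free entry, which is what the displayed identities accomplish. The least mechanical step is the $G$-bisectability of $\clNeq$ in part \ref{prop:NeqEq:Neq}: it is exactly there that having both a $\mathord{\vee}$-type function (detecting ``some coordinate is $1$'') and an $\overline{\mathord{\wedge}}$-type function (detecting ``some coordinate is $0$'') is indispensable, since $\clNeq$ contains functions whose positive fibre avoids $\vect{0}$ and functions whose positive fibre avoids $\vect{1}$, and no single family of minors separates both kinds of pairs.
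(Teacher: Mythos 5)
Your proof is correct, and both parts ultimately rest on the same machinery as the paper (extract binary minors from the hypotheses, then invoke Lemma~\ref{lem:helpful}), but your tactical choices differ in ways worth noting. For part \ref{prop:NeqEq:Neq}, the paper works with the ``partial'' minors directly, taking $G = \{\id, \neg, g', h'\}$ and splitting the bisectability check on whether the two tuples agree in some coordinate or are complementary; you instead spend an extra step completing $g'$ and $h'$ to the fully determined functions $\mathord{\vee}, \mathord{\wedge}, \overline{\mathord{\vee}}, \overline{\mathord{\wedge}}$ via $\mu$-compositions (all of which check out, including the use of $\id \leq f$ and $\neg \leq e$ to supply the projections and negations), and then split on $\theta \in \clOI$ versus $\theta \in \clIO$. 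The two case analyses are equally routine; yours buys a cleaner $G$ at the cost of the preprocessing identities. For part \ref{prop:NeqEq:Eq} your route is genuinely different: the paper verifies $G$-bisectability of $\clEq$ directly with $G = \{f', g', h'\}$ (the only nontrivial condition being \ref{helpful:both}), whereas you manufacture $\mathord{+}, \mathord{\nrightarrow}, \mathord{\leftrightarrow}, \overline{\mathord{\nrightarrow}}$, quote Proposition~\ref{prop:00}\ref{prop:00:00} to get $\gen{\mathord{+},\mathord{\nrightarrow}} = \clOO$, transport this by Lemma~\ref{lem:IcSM-stable-negation} to get $\clII$, and conclude via the disjoint decomposition $\clEq = \clOO \cup \clII$ and Lemma~\ref{lem:helpful}\ref{lem:helpful:general}. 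This reuses already-proved material instead of redoing a bisectability computation, which is arguably tidier, at the price of the four term identities (all of which I checked are independent of the undetermined entries of $f'$, $g'$, $h'$, as you claim). No gaps.
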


\begin{proof}
\ref{prop:NeqEq:Neq}
We have $e \in \clIO$, so $\neg \leq e$.
We have $f \in \clOI$, so $\id \leq f$.
Since $g \in \clNeq$, we have $g(\vect{0}) \neq g(\vect{1})$; since $g \notin \clSmin$, there exists a tuple $\vect{a} \in \{0,1\}^n \setminus \{\vect{0}, \vect{1}\}$ such that $g(\vect{a}) = g(\overline{\vect{a}}) = 1$. Thus $g$ has a binary minor $g'$ satisfying $g'(1,0) = 1$, $g'(0,1) = 1$.
Similarly, we can show that $h$ has a binary minor $h'$ satisfying $h'(1,0) = 0$, $h'(0,1) = 0$.
By Lemma~\ref{lem:helpful} it suffices to show that $\clNeq$ is $G$\hyp{}bisectable with $G := \{\id, \neg, g', h'\}$.
So, let $\theta \in \clNeq$.

Condition \ref{helpful:true}:
Let $\vect{a}, \vect{a}' \in \theta^{-1}(1)$.
We have $\{\vect{a}, \vect{a}'\} \neq \{\vect{0}, \vect{1}\}$.
If there is an $i$ such that $a_i = a'_i$, then $\tau(\vect{a}) = \tau(\vect{a}') = 1$ for some $\tau \in \{\id_i, \neg_i\}$.
Otherwise there exist $j$ and $k$ such that $a_j = a'_k = 1$ and $a'_j = a_k = 0$.
Then $g'_{jk}(\vect{a}) = g'_{jk}(\vect{a}') = 1$.

Condition \ref{helpful:false}:
Let $\vect{b}, \vect{b}' \in \theta^{-1}(0)$.
We have $\{\vect{b}, \vect{b}'\} \neq \{\vect{0}, \vect{1}\}$.
If there is an $i$ such that $b_i = b'_i$, then $\tau(\vect{b}) = \tau(\vect{b}') = 0$ for some $\tau \in \{\id_i, \neg_i\}$.
Otherwise there exist $j$ and $k$ such that $a_j = a'_k = 1$ and $a'_j = a_k = 0$.
Then $h'_{jk}(\vect{a}) = h'_{jk}(\vect{a}') = 0$.

Condition \ref{helpful:both}:
Let $\vect{a} \in \theta^{-1}(1)$ and $\vect{b} \in \theta^{-1}(0)$.
Then $\vect{a} \neq \vect{b}$, so there is an $i$ such that $a_i \neq b_i$, and we have $\tau(\vect{a}) = 1$ and $\tau(\vect{b}) = 0$ for some $\tau \in \{\id_i, \neg_i\}$.

\ref{prop:NeqEq:Eq}
We have $f \in \clII \setminus \clVak$, so $f$ has a binary minor $f'$ satisfying $f'(1,1) = 1$, $f'(0,0) = 1$, $f'(1,0) = 0$.
Similarly, $g \in \clOO \setminus \clVak$, so $g$ has a binary minor $g'$ satisfying $g'(0,0) = 0$, $g'(1,1) = 0$, $g'(1,0) = 1$.
Since $h \in \clEq \setminus \clRefl$, there exists a tuple $\vect{a} \notin \{\vect{0}, \vect{1}\}$ such that $h(\vect{a}) \neq h(\overline{\vect{a}})$.
Therefore $h$ has a binary minor $h'$ satisfying $h'(1,0) = 1$, $h'(0,1) = 0$.
By Lemma~\ref{lem:helpful} it suffices to show that $\clEq$ is $G$\hyp{}bisectable with $G := \{f', g', h'\}$.
Note that $1 \leq f'$ and $0 \leq g'$.
So, let $\theta \in \clEq$.

Conditions \ref{helpful:true} and \ref{helpful:false} hold because $0 \leq g' \in G$ and $1 \leq f' \in G$.

Condition \ref{helpful:both}:
Let $\vect{a} \in \theta^{-1}(1)$ and $\vect{b} \in \theta^{-1}(0)$.
We have $\vect{a} \neq \vect{b}$ and $\{\vect{a}, \vect{b}\} \neq \{\vect{0}, \vect{1}\}$.
Then one of the following cases must occur, for some $i$, $j$ and $x, y \in \{0,1\}$:
(i) $\big( \begin{smallmatrix} a_i & a_j \\ b_i & b_j \end{smallmatrix} \big) = \big( \begin{smallmatrix} x & \overline{x} \\ \overline{x} & x \end{smallmatrix} \big)$, in which case $\tau(\vect{a}) = 1$ and $\tau(\vect{b}) = 0$ for some $\tau \in \{h'_{ij}, h'_{ji}\}$;
(ii) $\big( \begin{smallmatrix} a_i & a_j \\ b_i & b_j \end{smallmatrix} \big) = \big( \begin{smallmatrix} x & y \\ \overline{x} & y \end{smallmatrix} \big)$, in which case $\tau(\vect{a}) = 1$ and $\tau(\vect{b}) = 0$ for some $\tau \in \{f'_{ij}, f'_{ji}, g'_{ij}, g'_{ji}\}$.
\end{proof}

\begin{proposition}
\label{prop:three}
\leavevmode
\begin{enumerate}[label=\upshape{(\roman*)}]
\item\label{prop:three:not10}
For any $f, g, h \in \clEiio$ with $f \notin \clOXC$, $g \notin \clXIC$, $h \notin \clEq$, we have $\gen{f, g, h} = \clEiio$.
\item\label{prop:three:not11}
For any $e, f, g, h \in \clEiii$ with $e \notin \clOX$, $f \notin \clXO$, $g \notin \clNeq$, $h \notin \clSmin$, we have $\gen{e, f, g, h} = \clEiii$.
\end{enumerate}
\end{proposition}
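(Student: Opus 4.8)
The plan is to handle both parts by the scheme used repeatedly in this section: from the hypotheses extract a small set $G$ of well-understood functions lying inside the generated class, then apply Lemma~\ref{lem:helpful}\ref{lem:helpful:bisectable} after checking that the target class is $G$-bisectable. In each case two of the three bisectability conditions of Definition~\ref{def:helpful} will be essentially free, and all the work concentrates in the remaining one.

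For \ref{prop:three:not10}: since $f \in \clEiio$ and $f \notin \clOXC$ we have $f(\vect{0}) = 1$, hence $f(\vect{1}) = 1$, so $f \in \clII \setminus \clVak$, and (by identifying blocks of arguments, as in the proof of Lemma~\ref{lem:apu:minors:Smin00-C}) $f$ has a binary minor $f'$ with $f'(0,0) = f'(1,1) = 1$ and $f'(1,0) = 0$; similarly $g \notin \clXIC$ forces $g \in \clOO \setminus \clVak$ with a binary minor $g'$ such that $g'(0,0) = g'(1,1) = 0$ and $g'(1,0) = 1$; and $h \notin \clEq$ forces $h \in \clOI$, so $\id \leq h$ by Lemma~\ref{lem:apu:minors}\ref{lem:apu:minors:01}. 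Note $f'_{11}$ and $g'_{11}$ are the constants $1$ and $0$. I would set $G := \{\id, f', g'\}$ (all in $\gen{f,g,h}$) and verify that $\clEiio$ is $G$-bisectable: conditions \ref{helpful:true} and \ref{helpful:false} hold because the constants $1$ and $0$ lie in $G_n$, and for condition \ref{helpful:both}, given $\theta \in \clEiio$, $\vect{a} \in \theta^{-1}(1)$, $\vect{b} \in \theta^{-1}(0)$, if some coordinate $k$ has $a_k = 1,\ b_k = 0$ then $\id_k$ separates, while otherwise $\vect{a} < \vect{b}$; if moreover $\vect{a} = \vect{0}$ then $\theta \in \clII$ and $\vect{b} \notin \{\vect{0},\vect{1}\}$, so a minor $f'_{pq}$ with $p$ a $1$-coordinate and $q$ a $0$-coordinate of $\vect{b}$ reads $f'(0,0) = 1$ on $\vect{a}$ and $f'(1,0) = 0$ on $\vect{b}$, and if $\vect{a} \neq \vect{0}$ then picking $i$ with $a_i = 1$ (hence $b_i = 1$) and $k$ with $a_k = 0,\ b_k = 1$, the minor $g'_{ik}$ reads $g'(1,0) = 1$ on $\vect{a}$ and $g'(1,1) = 0$ on $\vect{b}$. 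Then Lemma~\ref{lem:helpful}\ref{lem:helpful:bisectable} yields $\gen{f,g,h} = \clEiio$.

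For \ref{prop:three:not11}: $e \notin \clOX$ with $e \in \clEiii$ forces $e \in \clIO$, so $\neg \leq e$ by Lemma~\ref{lem:apu:minors}\ref{lem:apu:minors:10}; $f \notin \clXO$ forces $f \in \clOI$, so $\id \leq f$; $g \notin \clNeq$ forces $g \in \clOO$, so the constant $0$ is a unary minor of $g$ (Lemma~\ref{lem:apu:minors}\ref{lem:apu:minors:00}); and $h \notin \clSmin$ provides a tuple $\vect{a}$ with $h(\vect{a}) = h(\overline{\vect{a}}) = 1$, necessarily with $\vect{a} \notin \{\vect{0},\vect{1}\}$ (otherwise $h \in \clII$, impossible in $\clEiii$), hence a binary minor $h'$ with $h'(1,0) = h'(0,1) = 1$. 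I would set $G := \{0, \id, \neg, h'\} \subseteq \gen{e,f,g,h}$ and check $G$-bisectability of $\clEiii$: condition \ref{helpful:false} holds via the constant $0$; condition \ref{helpful:both} holds because any coordinate $k$ with $a_k \neq b_k$ gives $\id_k$ or $\neg_k$; and for condition \ref{helpful:true}, given $\vect{a}, \vect{a}' \in \theta^{-1}(1)$, a shared $1$-coordinate gives $\id_i$, a shared $0$-coordinate gives $\neg_i$, and otherwise $\vect{a}' = \overline{\vect{a}}$ with $\vect{a} \notin \{\vect{0},\vect{1}\}$ (else $\theta \in \clII$), so choosing a $1$-coordinate $i$ and a $0$-coordinate $j$ of $\vect{a}$, the minor $h'_{ij}$ reads $h'(1,0) = 1$ on $\vect{a}$ and $h'(0,1) = 1$ on $\vect{a}'$. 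Lemma~\ref{lem:helpful}\ref{lem:helpful:bisectable} then gives $\gen{e,f,g,h} = \clEiii$.

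The delicate points, and where care is required, are the degenerate sub-cases of the bisectability verifications in which a test tuple equals $\vect{0}$ or $\vect{1}$, or the two test tuples are complementary: these are precisely why $G$ must carry a genuinely non-constant member of $\clII$ in \ref{prop:three:not10} (the constant $1$ alone fails condition \ref{helpful:both} when $\vect{a} = \vect{0}$) and why $G$ must contain $h'$ in \ref{prop:three:not11} (projections and negations alone fail condition \ref{helpful:true} on a complementary pair). One should also confirm at the outset that the stated hypotheses enumerate exactly the lower covers of $\clEiio$ and $\clEiii$ in Figure~\ref{fig:SM-stable}, so that the case analyses above are exhaustive.
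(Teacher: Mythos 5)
Your proposal is correct and follows essentially the same route as the paper: the same reductions ($f\in\clII\setminus\clVak$, $g\in\clOO\setminus\clVak$, $h\in\clOI$ in part (i); $\neg\leq e$, $\id\leq f$, $0\leq g$ and the binary minor $h'$ with $h'(1,0)=h'(0,1)=1$ in part (ii)), the same generating sets $G$, and the same appeal to Lemma~\ref{lem:helpful} via $G$\hyp{}bisectability, with only a cosmetic reorganization of the case analysis for condition \ref{helpful:both} in part (i).
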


\begin{proof}
\ref{prop:three:not10}
We have $f \in \clII \setminus \clVak$; hence $f$ has a binary minor $f'$ satisfying $f'(1,1) = 1$, $f'(0,0) = 1$, $f'(1,0) = 0$.
We have $g \in \clOO \setminus \clVak$; hence $g$ has a binary minor $g'$ satisfying $g'(1,1) = 0$, $g'(0,0) = 0$, $g'(1,0) = 1$.
We have $h \in \clOI$; hence $\id \leq h$.
By Lemma~\ref{lem:helpful} it suffices to show that $\clEiio$ is $G$\hyp{}bisectable with $G := \{\id, f', g'\}$.
Note that $1 \leq f'$ and $0 \leq g'$.
So, let $\theta \in \clEiio$.

Conditions \ref{helpful:true} and \ref{helpful:false} hold because $1 \leq f' \in G$ and $0 \leq g' \in G$.

Condition \ref{helpful:both}:
Let $\vect{a} \in \theta^{-1}(1)$ and $\vect{b} \in \theta^{-1}(0)$.
We have $\vect{a} \neq \vect{b}$ and $(\vect{a}, \vect{b}) \neq (\vect{0}, \vect{1})$.
Then one of the following cases must occur, for some $i$, $j$:
(i) $\big( \begin{smallmatrix} a_i \\ b_i \end{smallmatrix} \big) = \big( \begin{smallmatrix} 1 \\ 0 \end{smallmatrix} \big)$, in which case $\id_i(\vect{a}) = 1$, $\id_i(\vect{b}) = 0$;
(ii) $\big( \begin{smallmatrix} a_i & a_j \\ b_i & b_j \end{smallmatrix} \big) = \big( \begin{smallmatrix} 0 & 0 \\ 1 & 0 \end{smallmatrix} \big)$, in which case $f'_{ij}(\vect{a}) = 1$, $f'_{ij}(\vect{b}) = 0$;
(iii) $\big( \begin{smallmatrix} a_i & a_j \\ b_i & b_j \end{smallmatrix} \big) = \big( \begin{smallmatrix} 0 & 1 \\ 1 & 1 \end{smallmatrix} \big)$, in which case $g'_{ji}(\vect{a}) = 1$, $g'_{ji}(\vect{b}) = 0$.

\ref{prop:three:not11}
We have $e \in \clIO$, $f \in \clOI$, $g \in \clOO$, so $\neg \leq e$, $\id \leq f$, $0 \leq g$.
Since $h \notin \clSmin$, there exists a tuple $\vect{a}$ such that $h(\vect{a}) = h(\overline{\vect{a}}) = 1$; since $h \notin \clII$, we must have $\vect{a} \notin \{\vect{0}, \vect{1}\}$.
Therefore $h$ has a binary minor $h'$ satisfying $h'(1,0) = 1$, $h'(0,1) = 1$.
By Lemma~\ref{lem:helpful} it suffices to show that $\clEiii$ is $G$\hyp{}bisectable with $G := \{0, \id, \neg, h'\}$.
So, let $\theta \in \clEiii$.

Condition \ref{helpful:true}:
Let $\vect{a}, \vect{a}' \in \theta^{-1}(1)$.
We have $\{\vect{a}, \vect{a}'\} \neq \{\vect{0}, \vect{1}\}$.
If there is an $i$ such that $a_i = a'_i$, then $\tau(\vect{a}) = \tau(\vect{a}') = 1$ for some $\tau \in \{\id_i, \neg_i\}$.
Otherwise there exist indices $j$ and $k$ such that $a_j = a'_k = 1$ and $a'_j = a_k = 0$.
Then $h'_{jk}(\vect{a}) = h'_{jk}(\vect{a}') = 1$.

Condition \ref{helpful:false} holds because $0 \in G$.

Condition \ref{helpful:both}:
Let $\vect{a} \in \theta^{-1}(1)$ and $\vect{b} \in \theta^{-1}(0)$.
Then $\vect{a} \neq \vect{b}$, so there is an $i$ such that $a_i \neq b_i$, and we have $\tau(\vect{a}) = 1$ and $\tau(\vect{b}) = 0$ for some $\tau \in \{\id_i, \neg_i\}$.
\end{proof}

\begin{proposition}
\label{prop:all}
For any $e, f, g, h \in \clAll$ with $e \notin \clEioo$, $f \notin \clEiii$, $g \notin \clEioi$, $h \notin \clEiio$, we have $\gen{e, f, g, h} = \clAll$.
\end{proposition}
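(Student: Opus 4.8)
The plan is to follow the same pattern as the preceding propositions: extract a small convenient generating set from $e,f,g,h$ and then invoke the bisectability criterion, Lemma~\ref{lem:helpful}\ref{lem:helpful:bisectable}. First I would unwind the hypotheses. The complements in $\clAll$ of $\clEioo$, $\clEiii$, $\clEioi$, $\clEiio$ are exactly $\clOO$, $\clII$, $\clOI$, $\clIO$, respectively, so the assumptions say $e \in \clOO$, $f \in \clII$, $g \in \clOI$, $h \in \clIO$. By Lemma~\ref{lem:apu:minors} the constant $0$ is a minor of $e$, the constant $1$ is a minor of $f$, $\id$ is a minor of $g$, and $\neg$ is a minor of $h$; hence $G := \{0,1,\id,\neg\} \subseteq \gen{e,f,g,h}$.

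By Lemma~\ref{lem:helpful}\ref{lem:helpful:bisectable} (noting that $\clAll$ is trivially $(\clIc,\clSM)$\hyp{}stable) it then suffices to verify that $\clAll$ is $G$\hyp{}bisectable, i.e.\ that every $n$\hyp{}ary Boolean function meets conditions \ref{helpful:true}--\ref{helpful:both} of Definition~\ref{def:helpful} relative to $G$. The $n$\hyp{}ary minors of $G$ are precisely the two $n$\hyp{}ary constants together with the projections $\id_1,\dots,\id_n$ and their negations $\neg_1,\dots,\neg_n$. Given an $n$\hyp{}ary $\phi \in \clAll$: condition \ref{helpful:true} is satisfied by the constant $1$ function for any pair of tuples in $\phi^{-1}(1)$; dually the constant $0$ function handles condition \ref{helpful:false}; and for condition \ref{helpful:both}, if $\vect{a} \in \phi^{-1}(1)$ and $\vect{b} \in \phi^{-1}(0)$ then $\vect{a} \neq \vect{b}$, so $a_i \neq b_i$ for some $i$, and $\id_i$ (if $a_i = 1$) or $\neg_i$ (if $a_i = 0$) has value $1$ at $\vect{a}$ and $0$ at $\vect{b}$. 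Thus $\clAll$ is $G$\hyp{}bisectable, and Lemma~\ref{lem:helpful}\ref{lem:helpful:bisectable} yields $\gen{e,f,g,h} = \clAll$.

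I do not expect a genuine obstacle, but one point deserves care: $\{0,1,\id,\neg\}$ generates, \emph{as a clone}, only the self\hyp{}dual functions together with the constants, so it may seem that $G$ is too weak and that one should either enlarge it or resort to stratified terms (Lemma~\ref{lem:str-terms}). This is unnecessary: under $(\clIc,\clSM)$\hyp{}stability one may compose with self\hyp{}dual monotone functions of unbounded arity (in particular threshold functions), which is precisely what Lemma~\ref{lem:helpful:step} packages via Lemma~\ref{lem:extend-SM}. Hence the elementary separation check above really is all that is required.
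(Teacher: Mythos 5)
Your proof is correct and follows essentially the same route as the paper: reduce the hypotheses to $e \in \clOO$, $f \in \clII$, $g \in \clOI$, $h \in \clIO$, extract $G = \{0,1,\id,\neg\}$ via Lemma~\ref{lem:apu:minors}, and verify $G$\hyp{}bisectability of $\clAll$ exactly as in the paper's argument. (Your closing aside slightly misdescribes $\clonegen{0,1,\id,\neg}$, which is just the clone of essentially at most unary functions, but this does not affect the proof.)
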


\begin{proof}
We have $e \in \clOO$, $f \in \clII$, $g \in \clOI$, $h \in \clIO$, so $0 \leq e$, $1 \leq f$, $\id \leq g$, $\neg \leq h$.
By Lemma~\ref{lem:helpful} it suffices to show that $\clAll$ is $G$\hyp{}bisectable with $G := \{0, 1, \id, \neg\}$.
So, let $\theta \in \clAll$.

Conditions \ref{helpful:true} and \ref{helpful:false} hold because $0, 1 \in G$.
Condition \ref{helpful:both} holds because if $\vect{a} \in \theta^{-1}(1)$ and $\vect{b} \in \theta^{-1}(0)$, then $\vect{a} \neq \vect{b}$, so there is an $i$ with $a_i \neq b_i$, and we have $\tau(\vect{a}) = 1$ and $\tau(\vect{b}) = 0$ for some $\tau \in \{\id_i, \neg_i\}$.
\end{proof}

\begin{proof}[Proof of Theorem~\ref{thm:SM-stable}]
By Proposition~\ref{prop:SM-stable:sufficiency}, the listed classes are $(\clIc,\clSM)$\hyp{}stable.
Propositions \ref{prop:empty}, \ref{prop:constants}, \ref{prop:SM}, \ref{prop:Sc}, \ref{prop:S}, \ref{prop:R}, \ref{prop:UWneg}, \ref{prop:UC}, \ref{prop:McU2}, \ref{prop:MU2}, \ref{prop:TcU2}, \ref{prop:TcU2C}, \ref{prop:U2}, \ref{prop:M}, \ref{prop:Smin00}, \ref{prop:Smin01C0}, \ref{prop:Smin0X}, \ref{prop:SminNeq}, \ref{prop:Smin}, \ref{prop:00}, \ref{prop:01}, \ref{prop:0X}, \ref{prop:NeqEq}, \ref{prop:three}, \ref{prop:all},
together with Lemma~\ref{lem:IcSM-stable-negation},
show that if $K$ is one of these classes and $F$ is a subset of $K$ that is not included in any of the lower covers of $K$, then $\gen{F} = K$.
It follows that any set $F \subseteq \clAll$ generates one of the listed classes, and there are hence no further $(\clIc,\clSM)$\hyp{}stable classes.
\end{proof}


\section{\texorpdfstring{$(C_1,C_2)$\hyp{}stable classes with $\clSM \subseteq C_2$}{(C1,C2)-stable classes with SM subset of C2}}
\label{sec:C1C2}

Theorem~\ref{thm:SM-stable} allows us to describe all $(C_1,C_2)$\hyp{}stable classes of Boolean functions for clones $C_1$ and $C_2$ such that $C_1$ is arbitrary and $\clSM \subseteq C_2$.
Since $(C_1,C_2)$\hyp{}stability implies $(\clIc,\clSM)$\hyp{}stability whenever $\clSM \subseteq C_2$, by Lemma~\ref{lem:stable-impl-stable}, it suffices to search for $(C_1,C_2)$\hyp{}stable classes among the $(\clIc,\clSM)$\hyp{}stable ones.
To this end, we determine, for each $(\clIc,\clSM)$\hyp{}stable class $K$, all clones $C_1$ and $C_2$ for which it holds that $K C_1 \subseteq K$ and $C_2 K \subseteq K$.
The results are summarized in the following theorem and Table~\ref{table:SM-stable}.

\begin{theorem}
\label{thm:C1C2-stability}
For each $(\clIc,\clSM)$\hyp{}stable class $K$, as determined in Theorem~\ref{thm:SM-stable}, there exist clones $C_1^K$ and $C_2^K$, as prescribed in Table~\ref{table:SM-stable}, such that for every clone $C$, it holds that $K C \subseteq K$ if and only if $C \subseteq C_1^K$, and $C K \subseteq K$ if and only if $C \subseteq C_2^K$.
\end{theorem}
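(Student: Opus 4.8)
The plan is to work class-by-class through the $93$ $(\clIc,\clSM)$-stable classes of Theorem~\ref{thm:SM-stable}, and for each such $K$ pin down the two clones $C_1^K$ and $C_2^K$. The conceptual reason a single largest clone $C_1^K$ (resp.\ $C_2^K$) exists is that the set of clones $C$ with $K C \subseteq K$ is closed under the join of clones: if $K C \subseteq K$ and $K C' \subseteq K$, then using the Associativity Lemma (Lemma~\ref{lem:associativity}) and the fact that clones are minor-closed, $K \langle C \cup C'\rangle \subseteq K$ as well (one checks $K(C C') \subseteq (KC)C' \subseteq KC'\subseteq K$, etc.), so there is a unique maximal such $C$; call it $C_1^K$. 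The same argument on the left gives $C_2^K$. So the theorem reduces to the computational task of identifying these maxima, and Table~\ref{table:SM-stable} records the answers.

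\textbf{Right-composition side.} For each $K$ I would first record an \emph{upper bound} for $C_1^K$: since $K$ is minor-closed, $K C_1^K \subseteq K$; concretely, if $f\in K$ and $g\in C_1^K$ then $f\ast g\in K$ by Lemma~\ref{lem:right-stab-gen}, and testing this against a few well-chosen $f\in K$ rules out all but finitely many clones $g$ could live in. For the matching \emph{lower bound} I would invoke Lemma~\ref{lem:right-stab-gen}\ref{minorG}: it suffices to exhibit a generating set $G$ of the candidate clone and verify $f\ast g\in K$ for all $f\in K$, $g\in G$. For clones among the $K$'s one expects $C_1^K$ to be rather large (often $\clAll$ itself, since a clone $C\supseteq\clProj{}$ satisfies $CC\subseteq C$), while for the genuinely ``clonoid'' classes like $\clReflOO$ or $\clSminOO$ one expects $C_1^K$ to be small — $\clIc$ or a clone of projections-with-negation type. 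The properties defining the Boolean classes in Section~\ref{sec:Boolean} (reflexivity, self-duality, minorant/majorant of self-dual, the various $f(\vect 0),f(\vect 1)$ constraints, $k$-separation) are each ``closed under substitution of a suitable class of inner functions,'' and that closure class is precisely $C_1^K$.

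\textbf{Left-composition side.} Here I would use Lemma~\ref{lem:left-stab-gen}: $C_2 K\subseteq K$ iff $g(f_1,\dots,f_n)\in K$ for all $g$ in a generating set of $C_2$ and all $f_1,\dots,f_n\in K$ of equal arity. Since every $K$ in the list is already stable under left composition with $\clSM=\clonegen{\mu}$, we have $\clSM\subseteq C_2^K$ always, and the task is to see how much further up Post's lattice one may go. For a typical meet-irreducible $K$ one tests the extra generators (e.g.\ $\mathord{\wedge}$, $\mathord{\vee}$, $\neg$, $0$, $1$) one at a time against the defining property of $K$; e.g.\ for $K=\clEiii$, $\mu$ preserves every binary relation (Lemma~\ref{lem:mu-preserves}) but $\neg$ does not preserve $\{00,11\}$ pointwise in the right way, so $C_2^K$ stays inside the clones of monotone-ish type, and so on. The bookkeeping is eased considerably by Lemma~\ref{lem:IcSM-stable-negation}: $C_2^{\overline C}=\overline{C_2^C}$, $C_2^{C^{\mathrm n}}=(C_2^C)^{\mathrm n}$, etc. (and likewise for $C_1$), so only about a quarter of the $93$ cases need genuine work; the rest follow by the symmetries $C\mapsto\overline C$, $C\mapsto C^{\mathrm n}$, $C\mapsto C^{\mathrm d}$ that act on the lattice (Figure~\ref{fig:SM-stable}).

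\textbf{The main obstacle} is not any single hard argument but the sheer volume of cases, and in particular making sure each claimed $C_i^K$ is simultaneously an upper bound (no larger clone works) and a lower bound (it does work). The upper-bound direction for $C_1^K$ is the subtler of the two: one must be sure the chosen ``witness'' functions $f\in K$ really do detect failure of $f\ast g\in K$ for every clone strictly above the claimed $C_1^K$ — here Post's lattice (Figure~\ref{fig:Post}) is indispensable, since it suffices to kill the upper covers of $C_1^K$ in the clone lattice. I would organize the proof as: (1) the join-closure argument establishing existence of $C_1^K,C_2^K$; (2) reduction of the $93$ cases to a manageable list of representatives via Lemma~\ref{lem:IcSM-stable-negation}; (3) for each representative, an upper-bound paragraph (exhibit witnesses killing each relevant upper cover in Post's lattice) and a lower-bound paragraph (apply Lemma~\ref{lem:right-stab-gen}/\ref{lem:left-stab-gen} with an explicit generating set); (4) transport the results to the remaining classes by the negation/inner-negation/dual symmetries and assemble Table~\ref{table:SM-stable}.
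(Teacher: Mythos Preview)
Your overall strategy---existence of $C_1^K,C_2^K$ via join-closure, symmetry reduction to a set of representatives, then for each representative an upper bound (kill the upper covers of the candidate in Post's lattice via explicit witnesses) and a lower bound (verify stability on a generating set via Lemmata~\ref{lem:right-stab-gen} and~\ref{lem:left-stab-gen})---is exactly the shape of the paper's proof. Two points need correction, and one tool from the paper would save you work.

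First, your symmetry transport formula is wrong. Lemma~\ref{lem:IcSM-stable-negation} is about $(\clIc,\clSM)$-closure and does not give what you claim; you need a separate statement (the paper's Lemma~\ref{lem:neg-dual}) relating stability of $K$, $\overline{K}$, $K^{\mathrm n}$, $K^{\mathrm d}$ under \emph{arbitrary} clones. The correct relations are $C K\subseteq K \iff C^{\mathrm d}\,\overline{K}\subseteq\overline{K}$ and $K C\subseteq K \iff K^{\mathrm n}\,C^{\mathrm d}\subseteq K^{\mathrm n}$, so e.g.\ $C_2^{\overline{K}}=(C_2^{K})^{\mathrm d}$, not $\overline{C_2^{K}}$; getting this wrong would propagate incorrect entries through a quarter of the table. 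Second, your expectation for clone $K$'s is backwards: by Lemma~\ref{lem:clones-stable}, if $K$ is itself a clone then $K C\subseteq K\iff C\subseteq K$ and $C K\subseteq K\iff C\subseteq K$, so $C_1^K=C_2^K=K$ exactly---never $\clAll$ unless $K=\clAll$. This disposes of roughly a third of the rows immediately. Finally, the paper exploits an intersection principle you do not mention (Lemma~\ref{lem:intersection}): if $K_i C_i\subseteq K_i$ for $i=1,2$ then $(K_1\cap K_2)(C_1\cap C_2)\subseteq K_1\cap K_2$, and dually on the left. Since most non-clone classes in the list are intersections of meet-irreducibles already handled, this gives the sufficiency direction almost for free and confines the real work to the necessity witnesses; without it your ``lower-bound paragraph'' for each representative is substantially longer than it needs to be.
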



\begin{table}
\small
\begin{tabular}{ccc@{\qquad\qquad\!\!\!\!}cccc}
\toprule
$K$ & $K C \subseteq K$ & $C K \subseteq K$ & $K$ & $K C \subseteq K$ & $C K \subseteq K$ & Prop. \\
& iff $C \subseteq \ldots$ & iff $C \subseteq \ldots$ & & iff $C \subseteq \ldots$ & iff $C \subseteq \ldots$ \\
\midrule
$\clAll$ & $\clAll$ & $\clAll$ & & & & \ref{prop:LR:clones} \\
$\clEiio$ & $\clOI$ & $\clM$ & $\clEioi$ & $\clOI$ & $\clM$ & \ref{prop:LR:Ei} \\
$\clEiii$ & $\clOI$ & $\clU$ & $\clEioo$ & $\clOI$ & $\clW$ & \ref{prop:LR:Ei} \\
$\clEq$ & $\clOI$ & $\clAll$ & & & & \ref{prop:LR:EqNeq} \\
$\clNeq$ & $\clOI$ & $\clS$ & & & & \ref{prop:LR:EqNeq} \\
$\clOXC$ & $\clOX$ & $\clM$ & $\clIXC$ & $\clOX$ & $\clM$ & \ref{prop:LR:OXC} \\
$\clXOC$ & $\clXI$ & $\clM$ & $\clXIC$ & $\clXI$ & $\clM$ & \ref{prop:LR:OXC} \\
$\clOX$ & $\clOX$ & $\clOX$ & $\clIX$ & $\clOX$ & $\clXI$ & \ref{prop:LR:clones} \\
$\clXO$ & $\clXI$ & $\clOX$ & $\clXI$ & $\clXI$ & $\clXI$ & \ref{prop:LR:clones} \\
$\clOOC$ & $\clOI$ & $\clM$ & $\clIIC$ & $\clOI$ & $\clM$ & \ref{prop:LR:OOC} \\
$\clOO$ & $\clOI$ & $\clOX$ & $\clII$ & $\clOI$ & $\clXI$ & \ref{prop:LR:OO} \\
$\clOIC$ & $\clOI$ & $\clM$ & $\clIOC$ & $\clOI$ & $\clM$ & \ref{prop:LR:OIC} \\
$\clOICO$ & $\clOI$ & $\clMo$ & $\clIOCI$ & $\clOI$ & $\clMi$ & \ref{prop:LR:OICO} \\
$\clOICI$ & $\clOI$ & $\clMi$ & $\clIOCO$ & $\clOI$ & $\clMo$ & \ref{prop:LR:OICO} \\
$\clOI$ & $\clOI$ & $\clOI$ & $\clIO$ & $\clOI$ & $\clOI$ & \ref{prop:LR:clones} \\
$\clSmin$ & $\clS$ & $\clU$ & $\clSmaj$ & $\clS$ & $\clW$ & \ref{prop:LR:Smin} \\
$\clSminNeq$ & $\clSc$ & $\clSM$ & $\clSmajNeq$ & $\clSc$ & $\clSM$ & \ref{prop:LR:SminNeq} \\
$\clSminOX$ & $\clSc$ & $\clU$ & $\clSmajIX$ & $\clSc$ & $\clW$ & \ref{prop:LR:SminOX} \\
$\clSminXO$ & $\clSc$ & $\clU$ & $\clSmajXI$ & $\clSc$ & $\clW$ & \ref{prop:LR:SminOX} \\
$\clSminOICO$ & $\clSc$ & $\clMU$ & $\clSmajIOCI$ & $\clSc$ & $\clMW$ & \ref{prop:LR:SminOICO} \\
$\clSminIOCO$ & $\clSc$ & $\clMU$ & $\clSmajOICI$ & $\clSc$ & $\clMW$ & \ref{prop:LR:SminOICO} \\
$\clSminOI$ & $\clSc$ & $\clTcU$ & $\clSmajIO$ & $\clSc$ & $\clTcW$ & \ref{prop:LR:SminOI} \\
$\clSminIO$ & $\clSc$ & $\clTcU$ & $\clSmajOI$ & $\clSc$ & $\clTcW$ & \ref{prop:LR:SminOI} \\
$\clSminOO$ & $\clSc$ & $\clU$ & $\clSmajII$ & $\clSc$ & $\clW$ & \ref{prop:LR:SminOO} \\
$\clS$ & $\clS$ & $\clS$ & & & & \ref{prop:LR:clones} \\ 
$\clSc$ & $\clSc$ & $\clSc$ & $\clScneg$ & $\clSc$ & $\clSc$ & \ref{prop:LR:clones} \\
$\clM$ & $\clM$ & $\clM$ & $\clMneg$ & $\clM$ & $\clM$ & \ref{prop:LR:clones} \\
$\clMo$ & $\clMo$ & $\clMo$ & $\clMoneg$ & $\clMo$ & $\clMi$ & \ref{prop:LR:clones} \\
$\clMi$ & $\clMi$ & $\clMi$ & $\clMineg$ & $\clMi$ & $\clMo$ & \ref{prop:LR:clones} \\
$\clMc$ & $\clMc$ & $\clMc$ & $\clMcneg$ & $\clMc$ & $\clMc$ & \ref{prop:LR:clones} \\
$\clSM$ & $\clSM$ & $\clSM$ & $\clSMneg$ & $\clSM$ & $\clSM$ & \ref{prop:LR:clones} \\
$\clU$ & $\clU$ & $\clU$ & $\clUneg$ & $\clU$ & $\clW$ & \ref{prop:LR:clones} \\
$\clTcUCO$ & $\clTcU$ & $\clMU$ & $\clTcUnegCI$ & $\clTcU$ & $\clMW$ & \ref{prop:LR:TcUCO} \\
$\clTcU$ & $\clTcU$ & $\clTcU$ & $\clTcUneg$ & $\clTcU$ & $\clTcW$ & \ref{prop:LR:clones} \\
$\clMU$ & $\clMU$ & $\clMU$ & $\clMUneg$ & $\clMU$ & $\clMW$ & \ref{prop:LR:clones} \\
$\clMcU$ & $\clMcU$ & $\clMcU$ & $\clMcUneg$ & $\clMcU$ & $\clMcW$ & \ref{prop:LR:clones} \\
$\clW$ & $\clW$ & $\clW$ & $\clWneg$ & $\clW$ & $\clU$ & \ref{prop:LR:clones} \\
$\clTcWCI$ & $\clTcW$ & $\clMW$ & $\clTcWnegCO$ & $\clTcW$ & $\clMU$ & \ref{prop:LR:TcUCO} \\
$\clTcW$ & $\clTcW$ & $\clTcW$ & $\clTcWneg$ & $\clTcW$ & $\clTcU$ & \ref{prop:LR:clones} \\
$\clMW$ & $\clMW$ & $\clMW$ & $\clMWneg$ & $\clMW$ & $\clMU$ & \ref{prop:LR:clones} \\
$\clMcW$ & $\clMcW$ & $\clMcW$ & $\clMcWneg$ & $\clMcW$ & $\clMcU$ & \ref{prop:LR:clones} \\
$\clUOO$ & $\clTcU$ & $\clU$ & $\clUnegII$ & $\clTcU$ & $\clW$ & \ref{prop:LR:UOO} \\
$\clWII$ & $\clTcW$ & $\clW$ & $\clWnegOO$ & $\clTcW$ & $\clU$ & \ref{prop:LR:UOO} \\
$\clUWneg$ & $\clSM$ & $\clU$ & $\clWUneg$ & $\clSM$ & $\clW$ & \ref{prop:LR:UWneg} \\
$\clRefl$ & $\clS$ & $\clAll$ & & & & \ref{prop:LR:R} \\
$\clReflOOC$ & $\clSc$ & $\clM$ & $\clReflIIC$ & $\clSc$ & $\clM$ & \ref{prop:LR:ROOC} \\
$\clReflOO$ & $\clSc$ & $\clOX$ & $\clReflII$ & $\clSc$ & $\clXI$ & \ref{prop:LR:ROO} \\
$\clVak$ & $\clAll$ & $\clAll$ & & & & \ref{prop:LR:const} \\
$\clVako$ & $\clAll$ & $\clOX$ & $\clVaki$ & $\clAll$ & $\clXI$ & \ref{prop:LR:const} \\
$\clEmpty$ & $\clAll$ & $\clAll$ & & & & \ref{prop:LR:empty} \\
\bottomrule
\end{tabular}
\smallskip
\caption{$(\clIc,\clSM)$\hyp{}stable classes and their stability under right and left composition with clones of Boolean functions.}
\label{table:SM-stable}
\end{table}


The remainder of this section is devoted to the proof of Theorem~\ref{thm:C1C2-stability}.
With the help of the following lemma, we can reduce the number of cases to consider.
Once we have established necessary and sufficient stability conditions for a class $K$, we get ones for the negation, the inner negation, and the dual of $K$ for free.

\begin{lemma}
\label{lem:neg-dual}
Let $K \subseteq \clAll$ and let $C$ be a clone.
\begin{enumerate}[label=\upshape{(\roman*)}]
\item\label{lem:neg-dual:KC}
The following conditions are equivalent:
\begin{enumerate}[label=\upshape{(\alph*)}]
\item\label{lem:neg-dual:KC:K} $K C \subseteq K$,
\item\label{lem:neg-dual:KC:K-} $\overline{K} C \subseteq \overline{K}$,
\item\label{lem:neg-dual:KC:Kn} $K^\mathrm{n} C^\mathrm{d} \subseteq K^\mathrm{n}$,
\item\label{lem:neg-dual:KC:Kd} $K^\mathrm{d} C^\mathrm{d} \subseteq K^\mathrm{d}$.
\end{enumerate}
\item\label{lem:neg-dual:CK}
The following conditions are equivalent:
\begin{enumerate}[label=\upshape{(\alph*)}]
\item\label{lem:neg-dual:CK:K} $C K \subseteq K$,
\item\label{lem:neg-dual:CK:Kn} $C K^\mathrm{n} \subseteq K^\mathrm{n}$,
\item\label{lem:neg-dual:CK:K-} $C^\mathrm{d} \overline{K} \subseteq \overline{K}$,
\item\label{lem:neg-dual:CK:Kd} $C^\mathrm{d} K^\mathrm{d} \subseteq K^\mathrm{d}$.
\end{enumerate}
\end{enumerate}
\end{lemma}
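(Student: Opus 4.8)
The plan is to derive all the equivalences from a short list of elementary composition identities for the transformations $f \mapsto \overline{f}$, $f \mapsto f^{\mathrm{n}}$, $f \mapsto f^{\mathrm{d}}$, and then to chain these identities together. First I would record the following facts, each obtained by a one-line evaluation at an arbitrary point $\vect{a}$: for $g \in \clAll^{(n)}$ and $h_1, \dots, h_n \in \clAll^{(m)}$,
\[
\overline{g(h_1, \dots, h_n)} = \overline{g}(h_1, \dots, h_n), \qquad
(g(h_1, \dots, h_n))^{\mathrm{n}} = g(h_1^{\mathrm{n}}, \dots, h_n^{\mathrm{n}}),
\]
\[
g^{\mathrm{n}}(h_1^{\mathrm{d}}, \dots, h_n^{\mathrm{d}}) = (g(h_1, \dots, h_n))^{\mathrm{n}}, \qquad
g^{\mathrm{d}}(\overline{h_1}, \dots, \overline{h_n}) = \overline{g(h_1, \dots, h_n)},
\]
where the last two use the observation $\overline{h_i^{\mathrm{d}}(\vect{a})} = h_i(\overline{\vect{a}}) = h_i^{\mathrm{n}}(\vect{a})$. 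I would also note that $C^{\mathrm{d}}$ is again a clone whenever $C$ is (the projections are fixed by $f \mapsto f^{\mathrm{d}}$, and $C^{\mathrm{d}} C^{\mathrm{d}} = (C C)^{\mathrm{d}} \subseteq C^{\mathrm{d}}$ by the dual of the second identity), that the three transformations are involutions (so $\overline{\overline{K}} = K$, $(K^{\mathrm{n}})^{\mathrm{n}} = K$, $(C^{\mathrm{d}})^{\mathrm{d}} = C$), and that $K^{\mathrm{d}} = \overline{K^{\mathrm{n}}} = (\overline{K})^{\mathrm{n}}$, as already used in the proof of Lemma~\ref{lem:IcSM-stable-negation}.

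For part~\ref{lem:neg-dual:KC}, the equivalence \ref{lem:neg-dual:KC:K}~$\Leftrightarrow$~\ref{lem:neg-dual:KC:K-} is immediate from the first identity: the condition $\overline{K}\,C \subseteq \overline{K}$ asserts that $\overline{g}(c_1,\dots,c_n) = \overline{g(c_1,\dots,c_n)} \in \overline{K}$ for all $g \in K$ and $c_i \in C$, and since $\overline{\overline{K}} = K$ this holds exactly when $g(c_1,\dots,c_n) \in K$. For \ref{lem:neg-dual:KC:K}~$\Leftrightarrow$~\ref{lem:neg-dual:KC:Kn}, I would write an arbitrary element of $K^{\mathrm{n}} C^{\mathrm{d}}$ as $g^{\mathrm{n}}(c_1^{\mathrm{d}}, \dots, c_n^{\mathrm{d}})$ with $g \in K$, $c_i \in C$; by the third identity it equals $(g(c_1,\dots,c_n))^{\mathrm{n}}$, which lies in $K^{\mathrm{n}}$ iff $g(c_1,\dots,c_n) \in K$; as $g$ and the $c_i$ range over all of $K$ and $C$, and $(K^{\mathrm{n}})^{\mathrm{n}} = K$, $(C^{\mathrm{d}})^{\mathrm{d}} = C$, the two conditions are equivalent. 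Finally \ref{lem:neg-dual:KC:Kn}~$\Leftrightarrow$~\ref{lem:neg-dual:KC:Kd} follows by applying the already-proved equivalence \ref{lem:neg-dual:KC:K}~$\Leftrightarrow$~\ref{lem:neg-dual:KC:K-} with $K$ replaced by $K^{\mathrm{n}}$ and $C$ replaced by the clone $C^{\mathrm{d}}$, using $\overline{K^{\mathrm{n}}} = K^{\mathrm{d}}$.

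Part~\ref{lem:neg-dual:CK} is entirely analogous, now using the second and fourth identities: \ref{lem:neg-dual:CK:K}~$\Leftrightarrow$~\ref{lem:neg-dual:CK:Kn} because $g(h_1^{\mathrm{n}},\dots,h_n^{\mathrm{n}}) = (g(h_1,\dots,h_n))^{\mathrm{n}}$; \ref{lem:neg-dual:CK:K}~$\Leftrightarrow$~\ref{lem:neg-dual:CK:K-} because $g^{\mathrm{d}}(\overline{h_1},\dots,\overline{h_n}) = \overline{g(h_1,\dots,h_n)}$ and $C^{\mathrm{d}}$ is a clone; and \ref{lem:neg-dual:CK:K-}~$\Leftrightarrow$~\ref{lem:neg-dual:CK:Kd} by applying \ref{lem:neg-dual:CK:K}~$\Leftrightarrow$~\ref{lem:neg-dual:CK:Kn} to $\overline{K}$ and $C^{\mathrm{d}}$ and noting $(\overline{K})^{\mathrm{n}} = K^{\mathrm{d}}$. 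I do not expect any genuine difficulty: the whole lemma is bookkeeping, and the only points requiring care are keeping straight which transformation acts on the outer function and which on the inner ones in each composition identity, and recording once that $f \mapsto f^{\mathrm{d}}$ sends clones to clones.
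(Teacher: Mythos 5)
Your proposal is correct and follows essentially the same route as the paper: the same four composition identities (negation/inner negation/dual interacting with composition on the outer or inner functions) plus the substitution trick of replacing $K$ by $\overline{K}$, $K^{\mathrm{n}}$, or $K^{\mathrm{d}}$ and $C$ by $C^{\mathrm{d}}$ to obtain the remaining equivalences. The only difference is organizational -- you prove biconditionals directly using that the three transformations are involutions, whereas the paper proves one-way implications and closes a cycle -- which does not change the substance of the argument.
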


\begin{proof}
\ref{lem:neg-dual:KC}
We show first that condition \ref{lem:neg-dual:KC:K} implies \ref{lem:neg-dual:KC:K-} and \ref{lem:neg-dual:KC:Kn}.
Assume that $K C \subseteq K$.
In order to show that $\overline{K} C \subseteq \overline{K}$, let $\overline{f} \in \overline{K}$, for some $f \in K$, and $g_1, \dots, g_n \in C$.
Then $\overline{f}(g_1, \dots, g_n) = \overline{f(g_1, \dots, g_n)} \in \overline{K}$ because $f(g_1, \dots, g_n) \in K$.
In order to show that $K^\mathrm{n} C^\mathrm{d} \subseteq K^\mathrm{n}$, let $f^\mathrm{n} \in K^\mathrm{n}$, for some $f \in K$, and $g_1^\mathrm{d}, \dots, g_n^\mathrm{d} \in C^\mathrm{d}$, for some $g_1, \dots, g_n \in C$.
Then $f^\mathrm{n}(g_1^\mathrm{d}, \dots, g_n^\mathrm{d}) = (f(g_1, \dots, g_n))^\mathrm{n} \in K^\mathrm{n}$ because $f(g_1, \dots, g_n) \in K$.

The equivalence of conditions \ref{lem:neg-dual:KC:K}, \ref{lem:neg-dual:KC:K-}, \ref{lem:neg-dual:KC:Kn}, and \ref{lem:neg-dual:KC:Kd} would follow if we show the implications 
$\text{\ref{lem:neg-dual:KC:K}} \Rightarrow \text{\ref{lem:neg-dual:KC:K-}} \Rightarrow \text{\ref{lem:neg-dual:KC:Kd}} \Rightarrow \text{\ref{lem:neg-dual:KC:Kn}} \Rightarrow \text{\ref{lem:neg-dual:KC:K}}$.
We have shown above that the implication $\text{\ref{lem:neg-dual:KC:K}} \Rightarrow \text{\ref{lem:neg-dual:KC:K-}}$ is valid.
The implication $\text{\ref{lem:neg-dual:KC:K-}} \Rightarrow \text{\ref{lem:neg-dual:KC:Kd}}$ follows by considering $\overline{K}$ in place of $K$,
$\text{\ref{lem:neg-dual:KC:Kd}} \Rightarrow \text{\ref{lem:neg-dual:KC:Kn}}$ follows by considering $K^\mathrm{d}$ in place of $K$ and $C^\mathrm{d}$ in place of $C$,
and $\text{\ref{lem:neg-dual:KC:Kn}} \Rightarrow \text{\ref{lem:neg-dual:KC:K}}$ follows by considering $K^\mathrm{n}$ in place of $K$ and $C^\mathrm{d}$ in place of $C$.

\ref{lem:neg-dual:CK}
We show first that condition \ref{lem:neg-dual:CK:K} implies \ref{lem:neg-dual:CK:Kn} and \ref{lem:neg-dual:CK:K-}.
Assume that \linebreak $C K \subseteq K$.
In order to show that $C K^\mathrm{n} \subseteq K^\mathrm{n}$, let $f \in C$ and $g_1^\mathrm{n}, \dots, g_n^\mathrm{n} \in K^\mathrm{n}$, for some $g_1, \dots, g_n \in K$.
Then $f(g_1^\mathrm{n}, \dots, g_n^\mathrm{n}) = (f(g_1, \dots, g_n))^\mathrm{n} \in K^\mathrm{n}$ because \linebreak $f(g_1, \dots, g_n) \in K$.
In order to show that $C^\mathrm{d} \overline{K} \subseteq \overline{K}$, let $f^\mathrm{d} \in C^\mathrm{d}$, for some $f \in C$, and $\overline{g_1}, \dots, \overline{g_n} \in \overline{K}$, for some $g_1, \dots, g_n \in K$.
Then $f^\mathrm{d}(\overline{g_1}, \dots, \overline{g_n}) = \overline{f}(g_1, \dots, g_n) = \overline{f(g_1, \dots, g_n)} \in \overline{K}$ because $f(g_1, \dots, g_n) \in K$.

The equivalence of conditions \ref{lem:neg-dual:CK:K}, \ref{lem:neg-dual:CK:Kn}, \ref{lem:neg-dual:CK:K-}, and \ref{lem:neg-dual:CK:Kd} follows by considering $\overline{K}$, $K^\mathrm{n}$, or $K^\mathrm{d}$ in place of $K$ and $C$ or $C^\mathrm{d}$ in place of $C$.
\end{proof}

The following lemma is our main tool for proving the sufficiency of the conditions.
It provides a sufficient condition for the intersection of two classes for which a sufficient condition is known.
Applying this lemma, we can proceed from the top downwards in the lattice of $(\clIc,\clSM)$\hyp{}stable classes.

\begin{lemma}
\label{lem:intersection}
Let $K_1, K_2 \subseteq \clAll$, and let $C_1$ and $C_2$ be clones.
\begin{enumerate}[label=\upshape{(\roman*)}]
\item\label{lem:intersection:R} If $K_1 C_1 \subseteq K_1$ and $K_2 C_2 \subseteq K_2$, then $(K_1 \cap K_2) (C_1 \cap C_2) \subseteq K_1 \cap K_2$.
\item\label{lem:intersection:L} If $C_1 K_1 \subseteq K_1$ and $C_2 K_2 \subseteq K_2$, then $(C_1 \cap C_2) (K_1 \cap K_2) \subseteq K_1 \cap K_2$.
\end{enumerate}
\end{lemma}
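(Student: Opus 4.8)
The plan is to unwind the definition of function class composition and check membership in each of $K_1$ and $K_2$ separately. Recall that a member of $(K_1 \cap K_2)(C_1 \cap C_2)$ is, by definition, a function of the form $f(g_1, \dots, g_n)$ where $f \in (K_1 \cap K_2)^{(n)}$ and $g_1, \dots, g_n \in (C_1 \cap C_2)^{(m)}$ for some $n, m \in \IN$. So the whole argument is just: fix such an $f$ and such $g_1, \dots, g_n$, and show $f(g_1, \dots, g_n)$ lies in both $K_1$ and $K_2$.

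For part \ref{lem:intersection:R}, first I would note that since $f \in K_1$ and $g_1, \dots, g_n \in C_1$, the composite $f(g_1, \dots, g_n)$ belongs to $K_1 C_1$, hence to $K_1$ by the hypothesis $K_1 C_1 \subseteq K_1$. Symmetrically, $f \in K_2$ and $g_1, \dots, g_n \in C_2$ give $f(g_1, \dots, g_n) \in K_2 C_2 \subseteq K_2$. Therefore $f(g_1, \dots, g_n) \in K_1 \cap K_2$, which is exactly what is needed. Part \ref{lem:intersection:L} is proved the same way with the roles of the clones and the classes exchanged: a member of $(C_1 \cap C_2)(K_1 \cap K_2)$ has the form $f(g_1, \dots, g_n)$ with $f \in (C_1 \cap C_2)^{(n)}$ and $g_1, \dots, g_n \in (K_1 \cap K_2)^{(m)}$; then $f \in C_1$, $g_i \in K_1$ yields membership in $C_1 K_1 \subseteq K_1$, and likewise for the index $2$.

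There is no real obstacle here; the statement is essentially immediate from the definitions, and the only thing to be careful about is phrasing the membership of $C_1 \cap C_2$ (respectively $K_1 \cap K_2$) in the composition so that the tuple $g_1, \dots, g_n$ is simultaneously in $C_1$ and in $C_2$ (respectively $K_1$ and $K_2$) — which is automatic since intersection is taken componentwise on functions of a fixed arity. I would present the two parts together, remarking that part \ref{lem:intersection:L} follows by the symmetric argument, to avoid repetition.
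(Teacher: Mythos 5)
Your argument is correct and is essentially the paper's own proof: the paper simply packages your elementwise check as ``monotonicity of function class composition,'' deducing $(K_1 \cap K_2)(C_1 \cap C_2) \subseteq K_i C_i \subseteq K_i$ for $i = 1, 2$ and intersecting. Unwinding the definition as you do is the same reasoning at one level lower of abstraction, and your handling of part (ii) by symmetry matches the paper as well.
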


\begin{proof}
\ref{lem:intersection:R}
Assume that $K_1 C_1 \subseteq K_1$ and $K_2 C_2 \subseteq K_2$.
Using the monotonicity of function class composition, we get
$(K_1 \cap K_2) (C_1 \cap C_2) \subseteq K_1 C_1 \subseteq K_1$ and
$(K_1 \cap K_2) (C_1 \cap C_2) \subseteq K_2 C_2 \subseteq K_2$;
hence $(K_1 \cap K_2) (C_1 \cap C_2) \subseteq K_1 \cap K_2$.

\ref{lem:intersection:L}
Assume that $C_1 K_1 \subseteq K_1$ and $C_2 K_2 \subseteq K_2$.
Using the monotonicity of function class composition, we get
$(C_1 \cap C_2) (K_1 \cap K_2) \subseteq C_1 K_1 \subseteq K_1$ and
$(C_1 \cap C_2) (K_1 \cap K_2) \subseteq C_2 K_2 \subseteq K_2$;
hence $(C_1 \cap C_2) (K_1 \cap K_2) \subseteq K_1 \cap K_2$.
\end{proof}

The following lemma will be applied frequently without explicit mention.

\begin{lemma}
\label{lem:Cns}
Let $C$ be a clone.
\begin{enumerate}[label=\upshape{(\roman*)}]
\item\label{lem:Cns:OX}
If $C \nsubseteq \clOX$, then $C$ contains $1$ or $\neg$.
\item\label{lem:Cns:OI}
If $C \nsubseteq \clOI$, then $C$ contains $0$, $1$, or $\neg$.
\item\label{lem:Cns:M}
If $C \nsubseteq \clM$, then $C$ contains a function $f$ satisfying $f(0,0,1) = 1$ and $f(0,1,1) = 0$.
\item\label{lem:Cns:S}
If $C \nsubseteq \clS$, then $C$ contains a function $f$ satisfying $f(0,1) = f(1,0)$.
\item\label{lem:Cns:U}
If $C \nsubseteq \clU$, then $C$ contains a function $f$ satisfying $f(0,0,1) = 1$ and $f(0,1,0) = 1$.
\end{enumerate}
\end{lemma}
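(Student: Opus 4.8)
The plan is to treat all five parts by the same elementary device: the hypothesis $C \nsubseteq K$ supplies a witness $f \in C$, and since $C$ is a clone it is minor-closed, so I may replace $f$ by a low-arity minor of it — identifying suitable blocks of arguments and, where a block would be empty, adjoining a fictitious argument — that still lies in $C$ and exhibits the stated configuration. The two auxiliary facts I would use repeatedly are parts of the classification of unary Boolean functions: a unary $g$ with $g(0)=1$ is the constant $1$ or $\neg$ (dually for $g(1)=0$), and a unary $g$ with $g(0)=g(1)$ is a constant.

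For \ref{lem:Cns:OX} and \ref{lem:Cns:OI} I would pass to the unary minor $g(x):=f(x,\dots,x)$ of a witness $f$ and read off the claim: for \ref{lem:Cns:OX} a witness has $f(\vect 0)=1$, so $g(0)=1$ and $g \in \{1,\neg\}$; for \ref{lem:Cns:OI} a witness has $f(\vect 0)=1$ or $f(\vect 1)=0$, giving $g(0)=1$ or $g(1)=0$, whence $C$ contains one of $0,1,\neg$. For \ref{lem:Cns:M}, since a non-monotone witness $f$ admits $\vect a' \le \vect b'$ with $f(\vect a')=1$ and $f(\vect b')=0$, traversing a maximal chain from $\vect a'$ to $\vect b'$ produces a pair $\vect a \lessdot \vect b$ differing in a single coordinate with $f(\vect a)=1$, $f(\vect b)=0$; identifying the coordinates on which $\vect a, \vect b$ are both $0$, the distinguished coordinate, and the coordinates on which they are both $1$ to three arguments then yields a ternary minor $g$ with $g(0,0,1)=1$ and $g(0,1,1)=0$. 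For \ref{lem:Cns:S} a witness is a tuple $\vect u$ with $f(\vect u)=f(\overline{\vect u})$; identifying the $1$-coordinates and the $0$-coordinates of $\vect u$ to two arguments gives a binary minor $g$ with $g(1,0)=g(0,1)$, and the degenerate cases $\vect u \in \{\vect 0,\vect 1\}$ reduce to $f(\vect 0)=f(\vect 1)$ and hence to a constant unary minor. For \ref{lem:Cns:U} a witness is a pair $\vect a, \vect b \in f^{-1}(1)$ with disjoint supports; when both are nonzero, identifying the support of $\vect a$, the support of $\vect b$, and the remaining coordinates to three arguments gives a ternary minor with $g(0,0,1)=f(\vect a)=1$ and $g(0,1,0)=f(\vect b)=1$, while the case $\vect 0 \in \{\vect a, \vect b\}$ gives $f(\vect 0)=1$ and is handled by the unary classification as in \ref{lem:Cns:OX}.

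I do not expect any genuine obstacle: every step is just minor-closure of clones together with the classification of unary Boolean functions. The only points demanding care are the bookkeeping for empty coordinate-blocks in parts \ref{lem:Cns:M}, \ref{lem:Cns:S}, \ref{lem:Cns:U} (handled by inserting fictitious arguments) and the reductions of the degenerate $\vect 0$/$\vect 1$ witnesses to constant or negation minors.
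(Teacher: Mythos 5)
Your proposal is correct and follows essentially the same route as the paper's proof: each non-inclusion supplies a witness, and minor-closure of the clone (with block identifications and, where a block would be empty, fictitious arguments) yields the required low-arity function, together with the classification of unary Boolean functions for the first two parts. The only cosmetic difference is your refinement in part \ref{lem:Cns:M} to a covering pair via a maximal chain, which is harmless but unnecessary, since the three-block identification already works for an arbitrary comparable pair $\vect{a} \leq \vect{b}$ with $f(\vect{a}) = 1$ and $f(\vect{b}) = 0$.
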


\begin{proof}
\ref{lem:Cns:OX}
There exists a function $g \in C$ such that $g(\vect{0}) = 1$.
By identifying all arguments, we obtain a unary minor $f$ of $g$ satisfying $f(0) = 1$.
There are two unary functions satisfying this condition, namely $1$ and $\neg$.
Since $C$ is minor\hyp{}closed, the resulting minor is also a member of $C$.

\ref{lem:Cns:OI}
There exists a function $g \in C$ such that $g(\vect{0}) = 1$ or $g(\vect{1}) = 0$.
By identifying all arguments, we obtain a unary minor $f$ of $g$ satisfying $f(0) = 1$ or $f(1) = 0$.
There are three unary functions satisfying this condition: $0$, $1$ and $\neg$.
Since $C$ is minor\hyp{}closed, the resulting minor is also a member of $C$.

\ref{lem:Cns:M}
There exist a function $g \in C$ and tuples $\vect{a}$ and $\vect{b}$ such that $\vect{a} \leq \vect{b}$ and $g(\vect{a}) > g(\vect{b})$; hence $g(\vect{a}) = 1$ and $g(\vect{b}) = 0$.
Without loss of generality, we may assume that $\vect{a} = 0^{i+j} 1^k$ and $\vect{b} = 0^i 1^{j+k}$ for some $i, j, k \in \IN_{+}$.
(We must have $j > 0$ because $\vect{a} < \vect{b}$. We may also assume that $i > 0$ and $k > 0$ by introducing fictitious arguments if necessary.)
By identifying the first $i$ arguments, the next $j$ arguments, and the last $k$ arguments, we obtain a ternary minor $f$ of $g$ satisfying $f(0,0,1) = 1$ and $f(0,1,1) = 0$.
Since $C$ is minor\hyp{}closed, we have $f \in C$.

\ref{lem:Cns:S}
There exist a function $g \in C$ and a tuple $\vect{a}$ such that $g(\vect{a}) = g(\overline{\vect{a}})$.
Without loss of generality, we may assume that $\vect{a} = 0^i 1^j$ for some $i, j \in \IN_{+}$.
(We clearly must have $i > 0$ or $j > 0$. We may assume that both $i$ and $j$ are nonzero by introducing a fictitious argument if necessary.)
By identifying the first $i$ arguments and the last $j$ arguments, we obtain a binary minor $f$ of $g$ satisfying $g(0,1) = g(1,0)$.
Since $C$ is minor\hyp{}closed, we have $f \in C$.

\ref{lem:Cns:U}
There exist a function $g \in C$ and tuples $\vect{a}$ and $\vect{b}$ such that $g(\vect{a}) = g(\vect{b}) = 1$ and $\vect{a} \wedge \vect{b} = \vect{0}$.
Without loss of generality, we may assume that $\vect{a} = 0^i 0^j 1^k$ and $\vect{b} = 0^i 1^j 0^k$ for some $i, j, k \in \IN_{+}$.
By identifying the first $i$ arguments, the next $j$ arguments, and the last $k$ arguments, we obtain a ternary minor $f$ of $g$ satisfying $f(0,0,1) = f(0,1,0) = 1$.
Since $C$ is minor\hyp{}closed, we have $f \in C$.
\end{proof}

In order to prove the sufficiency of the conditions, we will make use of the noninclusions established in the following lemma.

\begin{lemma}
\label{lem:KCCK}
Let $K$ be a $(\clIc,\clSM)$\hyp{}stable class and $C$ a clone.
\begin{enumerate}[label=\upshape{(\arabic*)}]
\item
\begin{enumerate}[label=\upshape{(\alph*)}]
\item\label{KC:OI-Eq-notin-Eiio}
If $C \nsubseteq \clOI$ and $\clEq \subseteq K$, then $K C \nsubseteq \clEiio$.
\item\label{KC:OI-Neq-notin-Eiii}
If $C \nsubseteq \clOI$ and $\clNeq \subseteq K$, then $K C \nsubseteq \clEiii$.
\item\label{KC:OX-ROO-notin-OXC}
If $C \nsubseteq \clOX$ and $\clReflOO \subseteq K$, then $K C \nsubseteq \clOXC$.
\item\label{KC:OI-ROO-notin-OOC}
If $C \nsubseteq \clOI$ and $\clReflOO \subseteq K$, then $K C \nsubseteq \clOOC$.
\item\label{KC:OI-OI-notin-OIC}
If $C \nsubseteq \clOI$ and $\clOI \subseteq K$, then $K C \nsubseteq \clOIC$.
\item\label{KC:S-Sc-notin-Smin}
If $C \nsubseteq \clS$ and $\clSc \subseteq K$, then $K C \nsubseteq \clSmin$.
\item\label{KC:OI-McU-notin-Neq}
If $C \nsubseteq \clOI$ and $\clMcU \subseteq K$, then $K C \nsubseteq \clNeq$.
\item\label{KC:OI+S-SM-notin-Eiio}
If $C \nsubseteq \clOI$, $C \subseteq \clS$, and $\clSM \subseteq K$, then $K C \nsubseteq \clEiio$.
\item\label{KC:OI-UWneg-notin-Eq}
If $C \nsubseteq \clOI$ and $\clUWneg \subseteq K$, then $K C \nsubseteq \clEq$.
\item\label{KC:S+OI-SminOO-notin-Smin}
If $C \nsubseteq \clS$, $C \subseteq \clOI$, and $\clSminOO \subseteq K$, then $K C \nsubseteq \clSmin$.
\item\label{KC:U-SM-notin-U}
If $C \nsubseteq \clU$ and $\clSM \subseteq K$, then $K C \nsubseteq \clU$.
\item\label{KC:U-UWneg-notin-U}
If $C \nsubseteq \clU$ and $\clUWneg \subseteq K$, then $K C \nsubseteq \clU$.
\item\label{KC:S-UWneg-notin-UWneg}
If $C \nsubseteq \clS$ and $\clUWneg \subseteq K$, then $K C \nsubseteq \clUWneg$.
\item\label{KC:M+S-UWneg-notin-U}
If $C \nsubseteq \clM$, $C \subseteq \clS$, and $\clUWneg \subseteq K$, then $K C \nsubseteq \clU$.
\item\label{KC:S-ROO-notin-R}
If $C \nsubseteq \clS$ and $\clReflOO \subseteq K$, then $K C \nsubseteq \clRefl$.
\end{enumerate}
\item
\begin{enumerate}[resume,label=\upshape{(\alph*)}]
\item\label{CK:M-OIC-notin-Eiio}
If $C \nsubseteq \clM$ and $\clOIC \subseteq K$, then $C K \nsubseteq \clEiio$.
\item\label{CK:U-Smin-notin-Eiii}
If $C \nsubseteq \clU$ and $\clSmin \subseteq K$, then $C K \nsubseteq \clEiii$.
\item\label{CK:S-S-notin-Neq}
If $C \nsubseteq \clS$ and $\clS \subseteq K$, then $C K \nsubseteq \clNeq$.
\item\label{CK:M-ROOC-notin-OXC}
If $C \nsubseteq \clM$ and $\clReflOOC \subseteq K$, then $C K \nsubseteq \clOXC$.
\item\label{CK:OX-ROO-notin-Eiii}
If $C \nsubseteq \clOX$ and $\clReflOO \subseteq K$, then $C K \nsubseteq \clEiii$.
\item\label{CK:Mo-OICO-notin-OICO}
If $C \nsubseteq \clMo$ and $\clOICO \subseteq K$, then $C K \nsubseteq \clOICO$.
\item\label{CK:U-UWneg-notin-Smin}
If $C \nsubseteq \clU$ and $\clUWneg \subseteq K$, then $C K \nsubseteq \clSmin$.
\item\label{CK:M+S-SminNeq-notin-Smin}
If $C \nsubseteq \clM$, $C \subseteq S$, and $\clSminNeq \subseteq K$, then $C K \nsubseteq \clSmin$.
\item\label{CK:MU-TcUCO-notin-SminOICO}
If $C \nsubseteq \clMU$ and $\clTcUCO \subseteq K$, then $C K \nsubseteq \clSminOICO$.
\item\label{CK:OI-SM-notin-OI}
If $C \nsubseteq \clOI$ and $\clSM \subseteq K$, then $C K \nsubseteq \clOI$.
\item\label{CK:U-McU-notin-Smin}
If $C \nsubseteq \clU$ and $\clMcU \subseteq K$, then $C K \nsubseteq \clSmin$.
\end{enumerate}
\end{enumerate}
\end{lemma}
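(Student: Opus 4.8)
The plan is to prove all the non-inclusions by a single uniform recipe, applied case by case. In each item one is handed a non-inclusion $C \not\subseteq C_0$ for a small clone $C_0$ and an inclusion $K_0 \subseteq K$ for an explicitly described small $(\clIc,\clSM)$-stable class $K_0$, and one must produce one Boolean function that lies in $KC$ (for the items of the form $KC \not\subseteq \cdots$) or in $CK$ (for those of the form $CK \not\subseteq \cdots$) but not in the stated target class. I would build a member of $KC$ as $f(g_1,\dots,g_n)$ with $f\in K^{(n)}$, $g_1,\dots,g_n\in C^{(m)}$ (matching arities by padding with fictitious arguments, which stays inside $C$ since clones are minor-closed), and a member of $CK$ dually as $g(f_1,\dots,f_n)$ with $g\in C^{(n)}$, $f_1,\dots,f_n\in K^{(m)}$. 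The first ingredient comes from $C\not\subseteq C_0$ via Lemma~\ref{lem:Cns}: one gets one of $0$, $1$, $\neg$ (when $C_0\in\{\clOX,\clOI\}$, cf.\ \ref{lem:Cns:OX}, \ref{lem:Cns:OI}), a ternary $f\in C$ with $f(0,0,1)=1$, $f(0,1,1)=0$ (when $C_0=\clM$, cf.\ \ref{lem:Cns:M}), a binary $f\in C$ with $f(0,1)=f(1,0)$ (when $C_0=\clS$, cf.\ \ref{lem:Cns:S}), or a ternary $f\in C$ with $f(0,0,1)=f(0,1,0)=1$ (when $C_0=\clU$, cf.\ \ref{lem:Cns:U}); in the items carrying a second hypothesis on $C$ (namely $C\subseteq\clS$, as in \ref{KC:M+S-UWneg-notin-U} and \ref{CK:M+S-SminNeq-notin-Smin}), self-duality pins down additional values of this witness. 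The second ingredient is read off from $K_0\subseteq K$: projections and constants whenever $K_0$ is one of $\clEq$, $\clOIC$, $\clReflOO$, $\clOI$, $\clSc$, $\clMcU$, and so on (each of these small classes being fully described in Section~\ref{sec:Boolean}); the generators $\mathord{\nrightarrow}$, $\mathord{\wedge}$, $\mathord{\vee}$, $\mathord{+}$ when needed (these are exactly the low-arity functions extracted in the propositions of Section~\ref{sec:IcSM}); or an explicitly tabulated function whose values at the relevant arguments are forced by the defining property of $K_0$ (for instance $\clEq$ is the full set of $f$ with $f(\vect 0)=f(\vect 1)$, so any binary table with equal corner values is at hand).

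Once the composite is assembled, the verification that it lies outside the target class is a direct table inspection, because membership in each target class is a condition on the values at $\vect 0$ and $\vect 1$ together with a possible $1$- or $0$-separation property: $f\notin\clEiio$ iff $f(\vect 0)=1$ and $f(\vect 1)=0$; $f\notin\clEiii$ iff $f(\vect 0)=f(\vect 1)=1$; $f\notin\clOIC$ iff $f$ is non-constant with $f(\vect 0)=1$ or $f(\vect 1)=0$; $f\notin\clU$ iff $f$ has two $1$-points whose meet is $\vect 0$; $f\notin\clSmin$ iff $f(\vect a)=f(\overline{\vect a})=1$ for some $\vect a$; etc. As a sample, for \ref{KC:OI-Eq-notin-Eiio} one gets $u\in C\cap\{0,1,\neg\}$, picks a binary $f\in\clEq\subseteq K$ whose substitution of $u$ for the second argument equals $\neg$, and concludes $\neg\in KC\setminus\clEiio$; for \ref{CK:M-OIC-notin-Eiio} one gets a ternary $f\in C$ with $f(0,0,1)=1$, $f(0,1,1)=0$, and, using $0,1,\id\in\clOIC\subseteq K$, the function $f(0,\id,1)$ is $\neg\in CK\setminus\clEiio$. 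The separation-type items (e.g.\ \ref{CK:U-Smin-notin-Eiii}, \ref{CK:U-UWneg-notin-Smin}, \ref{KC:M+S-UWneg-notin-U}, \ref{CK:U-McU-notin-Smin}) instead use a ternary witness $f$ with $f(0,0,1)=f(0,1,0)=1$ — and, under $C\subseteq\clS$, also $f(1,0,0)=1$ — composed so the result inherits two $1$-points with meet $\vect 0$, while the corner values land where the target class requires.

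I do not expect a conceptual obstacle; the cost is purely the bookkeeping of twenty-six separate witness constructions, and the places that need genuine care are the following. First, the items referring to $\clUWneg$: here the shortcut ``$f\in C$, hence $f\in KC$'' is unavailable because $\id\notin\clUWneg$, so one must honestly form $f(g_1,\dots,g_n)$ with the outer function taken from $K$ (e.g.\ $\mathord{\nrightarrow}$) and the $C$-witness substituted into an inner slot, then recheck that the composite still separates. Second, the items with two simultaneous hypotheses on $C$, where the self-duality-forced values of the witness must be combined correctly with the values coming from non-monotonicity. Third, simply keeping straight which corner values or which separation failure certifies non-membership in target classes whose names involve iterated complementation, such as $\clEiii=\clNeq\cup\clOO$, $\clWneg=\overline{\clW}$, or $\clSminOICO=\clSminOI\cup\clVako$. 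Since $K$ is $(\clIc,\clSM)$-stable, and in particular minor-closed, every minor of every witness used is again in $K$ (and in the relevant small class), so arities can always be matched; a few pairs of items are moreover related by the symmetries of Lemma~\ref{lem:neg-dual}, but it is cleanest to verify each item directly.
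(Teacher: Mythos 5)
Your recipe is exactly the paper's proof: each item is settled by extracting a low-arity witness from $C$ via Lemma~\ref{lem:Cns} (with self-duality fixing extra values when $C\subseteq\clS$), composing it with explicit members of the small class contained in $K$ (including the $\mathord{\nrightarrow}$-outer-function workaround for the $\clUWneg$ items, since $\id\notin\clUWneg$), and checking the composite fails the target class's defining condition -- and your two sample cases coincide with the paper's witnesses. The only difference is that you leave the remaining two dozen witness tables as bookkeeping rather than writing them out, but the method, the key auxiliary lemma, and the identified points of care all match.
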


\begin{proof}
\ref{KC:OI-Eq-notin-Eiio}
Since $C$ contains $0$, $1$, or $\neg$, and $\mathord{+}, \mathord{\leftrightarrow}, \mathord{\rightarrow} \in \clEq$,
we have that $\mathord{\leftrightarrow}(\id, 0) = \neg$, $\mathord{+}(\id, 1) = \neg$, or $\mathord{\rightarrow}(\neg, \id) = \neg$ is in $K C$, but $\neg \notin \clEiio$.

\ref{KC:OI-Neq-notin-Eiii}
Since $C$ contains $0$, $1$, or $\neg$, and $\id, \neg, \mathord{\vee} \in \clNeq$,
we have that $\neg(0) = 1$, $\id(1) = 1$, or $\mathord{\vee}(\id, \neg) = 1$ is in $K C$, but $1 \notin \clEiii$.

\ref{KC:OX-ROO-notin-OXC}
Since $C$ contains $1$ or $\neg$, and $\mathord{+} \in \clReflOO$,
we have that $\mathord{+}(\id, 1) = \neg$ or $\mathord{+}(\id_1, \neg_2) = \mathord{\leftrightarrow}$ is in $K C$, but $\neg, \mathord{\leftrightarrow} \notin \clOXC$.

\ref{KC:OI-ROO-notin-OOC}
Since $C$ contains $0$, $1$, or $\neg$, and $\mathord{+} \in \clReflOO$,
we have that $\mathord{+}(\id, 0) = \id$, $\mathord{+}(\id, 1) = \neg$, or $\mathord{+}(\id_1, \neg_2) = \mathord{\leftrightarrow}$ is in $K C$, but $\id, \neg, \mathord{\leftrightarrow} \notin \clOOC$.

\ref{KC:OI-OI-notin-OIC}
Since $C$ contains $0$, $1$, or $\neg$, and $\id, \mathord{\oplus_3} \in \clOI$,
we have that $K C$ contains $\mathord{\oplus_3}(\id_1, \id_2, 0) = \mathord{+}$, $\mathord{\oplus_3}(\id_1, \id_2, 1) = \mathord{\leftrightarrow}$, or $\id(\neg) = \neg$, but $\mathord{+}, \mathord{\leftrightarrow}, \neg \notin \clOIC$.

\ref{KC:S-Sc-notin-Smin}
There is $f \in C$ such that $f(0,1) = f(1,0) =: a$, and we have $\id, \mathord{\oplus_3} \in \clSc$.
If $a = 1$, then $id(f) = f \in K C$, but $f \notin \clSmin$.
If $a = 0$, then $\varphi := \mathord{\oplus_3}(\id_1, \id_2, f) \in K C$ but $\varphi \notin \clSmin$ because $\varphi(0,1) = \mathord{\oplus_3}(0,1,0) = 1$, $\varphi(1,0) = \mathord{\oplus_3}(1,0,0) = 1$.

\ref{KC:OI-McU-notin-Neq}
Since $C$ contains $0$, $1$, or $\neg$, and $\id, \mathord{\wedge} \in \clMcU$, we have that $\id(0) = 0$, $\id(1) = 1$, or $\mathord{\wedge}(\id, \neg) = 0$ is in $K C$, but $0, 1 \notin \clNeq$.

\ref{KC:OI+S-SM-notin-Eiio}
Since $\neg \in C$ and $\id \in \clSM$, we have $\id(\neg) = \neg \in K C$, but $\neg \notin \clEiio$.

\ref{KC:OI-UWneg-notin-Eq}
Since $C$ contains $0$, $1$, or $\neg$, and $\mathord{\rightarrow} \in \clUWneg$, we have that $\mathord{\rightarrow}(\id, 0) = \id$, $\mathord{\rightarrow}(1, \id) = \neg$, or $\mathord{\rightarrow}(\id, \neg)$ is in $K C$, but $\id, \neg \notin \clEq$.

\ref{KC:S+OI-SminOO-notin-Smin}
There is $f \in C$ such that $f(0,1) = f(1,0)$, $f(0,0) = 0$, $f(1,1) = 1$; in other words, $f \in \{\mathord{\wedge}, \mathord{\vee}\}$.
The ternary functions $g$ and $h$ satisfying $g^{-1}(1) = \{(1,0,0), (0,1,0), (0,0,1)\}$ and $h^{-1}(1) = \{(1,1,0), (1,0,1), (0,1,1)\}$ are members of $\clSminOO$.
Hence either $\alpha := g(\id_1, \id_2, \mathord{\wedge})$ or $\beta := h(\id_1, \id_2, \mathord{\wedge})$ is in $K C$, but $\alpha, \beta \notin \clSmin$ because
$\alpha(0,1) = g(0,1,0) = 1$, $\alpha(1,0) = g(1,0,0) = 1$, $\beta(0,1) = h(0,1,1) = 1$, $\beta(1,0) = h(1,0,1) = 1$.

\ref{KC:U-SM-notin-U}
The class $C$ contains an $f \notin \clU$.
Since $\id \in \clSM$, we have $\id(f) = f \in K C$.

\ref{KC:U-UWneg-notin-U}
There is $f \in C$ such that $f(0,0,1) = f(0,1,0) = 1$.
Since $\mathord{\nrightarrow} \in \clUWneg$, we have $\varphi := \mathord{\nrightarrow}(f, \id_1) \in K C$, but $\varphi \notin \clU$ because
$\varphi(0,0,1) = \mathord{\nrightarrow}(f(0,0,1), 0) = \mathord{\nrightarrow}(1,0) = 1$,
$\varphi(0,1,0) = \mathord{\nrightarrow}(f(0,1,0), 0) = \mathord{\nrightarrow}(1,0) = 1$,
and $(0,0,1) \wedge (0,1,0) = \vect{0}$.

\ref{KC:S-UWneg-notin-UWneg}
There is $f \in C$ such that $f(0,1) = f(1,0) =: a$.
We have $\mathord{\nrightarrow} \in \clUWneg$.
If $a = 0$, then $\varphi := \mathord{\nrightarrow}(\id_1, f_{23}) \in K C$, but $\varphi \notin \clWneg$ because $\varphi(1,0,1) = \varphi(1,1,0) = 1$ and $(1,0,1) \vee (1,1,0) = \vect{1}$.
If $a = 1$, then $\gamma := \mathord{\nrightarrow}(f_{12}, \id_3) \in K C$, but $\gamma \notin \clU$ because $\gamma(0,1,0) = \gamma(1,0,0) = 1$ and $(0,1,0) \wedge (1,0,0) = \vect{0}$.
Consequently, $KC \nsubseteq \clUWneg$.

\ref{KC:M+S-UWneg-notin-U}
There is $f \in C$ such that $f(0,0,1) = 1$, $f(0,1,1) = 0$, $f(1,1,0) = 0$, $f(1,0,0) = 1$.
Since $\mathord{\nrightarrow} \in \clUWneg$, we have $\delta := \mathord{\nrightarrow}(f_{123}, \id_4) \in K C$, but $\delta \notin \clU$ because $\delta(0,0,1,0) = \delta(1,0,0,0) = 1$ and $(0,0,1,0) \wedge (1,0,0,0) = \vect{0}$.
We conclude that $(\clUWneg) C \nsubseteq \clU$.

\ref{KC:S-ROO-notin-R}
There is $f \in C$ such that $f(0,1) = f(1,0) =: a$.
Since $\mathord{+} \in \clReflOO$, we have $\varphi := \mathord{+}(\id_1, f) \in K C$, but $\varphi \notin \clRefl$ because
$\varphi(0,1) = \mathord{+}(0,a) = a$, $\varphi(1,0) = \mathord{+}(1,a) = \overline{a}$.

\ref{CK:M-OIC-notin-Eiio}
There is $f \in C$ such that $f(0,0,1) = 1$, $f(0,1,1) = 0$.
Since $0, 1, \id \in \clOIC$, we have $\varphi := f(0, \id, 1) \in C K$, but $\varphi \notin \clEiio$ because $\varphi(0) = f(0,0,1) = 1$, $\varphi(1) = f(0,1,1) = 0$.

\ref{CK:U-Smin-notin-Eiii}
There is $f \in C$ such that $f(0,0,1) = f(0,1,0) = 1$.
Since $0, \id, \neg \in \clSmin$, we have $\varphi := f(0, \id_1, \neg_2) \in C K$, but $\varphi \notin \clEiii$ because $\varphi(0,0) = f(0,0,1) = 1$, $\varphi(1,1) = f(0,1,0) = 1$.

\ref{CK:S-S-notin-Neq}
There is $f \in C$ such that $f(0,1) = f(1,0)$.
Since $\id, \neg \in \clS$, we have $\varphi := f(\id, \neg) \in C K$, but $\varphi \notin \clNeq$ because $\varphi(0) = f(0,1) = f(1,0) = \varphi(1)$.

\ref{CK:M-ROOC-notin-OXC}
There is $f \in C$ such that $f(0,0,1) = 1$, $f(0,1,1) = 0$.
Since $0, 1, \mathord{+} \in \clReflOOC$, we have $\varphi := f(0, \mathord{+}, 1) \in C K$, but $\varphi \notin \clOXC$ because $\varphi(0,0) = f(0,0,1) = 1$, $\varphi(0,1) = f(0,1,1) = 0$.

\ref{CK:OX-ROO-notin-Eiii}
Since $C$ contains $1$ or $\neg$, and $0 \in \clReflOO$,
we have that $1(0) = 1$ or $\neg(0) = 1$ is in $C K$, but $1 \notin \clEiii$.

\ref{CK:Mo-OICO-notin-OICO}
We have $C \nsubseteq \clM$ or $C \nsubseteq \clOX$.
If $C \nsubseteq \clM$, then there is $f \in C$ such that $f(0,0,1) = 1$, $f(0,1,1) = 0$.
Since $0, \id \in \clOICO$, we have $\varphi := f(0, \id_1, \id_2) \in C K$, but $\varphi \notin \clXIC$ because $\varphi(1,1) = f(0,1,1) = 0$, $\varphi(0,1) = f(0,0,1) = 1$.
If $C \subseteq \clOX$, then $C$ contains $1$ or $\neg$.
Then $1(\id) = 1$ or $\neg(\id) = \neg$ is in $C K$, but $1 \notin \clOX$ and $\neg \notin \clEiio$.
Consequently, $C K \nsubseteq (\clXIC) \cap \clOX \cap \clEiio = \clOIC$.

\ref{CK:U-UWneg-notin-Smin}
There is $f \in C$ such that $f(0,0,1) = f(0,1,0) = 1$.
Since $0, \mathord{\nrightarrow} \in \clUWneg$, we have $\varphi := f(0, \mathord{\nrightarrow}_{12}, \mathord{\nrightarrow}_{21}) \in C K$, but $\varphi \notin \clSmin$ because $\varphi(0,1) = f(0,0,1) = 1$, $\varphi(1,0) = f(0,1,0) = 1$.

\ref{CK:M+S-SminNeq-notin-Smin}
There is $f \in C$ such that $f(0,0,1) = 1$, $f(0,1,1) = 0$, $f(1,1,0) = 0$, $f(1,0,0) = 1$.
Since $\id, \neg, \mathord{\wedge} \in \clSminNeq$, we have $\varphi := f(\id_1, \mathord{\wedge}, \neg_1) \in C K$, but $\varphi \notin \clSmin$ because $\varphi(0,1) = f(0,0,1) = 1$, $\varphi(1,0) = f(1,0,0) = 1$.

\ref{CK:MU-TcUCO-notin-SminOICO}
The class $C$ contains either an $f$ such that $f(0,0,1) = 1$, $f(0,1,1) = 0$ or a $g$ such that $g(0,0,1) = g(0,1,0) = 1$.
Since $0, \id \in \clTcUCO$, we have that $\varphi := f(0, \id_1, \id_2)$ or $\gamma := g(0, \id_1, \id_2)$ is in $C K$,
but $\varphi, \gamma \notin \clSminOICO$ because
$\varphi(0,1) = f(0,0,1) = 1$, $\varphi(1,1) = f(0,1,1) = 0$,
$\gamma(0,1) = g(0,0,1) = 1$, $\gamma(1,0) = g(0,1,0) = 1$.

\ref{CK:OI-SM-notin-OI}
There is an $f \in C$ such that $f \notin \clOI$.
Since $\id \in \clSM$, it holds that $f(\id_1, \dots, \id_n) = f \in C K$.

\ref{CK:U-McU-notin-Smin}
There is $f \in C$ such that $f(0,0,1) = f(0,1,0) = 1$.
Since $\id, \mathord{\wedge} \in \clMcU$, we have $\varphi := f(\mathord{\wedge}, \id_1, \id_2) \in C K$, but $\varphi \notin \clSmin$ because $\varphi(0,1) = f(0,0,1) = 1$, $\varphi(1,0) = f(0,1,0) = 1$.
\end{proof}

\begin{proposition}
\label{prop:LR:clones}
For any clone $C$ and for $K$ being one of the classes
$\clAll$, $\clOX$, $\clXI$, $\clOI$, $\clM$, $\clMo$, $\clMi$, $\clMc$, $\clU$, $\clTcU$, $\clMU$, $\clMcU$, $\clW$, $\clTcW$, $\clMW$, $\clMcW$, $\clS$, $\clSc$, and $\clSM$,
the following inclusions are equivalent:
\begin{align*}
& C \subseteq K, \\
& K C \subseteq K, &
& \overline{K} C \subseteq \overline{K}, &
& K^\mathrm{n} C^\mathrm{d} \subseteq K^\mathrm{n}, &
& K^\mathrm{d} C^\mathrm{d} \subseteq K^\mathrm{d}, \\
& C K \subseteq K, &
& C K^\mathrm{n} \subseteq K^\mathrm{n}, &
& C^\mathrm{d} \overline{K} \subseteq \overline{K}, &
& C^\mathrm{d} K^\mathrm{d} \subseteq K^\mathrm{d}.
\end{align*}
\end{proposition}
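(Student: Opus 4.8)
The plan is to recognise Proposition~\ref{prop:LR:clones} as a direct bookkeeping corollary of Lemmata~\ref{lem:clones-stable} and \ref{lem:neg-dual}; no computation involving the individual classes is required, because the essential point is simply that every $K$ in the list is a \emph{clone} of Boolean functions.

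First I would record that each of the listed classes --- $\clAll$, $\clOX$, $\clXI$, $\clOI$, $\clM$, $\clMo$, $\clMi$, $\clMc$, $\clU$, $\clTcU$, $\clMU$, $\clMcU$, $\clW$, $\clTcW$, $\clMW$, $\clMcW$, $\clS$, $\clSc$, $\clSM$ --- is a clone of Boolean functions in the sense of Post; these are exactly the $(\clIc,\clSM)$\hyp{}stable classes that are clones (highlighted in red in Figure~\ref{fig:SM-stable}), and in particular each contains $\clSM$. Since $K$ and $C$ are then both clones, Lemma~\ref{lem:clones-stable}\ref{lem:clones-stable:R} yields $K C \subseteq K$ if and only if $C \subseteq K$, and Lemma~\ref{lem:clones-stable}\ref{lem:clones-stable:L} yields $C K \subseteq K$ if and only if $C \subseteq K$. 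This already establishes the mutual equivalence of $C \subseteq K$, $K C \subseteq K$, and $C K \subseteq K$.

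Next I would bring in the remaining six inclusions via Lemma~\ref{lem:neg-dual}, which, crucially, imposes no hypothesis that $K$ be a clone and so applies verbatim. Part~\ref{lem:neg-dual:KC} of that lemma gives the equivalence of $K C \subseteq K$, $\overline{K} C \subseteq \overline{K}$, $K^\mathrm{n} C^\mathrm{d} \subseteq K^\mathrm{n}$, and $K^\mathrm{d} C^\mathrm{d} \subseteq K^\mathrm{d}$; part~\ref{lem:neg-dual:CK} gives the equivalence of $C K \subseteq K$, $C K^\mathrm{n} \subseteq K^\mathrm{n}$, $C^\mathrm{d} \overline{K} \subseteq \overline{K}$, and $C^\mathrm{d} K^\mathrm{d} \subseteq K^\mathrm{d}$. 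Chaining these two clusters through the equivalence $K C \subseteq K \iff C \subseteq K \iff C K \subseteq K$ from the previous step produces the asserted chain of nine equivalent conditions, completing the proof.

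I expect no genuine obstacle here: the only thing that really needs checking is that each listed $K$ is in fact a clone --- which is immediate from Post's classification --- so that Lemma~\ref{lem:clones-stable} is applicable; everything else is a formal consequence of the cited lemmata. (One could instead verify the negation/inner\hyp{}negation/dual variants by hand, repeating the short argument of Lemma~\ref{lem:neg-dual} four times, but invoking that lemma is cleaner.)
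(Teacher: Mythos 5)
Your proposal is correct and follows exactly the paper's own argument: the paper likewise observes that every listed $K$ is a clone and then cites Lemmata~\ref{lem:clones-stable} and~\ref{lem:neg-dual} to obtain all nine equivalences. Your write-up merely spells out the chaining of the two lemmata that the paper leaves implicit.
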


\begin{proof}
Since the classes listed in the statement are clones, the result follows immediately from Lemmata~\ref{lem:clones-stable} and \ref{lem:neg-dual}.
\end{proof}

\begin{proposition}
\label{prop:LR:Ei}
Let $C$ be a clone.
\begin{enumerate}[label=\upshape{(\roman*)}]
\item\label{prop:LR:Ei:Eiio-R}
$\clEiio C \subseteq \clEiio$ if and only if $C \subseteq \clOI$.
\item\label{prop:LR:Ei:Eiio-L}
$C \clEiio \subseteq \clEiio$ if and only if $C \subseteq \clM$.
\item\label{prop:LR:Ei:Eiii-R}
$\clEiii C \subseteq \clEiii$ if and only if $C \subseteq \clOI$.
\item\label{prop:LR:Ei:Eiii-L}
$C \clEiii \subseteq \clEiii$ if and only if $C \subseteq \clU$.
\end{enumerate}
\end{proposition}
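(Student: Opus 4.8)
The plan is to prove the four equivalences one by one, splitting each into the easy ``sufficiency'' direction (a direct computation) and the ``necessity'' direction (read off from Lemmata~\ref{lem:Cns} and \ref{lem:KCCK}). It will be convenient to describe the relevant classes by the forbidden value of the pair $(f(\vect{0}), f(\vect{1}))$: a function $f$ lies in $\clEiio$ exactly when $(f(\vect{0}), f(\vect{1})) \neq (1,0)$, and in $\clEiii$ exactly when $(f(\vect{0}), f(\vect{1})) \neq (1,1)$. I will also use the elementary containments $\clEq \subseteq \clEiio$, $\clOIC \subseteq \clEiio$, $\clNeq \subseteq \clEiii$, and $\clSmin \subseteq \clEiii$, each of which is immediate from the above descriptions (for $\clSmin \subseteq \clEiii$, take $\vect{a} = \vect{0}$ in the defining condition $f(\vect{a}) \wedge f(\overline{\vect{a}}) = 0$).

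For the ``if'' parts of \ref{prop:LR:Ei:Eiio-R} and \ref{prop:LR:Ei:Eiii-R}, I would record the general observation that right composition with operations from $\clOI$ leaves the value of a function at $\vect{0}$ and at $\vect{1}$ unchanged: if $f$ is $n$\hyp{}ary and $g_1, \dots, g_n \in \clOI^{(m)}$, then $g_i(\vect{0}) = 0$ and $g_i(\vect{1}) = 1$ for each $i$, so $f(g_1, \dots, g_n)(\vect{0}) = f(\vect{0})$ and $f(g_1, \dots, g_n)(\vect{1}) = f(\vect{1})$. Hence $K C \subseteq K$ for $K \in \{\clEiio, \clEiii\}$ whenever $C \subseteq \clOI$. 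Conversely, if $C \nsubseteq \clOI$, then Lemma~\ref{lem:KCCK}\ref{KC:OI-Eq-notin-Eiio} (applied with $K = \clEiio$, using $\clEq \subseteq \clEiio$) gives $\clEiio C \nsubseteq \clEiio$, and Lemma~\ref{lem:KCCK}\ref{KC:OI-Neq-notin-Eiii} (applied with $K = \clEiii$, using $\clNeq \subseteq \clEiii$) gives $\clEiii C \nsubseteq \clEiii$.

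For the ``if'' part of \ref{prop:LR:Ei:Eiio-L}, I would argue by monotonicity: for $g \in C^{(n)} \subseteq \clM$ and $f_1, \dots, f_n \in \clEiio^{(m)}$, the tuples $(f_1(\vect{0}), \dots, f_n(\vect{0}))$ and $(f_1(\vect{1}), \dots, f_n(\vect{1}))$ are comparable componentwise (each $f_i(\vect{0}) \leq f_i(\vect{1})$), so monotonicity of $g$ yields $g(f_1, \dots, f_n)(\vect{0}) \leq g(f_1, \dots, f_n)(\vect{1})$, i.e.\ $g(f_1, \dots, f_n) \in \clEiio$. For the ``if'' part of \ref{prop:LR:Ei:Eiii-L}, the analogue uses the rank\hyp{}$2$ $1$\hyp{}separating property instead: if $g \in C^{(n)} \subseteq \clU$ and $f_1, \dots, f_n \in \clEiii^{(m)}$ but $g(f_1, \dots, f_n) \notin \clEiii$, then $g(\vect{u}) = g(\vect{v}) = 1$ for $\vect{u} = (f_1(\vect{0}), \dots, f_n(\vect{0}))$ and $\vect{v} = (f_1(\vect{1}), \dots, f_n(\vect{1}))$, whence $\vect{u} \wedge \vect{v} \neq \vect{0}$ by $g \in \clU$, so some coordinate function $f_i$ satisfies $f_i(\vect{0}) = f_i(\vect{1}) = 1$, contradicting $f_i \in \clEiii$. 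The converses are again obtained from Lemma~\ref{lem:KCCK}: if $C \nsubseteq \clM$, then part~\ref{CK:M-OIC-notin-Eiio} (with $K = \clEiio$, using $\clOIC \subseteq \clEiio$) gives $C \clEiio \nsubseteq \clEiio$; and if $C \nsubseteq \clU$, then part~\ref{CK:U-Smin-notin-Eiii} (with $K = \clEiii$, using $\clSmin \subseteq \clEiii$) gives $C \clEiii \nsubseteq \clEiii$.

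None of the steps is difficult; the one worth a moment's care is the ``if'' part of \ref{prop:LR:Ei:Eiii-L}, where one must notice that the obstruction produced by the $1$\hyp{}separating property is precisely a coordinate function that is excluded from $\clEiii$, and that rank $2$ suffices because only the two argument tuples arising from $\vect{0}$ and $\vect{1}$ enter the argument. I would also verify explicitly the four small containments $\clEq \subseteq \clEiio$, $\clOIC \subseteq \clEiio$, $\clNeq \subseteq \clEiii$, $\clSmin \subseteq \clEiii$ that feed the applications of Lemma~\ref{lem:KCCK}.
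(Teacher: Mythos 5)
Your proposal is correct and follows essentially the same route as the paper's proof: the same direct computations for the four sufficiency directions (invariance of the values at $\vect{0}$ and $\vect{1}$ under right composition with $\clOI$, monotonicity for $\clM$, and the rank\hyp{}$2$ $1$\hyp{}separating property for $\clU$), and the same four parts of Lemma~\ref{lem:KCCK} for the necessity directions. Your explicit verification of the containments $\clEq \subseteq \clEiio$, $\clOIC \subseteq \clEiio$, $\clNeq \subseteq \clEiii$, $\clSmin \subseteq \clEiii$ is a small but welcome addition that the paper leaves implicit.
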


\begin{proof}
\ref{prop:LR:Ei:Eiio-R}
Assume first that $C \subseteq \clOI$.
Let $f \in \clEiio$ and $g_1, \dots, g_n \in C$.
Then $g_i(\vect{0}) = 0$ and $g_i(\vect{1}) = 1$ for all $i \in \nset{n}$, so we have
\[
\begin{split}
\lhs
f(g_1, \dots, g_n)(\vect{0})
= f(g_1(\vect{0}), \dots, g_n(\vect{0}))
= f(\vect{0})
\leq f(\vect{1})
\\ &
= f(g_1(\vect{1}), \dots, g_n(\vect{1}))
= f(g_1, \dots, g_n)(\vect{1}),
\end{split}
\]
which implies that $f(g_1, \dots, g_n) \in \clEiio$.
Therefore $\clEiio C \subseteq \clEiio$.
Conversely, if $C \nsubseteq \clOI$, then $\clEiio C \nsubseteq \clEiio$ by Lemma~\ref{lem:KCCK}\ref{KC:OI-Eq-notin-Eiio}.

\ref{prop:LR:Ei:Eiio-L}
Assume first that $C \subseteq \clM$.
Let $f \in C$ and $g_1, \dots, g_n \in \clEiio$.
Then $g_i(\vect{0}) \leq g_i(\vect{1})$ for all $i \in \nset{n}$, so we have
\[
f(g_1, \dots, g_n)(\vect{0}) = f(g_1(\vect{0}), \dots, g_1(\vect{0})) \leq f(g_1(\vect{1}), \dots, f(g_n(\vect{1}))) = f(g_1, \dots, g_n)(\vect{1}).
\]
Therefore $f(g_1, \dots, g_n) \in \clEiio$, and we have $C \clEiio \subseteq \clEiio$.
Conversely, if $C \nsubseteq \clM$, then $C \clEiio \nsubseteq \clEiio$ by Lemma~\ref{lem:KCCK}\ref{CK:M-OIC-notin-Eiio}.

\ref{prop:LR:Ei:Eiii-R}
Assume first that $C \subseteq \clOI$.
Let $f \in \clEiii$ and $g_1, \dots, g_n \in C$.
Then $g_i(\vect{0}) = 0$ and $g_i(\vect{1}) = 1$ for all $i \in \nset{n}$, so we have
\begin{align*}
f(g_1, \dots, g_n)(\vect{0}) = f(g_1(\vect{0}), \dots, g_n(\vect{0})) = f(\vect{0}), \\
f(g_1, \dots, g_n)(\vect{1}) = f(g_1(\vect{1}), \dots, g_n(\vect{1})) = f(\vect{1}),
\end{align*}
which implies that $f(g_1, \dots, g_n) \in \clEiii$.
Therefore $\clEiii C \subseteq \clEiii$.
Conversely, if $C \nsubseteq \clOI$, then $\clEiii C \nsubseteq \clEiii$ by Lemma~\ref{lem:KCCK}\ref{KC:OI-Neq-notin-Eiii}.

\ref{prop:LR:Ei:Eiii-L}
Assume first that $C \subseteq \clU$.
Let $f \in C$ and $g_1, \dots, g_n \in \clEiii$.
Then $g_i(\vect{0}) \wedge g_i(\vect{1}) = 0$ for all $i \in \nset{n}$.
Therefore $(g_1(\vect{0}), \dots, g_n(\vect{0}))$ and $(g_1(\vect{1}), \dots, g_n(\vect{1}))$ cannot both be true points of $f$, so we have
\[
f(g_1, \dots, g_n)(\vect{0}) \wedge f(g_1, \dots, g_n)(\vect{1})
= f(g_1(\vect{0}), \dots, g_n(\vect{0})) \wedge f(g_1(\vect{1}), \dots, g_n(\vect{1}))
= 0,
\]
which means that $f(g_1, \dots, g_n) \in \clEiii$.
Conversely, if $C \nsubseteq \clU$, then we have $C \clEiii \nsubseteq \clEiii$ by Lemma~\ref{lem:KCCK}\ref{CK:U-Smin-notin-Eiii}
\end{proof}

\begin{proposition}
\label{prop:LR:EqNeq}
Let $C$ be a clone.
\begin{enumerate}[label=\upshape{(\roman*)}]
\item\label{prop:LR:EqNeq:Eq-R}
$\clEq C \subseteq \clEq$ if and only if $C \subseteq \clOI$.
\item\label{prop:LR:EqNeq:Eq-L}
$C \clEq \subseteq \clEq$ if and only if $C \subseteq \clAll$.
\item\label{prop:LR:EqNeq:Neq-R}
$\clNeq C \subseteq \clNeq$ if and only if $C \subseteq \clOI$.
\item\label{prop:LR:EqNeq:Neq-L}
$C \clNeq \subseteq \clNeq$ if and only if $C \subseteq \clS$.
\end{enumerate}
\end{proposition}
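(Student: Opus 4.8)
The plan is to follow the same pattern as in the proof of Proposition~\ref{prop:LR:Ei}: establish each \emph{if} direction by a direct evaluation at the two constant tuples $\vect{0}$ and $\vect{1}$, and obtain each \emph{only if} direction from the noninclusions already recorded in Lemma~\ref{lem:KCCK}.

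For the sufficiency parts I would argue as follows. If $C \subseteq \clOI$, then for $f \in \clEq$ and $g_1, \dots, g_n \in C$ we have $g_i(\vect{0}) = 0$ and $g_i(\vect{1}) = 1$ for every $i$, so $f(g_1, \dots, g_n)(\vect{0}) = f(\vect{0}) = f(\vect{1}) = f(g_1, \dots, g_n)(\vect{1})$, which proves $\clEq C \subseteq \clEq$; replacing the middle equality by an inequality proves $\clNeq C \subseteq \clNeq$ in exactly the same way, settling parts~\ref{prop:LR:EqNeq:Eq-R} and~\ref{prop:LR:EqNeq:Neq-R}. For part~\ref{prop:LR:EqNeq:Eq-L} the condition $C \subseteq \clAll$ holds for every clone $C$, so it is enough to check that $\clAll \clEq \subseteq \clEq$: for $f \in \clAll^{(n)}$ and $g_1, \dots, g_n \in \clEq$ the tuples $(g_1(\vect{0}), \dots, g_n(\vect{0}))$ and $(g_1(\vect{1}), \dots, g_n(\vect{1}))$ coincide, so $f(g_1, \dots, g_n)(\vect{0}) = f(g_1, \dots, g_n)(\vect{1})$. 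For part~\ref{prop:LR:EqNeq:Neq-L}, assume $C \subseteq \clS$; given $f \in C^{(n)}$ and $g_1, \dots, g_n \in \clNeq$ we have $g_i(\vect{1}) = \overline{g_i(\vect{0})}$ for each $i$, hence $(g_1(\vect{1}), \dots, g_n(\vect{1})) = \overline{(g_1(\vect{0}), \dots, g_n(\vect{0}))}$, and the self\hyp{}duality of $f$ yields $f(g_1, \dots, g_n)(\vect{1}) = \overline{f(g_1, \dots, g_n)(\vect{0})}$, so $f(g_1, \dots, g_n) \in \clNeq$.

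For the necessity parts I would use the inclusions $\clEq \subseteq \clEiio$, $\clNeq \subseteq \clEiii$, and $\clS \subseteq \clNeq$. If $C \nsubseteq \clOI$, then Lemma~\ref{lem:KCCK}\ref{KC:OI-Eq-notin-Eiio} applied with $K := \clEq$ gives $\clEq C \nsubseteq \clEiio$, hence $\clEq C \nsubseteq \clEq$; likewise Lemma~\ref{lem:KCCK}\ref{KC:OI-Neq-notin-Eiii} with $K := \clNeq$ gives $\clNeq C \nsubseteq \clEiii$, hence $\clNeq C \nsubseteq \clNeq$. If $C \nsubseteq \clS$, then Lemma~\ref{lem:KCCK}\ref{CK:S-S-notin-Neq} with $K := \clNeq$ gives directly $C \clNeq \nsubseteq \clNeq$. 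No necessity argument is needed for part~\ref{prop:LR:EqNeq:Eq-L}. Since each step is either an elementary computation at $\vect{0}$ and $\vect{1}$ or a direct citation of an already proved noninclusion, I do not expect a genuine obstacle; the only point requiring a little care is recording the three inclusions above so that the noninclusions of Lemma~\ref{lem:KCCK} transfer to $\clEq$ and $\clNeq$.
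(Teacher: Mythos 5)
Your proposal is correct and follows essentially the same structure as the paper's proof: the necessity directions are obtained from exactly the same items of Lemma~\ref{lem:KCCK} (using the inclusions $\clEq \subseteq \clEiio$, $\clNeq \subseteq \clEiii$, $\clS \subseteq \clNeq$), and the sufficiency directions are elementary evaluations at $\vect{0}$ and $\vect{1}$. The only cosmetic difference is that for parts (i) and (iii) the paper deduces sufficiency from the decompositions $\clEq = \clEiio \cap \clEioi$ and $\clNeq = \clEiii \cap \clEioo$ via Lemma~\ref{lem:intersection} and Proposition~\ref{prop:LR:Ei}, whereas you verify it directly; both amount to the same computation.
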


\begin{proof}
\ref{prop:LR:EqNeq:Eq-R}
Assume first that $C \subseteq \clOI$.
Since $\clEq = \clEiio \cap \clEioi$ and $\clOI = \clOI \cap \clOI$, Lemma~\ref{lem:intersection} and Proposition~\ref{prop:LR:Ei}\ref{prop:LR:Ei:Eiio-R}, together with Lemma~\ref{lem:neg-dual}, imply that $\clEq C \subseteq \clEq$.
Conversely, if $C \nsubseteq \clOI$, then $\clEq C \nsubseteq \clEiio \supseteq \clEq$ by Lemma~\ref{lem:KCCK}\ref{KC:OI-Eq-notin-Eiio}.

\ref{prop:LR:EqNeq:Eq-L}
For any $f \in \clAll$ and $g_1, \dots, g_n \in \clEq$, it holds that
\[
f(g_1, \dots, g_n)(\vect{0}) = f(g_1(\vect{0}), \dots, g_n(\vect{0})) = f(g_1(\vect{1}), \dots, g_n(\vect{1})) = f(g_1, \dots, g_n)(\vect{1}),
\]
so $f(g_1, \dots, g_n) \in \clEq$.
This shows that $C \clEq \subseteq \clEq$ for any clone $C$.

\ref{prop:LR:EqNeq:Neq-R}
Assume first that $C \subseteq \clOI$.
Since $\clNeq = \clEiii \cap \clEioo$ and $\clOI = \clOI \cap \clOI$, Lemma~\ref{lem:intersection} and Proposition~\ref{prop:LR:Ei}\ref{prop:LR:Ei:Eiii-R}, together with Lemma~\ref{lem:neg-dual}, imply that $\clNeq C \subseteq \clNeq$.
Conversely, if $C \nsubseteq \clOI$, then $\clNeq C \nsubseteq \clEiii \supseteq \clNeq$ by Lemma~\ref{lem:KCCK}\ref{KC:OI-Neq-notin-Eiii}.

\ref{prop:LR:EqNeq:Neq-L}
Assume first that $C \subseteq \clS$.
Let $f \in C$ and $g_1, \dots, g_n \in \clNeq$.
Then $g_i(\vect{0}) \neq g_i(\vect{1})$ for all $i \in \nset{n}$, so we have
\[
f(g_1, \dots, g_n)(\vect{0}) = f(g_1(\vect{0}), \dots, g_n(\vect{0})) \neq f(g_1(\vect{1}), \dots, g_n(\vect{1})) = f(g_1, \dots, g_n)(\vect{1}),
\]
which implies that $f(g_1, \dots, g_n) \in \clNeq$.
Therefore $C \clNeq \subseteq \clNeq$.
Conversely, if $C \nsubseteq \clS$, then $C \clNeq \nsubseteq \clNeq$ by Lemma~\ref{lem:KCCK}\ref{CK:S-S-notin-Neq}.
\end{proof}

\begin{proposition}
\label{prop:LR:OXC}
Let $C$ be a clone.
\begin{enumerate}[label=\upshape{(\roman*)}]
\item\label{prop:LR:OXC:OXC-R}
$(\clOXC) C \subseteq \clOXC$ if and only if $C \subseteq \clOX$.
\item\label{prop:LR:OXC:OXC-L}
$C (\clOXC) \subseteq \clOXC$ if and only if $C \subseteq \clM$.
\end{enumerate}
\end{proposition}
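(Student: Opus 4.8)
The plan is to handle each biconditional by proving sufficiency of the "$C\subseteq\ldots$ implies stability" direction through a direct composition argument, and necessity by invoking the non-inclusions gathered in Lemma~\ref{lem:KCCK}. The one ancillary fact I will use repeatedly is that a reflexive function vanishing at $\vect{0}$ certainly lies in $\clOX$, so $\clReflOO \subseteq \clOX \subseteq \clOXC$ and hence $\clReflOOC = \clReflOO \cup \clVak \subseteq \clOXC$; this is precisely what lets me feed $K = \clOXC$ into the relevant parts of Lemma~\ref{lem:KCCK}.

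For part~\ref{prop:LR:OXC:OXC-R}, I would first assume $C \subseteq \clOX$ and take $f \in \clOXC$ together with $g_1, \dots, g_n \in C$. If $f$ is constant then $f(g_1, \dots, g_n)$ is constant and lies in $\clVak \subseteq \clOXC$; otherwise $f \in \clOX$, so $f(\vect{0}) = 0$ and each $g_i(\vect{0}) = 0$, whence $f(g_1, \dots, g_n)(\vect{0}) = f(\vect{0}) = 0$ and $f(g_1, \dots, g_n) \in \clOX \subseteq \clOXC$. For the converse, if $C \nsubseteq \clOX$ then $\clReflOO \subseteq \clOXC$ together with Lemma~\ref{lem:KCCK}\ref{KC:OX-ROO-notin-OXC} gives $(\clOXC)C \nsubseteq \clOXC$ immediately.

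For part~\ref{prop:LR:OXC:OXC-L}, I would start from $C \subseteq \clM$, fix $f \in C$ of arity $n$ and $g_1, \dots, g_n \in \clOXC$ of common arity $m$, and set $h := f(g_1, \dots, g_n)$. The crucial observation is that $h$ attains its minimum at $\vect{0}$: for every $\vect{a}$ and every $i$ we have $g_i(\vect{0}) \le g_i(\vect{a})$ — with equality if $g_i$ is constant, and because $g_i(\vect{0}) = 0$ if $g_i \in \clOX$ — so monotonicity of $f$ yields $h(\vect{a}) \ge h(\vect{0})$. Consequently, if $h(\vect{0}) = 0$ then $h \in \clOX$, while if $h(\vect{0}) = 1$ then $h \equiv 1$ is constant; either way $h \in \clOXC$. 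The converse is again a one-line application: since $\clReflOOC \subseteq \clOXC$, if $C \nsubseteq \clM$ then Lemma~\ref{lem:KCCK}\ref{CK:M-ROOC-notin-OXC} gives $C(\clOXC) \nsubseteq \clOXC$.

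The arguments are short and largely mechanical; the only spot that takes a moment's thought is the monotone left-composition case, where one must notice that $\clOXC$-membership forces $g_i(\vect{0}) \le g_i(\vect{a})$ and then exploit this "minimum at $\vect{0}$" phenomenon to conclude that the composite either preserves $\vect{0}$ or is globally constant $1$. I expect this to be the main (and still fairly minor) obstacle; everything else is bookkeeping with the identity $\clOXC = \clOX \cup \clVak$ and with the cited non-inclusions.
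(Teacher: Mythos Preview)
Your proof is correct and follows essentially the same approach as the paper's own proof. The only difference is in part~\ref{prop:LR:OXC:OXC-L}'s necessity direction: you invoke Lemma~\ref{lem:KCCK}\ref{CK:M-ROOC-notin-OXC} (using $\clReflOOC \subseteq \clOXC$), whereas the paper invokes Lemma~\ref{lem:KCCK}\ref{CK:M-OIC-notin-Eiio} (using $\clOIC \subseteq \clOXC$ and $\clOXC \subseteq \clEiio$); both citations work, and yours is arguably more direct since it lands in $\clOXC$ immediately rather than in the superset $\clEiio$.
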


\begin{proof}
\ref{prop:LR:OXC:OXC-R}
Assume first that $C \subseteq \clOX$.
Let $f \in \clOXC$ and $g_1, \dots, g_n \in C$.
If $f \in \clVak$, then $f(g_1, \dots, g_n) \in \clVak \subseteq \clOXC$.
If $f \in \clOX$, then $f(g_1, \dots, g_n)(\vect{0}) = f(g_1(\vect{0}), \dots, g_n(\vect{0})) = f(\vect{0}) = 0$, so $f(g_1, \dots, g_n) \in \clOX \subseteq \clOXC$.
Therefore $(\clOXC) C \subseteq \clOXC$.
Conversely, if $C \nsubseteq \clOX$, then $(\clOXC) C \nsubseteq \clOXC$ by Lemma~\ref{lem:KCCK}\ref{KC:OX-ROO-notin-OXC}.

\ref{prop:LR:OXC:OXC-L}
Assume first that $C \subseteq \clM$.
Let $f \in C$ and $g_1, \dots, g_n \in \clOXC$.
Observe that, for any $i \in \nset{n}$ and for every $\vect{a} \in \{0,1\}^n$, it holds that $g_i(\vect{0}) \leq g_i(\vect{a})$ (if $g_i \in \clOX$, then $g_i(\vect{0}) = 0 \leq g_i(\vect{a})$; if $g_i \in \clVak$, then $g_i(\vect{0}) = g_i(\vect{a})$).
Since $f$ is monotone, it follows that
\[
f(g_1, \dots, g_n)(\vect{0}) = f(g_1(\vect{0}), \dots, g_n(\vect{0})) \leq f(g_1(\vect{a}), \dots, g_n(\vect{a})) = f(g_1, \dots, g_n)(\vect{a}),
\]
for every $\vect{a} \in \{0,1\}^n$.
If $f(g_1, \dots, g_n)(\vect{0}) = 0$, then $f(g_1, \dots, g_n) \in \clOX$.
If $f(g_1, \dots, g_n)(\vect{0}) = 1$, then $f(g_1, \dots, g_n) = 1 \in \clVak$.
Therefore $C (\clOXC) \subseteq \clOXC$.
Conversely, if $C \nsubseteq \clM$, then $C (\clOXC) \nsubseteq \clEiio \supseteq \clOXC$ by Lemma~\ref{lem:KCCK}\ref{CK:M-OIC-notin-Eiio}.
\end{proof}

\begin{proposition}
\label{prop:LR:OOC}
Let $C$ be a clone.
\begin{enumerate}[label=\upshape{(\roman*)}]
\item\label{prop:LR:OOC:OOC-R}
$(\clOOC) C \subseteq \clOOC$ if and only if $C \subseteq \clOI$.
\item\label{prop:LR:OOC:OOC-L}
$C (\clOOC) \subseteq \clOOC$ if and only if $C \subseteq \clM$.
\end{enumerate}
\end{proposition}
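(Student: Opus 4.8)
The plan is to treat the two equivalences separately, in each case proving sufficiency of the clone condition by a short direct computation and deriving necessity from the appropriate noninclusion in Lemma~\ref{lem:KCCK}. I will use throughout the elementary containments $\clReflOO \subseteq \clOO \subseteq \clOOC$, $\clReflOOC \subseteq \clOOC$, and $\clOOC \subseteq \clOXC$, together with the fact that every member of $\clOOC$ either lies in $\clOO$ (so vanishes on $\vect 0$ and $\vect 1$) or is constant.

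For part~\ref{prop:LR:OOC:OOC-R}, suppose $C \subseteq \clOI$ and take $f \in \clOOC$, $g_1, \dots, g_n \in C$. If $f$ is constant then so is $f(g_1, \dots, g_n)$, and if $f \in \clOO$ then, using $g_i(\vect 0) = 0$ and $g_i(\vect 1) = 1$ for each $i$, one computes $f(g_1, \dots, g_n)(\vect 0) = f(\vect 0) = 0 = f(\vect 1) = f(g_1, \dots, g_n)(\vect 1)$; either way $f(g_1, \dots, g_n) \in \clOOC$. For the converse, if $C \nsubseteq \clOI$ then, since $\clReflOO \subseteq \clOOC$, Lemma~\ref{lem:KCCK}\ref{KC:OI-ROO-notin-OOC} applied with $K = \clOOC$ gives $(\clOOC) C \nsubseteq \clOOC$. (Sufficiency here could also be obtained from $\clOOC = \clOXC \cap \clXOC$ via Lemma~\ref{lem:intersection} and Proposition~\ref{prop:LR:OXC}, since $\clOI = \clOX \cap \clXI$.)

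For part~\ref{prop:LR:OOC:OOC-L}, suppose $C \subseteq \clM$ and take $f \in C$, $g_1, \dots, g_n \in \clOOC$. Each $g_i$ satisfies $g_i(\vect 0) = g_i(\vect 1) =: b_i$, so $f(g_1, \dots, g_n)$ takes the common value $f(b_1, \dots, b_n)$ at $\vect 0$ and at $\vect 1$. If this value is $0$, then $f(g_1, \dots, g_n) \in \clOO$. If it is $1$, then for every index $i$ with $b_i = 1$ the function $g_i$ is forced to be the constant $1$ (it lies in $\clOOC$ but not in $\clOO$), so $(g_1(\vect a), \dots, g_n(\vect a)) \geq (b_1, \dots, b_n)$ for all $\vect a$, and monotonicity of $f$ makes $f(g_1, \dots, g_n)$ the constant $1$; in both cases $f(g_1, \dots, g_n) \in \clOOC$. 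For the converse, if $C \nsubseteq \clM$ then, since $\clReflOOC \subseteq \clOOC$, Lemma~\ref{lem:KCCK}\ref{CK:M-ROOC-notin-OXC} gives $C(\clOOC) \nsubseteq \clOXC$, and $\clOOC \subseteq \clOXC$ then forces $C(\clOOC) \nsubseteq \clOOC$.

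The argument is essentially routine; the only step that needs a moment's attention is the value-$1$ case in part~\ref{prop:LR:OOC:OOC-L}, where one must observe that the sole member of $\clOOC$ taking value $1$ on the all-$0$ tuple is the constant $1$ and then feed this into the monotonicity estimate. I do not expect any genuine obstacle beyond this bookkeeping.
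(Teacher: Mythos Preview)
Your proof is correct. The necessity arguments in both parts are exactly the paper's (same lemma items, same containments), so the only difference is in sufficiency: the paper does not argue directly but instead writes $\clOOC = (\clOXC) \cap (\clXOC)$ and applies Lemma~\ref{lem:intersection} together with Proposition~\ref{prop:LR:OXC} (and its dual via Lemma~\ref{lem:neg-dual}), using $\clOI = \clOX \cap \clXI$ for part~\ref{prop:LR:OOC:OOC-R} and $\clM = \clM \cap \clM$ for part~\ref{prop:LR:OOC:OOC-L}. You even mention this alternative for part~\ref{prop:LR:OOC:OOC-R}. Your direct computations are a little more self-contained and make the role of monotonicity in~\ref{prop:LR:OOC:OOC-L} explicit (the value-$1$ case via the observation that any $g_i \in \clOOC$ with $g_i(\vect 0) = 1$ must be the constant $1$), whereas the paper's route is shorter once the machinery of Lemma~\ref{lem:intersection} and Proposition~\ref{prop:LR:OXC} is in place and avoids repeating that case analysis.
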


\begin{proof}
\ref{prop:LR:OOC:OOC-R}
Assume first that $C \subseteq \clOI$.
Since $\clOOC = (\clOXC) \cap (\clXOC)$ and $\clOI = \clOX \cap \clXI$, Lemma~\ref{lem:intersection} and Proposition~\ref{prop:LR:OXC}\ref{prop:LR:OXC:OXC-R}, together with Lemma~\ref{lem:neg-dual}, imply that $(\clOOC) C \subseteq \clOOC$.
Conversely, if $C \nsubseteq \clOI$, then $(\clOOC) C \nsubseteq \clOOC$ by Lemma~\ref{lem:KCCK}\ref{KC:OI-ROO-notin-OOC}.

\ref{prop:LR:OOC:OOC-L}
Assume first that $C \subseteq \clM$.
Since $\clOOC = (\clOXC) \cap (\clXOC)$ and $\clM = \clM \cap \clM$, Lemma~\ref{lem:intersection} and Proposition~\ref{prop:LR:OXC}\ref{prop:LR:OXC:OXC-L}, together with Lemma~\ref{lem:neg-dual}, imply that $C (\clOOC) \subseteq \clOOC$.
Conversely, if $C \nsubseteq \clM$, then $C (\clOOC) \nsubseteq \clOXC \supseteq \clOOC$ by Lemma~\ref{lem:KCCK}\ref{CK:M-ROOC-notin-OXC}.
\end{proof}

\begin{proposition}
\label{prop:LR:OO}
Let $C$ be a clone.
\begin{enumerate}[label=\upshape{(\roman*)}]
\item\label{prop:LR:OO:OO-R}
$\clOO C \subseteq \clOO$ if and only if $C \subseteq \clOI$.
\item\label{prop:LR:OO:OO-L}
$C \clOO \subseteq \clOO$ if and only if $C \subseteq \clOX$.
\end{enumerate}
\end{proposition}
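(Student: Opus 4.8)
The plan is to handle the two directions of each biconditional in the manner of Proposition~\ref{prop:LR:Ei}, namely to prove sufficiency by a direct evaluation of compositions at the tuples $\vect{0}$ and $\vect{1}$, and to prove necessity by exhibiting an explicit violating composition supplied by Lemma~\ref{lem:Cns}. For part~\ref{prop:LR:OO:OO-R}, suppose first that $C \subseteq \clOI$. For $f \in \clOO$ and $g_1, \dots, g_n \in C$, idempotence of the $g_i$ gives $(g_1(\vect{0}), \dots, g_n(\vect{0})) = \vect{0}$ and $(g_1(\vect{1}), \dots, g_n(\vect{1})) = \vect{1}$, so $f(g_1, \dots, g_n)(\vect{0}) = f(\vect{0}) = 0$ and $f(g_1, \dots, g_n)(\vect{1}) = f(\vect{1}) = 0$, whence $f(g_1, \dots, g_n) \in \clOO$; alternatively, since $\clOO = \clEq \cap \clOX$, one may combine Proposition~\ref{prop:LR:EqNeq}\ref{prop:LR:EqNeq:Eq-R} and Lemma~\ref{lem:clones-stable}\ref{lem:clones-stable:R} through Lemma~\ref{lem:intersection}\ref{lem:intersection:R}. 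Conversely, if $C \nsubseteq \clOI$, then Lemma~\ref{lem:Cns}\ref{lem:Cns:OI} furnishes $0$, $1$, or $\neg$ in $C$, and since $\mathord{+} \in \clOO$ one of $\mathord{+}(\id, 0) = \id$, $\mathord{+}(\id, 1) = \neg$, $\mathord{+}(\id_1, \neg_1) = 1$ lies in $\clOO C \setminus \clOO$; hence $\clOO C \nsubseteq \clOO$.

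For part~\ref{prop:LR:OO:OO-L}, suppose first that $C \subseteq \clOX$. For $f \in C$ and $g_1, \dots, g_n \in \clOO$ we have $(g_1(\vect{0}), \dots, g_n(\vect{0})) = \vect{0}$ and, because each $g_i \in \clOO$ maps $\vect{1}$ to $0$, also $(g_1(\vect{1}), \dots, g_n(\vect{1})) = \vect{0}$; therefore $f(g_1, \dots, g_n)(\vect{0}) = f(\vect{0}) = 0 = f(g_1, \dots, g_n)(\vect{1})$, so $f(g_1, \dots, g_n) \in \clOO$. (Again this can instead be read off from $\clOO = \clEq \cap \clOX$ via Proposition~\ref{prop:LR:EqNeq}\ref{prop:LR:EqNeq:Eq-L}, Lemma~\ref{lem:clones-stable}\ref{lem:clones-stable:L}, and Lemma~\ref{lem:intersection}\ref{lem:intersection:L}.) Conversely, if $C \nsubseteq \clOX$, there is $h \in C$ with $h(\vect{0}) = 1$; since the constant $0$ function lies in $\clOO$, composing $h$ with the requisite number of copies of it yields the constant $1$ function, which is in $C \clOO$ but not in $\clOO$. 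Hence $C \clOO \nsubseteq \clOO$.

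All the ingredients are routine; the only points requiring a moment's attention are checking that the chosen witnesses sit in the right classes ($\mathord{+}$ and the constant $0$ function both in $\clOO$) and that the minors $\mathord{+}(\id, 0)$, $\mathord{+}(\id, 1)$, $\mathord{+}(\id_1, \neg_1)$ are indeed $\id$, $\neg$, and the constant $1$. No step poses a substantive obstacle.
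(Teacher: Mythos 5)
Your proof is correct, and both verifications check out: the witnesses $\mathord{+}$ and the constant $0$ do lie in $\clOO$, and the compositions $\mathord{+}(\id,0)=\id$, $\mathord{+}(\id,1)=\neg$, $\mathord{+}(\id_1,\neg_1)=1$, and $h(0,\dots,0)=1$ all land outside $\clOO$. The substance matches the paper's argument; the difference is purely in packaging. The paper derives sufficiency from the decomposition $\clOO = \clOX \cap \clXO$ (with $\clOI = \clOX \cap \clXI$ and $\clOX = \clOX \cap \clOX$) via Lemma~\ref{lem:intersection} and Proposition~\ref{prop:LR:clones}, and gets necessity from the catalogue of noninclusions in Lemma~\ref{lem:KCCK} (parts using $\mathord{+} \in \clReflOO \subseteq \clOO$ and $0 \in \clReflOO$), which are stated once and reused for several classes ($\clOOC$, $\clReflOO$, etc.); you instead compute both directions inline. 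Your decomposition $\clOO = \clEq \cap \clOX$ is an equally valid alternative to the paper's $\clOX \cap \clXO$. One small remark on the necessity witness for part (i): you use $\mathord{+}(\id_1,\neg_1) = 1$, which suffices here since $1 \notin \clOO$, whereas the paper's Lemma~\ref{lem:KCCK} uses the binary $\mathord{+}(\id_1,\neg_2) = \mathord{\leftrightarrow}$ because it needs the stronger conclusion $K C \nsubseteq \clOOC$ (the constant $1$ \emph{does} belong to $\clOOC$); your choice is fine for the statement at hand but would not transfer to the classes with constants adjoined.
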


\begin{proof}
\ref{prop:LR:OO:OO-R}
Assume first that $C \subseteq \clOI$.
Since $\clOO = \clOX \cap \clXO$ and $\clOI = \clOX \cap \clXI$, Lemma~\ref{lem:intersection} and Proposition~\ref{prop:LR:clones} imply that $\clOO C \subseteq \clOO$.
Conversely, if $C \nsubseteq \clOI$, then $\clOO C \nsubseteq \clOOC \supseteq \clOO$ by Lemma~\ref{lem:KCCK}\ref{KC:OI-ROO-notin-OOC}.

\ref{prop:LR:OO:OO-L}
Assume first that $C \subseteq \clOX$.
Since $\clOO = \clOX \cap \clXO$ and $\clOX = \clOX \cap \clOX$, Lemma~\ref{lem:intersection} and Proposition~\ref{prop:LR:clones} imply that $C \clOO \subseteq \clOO$.
Conversely, if $C \nsubseteq \clOX$, then $C \clOO \nsubseteq \clEiii \supseteq \clOO$ by Lemma~\ref{lem:KCCK}\ref{CK:OX-ROO-notin-Eiii}.
\end{proof}

\begin{proposition}
\label{prop:LR:OIC}
Let $C$ be a clone.
\begin{enumerate}[label=\upshape{(\roman*)}]
\item\label{prop:LR:OIC:OIC-R}
$(\clOIC) C \subseteq \clOIC$ if and only if $C \subseteq \clOI$.
\item\label{prop:LR:OIC:OIC-L}
$C (\clOIC) \subseteq \clOIC$ if and only if $C \subseteq \clM$.
\end{enumerate}
\end{proposition}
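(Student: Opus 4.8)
The plan is to follow the template already used for Proposition~\ref{prop:LR:OOC} and Proposition~\ref{prop:LR:OO}, namely to realise $\clOIC$ as an intersection of two classes whose stability behaviour has already been pinned down, and then to invoke Lemma~\ref{lem:intersection} for the "if" directions and Lemma~\ref{lem:KCCK} for the "only if" directions. The first step is to record the identity
\[
\clOIC = (\clOXC) \cap (\clXIC),
\]
which holds because $(\clOX \cup \clVak) \cap (\clXI \cup \clVak) = (\clOX \cap \clXI) \cup \clVak = \clOI \cup \clVak = \clOIC$, together with the already-used decompositions $\clOI = \clOX \cap \clXI$ and $\clM = \clM \cap \clM$. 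Since $\clXIC$ is the dual of $\clOXC$ (and $\clXI$ the dual of $\clOX$, while $\clM$ and $\clVak$ are self-dual), Lemma~\ref{lem:neg-dual} will transfer Proposition~\ref{prop:LR:OXC} from $\clOXC$ to $\clXIC$.

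For part \ref{prop:LR:OIC:OIC-R}: assuming $C \subseteq \clOI = \clOX \cap \clXI$, Proposition~\ref{prop:LR:OXC}\ref{prop:LR:OXC:OXC-R} gives $(\clOXC) C \subseteq \clOXC$, and its dual version (via Lemma~\ref{lem:neg-dual}) gives $(\clXIC) C \subseteq \clXIC$, so Lemma~\ref{lem:intersection}\ref{lem:intersection:R} yields $(\clOIC) C \subseteq \clOIC$. Conversely, if $C \nsubseteq \clOI$, then since $\clOI \subseteq \clOIC$ we get $(\clOIC) C \nsubseteq \clOIC$ directly from Lemma~\ref{lem:KCCK}\ref{KC:OI-OI-notin-OIC}. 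For part \ref{prop:LR:OIC:OIC-L}: assuming $C \subseteq \clM = \clM \cap \clM$, Proposition~\ref{prop:LR:OXC}\ref{prop:LR:OXC:OXC-L} and its dual give $C (\clOXC) \subseteq \clOXC$ and $C (\clXIC) \subseteq \clXIC$, so Lemma~\ref{lem:intersection}\ref{lem:intersection:L} yields $C (\clOIC) \subseteq \clOIC$; conversely, if $C \nsubseteq \clM$, then since $\clOIC \subseteq \clEiio$, Lemma~\ref{lem:KCCK}\ref{CK:M-OIC-notin-Eiio} applied with $K = \clOIC$ gives $C (\clOIC) \nsubseteq \clEiio \supseteq \clOIC$.

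If a fully self-contained argument is preferred over the intersection shortcut, the "if" directions also have short direct proofs: for \ref{prop:LR:OIC:OIC-R} one observes that constants compose to constants, while for $f \in \clOI$ and $g_1,\dots,g_n \in C \subseteq \clOI$ one has $f(g_1,\dots,g_n)(\vect0) = f(\vect0) = 0$ and $f(g_1,\dots,g_n)(\vect1) = f(\vect1) = 1$; for \ref{prop:LR:OIC:OIC-L} one notes that every $g_i \in \clOIC$ satisfies $g_i(\vect0) \leq g_i(\vect a) \leq g_i(\vect1)$ for all $\vect a$ (trivially if $g_i$ is constant, and because $g_i(\vect0)=0$, $g_i(\vect1)=1$ if $g_i \in \clOI$), so monotonicity of $f \in C$ gives $f(g_1,\dots,g_n)(\vect0) \leq f(g_1,\dots,g_n)(\vect a) \leq f(g_1,\dots,g_n)(\vect1)$, whence $f(g_1,\dots,g_n)$ is either the constant $0$, the constant $1$, or a member of $\clOI$. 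I do not anticipate any genuine obstacle; the only point requiring mild care is checking that this last case split is exhaustive, which it is precisely because of the sandwich inequality on the values at $\vect0$ and $\vect1$.
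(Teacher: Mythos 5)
Your proposal is correct and follows essentially the same route as the paper: the paper likewise writes $\clOIC = (\clOXC) \cap (\clXIC)$ with $\clOI = \clOX \cap \clXI$ and $\clM = \clM \cap \clM$, applies Lemma~\ref{lem:intersection} and Proposition~\ref{prop:LR:OXC} together with Lemma~\ref{lem:neg-dual} for the sufficiency, and cites Lemma~\ref{lem:KCCK}\ref{KC:OI-OI-notin-OIC} and \ref{CK:M-OIC-notin-Eiio} for the converses. The direct sandwich argument you append is a correct but unnecessary supplement.
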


\begin{proof}
\ref{prop:LR:OIC:OIC-R}
Assume first that $C \subseteq \clOI$.
Since $\clOIC = (\clOXC) \cap (\clXIC)$ and $\clOI = \clOX \cap \clXI$, Lemma~\ref{lem:intersection} and Proposition~\ref{prop:LR:OXC}\ref{prop:LR:OXC:OXC-R}, together with Lemma~\ref{lem:neg-dual}, imply that $(\clOIC) C \subseteq \clOIC$.
Conversely, if $C \nsubseteq \clOI$, then $(\clOIC) C \subseteq \clOIC$ by Lemma~\ref{lem:KCCK}\ref{KC:OI-OI-notin-OIC}.

\ref{prop:LR:OIC:OIC-L}
Assume first that $C \subseteq \clM$.
Since $\clOIC = (\clOXC) \cap (\clXIC)$ and $\clM = \clM \cap \clM$, Lemma~\ref{lem:intersection} and Proposition~\ref{prop:LR:OXC}\ref{prop:LR:OXC:OXC-L}, together with Lemma~\ref{lem:neg-dual}, imply that $C (\clOIC) \subseteq \clOIC$.
Conversely, if $C \nsubseteq \clM$, then $C (\clOIC) \nsubseteq \clEiio \supseteq \clOIC$ by Lemma~\ref{lem:KCCK}\ref{CK:M-OIC-notin-Eiio}.
\end{proof}

\begin{proposition}
\label{prop:LR:OICO}
Let $C$ be a clone.
\begin{enumerate}[label=\upshape{(\roman*)}]
\item\label{prop:LR:OICO:OICO-R}
$(\clOICO) C \subseteq \clOICO$ if and only if $C \subseteq \clOI$.
\item\label{prop:LR:OICO:OICO-L}
$C (\clOICO) \subseteq \clOICO$ if and only if $C \subseteq \clMo$.
\end{enumerate}
\end{proposition}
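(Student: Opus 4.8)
The plan is to treat each biconditional by the two-step scheme used throughout this section: prove sufficiency of the clone inclusion by writing $\clOICO$ as an intersection of classes whose stability behaviour is already settled, and then read off necessity from the noninclusions collected in Lemma~\ref{lem:KCCK}.

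The key structural fact is that $\clOICO = \clOI \cup \clVako$ can be rewritten as the intersection $\clOX \cap \clXIC$: a function $f$ with $f(\vect{0}) = 0$ either satisfies $f(\vect{1}) = 1$, so that $f \in \clOI$, or is constant, so that $f$ is the constant-$0$ function and $f \in \clVako$. For part \ref{prop:LR:OICO:OICO-R} I would assume $C \subseteq \clOI = \clOX \cap \clXI$; then $\clOX C \subseteq \clOX$ by Proposition~\ref{prop:LR:clones} and $(\clXIC) C \subseteq \clXIC$ by Proposition~\ref{prop:LR:OXC} together with Lemma~\ref{lem:neg-dual} (using $\clXIC = (\clOXC)^{\mathrm{d}}$ and $\clXI = \clOX^{\mathrm{d}}$), so Lemma~\ref{lem:intersection}\ref{lem:intersection:R} gives $(\clOICO) C \subseteq \clOICO$. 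For part \ref{prop:LR:OICO:OICO-L} I would use in addition the identity $\clMo = \clOX \cap \clM$; assuming $C \subseteq \clMo$, we have $C \clOX \subseteq \clOX$ (Proposition~\ref{prop:LR:clones}, since $C \subseteq \clOX$) and $C (\clXIC) \subseteq \clXIC$ (Proposition~\ref{prop:LR:OXC}\ref{prop:LR:OXC:OXC-L} with Lemma~\ref{lem:neg-dual}, since $C \subseteq \clM$ and $\clM^{\mathrm{d}} = \clM$), so Lemma~\ref{lem:intersection}\ref{lem:intersection:L} gives $C (\clOICO) \subseteq \clOICO$.

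For the converses I would appeal directly to Lemma~\ref{lem:KCCK}. If $C \nsubseteq \clOI$, then part~\ref{KC:OI-OI-notin-OIC} of that lemma, applied with $K := \clOICO$ (note $\clOI \subseteq \clOICO$), gives $(\clOICO) C \nsubseteq \clOIC$; since $\clOICO \subseteq \clOIC$, this already implies $(\clOICO) C \nsubseteq \clOICO$. If $C \nsubseteq \clMo$, then part~\ref{CK:Mo-OICO-notin-OICO}, applied with $K := \clOICO$, gives $C (\clOICO) \nsubseteq \clOICO$ outright. I do not expect a genuine obstacle; the only point requiring care is checking that the dualization of Proposition~\ref{prop:LR:OXC} via Lemma~\ref{lem:neg-dual} correctly transports the statements about $\clOXC$ to the class $\clXIC$, after which the intersection lemma does all the remaining work.
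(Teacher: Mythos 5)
Your proof is correct and follows the same two-step scheme as the paper: the intersection lemma (Lemma~\ref{lem:intersection}) for sufficiency and the noninclusions \ref{KC:OI-OI-notin-OIC} and \ref{CK:Mo-OICO-notin-OICO} of Lemma~\ref{lem:KCCK} for necessity, applied exactly as you describe. The only (cosmetic) difference is the decomposition: the paper writes $\clOICO = \clOX \cap (\clOIC)$ and invokes Proposition~\ref{prop:LR:OIC}, whereas you write $\clOICO = \clOX \cap (\clXIC)$ and invoke Proposition~\ref{prop:LR:OXC} transported by duality via Lemma~\ref{lem:neg-dual}; both decompositions are valid, and since $\clOX \cap \clXI = \clOI$ and $\clOX \cap \clM = \clMo$ the intersection lemma yields the same clones in either case, so nothing is gained or lost by the choice.
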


\begin{proof}
\ref{prop:LR:OICO:OICO-R}
Assume first that $C \subseteq \clOI$.
Since $\clOICO = \clOX \cap (\clOIC)$ and $\clOI = \clOX \cap \clOI$, Lemma~\ref{lem:intersection} and Propositions~\ref{prop:LR:clones} and \ref{prop:LR:OIC}\ref{prop:LR:OIC:OIC-R}
imply that $(\clOICO) C \subseteq \clOICO$.
Conversely, if $C \nsubseteq \clOI$, then $(\clOICO) C \subseteq \clOIC \supseteq \clOICO$ by Lemma~\ref{lem:KCCK}\ref{KC:OI-OI-notin-OIC}.

\ref{prop:LR:OICO:OICO-L}
Assume first that $C \subseteq \clMo$.
Since $\clOICO = \clOX \cap (\clOIC)$ and $\clMo = \clOX \cap \clM$, Lemma~\ref{lem:intersection} and Propositions~\ref{prop:LR:clones} and \ref{prop:LR:OIC}\ref{prop:LR:OIC:OIC-L} imply that $C (\clOICO) \subseteq \clOICO$.
Conversely, if $C \nsubseteq \clMo$, then $C (\clOICO) \nsubseteq \clOICO$ by Lemma~\ref{lem:KCCK}\ref{CK:Mo-OICO-notin-OICO}.
\end{proof}

\begin{proposition}
\label{prop:LR:Smin}
Let $C$ be a clone.
\begin{enumerate}[label=\upshape{(\roman*)}]
\item\label{prop:LR:Smin:Smin-R}
$\clSmin C \subseteq \clSmin$ if and only if $C \subseteq \clS$.
\item\label{prop:LR:Smin:Smin-L}
$C \clSmin \subseteq \clSmin$ if and only if $C \subseteq \clU$.
\end{enumerate}
\end{proposition}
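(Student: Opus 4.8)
The plan is to handle each of the two equivalences by proving sufficiency directly from the defining conditions ($f \in \clSmin$ iff $f(\vect{a}) \wedge f(\overline{\vect{a}}) = 0$ for all $\vect{a}$; self\hyp{}duality for $\clS$; $1$\hyp{}separation of rank $2$ for $\clU$), and by deducing necessity from the noninclusions assembled in Lemma~\ref{lem:KCCK}. For the latter step the only preliminary fact I need is the pair of inclusions $\clS \subseteq \clSmin$ and $\clU \subseteq \clSmin$: the first is immediate (indeed $\clS = \clSmin \cap \clSmaj$), and the second holds because $\vect{a} \wedge \overline{\vect{a}} = \vect{0}$ prevents $\vect{a}$ and $\overline{\vect{a}}$ from both being true points of a function that is $1$\hyp{}separating of rank $2$. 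In particular $\clSc \subseteq \clSmin$ and $\clMcU \subseteq \clSmin$, which is exactly what is required to invoke the relevant parts of Lemma~\ref{lem:KCCK} with $K = \clSmin$.

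For part~\ref{prop:LR:Smin:Smin-R}, sufficiency: assuming $C \subseteq \clS$, take $f \in \clSmin$, $g_1, \dots, g_n \in C$, and any $\vect{a}$; setting $\vect{b} := (g_1(\vect{a}), \dots, g_n(\vect{a}))$, self\hyp{}duality of the $g_i$ gives $(g_1(\overline{\vect{a}}), \dots, g_n(\overline{\vect{a}})) = \overline{\vect{b}}$, so that $f(g_1, \dots, g_n)(\vect{a}) \wedge f(g_1, \dots, g_n)(\overline{\vect{a}}) = f(\vect{b}) \wedge f(\overline{\vect{b}}) = 0$, whence $f(g_1, \dots, g_n) \in \clSmin$. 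Necessity: if $C \nsubseteq \clS$, then since $\clSc \subseteq \clSmin$, Lemma~\ref{lem:KCCK}\ref{KC:S-Sc-notin-Smin} gives $\clSmin C \nsubseteq \clSmin$.

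For part~\ref{prop:LR:Smin:Smin-L}, sufficiency: assuming $C \subseteq \clU$, take an $n$\hyp{}ary $f \in C$ and $m$\hyp{}ary $g_1, \dots, g_n \in \clSmin$, and fix $\vect{a} \in \{0,1\}^m$; if both $\vect{u} := (g_1(\vect{a}), \dots, g_n(\vect{a}))$ and $\vect{v} := (g_1(\overline{\vect{a}}), \dots, g_n(\overline{\vect{a}}))$ lay in $f^{-1}(1)$, then $1$\hyp{}separation of rank $2$ would force $\vect{u} \wedge \vect{v} \neq \vect{0}$, i.e. $g_i(\vect{a}) = g_i(\overline{\vect{a}}) = 1$ for some $i$, contradicting $g_i \in \clSmin$; hence $f(g_1, \dots, g_n)(\vect{a}) \wedge f(g_1, \dots, g_n)(\overline{\vect{a}}) = 0$ for every $\vect{a}$. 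Necessity: if $C \nsubseteq \clU$, then since $\clMcU \subseteq \clU \subseteq \clSmin$, Lemma~\ref{lem:KCCK}\ref{CK:U-McU-notin-Smin} gives $C \clSmin \nsubseteq \clSmin$. (Alternatively one can construct the witness explicitly: by Lemma~\ref{lem:Cns}\ref{lem:Cns:U} there is $f \in C$ with $f(0,0,1) = f(0,1,0) = 1$, and then $f(0, \id_1, \neg_2) \in C \clSmin$ fails to be in $\clSmin$, since it takes value $1$ at both $(0,0)$ and $(1,1)$.)

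The computations are all short; the one step that needs a moment's care is the sufficiency in part~\ref{prop:LR:Smin:Smin-L}, where the rank\hyp{}$2$ $1$\hyp{}separation of $f$ at the outer level must be combined with the complementary relationship of $\vect{a}$ and $\overline{\vect{a}}$ at the inner level. I also expect the small observation $\clU \subseteq \clSmin$ (and hence $\clMcU \subseteq \clSmin$) to be the pivot that makes the necessity directions reduce cleanly to Lemma~\ref{lem:KCCK}, so I would state and justify it before invoking that lemma.
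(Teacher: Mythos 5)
Your proposal is correct and follows essentially the same route as the paper: the identical direct computations for the two sufficiency directions (self\hyp{}duality of the inner functions for \ref{prop:LR:Smin:Smin-R}, rank\hyp{}$2$ $1$\hyp{}separation of the outer function against $g_i(\vect{a}) \wedge g_i(\overline{\vect{a}}) = 0$ for \ref{prop:LR:Smin:Smin-L}), and necessity via Lemma~\ref{lem:KCCK} using the inclusions $\clSc \subseteq \clSmin$ and $\clU \subseteq \clSmin$. The only cosmetic difference is that for part~\ref{prop:LR:Smin:Smin-L} you cite Lemma~\ref{lem:KCCK}\ref{CK:U-McU-notin-Smin} where the paper cites \ref{CK:U-UWneg-notin-Smin} (noting \ref{CK:U-Smin-notin-Eiii} as an alternative); all three apply, so this changes nothing of substance.
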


\begin{proof}
\ref{prop:LR:Smin:Smin-R}
Assume first that $C \subseteq \clS$.
Let $f \in \clSmin$ and $g_1, \dots, g_n \in C$.
Then, for every $\vect{a} \in \{0,1\}^n$,
\[
\begin{split}
\lhs
f(g_1, \dots, g_n)(\vect{a}) \wedge f(g_1, \dots, g_n)(\overline{\vect{a}})
= f(g_1(\vect{a}), \dots, g_n(\vect{a})) \wedge f(g_1(\overline{\vect{a}}), \dots, g_n(\overline{\vect{a}}))
\\ &
= f(g_1(\vect{a}), \dots, g_n(\vect{a})) \wedge f(\overline{g_1(\vect{a})}, \dots, \overline{g_n(\vect{a})})
= 0,
\end{split}
\]
where the second equality holds because each $g_i \in \clS$, and the last equality holds because $f \in \clSmin$.
This shows that $f(g_1, \dots, g_n) \in \clSmin$.
Therefore $\clSmin C \subseteq \clSmin$.
Conversely, if $C \nsubseteq \clS$, then $\clSmin C \nsubseteq \clSmin$ by Lemma~\ref{lem:KCCK}\ref{KC:S-Sc-notin-Smin}.

\ref{prop:LR:Smin:Smin-L}
Assume first that $C \subseteq \clU$.
Let $f \in C$ and $g_1, \dots, g_n \in \clSmin$.
Suppose, to the contrary, that $f(g_1, \dots, g_n)(\vect{a}) \wedge f(g_1, \dots, g_n)(\overline{\vect{a}}) = 1$ for some $\vect{a}$.
Then $f(g_1(\vect{a}), \dots, g_n(\vect{a})) = f(g_1(\overline{\vect{a}}), \dots, g_n(\overline{\vect{a}})) = 1$.
Since $f \in \clU$, there exists an $i \in \nset{n}$ such that $g_i(\vect{a}) = g_i(\overline{\vect{a}}) = 1$.
But then $g_i(\vect{a}) \wedge g_i(\overline{\vect{a}}) = 1$, contradicting the fact that $g_i \in \clSmin$.
We conclude that $f(g_1, \dots, g_n) \in \clSmin$.
Therefore $C \clSmin \subseteq \clSmin$.
Conversely, if $C \nsubseteq \clU$, then $C \clSmin \nsubseteq \clSmin$ by Lemma~\ref{lem:KCCK}\ref{CK:U-UWneg-notin-Smin} (or by Lemma~\ref{lem:KCCK}\ref{CK:U-Smin-notin-Eiii}).
\end{proof}

\begin{proposition}
\label{prop:LR:SminNeq}
Let $C$ be a clone.
\begin{enumerate}[label=\upshape{(\roman*)}]
\item\label{prop:LR:SminNeq:SminNeq-R}
$\clSminNeq C \subseteq \clSminNeq$ if and only if $C \subseteq \clSc$.
\item\label{prop:LR:SminNeq:SminNeq-L}
$C \clSminNeq \subseteq \clSminNeq$ if and only if $C \subseteq \clSM$.
\end{enumerate}
\end{proposition}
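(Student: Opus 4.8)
The plan is to exploit the identity $\clSminNeq = \clSmin \cap \clNeq$ (indeed $\clSminNeq = \clIntNeq{\clSmin} = \{\, f \in \clSmin \mid f(\vect{0}) \neq f(\vect{1}) \,\} = \clSmin \cap \clIntNeq{\clAll} = \clSmin \cap \clNeq$) together with the stability results already established for the two factors: Proposition~\ref{prop:LR:Smin} for $\clSmin$ and Proposition~\ref{prop:LR:EqNeq}\ref{prop:LR:EqNeq:Neq-R},\ref{prop:LR:EqNeq:Neq-L} for $\clNeq$. The sufficiency of the conditions will then follow from Lemma~\ref{lem:intersection}, and their necessity from suitable noninclusions in Lemma~\ref{lem:KCCK}. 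Throughout I will also use the elementary inclusions $\clS \subseteq \clSminNeq$, $\clSc \subseteq \clSminNeq$, and $\clMcU \subseteq \clSminNeq$: the first holds because a self\hyp{}dual function $f$ satisfies $f(\vect{a}) \wedge f(\overline{\vect{a}}) = f(\vect{a}) \wedge \overline{f(\vect{a})} = 0$ and $f(\vect{0}) = \overline{f(\vect{1})} \neq f(\vect{1})$; the second is a special case since $\clSc \subseteq \clS$; and for the third, if $f \in \clMcU$ then $f \in \clOI \subseteq \clNeq$, and for any $\vect{a}$ the tuples $\vect{a}$ and $\overline{\vect{a}}$ cannot both be true points of $f$ because $\vect{a} \wedge \overline{\vect{a}} = \vect{0}$ and $f$ is $1$\hyp{}separating of rank $2$, whence $f \in \clSmin$.

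For the sufficiency in part~(i), I would assume $C \subseteq \clSc = \clS \cap \clOI$. By Proposition~\ref{prop:LR:Smin}\ref{prop:LR:Smin:Smin-R}, $\clSmin \clS \subseteq \clSmin$, and by Proposition~\ref{prop:LR:EqNeq}\ref{prop:LR:EqNeq:Neq-R}, $\clNeq \clOI \subseteq \clNeq$; hence Lemma~\ref{lem:intersection}\ref{lem:intersection:R} gives $\clSminNeq \clSc = (\clSmin \cap \clNeq)(\clS \cap \clOI) \subseteq \clSmin \cap \clNeq = \clSminNeq$, so $\clSminNeq C \subseteq \clSminNeq \clSc \subseteq \clSminNeq$. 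For the sufficiency in part~(ii), I would assume $C \subseteq \clSM = \clS \cap \clM$; since $\clSM \subseteq \clU$ (a self\hyp{}dual monotone $f$ with $\vect{a},\vect{b} \in f^{-1}(1)$ and $\vect{a} \wedge \vect{b} = \vect{0}$, i.e.\ $\vect{b} \leq \overline{\vect{a}}$, would force $1 = f(\vect{b}) \leq f(\overline{\vect{a}}) = \overline{f(\vect{a})} = 0$), we get $C \subseteq \clS \cap \clU$. By Proposition~\ref{prop:LR:Smin}\ref{prop:LR:Smin:Smin-L}, $\clU \clSmin \subseteq \clSmin$, and by Proposition~\ref{prop:LR:EqNeq}\ref{prop:LR:EqNeq:Neq-L}, $\clS \clNeq \subseteq \clNeq$; Lemma~\ref{lem:intersection}\ref{lem:intersection:L} then yields $\clSM(\clSmin \cap \clNeq) \subseteq (\clU \cap \clS)(\clSmin \cap \clNeq) \subseteq \clSmin \cap \clNeq$, i.e.\ $C \clSminNeq \subseteq \clSM \clSminNeq \subseteq \clSminNeq$.

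For the necessity I would argue contrapositively, using the decompositions $\clSc = \clS \cap \clOI$ and $\clSM = \clS \cap \clM$. In part~(i), if $C \nsubseteq \clSc$ then either $C \nsubseteq \clS$, in which case $\clSminNeq C \nsubseteq \clSmin \supseteq \clSminNeq$ by Lemma~\ref{lem:KCCK}\ref{KC:S-Sc-notin-Smin} (applicable since $\clSc \subseteq \clSminNeq$), or $C \nsubseteq \clOI$, in which case $\clSminNeq C \nsubseteq \clNeq \supseteq \clSminNeq$ by Lemma~\ref{lem:KCCK}\ref{KC:OI-McU-notin-Neq} (applicable since $\clMcU \subseteq \clSminNeq$). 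In part~(ii), if $C \nsubseteq \clSM$ then either $C \nsubseteq \clS$, in which case $C \clSminNeq \nsubseteq \clNeq \supseteq \clSminNeq$ by Lemma~\ref{lem:KCCK}\ref{CK:S-S-notin-Neq} (applicable since $\clS \subseteq \clSminNeq$), or $C \subseteq \clS$ and $C \nsubseteq \clM$, in which case $C \clSminNeq \nsubseteq \clSmin \supseteq \clSminNeq$ directly by Lemma~\ref{lem:KCCK}\ref{CK:M+S-SminNeq-notin-Smin}.

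I do not expect a serious obstacle here: once the identity $\clSminNeq = \clSmin \cap \clNeq$ is in place, the argument is pure bookkeeping with Lemmata~\ref{lem:intersection} and \ref{lem:KCCK} and the handful of elementary class inclusions above. The only points that need care are choosing the correct side (left versus right composition) of Propositions~\ref{prop:LR:Smin} and~\ref{prop:LR:EqNeq} in each case, and verifying that the hypotheses $\clSc, \clMcU, \clS \subseteq \clSminNeq$ required by the various parts of Lemma~\ref{lem:KCCK} really do hold — which is exactly where the rank\hyp{}$2$ self\hyp{}dual observations enter.
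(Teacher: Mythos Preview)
Your proof is correct and follows essentially the same approach as the paper: the decomposition $\clSminNeq = \clSmin \cap \clNeq$ together with Lemma~\ref{lem:intersection} for sufficiency and the same items of Lemma~\ref{lem:KCCK} for necessity. The only minor difference is that for the sufficiency in part~\ref{prop:LR:SminNeq:SminNeq-L} the paper simply cites Theorem~\ref{thm:SM-stable}, whereas you give a direct argument via $\clSM \subseteq \clS \cap \clU$ and Lemma~\ref{lem:intersection}; both are valid.
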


\begin{proof}
\ref{prop:LR:SminNeq:SminNeq-R}
Assume first that $C \subseteq \clSc$.
Since $\clSminNeq = \clNeq \cap \clSmin$ and $\clSc = \clOI \cap \clS$, Lemma~\ref{lem:intersection} and Propositions~\ref{prop:LR:EqNeq}\ref{prop:LR:EqNeq:Neq-R} and \ref{prop:LR:Smin}\ref{prop:LR:Smin:Smin-R} imply that $\clSminNeq C \subseteq \clSminNeq$.

Conversely, assume that $C \nsubseteq \clSc$.
Then $C \nsubseteq \clS$ or $C \nsubseteq \clOI$.
If $C \nsubseteq \clS$, then $\clSminNeq C \nsubseteq \clSmin$ by Lemma~\ref{lem:KCCK}\ref{KC:S-Sc-notin-Smin}.
If $C \nsubseteq \clOI$, then $\clSminNeq C \nsubseteq \clNeq$ by Lemma~\ref{lem:KCCK}\ref{KC:OI-McU-notin-Neq}.
Consequently, $\clSminNeq C \nsubseteq \clSmin \cap \clNeq = \clSminNeq$.

\ref{prop:LR:SminNeq:SminNeq-L}
We have already established in Theorem~\ref{thm:SM-stable} that $C \clSminNeq \subseteq \clSminNeq$ if $C \subseteq \clSM$.
Conversely, assume that $C \nsubseteq \clSM$.
Then $C \nsubseteq \clS$ or $C \nsubseteq \clM$.
If $C \nsubseteq \clS$, then $C \clSminNeq \nsubseteq \clNeq$ by Lemma~\ref{lem:KCCK}\ref{CK:S-S-notin-Neq}.
Otherwise we have $C \nsubseteq \clM$ and $C \subseteq \clS$; in this case we have $C \clSminNeq \nsubseteq \clSmin$ by Lemma~\ref{lem:KCCK}\ref{CK:M+S-SminNeq-notin-Smin}.
Consequently, $C \clSminNeq \nsubseteq \clNeq \cap \clSmin = \clSminNeq$.
\end{proof}

\begin{proposition}
\label{prop:LR:SminOX}
Let $C$ be a clone.
\begin{enumerate}[label=\upshape{(\roman*)}]
\item\label{prop:LR:SminOX:SminOX-R}
$\clSminOX C \subseteq \clSminOX$ if and only if $C \subseteq \clSc$.
\item\label{prop:LR:SminOX:SminOX-L}
$C \clSminOX \subseteq \clSminOX$ if and only if $C \subseteq \clU$.
\end{enumerate}
\end{proposition}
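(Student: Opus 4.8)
The plan is to handle both directions of each part by writing $\clSminOX = \clSmin \cap \clOX$, so that the stability conditions for $\clSmin$ established in Proposition~\ref{prop:LR:Smin} and those for the clone $\clOX$ given by Proposition~\ref{prop:LR:clones} can be combined through Lemma~\ref{lem:intersection}, while the failure of stability for larger clones is read off from the noninclusions in Lemma~\ref{lem:KCCK}. Two elementary observations set everything up. First, $\clSc = \clS \cap \clOX$: a self\hyp{}dual function with $f(\vect 0) = 0$ automatically satisfies $f(\vect 1) = \overline{f(\vect 0)} = 1$, so imposing $f(\vect 0) = 0$ on a self\hyp{}dual function is the same as imposing idempotency. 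Second, $\clU \subseteq \clSmin \cap \clOX = \clSminOX$: a $1$\hyp{}separating function of rank $2$ cannot take the value $1$ at $\vect 0$ (otherwise $\vect 0 \wedge \vect 0 = \vect 0$ violates the defining condition), nor at both some $\vect a$ and its complement $\overline{\vect a}$ (since $\vect a \wedge \overline{\vect a} = \vect 0$). In particular $\clSc = \clS \cap \clOI \subseteq \clSminOX$, $\clUWneg \subseteq \clU \subseteq \clSminOX$, and $\id \in \clSminOX$; these inclusions are exactly what is needed to apply the relevant items of Lemma~\ref{lem:KCCK} with $K := \clSminOX$.

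For part~\ref{prop:LR:SminOX:SminOX-R}, sufficiency: if $C \subseteq \clSc$, then applying Lemma~\ref{lem:intersection}\ref{lem:intersection:R} to $\clSmin \clS \subseteq \clSmin$ (Proposition~\ref{prop:LR:Smin}\ref{prop:LR:Smin:Smin-R}) and $\clOX \clOX \subseteq \clOX$ (Proposition~\ref{prop:LR:clones}), and using $\clS \cap \clOX = \clSc$, gives $\clSminOX \clSc = (\clSmin \cap \clOX)(\clS \cap \clOX) \subseteq \clSmin \cap \clOX = \clSminOX$, hence $\clSminOX C \subseteq \clSminOX$. For necessity, suppose $C \nsubseteq \clSc = \clS \cap \clOX$, so that $C \nsubseteq \clOX$ or $C \nsubseteq \clS$. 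If $C \nsubseteq \clOX$, choose $g \in C \setminus \clOX$; then $g = \id(g) \in \clSminOX C$, but $g \notin \clOX \supseteq \clSminOX$. If $C \nsubseteq \clS$, then since $\clSc \subseteq \clSminOX$, Lemma~\ref{lem:KCCK}\ref{KC:S-Sc-notin-Smin} yields $\clSminOX C \nsubseteq \clSmin \supseteq \clSminOX$. In either case $\clSminOX C \nsubseteq \clSminOX$.

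For part~\ref{prop:LR:SminOX:SminOX-L}, sufficiency: if $C \subseteq \clU$, then applying Lemma~\ref{lem:intersection}\ref{lem:intersection:L} to $\clU \clSmin \subseteq \clSmin$ (Proposition~\ref{prop:LR:Smin}\ref{prop:LR:Smin:Smin-L}) and $\clOX \clOX \subseteq \clOX$ (Proposition~\ref{prop:LR:clones}), and using $\clU \cap \clOX = \clU$, gives $\clU \clSminOX = (\clU \cap \clOX)(\clSmin \cap \clOX) \subseteq \clSmin \cap \clOX = \clSminOX$, hence $C \clSminOX \subseteq \clSminOX$. For necessity, suppose $C \nsubseteq \clU$; since $\clUWneg \subseteq \clSminOX$, Lemma~\ref{lem:KCCK}\ref{CK:U-UWneg-notin-Smin} yields $C \clSminOX \nsubseteq \clSmin \supseteq \clSminOX$, so $C \clSminOX \nsubseteq \clSminOX$.

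The only point requiring a little care — rather than a genuine obstacle — is the bookkeeping of the auxiliary inclusions: one must confirm that the intersections produced by Lemma~\ref{lem:intersection} collapse to exactly $\clSc$, respectively $\clU$, rather than to something smaller, which is precisely the content of $\clS \cap \clOX = \clSc$ and $\clU \subseteq \clOX$; and one must check that the subclasses $\clSc$ and $\clUWneg$ lie inside $K = \clSminOX$ so that items \ref{KC:S-Sc-notin-Smin} and \ref{CK:U-UWneg-notin-Smin} of Lemma~\ref{lem:KCCK} are applicable. All of these are immediate from the definitions of the classes involved (and can also be read off Post's lattice), so no real difficulty is anticipated.
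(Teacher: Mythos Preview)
Your proof is correct and follows the same strategy as the paper: use Lemma~\ref{lem:intersection} together with Propositions~\ref{prop:LR:clones} and~\ref{prop:LR:Smin} for sufficiency, and Lemma~\ref{lem:KCCK} for necessity. The only difference is in the necessity of~\ref{prop:LR:SminOX:SminOX-R}: the paper splits into $C \nsubseteq \clS$ versus $C \nsubseteq \clOI$ with $C \subseteq \clS$ (invoking item~\ref{KC:OI+S-SM-notin-Eiio}), whereas you split into $C \nsubseteq \clS$ versus $C \nsubseteq \clOX$ and dispose of the latter by the elementary observation that $\id \in \clSminOX$ carries any $g \in C \setminus \clOX$ straight into $\clSminOX C \setminus \clOX$ --- a slight simplification that avoids the conditional hypothesis of~\ref{KC:OI+S-SM-notin-Eiio}.
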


\begin{proof}
\ref{prop:LR:SminOX:SminOX-R}
Assume first that $C \subseteq \clSc$.
Since $\clSminOX = \clOX \cap \clSmin$ and $\clSc = \clOX \cap \clS$, Lemma~\ref{lem:intersection} and Propositions~\ref{prop:LR:clones} and \ref{prop:LR:Smin}\ref{prop:LR:Smin:Smin-R} imply that $\clSminOX C \subseteq \clSminOX$.

Conversely, assume that $C \nsubseteq \clSc$.
Then $C \nsubseteq \clS$ or $C \nsubseteq \clOI$.
If $C \nsubseteq \clS$, then $\clSminOX C \nsubseteq \clSmin$ by Lemma~\ref{lem:KCCK}\ref{KC:S-Sc-notin-Smin}.
Otherwise $C \nsubseteq \clOI$ and $C \subseteq \clS$; in this case we have $\clSminOX C \nsubseteq \clEiio$ by Lemma~\ref{lem:KCCK}\ref{KC:OI+S-SM-notin-Eiio}.
Consequently, $\clSminOX C \nsubseteq \clSmin \cap \clEiio = \clSminOX$.

\ref{prop:LR:SminOX:SminOX-L}
Assume first that $C \subseteq \clU$.
Since $\clSminOX = \clOX \cap \clSmin$ and $\clU = \clOX \cap \clU$, Lemma~\ref{lem:intersection} and Propositions~\ref{prop:LR:clones} and \ref{prop:LR:Smin}\ref{prop:LR:Smin:Smin-L} imply that $C \clSminOX \subseteq \clSminOX$.
Conversely, if $C \nsubseteq \clU$, then $C \clSminOX \nsubseteq \clSmin \supseteq \clSminOX$ by Lemma~\ref{lem:KCCK}\ref{CK:U-UWneg-notin-Smin}.
\end{proof}

\begin{proposition}
\label{prop:LR:SminOICO}
Let $C$ be a clone.
\begin{enumerate}[label=\upshape{(\roman*)}]
\item\label{prop:LR:SminOICO:SminOICO-R}
$(\clSminOICO) C \subseteq \clSminOICO$ if and only if $C \subseteq \clSc$.
\item\label{prop:LR:SminOICO:SminOICO-L}
$C (\clSminOICO) \subseteq \clSminOICO$ if and only if $C \subseteq \clMU$.
\end{enumerate}
\end{proposition}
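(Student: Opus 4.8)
The plan is to realize $\clSminOICO$ as an intersection of two classes whose one-sided stability has already been determined, and then run the same machinery as in the preceding propositions. The key structural fact is
\[
\clSminOICO = \clSminOX \cap \clOICO .
\]
To see this, distribute the intersection: $\clSminOX \cap \clOICO = (\clSminOX \cap \clOI) \cup (\clSminOX \cap \clVako)$, where $\clSminOX \cap \clOI = \clSminOI$ and $\clSminOX \cap \clVako = \clVako$ (the constant $0$ is a minorant of a self\hyp{}dual function and vanishes at $\vect 0$), so the right\hyp{}hand side equals $\clSminOI \cup \clVako = \clSminOICO$. I would also record two elementary inclusions to be used throughout, namely $\clU \subseteq \clOX$ and $\clU \subseteq \clSmin$: the first holds because $\vect 0 \wedge \vect 0 = \vect 0$ forces $f(\vect 0) = 0$ for every $f \in \clU$, and the second because $\vect a$ and $\overline{\vect a}$ cannot both be true points of an $f \in \clU$, their meet being $\vect 0$. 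These give $\clMU = \clM \cap \clU = \clU \cap \clMo$, $\clTcUCO \subseteq \clSminOI \cup \clVako = \clSminOICO$, and $\clSc \subseteq \clSminOI \subseteq \clSminOICO$.

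For the sufficiency in~\ref{prop:LR:SminOICO:SminOICO-R}, assuming $C \subseteq \clSc$ and noting $\clSc \subseteq \clOI$, I would apply Lemma~\ref{lem:intersection}\ref{lem:intersection:R} with $K_1 = \clSminOX$ and $K_2 = \clOICO$, invoking Proposition~\ref{prop:LR:SminOX}\ref{prop:LR:SminOX:SminOX-R} and Proposition~\ref{prop:LR:OICO}\ref{prop:LR:OICO:OICO-R}. For the sufficiency in~\ref{prop:LR:SminOICO:SminOICO-L}, assuming $C \subseteq \clMU = \clU \cap \clMo$, I would apply Lemma~\ref{lem:intersection}\ref{lem:intersection:L} with the same $K_1, K_2$, invoking Proposition~\ref{prop:LR:SminOX}\ref{prop:LR:SminOX:SminOX-L} and Proposition~\ref{prop:LR:OICO}\ref{prop:LR:OICO:OICO-L}. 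No appeal to Lemma~\ref{lem:neg-dual} is needed here, since both factors of the decomposition have their stability stated directly.

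For the necessity in~\ref{prop:LR:SminOICO:SminOICO-R}, suppose $C \nsubseteq \clSc$, that is, $C \nsubseteq \clS$ or $C \nsubseteq \clOI$. If $C \nsubseteq \clS$, then since $\clSc \subseteq \clSminOICO$, Lemma~\ref{lem:KCCK}\ref{KC:S-Sc-notin-Smin} gives $(\clSminOICO) C \nsubseteq \clSmin \supseteq \clSminOICO$. Otherwise $C \subseteq \clS$ and $C \nsubseteq \clOI$, and then $\neg \in C$ by Lemma~\ref{lem:Cns}\ref{lem:Cns:OI} (as $0, 1 \notin \clS$); since $\id \in \clSminOI \subseteq \clSminOICO$, the function $\id(\neg) = \neg$ lies in $(\clSminOICO) C$ but not in $\clSminOICO$, because $\neg(\vect 0) = 1$. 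For the necessity in~\ref{prop:LR:SminOICO:SminOICO-L}, suppose $C \nsubseteq \clMU$; since $\clTcUCO \subseteq \clSminOICO$, Lemma~\ref{lem:KCCK}\ref{CK:MU-TcUCO-notin-SminOICO} immediately gives $C(\clSminOICO) \nsubseteq \clSminOICO$.

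I do not anticipate a genuine obstacle: once the decomposition $\clSminOICO = \clSminOX \cap \clOICO$ and the inclusions $\clU \subseteq \clOX$, $\clU \subseteq \clSmin$ are in hand, the rest is routine bookkeeping with Lemma~\ref{lem:intersection}, Propositions~\ref{prop:LR:SminOX} and~\ref{prop:LR:OICO}, Lemma~\ref{lem:Cns}, and the noninclusions~\ref{KC:S-Sc-notin-Smin} and~\ref{CK:MU-TcUCO-notin-SminOICO} of Lemma~\ref{lem:KCCK}. The only step that is not entirely mechanical is checking $\clTcUCO \subseteq \clSminOICO$, which is exactly where the inclusion $\clU \subseteq \clSmin$ is needed.
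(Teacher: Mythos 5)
Your proposal is correct and takes essentially the same route as the paper: the same decomposition $\clSminOICO = \clSminOX \cap (\clOICO)$, the same application of Lemma~\ref{lem:intersection} together with Propositions~\ref{prop:LR:SminOX} and~\ref{prop:LR:OICO} for sufficiency, and the same noninclusions \ref{KC:S-Sc-notin-Smin} and \ref{CK:MU-TcUCO-notin-SminOICO} of Lemma~\ref{lem:KCCK} for necessity. The only cosmetic difference is that in the subcase $C \subseteq \clS$, $C \nsubseteq \clOI$ you inline the computation $\id(\neg) = \neg \notin \clOX \supseteq \clSminOICO$ instead of citing Lemma~\ref{lem:KCCK}\ref{KC:OI+S-SM-notin-Eiio}, whose proof is exactly that computation.
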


\begin{proof}
\ref{prop:LR:SminOICO:SminOICO-R}
Assume first that $C \subseteq \clSc$.
Since $\clSminOICO = (\clOICO) \cap \clSminOX$ and $\clSc = \clOI \cap \clSc$, Lemma~\ref{lem:intersection} and Propositions~\ref{prop:LR:OICO}\ref{prop:LR:OICO:OICO-R} and \ref{prop:LR:SminOX}\ref{prop:LR:SminOX:SminOX-R} imply that $(\clSminOICO) C \subseteq \clSminOICO$.

Conversely, assume that $C \nsubseteq \clSc$.
Then $C \nsubseteq \clS$ or $C \nsubseteq \clOI$.
If $C \nsubseteq \clS$, then $(\clSminOICO) C \nsubseteq \clSmin$ by Lemma~\ref{lem:KCCK}\ref{KC:S-Sc-notin-Smin}.
Otherwise $C \nsubseteq \clOI$ and $C \subseteq \clS$; in this case we have $(\clSminOICO) C \nsubseteq \clEiio$ by Lemma~\ref{lem:KCCK}\ref{KC:OI+S-SM-notin-Eiio}.
Consequently, $(\clSminOICO) C \nsubseteq \clSmin \cap \clEiio = \clSminOX \supseteq \clSminOICO$.

\ref{prop:LR:SminOICO:SminOICO-L}
Assume first that $C \subseteq \clMU$.
Since $\clSminOICO = (\clOICO) \cap \clSminOX$ and $\clMU = \clMo \cap \clU$, Lemma~\ref{lem:intersection} and Propositions~\ref{prop:LR:OICO}\ref{prop:LR:OICO:OICO-L} and \ref{prop:LR:SminOX}\ref{prop:LR:SminOX:SminOX-L} imply that $(\clSminOICO) C \subseteq \clSminOICO$.
Conversely, if $C \nsubseteq \clMU$, then $C (\clSminOICO) \nsubseteq \clSminOICO$ by Lemma~\ref{lem:KCCK}\ref{CK:MU-TcUCO-notin-SminOICO}.
\end{proof}

\begin{proposition}
\label{prop:LR:SminOI}
Let $C$ be a clone.
\begin{enumerate}[label=\upshape{(\roman*)}]
\item\label{prop:LR:SminOI:SminOI-R}
$\clSminOI C \subseteq \clSminOI$ if and only if $C \subseteq \clSc$.
\item\label{prop:LR:SminOI:SminOI-L}
$C \clSminOI \subseteq \clSminOI$ if and only if $C \subseteq \clTcU$.
\end{enumerate}
\end{proposition}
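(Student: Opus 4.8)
The plan is to follow the template of the neighbouring propositions (cf.\ Proposition~\ref{prop:LR:SminOICO}), basing everything on the decompositions $\clSminOI = \clOI \cap \clSmin$, $\clSc = \clOI \cap \clS$, and $\clTcU = \clOI \cap \clU$, together with the auxiliary containments $\clS \subseteq \clSmin$ and $\clU \subseteq \clSmin$. The first holds because a self\hyp{}dual function is a minorant of itself; the second holds because $f \in \clU$ forbids $f(\vect{a}) = f(\overline{\vect{a}}) = 1$ (this would give $\vect{a} \wedge \overline{\vect{a}} = \vect{0}$, contradicting $1$\hyp{}separation of rank $2$), whence $f(\vect{a}) \wedge f(\overline{\vect{a}}) = 0$ for all $\vect{a}$. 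Consequently $\clSc \subseteq \clSminOI$, $\clMcU \subseteq \clTcU \subseteq \clSminOI$, and, since every self\hyp{}dual monotone function is idempotent, $\clSM \subseteq \clSminOI$; also plainly $\clSminOI \subseteq \clOI \cap \clSmin \cap \clNeq$. These are the only small verifications the argument needs.

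For part~\ref{prop:LR:SminOI:SminOI-R}, if $C \subseteq \clSc = \clOI \cap \clS$ I would apply Lemma~\ref{lem:intersection}\ref{lem:intersection:R} with Proposition~\ref{prop:LR:clones} (for $\clOI$) and Proposition~\ref{prop:LR:Smin}\ref{prop:LR:Smin:Smin-R} (for $\clSmin$), obtaining $\clSminOI C = (\clOI \cap \clSmin) C \subseteq \clOI \cap \clSmin = \clSminOI$. For the converse, if $C \nsubseteq \clSc$ then $C \nsubseteq \clS$ or $C \nsubseteq \clOI$: in the former case Lemma~\ref{lem:KCCK}\ref{KC:S-Sc-notin-Smin} applies (since $\clSc \subseteq \clSminOI$) and gives $\clSminOI C \nsubseteq \clSmin$, and in the latter case Lemma~\ref{lem:KCCK}\ref{KC:OI-McU-notin-Neq} applies (since $\clMcU \subseteq \clSminOI$) and gives $\clSminOI C \nsubseteq \clNeq$; as $\clSminOI \subseteq \clSmin \cap \clNeq$, either way $\clSminOI C \nsubseteq \clSminOI$.

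For part~\ref{prop:LR:SminOI:SminOI-L}, if $C \subseteq \clTcU = \clOI \cap \clU$ I would apply Lemma~\ref{lem:intersection}\ref{lem:intersection:L} with Proposition~\ref{prop:LR:clones} and Proposition~\ref{prop:LR:Smin}\ref{prop:LR:Smin:Smin-L}, obtaining $C \clSminOI = C (\clOI \cap \clSmin) \subseteq \clOI \cap \clSmin = \clSminOI$. For the converse, if $C \nsubseteq \clTcU$ then $C \nsubseteq \clOI$ or $C \nsubseteq \clU$: in the former case Lemma~\ref{lem:KCCK}\ref{CK:OI-SM-notin-OI} applies (since $\clSM \subseteq \clSminOI$) and gives $C \clSminOI \nsubseteq \clOI$, and in the latter case Lemma~\ref{lem:KCCK}\ref{CK:U-McU-notin-Smin} applies (since $\clMcU \subseteq \clSminOI$) and gives $C \clSminOI \nsubseteq \clSmin$; as $\clSminOI \subseteq \clOI \cap \clSmin$, either way $C \clSminOI \nsubseteq \clSminOI$. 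The argument is pure bookkeeping: the point that makes the two biconditionals close up is precisely that $\clU \subseteq \clSmin$ (hence $\clTcU \subseteq \clSminOI$) and $\clS \subseteq \clSmin$ (hence $\clSc \subseteq \clSminOI$), so I do not anticipate any genuine obstacle.
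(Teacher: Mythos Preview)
Your proof is correct and follows essentially the same approach as the paper: the same decomposition $\clSminOI = \clOI \cap \clSmin$, the same intersection lemma for the forward directions, and the same case splits on $\clSc = \clOI \cap \clS$ and $\clTcU = \clOI \cap \clU$ for the converses. The only minor difference is in the converse of part~\ref{prop:LR:SminOI:SminOI-R}: for the case $C \nsubseteq \clOI$ you invoke Lemma~\ref{lem:KCCK}\ref{KC:OI-McU-notin-Neq} (using $\clMcU \subseteq \clSminOI$ to obtain $\clSminOI C \nsubseteq \clNeq$), whereas the paper first restricts to the residual subcase $C \subseteq \clS$ and uses Lemma~\ref{lem:KCCK}\ref{KC:OI+S-SM-notin-Eiio} to obtain $\clSminOI C \nsubseteq \clEiio$; your route is marginally cleaner since it handles the two subcases as a plain disjunction without needing the extra hypothesis $C \subseteq \clS$.
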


\begin{proof}
\ref{prop:LR:SminOI:SminOI-R}
Assume first that $C \subseteq \clSc$.
Since $\clSminOI = \clOI \cap \clSmin$ and $\clSc = \clOI \cap \clS$, Lemma~\ref{lem:intersection} and Propositions~\ref{prop:LR:clones} and \ref{prop:LR:Smin}\ref{prop:LR:Smin:Smin-R} imply that $\clSminOI C \subseteq \clSminOI$.

Conversely, assume that $C \nsubseteq \clSc$.
Then $C \nsubseteq \clS$ or $C \nsubseteq \clOI$.
If $C \nsubseteq \clS$, then $\clSminOI C \nsubseteq \clSmin$ by Lemma~\ref{lem:KCCK}\ref{KC:S-Sc-notin-Smin}.
Otherwise $C \nsubseteq \clOI$ and $C \subseteq \clS$; in this case we have $\clSminOI C \nsubseteq \clEiio$ by Lemma~\ref{lem:KCCK}\ref{KC:OI+S-SM-notin-Eiio}.
Consequently, $\clSminOI C \nsubseteq \clSmin \cap \clEiio = \clSminOX \supseteq \clSminOI$.

\ref{prop:LR:SminOI:SminOI-L}
Assume first that $C \subseteq \clTcU$.
Since $\clSminOI = \clOI \cap \clSmin$ and $\clTcU = \clOI \cap \clU$, Lemma~\ref{lem:intersection} and Propositions~\ref{prop:LR:clones} and \ref{prop:LR:Smin}\ref{prop:LR:Smin:Smin-L} imply that $C \clSminOI \subseteq \clSminOI$.

Conversely, assume that $C \nsubseteq \clTcU$.
Then $C \nsubseteq \clOI$ or $C \nsubseteq \clU$.
If $C \nsubseteq \clOI$, then $C \clSminOI \nsubseteq \clOI$ by Lemma~\ref{lem:KCCK}\ref{CK:OI-SM-notin-OI}.
If $C \nsubseteq \clU$, then $C \clSminOI \nsubseteq \clSmin$ by Lemma~\ref{lem:KCCK}\ref{CK:U-McU-notin-Smin}.
Consequently, $C \clSminOI \nsubseteq \clOI \cap \clSmin = \clSminOI$.
\end{proof}

\begin{proposition}
\label{prop:LR:SminOO}
Let $C$ be a clone.
\begin{enumerate}[label=\upshape{(\roman*)}]
\item\label{prop:LR:SminOO:SminOO-R}
$\clSminOO C \subseteq \clSminOO$ if and only if $C \subseteq \clSc$.
\item\label{prop:LR:SminOO:SminOO-L}
$C \clSminOO \subseteq \clSminOO$ if and only if $C \subseteq \clU$.
\end{enumerate}
\end{proposition}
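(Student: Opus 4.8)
The plan is to reuse the decomposition technique of the preceding propositions (cf.\ Propositions~\ref{prop:LR:SminOX} and \ref{prop:LR:SminOI}), writing $\clSminOO = \clOO \cap \clSmin$ and recording the two clone factorizations $\clSc = \clOI \cap \clS$ and $\clU = \clOX \cap \clU$. The latter identity holds because a $1$-separating function of rank~$2$ cannot take the value $1$ at $\vect 0$ (otherwise $\vect 0, \vect 0 \in f^{-1}(1)$ but $\vect 0 \wedge \vect 0 = \vect 0$), so $\clU \subseteq \clOX$. I shall also use the fact, observed in the proof of Proposition~\ref{prop:U2}, that $\clUWneg \subseteq \clUOO \subseteq \clSminOO$: every $f \in \clU$ lies in $\clSmin$ because $\vect a \wedge \overline{\vect a} = \vect 0$ precludes $\vect a$ and $\overline{\vect a}$ being simultaneous true points of a rank-$2$ $1$-separating function, and $\clWneg \subseteq \clXO$, so $\clUWneg = \clU \cap \clWneg \subseteq \clSmin \cap \clOX \cap \clXO = \clSminOO$.

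For the sufficiency of the conditions I would invoke Lemma~\ref{lem:intersection}. If $C \subseteq \clSc = \clOI \cap \clS$, then $\clOO C \subseteq \clOO$ by Proposition~\ref{prop:LR:OO}\ref{prop:LR:OO:OO-R} and $\clSmin C \subseteq \clSmin$ by Proposition~\ref{prop:LR:Smin}\ref{prop:LR:Smin:Smin-R}, so Lemma~\ref{lem:intersection}\ref{lem:intersection:R} gives $\clSminOO\,C = (\clOO \cap \clSmin)\,C \subseteq (\clOO \cap \clSmin)(\clOI \cap \clS) \subseteq \clOO \cap \clSmin = \clSminOO$. Symmetrically, if $C \subseteq \clU = \clOX \cap \clU$, then $C\clOO \subseteq \clOO$ by Proposition~\ref{prop:LR:OO}\ref{prop:LR:OO:OO-L} and $C\clSmin \subseteq \clSmin$ by Proposition~\ref{prop:LR:Smin}\ref{prop:LR:Smin:Smin-L}, so Lemma~\ref{lem:intersection}\ref{lem:intersection:L} yields $C\clSminOO = C(\clOO \cap \clSmin) \subseteq (\clOX \cap \clU)(\clOO \cap \clSmin) \subseteq \clOO \cap \clSmin = \clSminOO$.

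For necessity in part~\ref{prop:LR:SminOO:SminOO-R}, from $C \nsubseteq \clSc = \clOI \cap \clS$ I would split into two cases. If $C \nsubseteq \clOI$, then Lemma~\ref{lem:KCCK}\ref{KC:OI-UWneg-notin-Eq} (applicable since $\clUWneg \subseteq \clSminOO$) gives $\clSminOO\,C \nsubseteq \clEq$, and since $\clSminOO \subseteq \clOO \subseteq \clEq$ this forces $\clSminOO\,C \nsubseteq \clSminOO$. If instead $C \subseteq \clOI$, then necessarily $C \nsubseteq \clS$, and Lemma~\ref{lem:KCCK}\ref{KC:S+OI-SminOO-notin-Smin} gives $\clSminOO\,C \nsubseteq \clSmin \supseteq \clSminOO$. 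For necessity in part~\ref{prop:LR:SminOO:SminOO-L}, if $C \nsubseteq \clU$ then Lemma~\ref{lem:KCCK}\ref{CK:U-UWneg-notin-Smin}, again using $\clUWneg \subseteq \clSminOO$, gives $C\clSminOO \nsubseteq \clSmin \supseteq \clSminOO$.

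The argument is essentially routine once the decomposition $\clSminOO = \clOO \cap \clSmin$ and its matching clone factorizations are in place; the one point requiring attention is that the case $C \nsubseteq \clOI$ in the necessity half of part~\ref{prop:LR:SminOO:SminOO-R} lies outside the scope of Lemma~\ref{lem:KCCK}\ref{KC:S+OI-SminOO-notin-Smin} and must be dispatched separately, which is precisely why the inclusion $\clUWneg \subseteq \clSminOO$ and Lemma~\ref{lem:KCCK}\ref{KC:OI-UWneg-notin-Eq} enter. If one prefers to avoid that inclusion, this case can also be settled directly: $C$ then contains one of $0$, $1$, $\neg$, and $\mathord{\nrightarrow}(\id, 0) = \id$, $\mathord{\nrightarrow}(1, \id) = \neg$, and $\mathord{\nrightarrow}(\neg_1, \id_2)$ are minors of compositions of $\mathord{\nrightarrow} \in \clSminOO$ with members of $C$ that leave $\clOX$ or $\clXO$, hence $\clSminOO$.
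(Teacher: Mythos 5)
Your proof is correct and follows essentially the same strategy as the paper: Lemma~\ref{lem:intersection} for sufficiency and the very same items of Lemma~\ref{lem:KCCK} (via the inclusion $\clUWneg \subseteq \clSminOO$) for necessity. The only difference is cosmetic: the paper feeds the decomposition $\clSminOO = \clSminOX \cap \clSminXO$ with $\clSc = \clSc \cap \clSc$ and $\clU = \clU \cap \clU$ into Lemma~\ref{lem:intersection}, using Proposition~\ref{prop:LR:SminOX} together with Lemma~\ref{lem:neg-dual}, whereas you use $\clSminOO = \clOO \cap \clSmin$ with Propositions~\ref{prop:LR:OO} and~\ref{prop:LR:Smin}; both factorizations are available at this point and both work.
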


\begin{proof}
\ref{prop:LR:SminOO:SminOO-R}
Assume first that $C \subseteq \clSc$.
Since $\clSminOO = \clSminOX \cap \clSminXO$ and $\clSc = \clSc \cap \clSc$, Lemma~\ref{lem:intersection} and Proposition~\ref{prop:LR:SminOX}\ref{prop:LR:SminOX:SminOX-R}, together with Lemma~\ref{lem:neg-dual}, imply that $\clSminOO C \subseteq \clSminOO$.

Conversely, assume that $C \nsubseteq \clSc$.
Then $C \nsubseteq \clOI$ or $C \nsubseteq \clS$.
If $C \nsubseteq \clOI$, then $\clSminOO C \nsubseteq \clEq$ by Lemma~\ref{lem:KCCK}\ref{KC:OI-UWneg-notin-Eq}.
Otherwise we have $C \nsubseteq \clS$ and $C \subseteq \clOI$; in this case we have $\clSminOO C \nsubseteq \clSmin$ by Lemma~\ref{lem:KCCK}\ref{KC:S+OI-SminOO-notin-Smin}.
Consequently, $\clSminOO C \nsubseteq \clEq \cap \clSmin = \clSminOO$.

\ref{prop:LR:SminOO:SminOO-L}
Assume first that $C \subseteq \clU$.
Since $\clSminOO = \clSminOX \cap \clSminXO$ and $\clU = \clU \cap \clU$, Lemma~\ref{lem:intersection} and Proposition~\ref{prop:LR:SminOX}\ref{prop:LR:SminOX:SminOX-L}, together with Lemma~\ref{lem:neg-dual}, imply that $C \clSminOO \subseteq \clSminOO$.
Conversely, if $C \nsubseteq \clU$, then $C \clSminOO \nsubseteq \clSmin \supseteq \clSminOO$ by Lemma~\ref{lem:KCCK}\ref{CK:U-UWneg-notin-Smin}.
\end{proof}

\begin{proposition}
\label{prop:LR:TcUCO}
Let $C$ be a clone.
\begin{enumerate}[label=\upshape{(\roman*)}]
\item\label{prop:LR:TcUCO:TcUCO-R}
$(\clTcUCO) C \subseteq \clTcUCO$ if and only if $C \subseteq \clTcU$.
\item\label{prop:LR:TcUCO:TcUCO-L}
$C (\clTcUCO) \subseteq \clTcUCO$ if and only if $C \subseteq \clMU$.
\end{enumerate}
\end{proposition}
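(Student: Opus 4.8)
The plan is to realize $\clTcUCO$ as an intersection of two classes whose one\hyp{}sided stability behaviour is already known, and then combine Lemma~\ref{lem:intersection} (for the sufficiency directions) with appropriate noninclusions from Lemma~\ref{lem:KCCK} and two elementary composition computations (for the necessity directions). The key structural observation is that a constant $0$ function is vacuously $1$\hyp{}separating, so $\clVako \subseteq \clU$, and hence
\[
\clTcUCO = (\clOICO) \cap \clU ,
\]
because $(\clOI \cup \clVako) \cap \clU = (\clOI \cap \clU) \cup \clVako = \clTcU \cup \clVako$. I shall also use the easy identities $\clTcU = \clOI \cap \clU$ and $\clMU = \clMo \cap \clU$ (the latter since a monotone function not in $\clOX$ is the constant $1$, which is not $1$\hyp{}separating, so $\clM \cap \clU \subseteq \clOX$), together with the inclusions $\clSM \subseteq \clTcU$ (every self\hyp{}dual monotone function is idempotent and $1$\hyp{}separating) and $\clU \subseteq \clSmin$ (if $f(\vect{a}) = f(\overline{\vect{a}}) = 1$ then $\vect{a} \wedge \overline{\vect{a}} = \vect{0}$ contradicts $1$\hyp{}separability), whence $\clTcUCO \subseteq \clSminOI \cup \clVako = \clSminOICO$.

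For statement \ref{prop:LR:TcUCO:TcUCO-R}, sufficiency follows by taking $C \subseteq \clTcU = \clOI \cap \clU$ and applying Lemma~\ref{lem:intersection}\ref{lem:intersection:R} to the inclusions $(\clOICO)\clOI \subseteq \clOICO$ (Proposition~\ref{prop:LR:OICO}\ref{prop:LR:OICO:OICO-R}) and $\clU\clU \subseteq \clU$ (Proposition~\ref{prop:LR:clones}), which yields $(\clTcUCO) C \subseteq (\clTcUCO)\clTcU \subseteq \clTcUCO$. For necessity, assume $C \nsubseteq \clTcU$, i.e., $C \nsubseteq \clU$ or $C \nsubseteq \clOI$. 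If $C \nsubseteq \clU$, then since $\clSM \subseteq \clTcUCO$, Lemma~\ref{lem:KCCK}\ref{KC:U-SM-notin-U} gives $(\clTcUCO) C \nsubseteq \clU \supseteq \clTcUCO$. If $C \nsubseteq \clOI$, then $C$ contains $0$, $1$, or $\neg$ by Lemma~\ref{lem:Cns}\ref{lem:Cns:OI}; here I would note that $\id$ and the ternary function $h$ with $h^{-1}(1) = \{\vect{1}, (1,0,0)\}$ both lie in $\clTcU \subseteq \clTcUCO$, so that $(\clTcUCO) C$ contains one of $\id(1) = 1$, $\id(\neg) = \neg$, or $h(\pr^{(2)}_1, \pr^{(2)}_2, 0) = \mathord{\nrightarrow}$, none of which belongs to $\clTcUCO$ (the first two are not even in $\clOX$, and $\mathord{\nrightarrow}$ is non\hyp{}constant and not in $\clOI$).

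For statement \ref{prop:LR:TcUCO:TcUCO-L}, sufficiency follows by taking $C \subseteq \clMU = \clMo \cap \clU$ and applying Lemma~\ref{lem:intersection}\ref{lem:intersection:L} to $\clMo(\clOICO) \subseteq \clOICO$ (Proposition~\ref{prop:LR:OICO}\ref{prop:LR:OICO:OICO-L}) and $\clU\clU \subseteq \clU$ (Proposition~\ref{prop:LR:clones}), giving $C(\clTcUCO) \subseteq \clMU(\clTcUCO) \subseteq \clTcUCO$. For necessity, if $C \nsubseteq \clMU$, then, using $\clTcUCO \subseteq \clSminOICO$ from the first paragraph, Lemma~\ref{lem:KCCK}\ref{CK:MU-TcUCO-notin-SminOICO} (applied with $K := \clTcUCO$) gives $C(\clTcUCO) \nsubseteq \clSminOICO \supseteq \clTcUCO$. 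Finally, the analogous statements for $\clTcUnegCI = \overline{\clTcUCO}$, $\clTcWCI = (\clTcUCO)^{\mathrm{d}}$, and $\clTcWnegCO = (\clTcUCO)^{\mathrm{n}}$ are obtained from \ref{prop:LR:TcUCO:TcUCO-R} and \ref{prop:LR:TcUCO:TcUCO-L} via Lemma~\ref{lem:neg-dual}.

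The one slightly delicate point, and hence the main obstacle, is the right\hyp{}composition necessity in the subcase $C \nsubseteq \clOI$ with $0 \in C$ but $1, \neg \notin C$: then $C \subseteq \clOX$, so composing $\id$ with members of $C$ cannot leave $\clTcUCO$, and one genuinely needs a non\hyp{}monotone witness inside $\clTcU$, which is the purpose of the function $h$ above; verifying $h \in \clTcU$ and $h(\pr^{(2)}_1, \pr^{(2)}_2, 0) = \mathord{\nrightarrow}$ is a short direct check. Everything else reduces to mechanical applications of Lemmata~\ref{lem:intersection}, \ref{lem:KCCK}, \ref{lem:Cns}, \ref{lem:neg-dual} and Propositions~\ref{prop:LR:clones} and \ref{prop:LR:OICO}, together with the elementary inclusions collected in the first paragraph.
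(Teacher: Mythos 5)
Your proof is correct, and for both sufficiency directions and for the necessity of part \ref{prop:LR:TcUCO:TcUCO-L} it follows essentially the same route as the paper: the decomposition $\clTcUCO = (\clOICO) \cap \clU$ combined with Lemma~\ref{lem:intersection}, Propositions~\ref{prop:LR:clones} and \ref{prop:LR:OICO}, and, for part \ref{prop:LR:TcUCO:TcUCO-L}, Lemma~\ref{lem:KCCK}\ref{CK:MU-TcUCO-notin-SminOICO} together with $\clTcUCO \subseteq \clSminOICO$. The genuine divergence is in the necessity of part \ref{prop:LR:TcUCO:TcUCO-R} in the subcase $C \nsubseteq \clOI$. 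The paper there invokes Lemma~\ref{lem:KCCK}\ref{KC:OI-McU-notin-Neq} to get $(\clTcUCO) C \nsubseteq \clNeq$ and concludes $(\clTcUCO) C \nsubseteq \clNeq \cap \clU = \clTcU$; but since $\clTcU \subsetneq \clTcUCO$ (and the witness that lemma produces when $0 \in C$ while $1, \neg \notin C$ is a constant $0$ function, which lies in $\clVako \subseteq \clTcUCO$), non\hyp{}inclusion in $\clTcU$ does not by itself give non\hyp{}inclusion in $\clTcUCO$. Your explicit witness $h(\pr^{(2)}_1, \pr^{(2)}_2, 0) = \mathord{\nrightarrow}$, with $h \in \clTcU$ given by $h^{-1}(1) = \{(1,1,1),(1,0,0)\}$, is exactly what is needed here, and your diagnosis of this subcase as the main obstacle is apt: for $C = \clMU$ one has $C \subseteq \clTcUCO$, so no witness of the trivial form $\id(g)$ with $g \in C$ exists, yet $\mathord{\nrightarrow} \in (\clTcUCO)\clMU \setminus \clTcUCO$. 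So your argument is not only correct but more careful than the paper's own at this one point; everything else is a faithful match.
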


\begin{proof}
\ref{prop:LR:TcUCO:TcUCO-R}
Assume first that $C \subseteq \clTcU$.
Since $\clTcUCO = \clU \cap (\clOICO)$ and $\clTcU = \clU \cap \clOI$, Lemma~\ref{lem:intersection} and Propositions~\ref{prop:LR:clones} and \ref{prop:LR:OICO}\ref{prop:LR:OICO:OICO-R} imply that $(\clTcUCO) C \subseteq \clTcUCO$.

Conversely, assume that $C \nsubseteq \clTcU$.
Then $C \nsubseteq \clOI$ or $C \nsubseteq \clU$.
If $C \nsubseteq \clOI$, then $(\clTcUCO) C \nsubseteq \clNeq$ by Lemma~\ref{lem:KCCK}\ref{KC:OI-McU-notin-Neq}.
If $C \nsubseteq \clU$, then $(\clTcUCO) C \nsubseteq \clU$ by Lemma~\ref{lem:KCCK}\ref{KC:U-SM-notin-U}.
Consequently, $(\clTcUCO) C \nsubseteq \clNeq \cap \clU = \clTcU$.

\ref{prop:LR:TcUCO:TcUCO-L}
Assume first that $C \subseteq \clMU$.
Since $\clTcUCO = \clU \cap (\clOICO)$ and $\clMU = \clU \cap \clMo$, Lemma~\ref{lem:intersection} and Propositions~\ref{prop:LR:clones} and \ref{prop:LR:OICO}\ref{prop:LR:OICO:OICO-L} imply that $C (\clTcUCO) \subseteq \clTcUCO$.
Conversely, if $C \nsubseteq \clMU$, then $C (\clTcUCO) \nsubseteq \clSminOICO \supseteq \clTcUCO$ by Lemma~\ref{lem:KCCK}\ref{CK:MU-TcUCO-notin-SminOICO}.
\end{proof}

\begin{proposition}
\label{prop:LR:UOO}
Let $C$ be a clone.
\begin{enumerate}[label=\upshape{(\roman*)}]
\item\label{prop:LR:UOO:UOO-R}
$\clUOO C \subseteq \clUOO$ if and only if $C \subseteq \clTcU$.
\item\label{prop:LR:UOO:UOO-L}
$C \clUOO \subseteq \clUOO$ if and only if $C \subseteq \clU$.
\end{enumerate}
\end{proposition}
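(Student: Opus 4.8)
The plan is to follow the pattern of Propositions~\ref{prop:LR:OO}--\ref{prop:LR:SminOO}, using the decompositions $\clUOO = \clU \cap \clOO$ and $\clTcU = \clU \cap \clOI$: sufficiency of each stability condition will come from Lemma~\ref{lem:intersection}, and necessity from the noninclusions collected in Lemma~\ref{lem:KCCK}. Before starting I would record a few elementary facts that follow at once from the definition of a $1$\hyp{}separating function of rank~$2$. Since $\vect{0} \wedge \vect{0} = \vect{0}$, no function in $\clU$ can have $\vect{0}$ as a true point, so $\clU \subseteq \clOX$ and hence $\clU \cap \clOX = \clU$. Since $\vect{a} \wedge \overline{\vect{a}} = \vect{0}$, a function in $\clU$ cannot take the value $1$ at both $\vect{a}$ and $\overline{\vect{a}}$, so $\clU \subseteq \clSmin$, and in particular $\clUOO \subseteq \clSmin$. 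Spelling out membership in $\clWneg$ (no two true points have join $\vect{1}$), a function of $\clWneg$ cannot have $\vect{1}$ as a true point; combined with the previous remark, $\clUWneg = \clU \cap \clWneg \subseteq \clU \cap \clOO = \clUOO$. Finally, $\clUOO \subseteq \clU$ and $\clUOO \subseteq \clOO \subseteq \clEq$ are trivial.

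For part~\ref{prop:LR:UOO:UOO-R}, if $C \subseteq \clTcU$ then $C \subseteq \clU$ and $C \subseteq \clOI$, so Lemma~\ref{lem:intersection}\ref{lem:intersection:R}, applied with the known facts $\clU \clU \subseteq \clU$ (Proposition~\ref{prop:LR:clones}) and $\clOO \clOI \subseteq \clOO$ (Proposition~\ref{prop:LR:OO}\ref{prop:LR:OO:OO-R}), gives $(\clU \cap \clOO)(\clU \cap \clOI) \subseteq \clU \cap \clOO$, that is, $\clUOO \clTcU \subseteq \clUOO$, whence $\clUOO C \subseteq \clUOO$. Conversely, if $C \nsubseteq \clTcU$ then $C \nsubseteq \clU$ or $C \nsubseteq \clOI$; in the first case Lemma~\ref{lem:KCCK}\ref{KC:U-UWneg-notin-U} (with $K = \clUOO \supseteq \clUWneg$) gives $\clUOO C \nsubseteq \clU \supseteq \clUOO$, and in the second case Lemma~\ref{lem:KCCK}\ref{KC:OI-UWneg-notin-Eq} gives $\clUOO C \nsubseteq \clEq \supseteq \clUOO$. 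Either way $\clUOO C \nsubseteq \clUOO$.

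For part~\ref{prop:LR:UOO:UOO-L}, if $C \subseteq \clU$ then Lemma~\ref{lem:intersection}\ref{lem:intersection:L}, applied with $\clU \clU \subseteq \clU$ and $\clOX \clOO \subseteq \clOO$ (Proposition~\ref{prop:LR:OO}\ref{prop:LR:OO:OO-L}), gives $(\clU \cap \clOX)(\clU \cap \clOO) \subseteq \clU \cap \clOO$, which, since $\clU \cap \clOX = \clU$, reads $\clU \clUOO \subseteq \clUOO$, so $C \clUOO \subseteq \clUOO$. (Equivalently, one checks directly that a composition of a function of $\clU$ with functions of $\clUOO$ stays in the clone $\clU$ and takes value~$0$ at both $\vect{0}$ and $\vect{1}$.) Conversely, if $C \nsubseteq \clU$, then Lemma~\ref{lem:KCCK}\ref{CK:U-UWneg-notin-Smin} (again with $K = \clUOO \supseteq \clUWneg$) gives $C \clUOO \nsubseteq \clSmin \supseteq \clUOO$, so $C \clUOO \nsubseteq \clUOO$. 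The only mildly delicate point is the bundle of preliminary containments $\clU \subseteq \clOX$, $\clU \subseteq \clSmin$, and $\clUWneg \subseteq \clUOO$, together with keeping track of which superclass of $\clUOO$ each noninclusion lands outside of; once these are settled, every implication above is a single invocation of an already-proved result.
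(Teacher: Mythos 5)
Your proof is correct and follows essentially the same route as the paper: the same decompositions $\clUOO = \clU \cap \clOO$, $\clTcU = \clU \cap \clOI$, $\clU = \clU \cap \clOX$ fed into Lemma~\ref{lem:intersection} for sufficiency, and the same three items \ref{KC:U-UWneg-notin-U}, \ref{KC:OI-UWneg-notin-Eq}, \ref{CK:U-UWneg-notin-Smin} of Lemma~\ref{lem:KCCK} for necessity. Your preliminary containments $\clU \subseteq \clOX$, $\clUOO \subseteq \clSmin$, $\clUOO \subseteq \clEq$ and $\clUWneg \subseteq \clUOO$ are exactly the hypotheses the paper uses implicitly when applying those lemmas with $K = \clUOO$, so nothing is missing.
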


\begin{proof}
\ref{prop:LR:UOO:UOO-R}
Assume first that $C \subseteq \clTcU$.
Since $\clUOO = \clU \cap \clOO$ and $\clTcU = \clU \cap \clOI$, Lemma~\ref{lem:intersection} and Propositions~\ref{prop:LR:clones} and \ref{prop:LR:OO}\ref{prop:LR:OO:OO-R} imply that $\clUOO C \subseteq \clUOO$.

Conversely, assume that $C \nsubseteq \clTcU$.
Then $C \nsubseteq \clOI$ or $C \nsubseteq \clU$.
If $C \nsubseteq \clOI$, then $\clUOO C \nsubseteq \clEq$ by Lemma~\ref{lem:KCCK}\ref{KC:OI-UWneg-notin-Eq}.
If $C \nsubseteq \clU$, then $\clUOO C \nsubseteq \clU$ by Lemma~\ref{lem:KCCK}\ref{KC:U-UWneg-notin-U}.
Consequently, $\clUOO C \nsubseteq \clEq \cap \clU = \clTcU$.

\ref{prop:LR:UOO:UOO-L}
Assume first that $C \subseteq \clU$.
Since $\clUOO = \clU \cap \clOO$ and $\clU = \clU \cap \clOX$, Lemma~\ref{lem:intersection} and Propositions~\ref{prop:LR:clones} and \ref{prop:LR:OO}\ref{prop:LR:OO:OO-L} imply that $C \clUOO \subseteq \clUOO$.
Conversely, if $C \nsubseteq \clU$, then $C \clUOO \nsubseteq \clSmin \supseteq \clUOO$ by Lemma~\ref{lem:KCCK}\ref{CK:U-UWneg-notin-Smin}.
\end{proof}

\begin{proposition}
\label{prop:LR:UWneg}
Let $C$ be a clone.
\begin{enumerate}[label=\upshape{(\roman*)}]
\item\label{prop:LR:UWneg:UWneg-R}
$(\clUWneg) C \subseteq \clUWneg$ if and only if $C \subseteq \clSM$.
\item\label{prop:LR:UWneg:UWneg-L}
$C (\clUWneg) \subseteq \clUWneg$ if and only if $C \subseteq \clU$.
\end{enumerate}
\end{proposition}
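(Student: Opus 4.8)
The plan is to prove each of the two biconditionals by establishing sufficiency through a direct computation and necessity by quoting the relevant noninclusion from Lemma~\ref{lem:KCCK}. Throughout I will use the combinatorial reformulations $f \in \clU$ iff $\vect{a} \wedge \vect{b} \neq \vect{0}$ for all $\vect{a}, \vect{b} \in f^{-1}(1)$, and $f \in \clWneg$ (i.e.\ $\overline{f} \in \clW$) iff $\vect{a} \vee \vect{b} \neq \vect{1}$ for all $\vect{a}, \vect{b} \in f^{-1}(1)$; hence $f \in \clUWneg$ iff both conditions hold. I will also record the elementary inclusion $\clUWneg \subseteq \clU \subseteq \clSmin$: a function in $\clU$ cannot have both $\vect{a}$ and $\overline{\vect{a}}$ among its true points, since $\vect{a} \wedge \overline{\vect{a}} = \vect{0}$.

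For sufficiency in \ref{prop:LR:UWneg:UWneg-R}, I will take $f \in \clUWneg$ of arity $n$, functions $g_1, \dots, g_n \in C \subseteq \clSM$ of a common arity $m$, and set $h := f(g_1, \dots, g_n)$ and $\vect{a}' := (g_1(\vect{a}), \dots, g_n(\vect{a}))$ for $\vect{a} \in \{0,1\}^m$. Given $\vect{a}, \vect{b} \in h^{-1}(1)$ one has $\vect{a}', \vect{b}' \in f^{-1}(1)$. If $\vect{a} \wedge \vect{b} = \vect{0}$, i.e.\ $\vect{b} \leq \overline{\vect{a}}$, then monotonicity and self\hyp{}duality of each $g_i$ give $g_i(\vect{b}) \leq g_i(\overline{\vect{a}}) = \overline{g_i(\vect{a})}$, so $\vect{a}' \wedge \vect{b}' = \vect{0}$, contradicting $f \in \clU$; hence $\vect{a} \wedge \vect{b} \neq \vect{0}$. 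Dually, if $\vect{a} \vee \vect{b} = \vect{1}$, i.e.\ $\overline{\vect{a}} \leq \vect{b}$, then $\overline{g_i(\vect{a})} = g_i(\overline{\vect{a}}) \leq g_i(\vect{b})$, so $\vect{a}' \vee \vect{b}' = \vect{1}$, contradicting $f \in \clWneg$; hence $\vect{a} \vee \vect{b} \neq \vect{1}$. Thus $h \in \clUWneg$, proving $(\clUWneg) C \subseteq \clUWneg$. For necessity, if $C \nsubseteq \clSM$ then either $C \nsubseteq \clS$, in which case $(\clUWneg) C \nsubseteq \clUWneg$ by Lemma~\ref{lem:KCCK}\ref{KC:S-UWneg-notin-UWneg}, or $C \subseteq \clS$ and $C \nsubseteq \clM$, in which case $(\clUWneg) C \nsubseteq \clU$ by Lemma~\ref{lem:KCCK}\ref{KC:M+S-UWneg-notin-U}, and $\clUWneg \subseteq \clU$ finishes the argument.

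For sufficiency in \ref{prop:LR:UWneg:UWneg-L}, I will take $f \in C \subseteq \clU$ of arity $n$, functions $g_1, \dots, g_n \in \clUWneg$ of a common arity $m$, and again set $h := f(g_1, \dots, g_n)$. Given $\vect{a}, \vect{b} \in h^{-1}(1)$, the tuples $\vect{a}', \vect{b}'$ (defined as above) lie in $f^{-1}(1)$, so $f \in \clU$ produces an index $i$ with $g_i(\vect{a}) = g_i(\vect{b}) = 1$; then $\vect{a}, \vect{b} \in g_i^{-1}(1)$ and $g_i \in \clUWneg$ give $\vect{a} \wedge \vect{b} \neq \vect{0}$ and $\vect{a} \vee \vect{b} \neq \vect{1}$. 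Hence $h \in \clUWneg$, proving $C (\clUWneg) \subseteq \clUWneg$. For necessity, if $C \nsubseteq \clU$, then $C (\clUWneg) \nsubseteq \clSmin$ by Lemma~\ref{lem:KCCK}\ref{CK:U-UWneg-notin-Smin} (applied with $K := \clUWneg$), and since $\clUWneg \subseteq \clSmin$ this yields $C (\clUWneg) \nsubseteq \clUWneg$.

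These verifications are short; the only point requiring care is that both defining conditions of $\clUWneg$ must be propagated through a composition on the correct side, and for the right\hyp{}composition statement this is exactly where monotonicity and self\hyp{}duality of the inner functions must be used \emph{together} --- this is the reason the hypothesis is $C \subseteq \clSM$ and not merely $C \subseteq \clM$ or $C \subseteq \clS$, matching the two necessity cases supplied by Lemma~\ref{lem:KCCK}. Finally, since $\clWUneg = \overline{\clUWneg}$, the corresponding stability conditions for $\clWUneg$ follow at once from Lemma~\ref{lem:neg-dual}.
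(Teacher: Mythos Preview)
Your proof is correct. The necessity arguments are identical to the paper's. For sufficiency, however, you take a different route: you verify directly, by an elementary combinatorial computation, that compositions with $\clSM$ on the right (resp.\ $\clU$ on the left) preserve the two defining conditions of $\clUWneg$. The paper instead invokes its modular machinery: it writes $\clUWneg = \clU \cap \clWneg$ and $\clSM = \clU \cap \clW$ (resp.\ $\clU = \clU \cap \clU$), then applies Lemma~\ref{lem:intersection} together with Proposition~\ref{prop:LR:clones} (which handles the clone cases $\clU$, $\clW$ and, via Lemma~\ref{lem:neg-dual}, their negations $\clWneg$). Your approach is more self-contained and makes explicit why both self-duality and monotonicity are needed for the right-composition statement; the paper's approach is shorter once the intersection lemma and the clone case are in hand, and fits its general pattern of reducing each meet-reducible class to previously treated pieces.
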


\begin{proof}
\ref{prop:LR:UWneg:UWneg-R}
Assume first that $C \subseteq \clSM$.
Since $\clUWneg = \clU \cap \clWneg$ and $\clSM = \clU \cap \clW$, Lemma~\ref{lem:intersection} and Proposition~\ref{prop:LR:clones} imply that $(\clUWneg) C \subseteq \clUWneg$.

Conversely, assume that $C \nsubseteq \clSM$.
Then $C \nsubseteq \clS$ or $C \nsubseteq \clM$.
If $C \nsubseteq \clS$, then $(\clUWneg) C \nsubseteq \clUWneg$ by Lemma~\ref{lem:KCCK}\ref{KC:S-UWneg-notin-UWneg}.
Otherwise we have $C \nsubseteq \clM$ and $C \subseteq \clS$; in this case we have $(\clUWneg) C \nsubseteq \clU$ by Lemma~\ref{lem:KCCK}\ref{KC:M+S-UWneg-notin-U}.
Consequently, $(\clUWneg) C \nsubseteq (\clUWneg) \cap \clU = \clUWneg$.

\ref{prop:LR:UWneg:UWneg-L}
Assume first that $C \subseteq \clU$.
Since $\clUWneg = \clU \cap \clWneg$ and $\clU = \clU \cap \clU$, Lemma~\ref{lem:intersection} and Proposition~\ref{prop:LR:clones} imply that $C (\clUWneg) \subseteq \clUWneg$.
Conversely, if $C \nsubseteq \clU$, then $C (\clUWneg) \nsubseteq \clSmin \supseteq \clUWneg$ by Lemma~\ref{lem:KCCK}\ref{CK:U-UWneg-notin-Smin}.
\end{proof}

\begin{proposition}
\label{prop:LR:R}
Let $C$ be a clone.
\begin{enumerate}[label=\upshape{(\roman*)}]
\item\label{prop:LR:R:R-R}
$\clRefl C \subseteq \clRefl$ if and only if $C \subseteq \clS$.
\item\label{prop:LR:R:R-L}
$C \clRefl \subseteq \clRefl$ for every clone $C$.
\end{enumerate}
\end{proposition}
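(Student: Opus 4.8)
The plan is to treat the two statements separately, since part (ii) needs nothing about $C$ while part (i) is a genuine (but mild) characterization. Part (ii) is immediate from the definition of reflexivity: given $f \in C^{(n)}$ and $g_1, \dots, g_n \in \clRefl$, all $m$-ary, for every $\vect{a} \in \{0,1\}^m$ we have $f(g_1, \dots, g_n)(\overline{\vect{a}}) = f(g_1(\overline{\vect{a}}), \dots, g_n(\overline{\vect{a}})) = f(g_1(\vect{a}), \dots, g_n(\vect{a})) = f(g_1, \dots, g_n)(\vect{a})$, where the middle equality holds because each $g_i$ is reflexive. Hence $f(g_1, \dots, g_n) \in \clRefl$, and since $f$ and the clone $C$ were arbitrary, $C \clRefl \subseteq \clRefl$ always holds.

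For the sufficiency direction of part (i), I would argue directly. Assume $C \subseteq \clS$, take $f \in \clRefl$ with $\arity{f} = n$ and $g_1, \dots, g_n \in C$, all $m$-ary, and fix $\vect{a} \in \{0,1\}^m$. Using first that each $g_i$ is self-dual, so that $g_i(\overline{\vect{a}}) = \overline{g_i(\vect{a})}$, and then that $f$ is reflexive (applied to the tuple $(g_1(\vect{a}), \dots, g_n(\vect{a}))$, whose bitwise complement is $(\overline{g_1(\vect{a})}, \dots, \overline{g_n(\vect{a})})$), we compute $f(g_1, \dots, g_n)(\overline{\vect{a}}) = f(g_1(\overline{\vect{a}}), \dots, g_n(\overline{\vect{a}})) = f(\overline{g_1(\vect{a})}, \dots, \overline{g_n(\vect{a})}) = f(g_1(\vect{a}), \dots, g_n(\vect{a})) = f(g_1, \dots, g_n)(\vect{a})$. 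Thus $f(g_1, \dots, g_n) \in \clRefl$, so $\clRefl C \subseteq \clRefl$.

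The necessity direction of part (i) is essentially free: if $C \nsubseteq \clS$, then since $\clReflOO \subseteq \clRefl$, Lemma~\ref{lem:KCCK}\ref{KC:S-ROO-notin-R} immediately yields $\clRefl C \nsubseteq \clRefl$. There is no real obstacle in this proposition; the only point that requires a moment of care is keeping straight the direction of the self-duality identity $g_i(\overline{\vect{a}}) = \overline{g_i(\vect{a})}$ and noting that reflexivity of $f$ is exactly what converts $f$ of the complemented tuple back to $f$ of the original, so the two halves of part (i) fit together cleanly, with the harder (necessity) half already packaged in Lemma~\ref{lem:KCCK}.
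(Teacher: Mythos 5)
Your proposal is correct and follows essentially the same route as the paper: a direct computation for part (ii), a direct computation using self-duality of the $g_i$ followed by reflexivity of $f$ for the sufficiency in part (i), and an appeal to Lemma~\ref{lem:KCCK}\ref{KC:S-ROO-notin-R} with $\clReflOO \subseteq \clRefl$ for the necessity. The only difference is the (immaterial) direction in which you write the chain of equalities.
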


\begin{proof}
\ref{prop:LR:R:R-R}
Assume first that $C \subseteq \clS$.
Let $f \in \clRefl^{(n)}$ and $g_1, \dots, g_n \in C^{(m)}$.
It holds that, for all $\vect{a} \in \{0,1\}^m$,
\[
\begin{split}
\lhs
f(g_1, \dots, g_n)(\vect{a})
= f(g_1(\vect{a}), \dots, g_n(\vect{a}))
= f(\overline{g_1(\overline{\vect{a}})}, \dots, \overline{g_n(\overline{\vect{a}})})
\\ &
= f(g_1(\overline{\vect{a}}), \dots, g_n(\overline{\vect{a}}))
= f(g_1, \dots, g_n)(\overline{\vect{a}}),
\end{split}
\]
where the second equality holds because $g_i \in \clS$ and the third equality holds because $f \in \clRefl$.
Thus $f(g_1, \dots, g_n) \in \clRefl$.
Therefore $\clRefl C \subseteq \clRefl$.
Conversely, if $C \nsubseteq \clS$, then $\clRefl C \nsubseteq \clRefl$ by Lemma~\ref{lem:KCCK}\ref{KC:S-ROO-notin-R}.

\ref{prop:LR:R:R-L}
For any $f \in \clAll^{(n)}$ and $g_1, \dots, g_n \in \clRefl^{(m)}$ it holds that
\[
f(g_1, \dots, g_n)(\vect{a}) = f(g_1(\vect{a}), \dots, g_n(\vect{a})) = f(g_1(\overline{\vect{a}}), \dots, g_n(\overline{\vect{a}})) = f(g_1, \dots, g_n)(\overline{\vect{a}}),
\]
for all $\vect{a} \in \{0,1\}^m$.
Therefore $C \clRefl \subseteq \clRefl$ for any clone $C$.
\end{proof}

\begin{proposition}
\label{prop:LR:ROOC}
Let $C$ be a clone.
\begin{enumerate}[label=\upshape{(\roman*)}]
\item\label{prop:LR:ROOC:ROOC-R}
$(\clReflOOC) C \subseteq \clReflOOC$ if and only if $C \subseteq \clSc$.
\item\label{prop:LR:ROOC:ROOC-L}
$C (\clReflOOC) \subseteq \clReflOOC$ if and only if $C \subseteq \clM$.
\end{enumerate}
\end{proposition}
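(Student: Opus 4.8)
The plan is to reduce everything to the decomposition $\clReflOOC = \clRefl \cap (\clOOC)$, after which the proposition follows by combining the stability criteria already established for $\clRefl$ (Proposition~\ref{prop:LR:R}) and for $\clOOC$ (Proposition~\ref{prop:LR:OOC}) via the intersection lemma (Lemma~\ref{lem:intersection}) and the noninclusion lemma (Lemma~\ref{lem:KCCK}). First I would verify the decomposition: every constant function is reflexive, so $\clRefl \cap \clVak = \clVak$; and a reflexive $f$ satisfies $f(\vect{0}) = f(\overline{\vect{0}}) = f(\vect{1})$, so the defining condition $f(\vect{0}) = f(\vect{1}) = 0$ of $\clOO$ collapses to $f(\vect{0}) = 0$, giving $\clRefl \cap \clOO = \clReflOO$. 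Hence $\clRefl \cap (\clOOC) = \clReflOO \cup \clVak = \clReflOOC$. I would also record the trivial inclusions $\clReflOOC \subseteq \clRefl$, $\clReflOOC \subseteq \clOOC$, and $\clReflOOC \subseteq \clOXC$, which will be used to transport noninclusions from those superclasses down to $\clReflOOC$.

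For the sufficiency directions I would apply Lemma~\ref{lem:intersection}. For \ref{prop:LR:ROOC:ROOC-R}: assuming $C \subseteq \clSc = \clS \cap \clOI$, combine $\clRefl\,\clS \subseteq \clRefl$ (Proposition~\ref{prop:LR:R}\ref{prop:LR:R:R-R}) with $(\clOOC)\clOI \subseteq \clOOC$ (Proposition~\ref{prop:LR:OOC}\ref{prop:LR:OOC:OOC-R}) via Lemma~\ref{lem:intersection}\ref{lem:intersection:R} to obtain $(\clReflOOC)\clSc \subseteq \clReflOOC$, whence $(\clReflOOC) C \subseteq \clReflOOC$ by monotonicity of composition. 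For \ref{prop:LR:ROOC:ROOC-L}: assuming $C \subseteq \clM$, combine $\clAll\,\clRefl \subseteq \clRefl$ (Proposition~\ref{prop:LR:R}\ref{prop:LR:R:R-L}, valid for every clone) with $\clM(\clOOC) \subseteq \clOOC$ (Proposition~\ref{prop:LR:OOC}\ref{prop:LR:OOC:OOC-L}) via Lemma~\ref{lem:intersection}\ref{lem:intersection:L} to obtain $\clM(\clReflOOC) \subseteq \clReflOOC$, whence $C(\clReflOOC) \subseteq \clReflOOC$.

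For the necessity directions I would invoke Lemma~\ref{lem:KCCK}, noting that $\clReflOO \subseteq \clReflOOC$ so the hypotheses of the relevant items are met with $K = \clReflOOC$. For \ref{prop:LR:ROOC:ROOC-R}: if $C \nsubseteq \clSc$ then $C \nsubseteq \clS$ or $C \nsubseteq \clOI$; in the first case Lemma~\ref{lem:KCCK}\ref{KC:S-ROO-notin-R} gives $(\clReflOOC) C \nsubseteq \clRefl \supseteq \clReflOOC$, and in the second case Lemma~\ref{lem:KCCK}\ref{KC:OI-ROO-notin-OOC} gives $(\clReflOOC) C \nsubseteq \clOOC \supseteq \clReflOOC$. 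For \ref{prop:LR:ROOC:ROOC-L}: if $C \nsubseteq \clM$, then Lemma~\ref{lem:KCCK}\ref{CK:M-ROOC-notin-OXC} gives $C(\clReflOOC) \nsubseteq \clOXC \supseteq \clReflOOC$. I do not anticipate a genuine obstacle: the whole argument is bookkeeping built on the two component propositions and Lemma~\ref{lem:KCCK}, and the only points that need care are confirming the decomposition $\clReflOOC = \clRefl \cap (\clOOC)$ and the three superclass inclusions used to propagate the noninclusions — all immediate from the definitions.
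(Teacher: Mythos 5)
Your proposal is correct and follows essentially the same route as the paper: decompose $\clReflOOC = \clRefl \cap (\clOOC)$, get sufficiency from Lemma~\ref{lem:intersection} together with Propositions~\ref{prop:LR:R} and \ref{prop:LR:OOC}, and get necessity from Lemma~\ref{lem:KCCK}\ref{KC:S-ROO-notin-R}, \ref{KC:OI-ROO-notin-OOC}, and \ref{CK:M-ROOC-notin-OXC}. The only difference is that you spell out the verification of the decomposition and the superclass inclusions, which the paper leaves implicit.
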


\begin{proof}
\ref{prop:LR:ROOC:ROOC-R}
Assume first that $C \subseteq \clSc$.
Since $\clReflOOC = (\clOOC) \cap \clRefl$ and $\clSc = \clOI \cap \clS$, Lemma~\ref{lem:intersection} and Propositions~\ref{prop:LR:OOC}\ref{prop:LR:OOC:OOC-R} and \ref{prop:LR:R}\ref{prop:LR:R:R-R} imply that $(\clReflOOC) C \subseteq \clReflOOC$.

Conversely, assume that $C \nsubseteq \clSc$.
Then $C \nsubseteq \clOI$ or $C \nsubseteq \clS$.
If $C \nsubseteq \clOI$, then $(\clReflOOC) C \nsubseteq \clOOC$ by Lemma~\ref{lem:KCCK}\ref{KC:OI-ROO-notin-OOC}.
If $C \nsubseteq \clS$, then $(\clReflOOC) C \nsubseteq \clRefl$ by Lemma~\ref{lem:KCCK}\ref{KC:S-ROO-notin-R}.
Consequently, $(\clReflOOC) C \nsubseteq (\clOOC) \cap \clRefl = \clReflOOC$.

\ref{prop:LR:ROOC:ROOC-L}
Assume first that $C \subseteq \clM$.
Since $\clReflOOC = (\clOOC) \cap \clRefl$ and $\clM = \clM \cap \clAll$, Lemma~\ref{lem:intersection} and Propositions~\ref{prop:LR:OOC}\ref{prop:LR:OOC:OOC-L} and \ref{prop:LR:R}\ref{prop:LR:R:R-L} imply that $C (\clReflOOC) \subseteq \clReflOOC$.
Conversely, if $C \nsubseteq \clM$, then $C (\clReflOOC) \nsubseteq \clOXC \supseteq \clReflOOC$ by Lemma~\ref{lem:KCCK}\ref{CK:M-ROOC-notin-OXC}.
\end{proof}

\begin{proposition}
\label{prop:LR:ROO}
Let $C$ be a clone.
\begin{enumerate}[label=\upshape{(\roman*)}]
\item\label{prop:LR:ROO:ROO-R}
$\clReflOO C \subseteq \clReflOO$ if and only if $C \subseteq \clSc$.
\item\label{prop:LR:ROO:ROO-L}
$C \clReflOO \subseteq \clReflOO$ if and only if $C \subseteq \clOX$.
\end{enumerate}
\end{proposition}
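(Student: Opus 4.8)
The plan is to reduce both statements to results already in hand via the decomposition $\clReflOO = \clRefl \cap \clOX$. This identity holds because a reflexive function $f$ satisfies $f(\vect{1}) = f(\overline{\vect{0}}) = f(\vect{0})$, so being reflexive and fixing $\vect{0}$ at $0$ is the same as being reflexive and taking value $0$ at both $\vect{0}$ and $\vect{1}$. Combined with the observations $\clSc = \clS \cap \clOX$ and $\clOX = \clAll \cap \clOX$, this places Proposition~\ref{prop:LR:ROO} in the same format as the neighbouring Propositions~\ref{prop:LR:R}, \ref{prop:LR:ROOC}, and \ref{prop:LR:OO}, and I will follow the same two-step recipe used there: derive sufficiency from Lemma~\ref{lem:intersection} and necessity from the noninclusions catalogued in Lemma~\ref{lem:KCCK}.

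For part~\ref{prop:LR:ROO:ROO-R}, the sufficiency step will run as follows: assuming $C \subseteq \clSc$, hence $C \subseteq \clS$ and $C \subseteq \clOX$, I will invoke $\clRefl C \subseteq \clRefl$ (Proposition~\ref{prop:LR:R}\ref{prop:LR:R:R-R}) and $\clOX C \subseteq \clOX$ (Proposition~\ref{prop:LR:clones}), and then Lemma~\ref{lem:intersection}\ref{lem:intersection:R} with $K_1 = \clRefl$, $K_2 = \clOX$, $C_1 = C_2 = C$ gives $\clReflOO C = (\clRefl \cap \clOX) C \subseteq \clRefl \cap \clOX = \clReflOO$. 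For necessity I will contrapose: if $C \nsubseteq \clSc = \clS \cap \clOX$, then $C \nsubseteq \clS$ or $C \nsubseteq \clOX$; in the first case Lemma~\ref{lem:KCCK}\ref{KC:S-ROO-notin-R} yields $\clReflOO C \nsubseteq \clRefl$, and since $\clReflOO \subseteq \clRefl$ this rules out $\clReflOO C \subseteq \clReflOO$; in the second case Lemma~\ref{lem:KCCK}\ref{KC:OX-ROO-notin-OXC} yields $\clReflOO C \nsubseteq \clOXC$, and since $\clReflOO \subseteq \clOX \subseteq \clOXC$ it again rules out $\clReflOO C \subseteq \clReflOO$.

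Part~\ref{prop:LR:ROO:ROO-L} is handled symmetrically on the left. For sufficiency, assuming $C \subseteq \clOX$, I will use $C \clRefl \subseteq \clRefl$, which holds for every clone by Proposition~\ref{prop:LR:R}\ref{prop:LR:R:R-L}, together with $C \clOX \subseteq \clOX$ (Proposition~\ref{prop:LR:clones}), and apply Lemma~\ref{lem:intersection}\ref{lem:intersection:L} with $K_1 = \clRefl$, $K_2 = \clOX$, $C_1 = C_2 = C$ to get $C \clReflOO = C(\clRefl \cap \clOX) \subseteq \clRefl \cap \clOX = \clReflOO$. For necessity, if $C \nsubseteq \clOX$ then Lemma~\ref{lem:KCCK}\ref{CK:OX-ROO-notin-Eiii} gives $C \clReflOO \nsubseteq \clEiii$, and since $\clReflOO \subseteq \clOO \subseteq \clEiii$ this forbids $C \clReflOO \subseteq \clReflOO$.

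I do not expect a genuine obstacle here; this is a routine instance of the section's method. The only points needing care are confirming the decomposition $\clReflOO = \clRefl \cap \clOX$ so that Lemma~\ref{lem:intersection} applies factorwise, and checking in each necessity case that $\clReflOO$ is contained in the ``trap'' class produced by Lemma~\ref{lem:KCCK} (namely $\clRefl$, $\clOXC$, or $\clEiii$) so that the noninclusion descends to $\clReflOO$ itself. Should a reader prefer to sidestep Lemma~\ref{lem:intersection}, the two sufficiency directions also admit a direct check: for $f \in \clReflOO$ and $g_1, \dots, g_n$ ranging over $\clSc$ (resp.\ over $\clOX$), one verifies that $f(g_1, \dots, g_n)$ is reflexive from the self-duality (resp.\ reflexivity) of the $g_i$, and that it vanishes at $\vect{0}$ from $0$-preservation of $f$ (resp.\ of the $g_i$).
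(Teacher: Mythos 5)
Your proposal is correct and follows essentially the same route as the paper: sufficiency via Lemma~\ref{lem:intersection} applied to a factorization of $\clReflOO$, and necessity via the noninclusions of Lemma~\ref{lem:KCCK}. The only (immaterial) difference is that the paper factors $\clReflOO = \clOO \cap \clRefl$ and $\clSc = \clOI \cap \clS$, so its necessity argument for part~\ref{prop:LR:ROO:ROO-R} splits on $C \nsubseteq \clOI$ (using Lemma~\ref{lem:KCCK}\ref{KC:OI-ROO-notin-OOC}) rather than on $C \nsubseteq \clOX$, whereas you use $\clReflOO = \clRefl \cap \clOX$ and Lemma~\ref{lem:KCCK}\ref{KC:OX-ROO-notin-OXC}; both factorizations and both case splits are valid.
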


\begin{proof}
\ref{prop:LR:ROO:ROO-R}
Assume first that $C \subseteq \clSc$.
Since $\clReflOO = \clOO \cap \clRefl$ and $\clSc = \clOI \cap \clS$, Lemma~\ref{lem:intersection} and Propositions~\ref{prop:LR:OO}\ref{prop:LR:OO:OO-R} and \ref{prop:LR:R}\ref{prop:LR:R:R-R} imply that $\clReflOO C \subseteq \clReflOO$.

Conversely, assume that $C \nsubseteq \clSc$.
Then $C \nsubseteq \clOI$ or $C \nsubseteq \clS$.
If $C \nsubseteq \clOI$, then $\clReflOO C \nsubseteq \clOOC$ by Lemma~\ref{lem:KCCK}\ref{KC:OI-ROO-notin-OOC}.
If $C \nsubseteq \clS$, then $\clReflOO C \nsubseteq \clRefl$ by Lemma~\ref{lem:KCCK}\ref{KC:S-ROO-notin-R}.
Consequently, $\clReflOO C \nsubseteq (\clOOC) \cap \clRefl = \clReflOOC \supseteq \clReflOO$.

\ref{prop:LR:ROO:ROO-L}
Assume first that $C \subseteq \clOX$.
Since $\clReflOO = \clOO \cap \clRefl$ and $\clOX = \clOX \cap \clAll$, Lemma~\ref{lem:intersection} and Propositions~\ref{prop:LR:OO}\ref{prop:LR:OO:OO-L} and \ref{prop:LR:R}\ref{prop:LR:R:R-L} imply that $C \clReflOO \subseteq \clReflOO$.
Conversely, if $C \nsubseteq \clOX$, then $C \clReflOO \nsubseteq \clEiii \supseteq \clReflOO$ by Lemma~\ref{lem:KCCK}\ref{CK:OX-ROO-notin-Eiii}.
\end{proof}

\begin{proposition}
\label{prop:LR:const}
Let $C$ be a clone.
\begin{enumerate}[label=\upshape{(\roman*)}]
\item\label{prop:LR:const:C-R}
$\clVak C \subseteq \clVak$ for every clone $C$.
\item\label{prop:LR:const:C-L}
$C \clVak \subseteq \clVak$ for every clone $C$.
\item\label{prop:LR:const:C0-R}
$\clVako C \subseteq \clVako$ for every clone $C$.
\item\label{prop:LR:const:C0-L}
$C \clVako \subseteq \clVako$ if and only if $C \subseteq \clOX$.
\item\label{prop:LR:const:C1-R}
$\clVaki C \subseteq \clVaki$ for every clone $C$.
\item\label{prop:LR:const:C1-L}
$C \clVaki \subseteq \clVaki$ if and only if $C \subseteq \clXI$.
\end{enumerate}
\end{proposition}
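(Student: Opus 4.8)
The plan is to prove all six parts by direct evaluation of composites on constant tuples, deriving the statements about $\clVaki$ from those about $\clVako$ by duality. First I would dispose of the four unconditional claims \ref{prop:LR:const:C-R}, \ref{prop:LR:const:C-L}, \ref{prop:LR:const:C0-R}, \ref{prop:LR:const:C1-R}. For right composition, if $f \in \clVak^{(n)}$ is the constant $a$ function and $g_1, \dots, g_n \in C^{(m)}$, then $f(g_1, \dots, g_n)(\vect{b}) = f(g_1(\vect{b}), \dots, g_n(\vect{b})) = a$ for every $\vect{b} \in \{0,1\}^m$, so the composite is again the $m$\hyp{}ary constant $a$ function; this gives $\clVak C \subseteq \clVak$, and specializing to $a = 0$ and $a = 1$ gives $\clVako C \subseteq \clVako$ and $\clVaki C \subseteq \clVaki$. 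For left composition \ref{prop:LR:const:C-L}, if $f \in C^{(n)}$ and each $g_i \in \clVak^{(m)}$ is the constant $a_i$ function, then $f(g_1, \dots, g_n)(\vect{b}) = f(a_1, \dots, a_n)$ is independent of $\vect{b}$, so the composite is constant.

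Next I would handle the two conditional claims. For the sufficiency in \ref{prop:LR:const:C0-L}, if $C \subseteq \clOX$, $f \in C^{(n)}$, and $g_1, \dots, g_n \in \clVako^{(m)}$, then $f(g_1, \dots, g_n)(\vect{b}) = f(\vect{0}) = 0$ since $f$ preserves $0$, so the composite lies in $\clVako$. For the necessity, if $C \nsubseteq \clOX$ then Lemma~\ref{lem:Cns}\ref{lem:Cns:OX} yields $1 \in C$ or $\neg \in C$, and then $C \clVako$ contains $1(0) = 1$ or $\neg(0) = 1$, which is not the constant $0$ function. Finally, \ref{prop:LR:const:C1-L} follows from \ref{prop:LR:const:C0-L} and Lemma~\ref{lem:neg-dual}\ref{lem:neg-dual:CK}: since $\clVaki = \clVako^\mathrm{d}$ and $(\clOX)^\mathrm{d} = \clXI$, we get $C \clVaki \subseteq \clVaki$ precisely when $C^\mathrm{d} \clVako \subseteq \clVako$, i.e.\ when $C^\mathrm{d} \subseteq \clOX$, i.e.\ when $C \subseteq \clXI$. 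One could equally well repeat the argument of \ref{prop:LR:const:C0-L} verbatim with $\vect{1}$, $\clXI$, and the unary constant $1$ in place of $\vect{0}$, $\clOX$, and the unary constant $0$, using that a clone not contained in $\clXI$ contains a unary minor sending $1$ to $0$.

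There is no real obstacle here: every assertion reduces to the observation that plugging constant functions into either slot of a composition yields a constant function, subject only to a $0$\hyp{} or $1$\hyp{}preservation constraint on the outer function in the conditional cases. The only point requiring an auxiliary result is the necessity half of \ref{prop:LR:const:C0-L} (and, if carried out directly rather than by duality, of \ref{prop:LR:const:C1-L}), where the explicit witness in $C\clVako \setminus \clVako$ is produced via Lemma~\ref{lem:Cns}\ref{lem:Cns:OX}.
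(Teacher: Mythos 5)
Your proposal is correct and is exactly the kind of direct computation the paper has in mind when it dismisses this proposition with ``Straightforward verification'': plugging constants into either slot of a composition, checking the $0$\hyp{}preservation constraint for the sufficiency of \ref{prop:LR:const:C0-L}, using Lemma~\ref{lem:Cns}\ref{lem:Cns:OX} for the necessity, and transferring to $\clVaki$ by duality via Lemma~\ref{lem:neg-dual}. All steps check out.
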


\begin{proof}
Straightforward verification.
\end{proof}

\begin{proposition}
\label{prop:LR:empty}
$\clEmpty C \subseteq \clEmpty$ and $C \clEmpty \subseteq \clEmpty$  for every clone $C$.
\end{proposition}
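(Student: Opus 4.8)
The plan is to simply unwind the definition of function class composition given in Section~\ref{sec:preliminaries}. Recall that for $I \subseteq \mathcal{F}_{BC}$ and $J \subseteq \mathcal{F}_{AB}$ one has
\[
IJ = \{ \, f(g_1, \dots, g_n) \mid n, m \in \IN, \, f \in I^{(n)}, \, g_1, \dots, g_n \in J^{(m)} \, \},
\]
and that every function of several arguments has arity in $\IN_{+}$, i.e.\ arity at least $1$.

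First I would handle $\clEmpty C$. Since $\clEmpty = \emptyset$, we have $\clEmpty^{(n)} = \emptyset$ for every $n$, so there is no choice of a function $f \in \clEmpty^{(n)}$ to serve as the outer function; hence the defining set of $\clEmpty C$ is empty, giving $\clEmpty C = \emptyset = \clEmpty$, and in particular $\clEmpty C \subseteq \clEmpty$. Next I would handle $C \clEmpty$. Here the inner class is $\clEmpty = \emptyset$, so $\clEmpty^{(m)} = \emptyset$ for every $m$; since any $f \in C^{(n)}$ has $n \geq 1$, there is no tuple $(g_1, \dots, g_n)$ with all $g_i \in \clEmpty^{(m)}$, and again the defining set is empty, so $C \clEmpty = \emptyset = \clEmpty$ and $C \clEmpty \subseteq \clEmpty$.

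The only point requiring any care — and it is the ``main obstacle'' only in a nominal sense — is to note that arities are strictly positive, so the degenerate case $n = 0$ (a nullary ``composition'' that would be vacuously satisfiable even with an empty inner class) never occurs; this is what makes $C\clEmpty$ genuinely empty rather than potentially nonempty. No properties of clones beyond $C \subseteq \mathcal{O}_A$ are needed, and the argument is uniform in $C$. This completes the proof.
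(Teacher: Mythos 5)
Your argument is correct and is exactly the content behind the paper's one-word proof (``Trivial''): with $\clEmpty=\emptyset$ there is no admissible outer function for $\clEmpty C$, and since arities lie in $\IN_{+}$ there is no admissible tuple of inner functions for $C\clEmpty$, so both compositions are empty. Nothing further is needed.
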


\begin{proof}
Trivial.
\end{proof}

\begin{proof}[Proof of Theorem~\ref{thm:C1C2-stability}]
The theorem puts together Lemma~\ref{lem:neg-dual}
and
Propositions
\ref{prop:LR:clones},
\ref{prop:LR:Ei},
\ref{prop:LR:EqNeq},
\ref{prop:LR:OXC},
\ref{prop:LR:OOC},
\ref{prop:LR:OO},
\ref{prop:LR:OIC},
\ref{prop:LR:OICO},
\ref{prop:LR:Smin},
\ref{prop:LR:SminNeq},
\ref{prop:LR:SminOX},
\ref{prop:LR:SminOICO},
\ref{prop:LR:SminOI},
\ref{prop:LR:SminOO},
\ref{prop:LR:TcUCO},
\ref{prop:LR:UOO},
\ref{prop:LR:UWneg},
\ref{prop:LR:R},
\ref{prop:LR:ROOC},
\ref{prop:LR:ROO},
\ref{prop:LR:const},
\ref{prop:LR:empty}.
\end{proof}

With the help of Post's lattice and by reading off from Table~\ref{table:SM-stable}, we can determine for any pair $(C_1,C_2)$ of clones which $(\clIc,\clSM)$\hyp{}stable classes are $(C_1,C_2)$\hyp{}stable.
If $\clSM \subseteq C_2$, then any $(C_1,C_2)$\hyp{}stable class is $(\clIc,\clSM)$\hyp{}stable by Lemma~\ref{lem:stable-impl-stable}.
Therefore, in the case when $\clSM \subseteq C_2$, the $(C_1,C_2)$\hyp{}stable classes occur among the $(\clIc,\clSM)$\hyp{}stable ones and they can be easily picked out from Table~\ref{table:SM-stable}.
In particular, we have an explicit description of $(\clIc,C)$\hyp{}stable classes (``clonoids'' of Aichinger and Mayr~\cite{AicMay}) and $(C,C)$\hyp{}stable classes for $\clSM \subseteq C$.

\begin{corollary}
\leavevmode
\begin{enumerate}[label=\upshape{(\alph*)}]
\item
The $(\clIc,\clMcU)$\hyp{}stable classes are
$\clAll$, $\clEiio$, $\clEioi$, $\clEiii$, $\clEq$, $\clOXC$, $\clIXC$, $\clXOC$, $\clXIC$, $\clOX$, $\clIX$, $\clXO$, $\clXI$, $\clOOC$, $\clIIC$, $\clOO$, $\clII$, $\clOIC$, $\clIOC$, $\clOICO$, $\clIOCI$, $\clOICI$, $\clIOCO$, $\clOI$, $\clIO$, $\clSmin$, $\clSminOX$, $\clSminXO$, $\clSminOICO$, $\clSminIOCO$, $\clSminOI$, $\clSminIO$, $\clSminOO$, $\clM$, $\clMneg$, $\clMo$, $\clMoneg$, $\clMi$, $\clMineg$, $\clMc$, $\clMcneg$, $\clU$, $\clTcUCO$, $\clTcU$, $\clMU$, $\clMcU$, $\clWneg$, $\clTcWnegCO$, $\clTcWneg$, $\clMWneg$, $\clMcWneg$, $\clUOO$, $\clWnegOO$, $\clUWneg$, $\clRefl$, $\clReflOOC$, $\clReflIIC$, $\clReflOO$, $\clReflII$, $\clVak$, $\clVako$, $\clVaki$, $\clEmpty$.

\item
The $(\clIc,\clMU)$\hyp{}stable classes are
$\clAll$, $\clEiio$, $\clEioi$, $\clEiii$, $\clEq$, $\clOXC$, $\clIXC$, $\clXOC$, $\clXIC$, $\clOX$, $\clXO$, $\clOOC$, $\clIIC$, $\clOO$, $\clOIC$, $\clIOC$, $\clOICO$, $\clIOCO$, $\clSmin$, $\clSminOX$, $\clSminXO$, $\clSminOICO$, $\clSminIOCO$, $\clSminOO$, $\clM$, $\clMneg$, $\clMo$, $\clMineg$, $\clU$, $\clTcUCO$, $\clMU$, $\clWneg$, $\clTcWnegCO$, $\clMWneg$, $\clUOO$, $\clWnegOO$, $\clUWneg$, $\clRefl$, $\clReflOOC$, $\clReflIIC$, $\clReflOO$, $\clVak$, $\clVako$, $\clEmpty$.

\item
The $(\clIc,\clTcU)$\hyp{}stable classes are
$\clAll$, $\clEiii$, $\clEq$, $\clOX$, $\clIX$, $\clXO$, $\clXI$, $\clOO$, $\clII$, $\clOI$, $\clIO$, $\clSmin$, $\clSminOX$, $\clSminXO$, $\clSminOI$, $\clSminIO$, $\clSminOO$, $\clU$, $\clTcU$, $\clWneg$, $\clTcWneg$, $\clUOO$, $\clWnegOO$, $\clUWneg$, $\clRefl$, $\clReflOO$, $\clReflII$, $\clVak$, $\clVako$, $\clVaki$, $\clEmpty$.

\item
The $(\clIc,\clU)$\hyp{}stable classes are
$\clAll$, $\clEiii$, $\clEq$, $\clOX$, $\clXO$, $\clOO$, $\clSmin$, $\clSminOX$, $\clSminXO$, $\clSminOO$, $\clU$, $\clWneg$, $\clUOO$, $\clWnegOO$, $\clUWneg$, $\clRefl$, $\clReflOO$, $\clVak$, $\clVako$, $\clEmpty$.

\item
The $(\clIc,\clMcW)$\hyp{}stable classes are
$\clAll$, $\clEiio$, $\clEioi$, $\clEioo$, $\clEq$, $\clOXC$, $\clIXC$, $\clXOC$, $\clXIC$, $\clOX$, $\clIX$, $\clXO$, $\clXI$, $\clOOC$, $\clIIC$, $\clOO$, $\clII$, $\clOIC$, $\clIOC$, $\clOICO$, $\clIOCI$, $\clOICI$, $\clIOCO$, $\clOI$, $\clIO$, $\clSmaj$, $\clSmajIX$, $\clSmajXI$, $\clSmajIOCI$, $\clSmajOICI$, $\clSmajIO$, $\clSmajOI$, $\clSmajII$, $\clM$, $\clMneg$, $\clMo$, $\clMoneg$, $\clMi$, $\clMineg$, $\clMc$, $\clMcneg$, $\clUneg$, $\clTcUnegCI$, $\clTcUneg$, $\clMUneg$, $\clMcUneg$, $\clW$, $\clTcWCI$, $\clTcW$, $\clMW$, $\clMcW$, $\clUnegII$, $\clWII$, $\clWUneg$, $\clRefl$, $\clReflOOC$, $\clReflIIC$, $\clReflOO$, $\clReflII$, $\clVak$, $\clVako$, $\clVaki$, $\clEmpty$.

\item
The $(\clIc,\clMW)$\hyp{}stable classes are
$\clAll$, $\clEiio$, $\clEioi$, $\clEioo$, $\clEq$, $\clOXC$, $\clIXC$, $\clXOC$, $\clXIC$, $\clIX$, $\clXI$, $\clOOC$, $\clIIC$, $\clII$, $\clOIC$, $\clIOC$, $\clIOCI$, $\clOICI$, $\clSmaj$, $\clSmajIX$, $\clSmajXI$, $\clSmajIOCI$, $\clSmajOICI$, $\clSmajII$, $\clM$, $\clMneg$, $\clMoneg$, $\clMi$, $\clUneg$, $\clTcUnegCI$, $\clMUneg$, $\clW$, $\clTcWCI$, $\clMW$, $\clUnegII$, $\clWII$, $\clWUneg$, $\clRefl$, $\clReflOOC$, $\clReflIIC$, $\clReflII$, $\clVak$, $\clVaki$, $\clEmpty$.

\item
The $(\clIc,\clTcW)$\hyp{}stable classes are
$\clAll$, $\clEioo$, $\clEq$, $\clOX$, $\clIX$, $\clXO$, $\clXI$, $\clOO$, $\clII$, $\clOI$, $\clIO$, $\clSmaj$, $\clSmajIX$, $\clSmajXI$, $\clSmajIO$, $\clSmajOI$, $\clSmajII$, $\clUneg$, $\clTcUneg$, $\clW$, $\clTcW$, $\clUnegII$, $\clWII$, $\clWUneg$, $\clRefl$, $\clReflOO$, $\clReflII$, $\clVak$, $\clVako$, $\clVaki$, $\clEmpty$.

\item
The $(\clIc,\clW)$\hyp{}stable classes are
$\clAll$, $\clEioo$, $\clEq$, $\clIX$, $\clXI$, $\clII$, $\clSmaj$, $\clSmajIX$, $\clSmajXI$, $\clSmajII$, $\clUneg$, $\clW$, $\clUnegII$, $\clWII$, $\clWUneg$, $\clRefl$, $\clReflII$, $\clVak$, $\clVaki$, $\clEmpty$.

\item
The $(\clIc,\clSc)$\hyp{}stable classes are
$\clAll$, $\clEq$, $\clNeq$, $\clOX$, $\clIX$, $\clXO$, $\clXI$, $\clOO$, $\clII$, $\clOI$, $\clIO$, $\clS$, $\clSc$, $\clScneg$, $\clRefl$, $\clReflOO$, $\clReflII$, $\clVak$, $\clVako$, $\clVaki$, $\clEmpty$.

\item
The $(\clIc,\clS)$\hyp{}stable classes are
$\clAll$, $\clEq$, $\clNeq$, $\clS$, $\clRefl$, $\clVak$, $\clEmpty$.

\item
The $(\clIc,\clMc)$\hyp{}stable classes are
$\clAll$, $\clEioi$, $\clEiio$, $\clEq$, $\clOXC$, $\clIXC$, $\clXOC$, $\clXIC$, $\clOX$, $\clIX$, $\clXO$, $\clXI$, $\clOOC$, $\clIIC$, $\clOO$, $\clII$, $\clOIC$, $\clIOC$, $\clOICO$, $\clIOCI$, $\clOICI$, $\clIOCO$, $\clOI$, $\clIO$, $\clM$, $\clMneg$, $\clMo$, $\clMoneg$, $\clMi$, $\clMineg$, $\clMc$, $\clMcneg$, $\clRefl$, $\clReflOOC$, $\clReflIIC$, $\clReflOO$, $\clReflII$, $\clVak$, $\clVako$, $\clVaki$, $\clEmpty$.

\item
The $(\clIc,\clMo)$\hyp{}stable classes are
$\clAll$, $\clEioi$, $\clEiio$, $\clEq$, $\clOXC$, $\clIXC$, $\clXOC$, $\clXIC$, $\clOX$, $\clXO$, $\clOOC$, $\clIIC$, $\clOO$, $\clOIC$, $\clIOC$, $\clOICO$, $\clIOCO$, $\clM$, $\clMneg$, $\clMo$, $\clMineg$, $\clRefl$, $\clReflOOC$, $\clReflIIC$, $\clReflOO$, $\clVak$, $\clVako$, $\clEmpty$.

\item
The $(\clIc,\clMi)$\hyp{}stable classes are
$\clAll$, $\clEioi$, $\clEiio$, $\clEq$, $\clOXC$, $\clIXC$, $\clXOC$, $\clXIC$, $\clIX$, $\clXI$, $\clOOC$, $\clIIC$, $\clII$, $\clOIC$, $\clIOC$, $\clIOCI$, $\clOICI$, $\clM$, $\clMneg$, $\clMoneg$, $\clMi$, $\clRefl$, $\clReflOOC$, $\clReflIIC$, $\clReflII$, $\clVak$, $\clVaki$, $\clEmpty$.

\item
The $(\clIc,\clM)$\hyp{}stable classes are
$\clAll$, $\clEioi$, $\clEiio$, $\clEq$, $\clOXC$, $\clIXC$, $\clXOC$, $\clXIC$, $\clOOC$, $\clIIC$, $\clOIC$, $\clIOC$, $\clM$, $\clMneg$, $\clRefl$, $\clReflOOC$, $\clReflIIC$, $\clVak$, $\clEmpty$.

\item
The $(\clIc,\clOI)$\hyp{}stable classes are
$\clAll$, $\clEq$, $\clOX$, $\clIX$, $\clXO$, $\clXI$, $\clOO$, $\clII$, $\clOI$, $\clIO$, $\clRefl$, $\clReflOO$, $\clReflII$, $\clVak$, $\clVako$, $\clVaki$, $\clEmpty$.

\item
The $(\clIc,\clOX)$\hyp{}stable classes are
$\clAll$, $\clEq$, $\clOX$, $\clXO$, $\clOO$, $\clRefl$, $\clReflOO$, $\clVak$, $\clVako$, $\clEmpty$.

\item
The $(\clIc,\clXI)$\hyp{}stable classes are
$\clAll$, $\clEq$, $\clIX$, $\clXI$, $\clII$, $\clRefl$, $\clReflII$, $\clVak$, $\clVaki$, $\clEmpty$.

\item
The $(\clIc,\clAll)$\hyp{}stable classes are
$\clAll$, $\clEq$, $\clRefl$, $\clVak$, $\clEmpty$.
\end{enumerate}
\end{corollary}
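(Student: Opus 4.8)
The plan is to obtain the corollary as an immediate consequence of Theorem~\ref{thm:C1C2-stability} and Table~\ref{table:SM-stable}, so there is essentially nothing new to prove. The first step is to record that each clone $C$ occurring in parts (a)--(r) satisfies $\clSM \subseteq C$: reading off Figure~\ref{fig:Post}, the interval $[\clSM,\clAll]$ in Post's lattice consists precisely of $\clSM$ together with the eighteen clones $\clMcU$, $\clMU$, $\clTcU$, $\clU$, $\clMcW$, $\clMW$, $\clTcW$, $\clW$, $\clSc$, $\clS$, $\clMc$, $\clMo$, $\clMi$, $\clM$, $\clOI$, $\clOX$, $\clXI$, $\clAll$, i.e., exactly the parameters in the statement. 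Since $\clSM \subseteq C$, Lemma~\ref{lem:stable-impl-stable} shows that every $(\clIc,C)$\hyp{}stable class is $(\clIc,\clSM)$\hyp{}stable, hence one of the $93$ classes of Theorem~\ref{thm:SM-stable}. Conversely, any $(\clIc,\clSM)$\hyp{}stable class $K$ already satisfies $K\clIc \subseteq K$, so $K$ is $(\clIc,C)$\hyp{}stable if and only if $C K \subseteq K$, and by Theorem~\ref{thm:C1C2-stability} this holds if and only if $C \subseteq C_2^K$, where $C_2^K$ is the clone recorded in the ``$C K \subseteq K$ iff $C \subseteq \ldots$'' column of Table~\ref{table:SM-stable}.

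Thus, for each of the eighteen clones $C$, the $(\clIc,C)$\hyp{}stable classes are obtained simply by scanning the $93$ rows of Table~\ref{table:SM-stable} and keeping those classes $K$ for which $C \subseteq C_2^K$ in Post's lattice, each such inclusion being a one\hyp{}step check in Figure~\ref{fig:Post}. To halve the labour I would invoke the duality $C \mapsto C^{\mathrm d}$, which is an automorphism of Post's lattice and, by Lemma~\ref{lem:IcSM-stable-negation} (using $\clSM^{\mathrm d}=\clSM$), induces an automorphism $K \mapsto K^{\mathrm d}$ of the lattice of $(\clIc,\clSM)$\hyp{}stable classes; by Lemma~\ref{lem:neg-dual}\ref{lem:neg-dual:CK} the $(\clIc,C^{\mathrm d})$\hyp{}stable classes are exactly the duals of the $(\clIc,C)$\hyp{}stable ones. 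Since the duality exchanges $\clMcU \leftrightarrow \clMcW$, $\clMU \leftrightarrow \clMW$, $\clTcU \leftrightarrow \clTcW$, $\clU \leftrightarrow \clW$, $\clMo \leftrightarrow \clMi$, $\clOX \leftrightarrow \clXI$ and fixes $\clSM$, $\clSc$, $\clS$, $\clMc$, $\clM$, $\clOI$, $\clAll$ (all visible in Figure~\ref{fig:Post}), parts (e)--(h) follow from (a)--(d), part (m) from (l), part (q) from (p), and each remaining list must be closed under $K \mapsto K^{\mathrm d}$, which is both a shortcut and a consistency check.

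There is no genuine mathematical obstacle here: everything rests on Theorem~\ref{thm:C1C2-stability}, which has already been proved. The real difficulty will be purely clerical --- correctly pinning down the interval $[\clSM,\clAll]$ in Post's lattice and then, for each of its eighteen nontrivial members $C$, comparing $C$ against all $93$ entries of the $C_2^K$\hyp{}column without transcription errors. As safeguards I would (i) verify that every produced list is closed under $K \mapsto K^{\mathrm d}$, and (ii) verify the monotone behaviour along chains of clones: if $C \subseteq C'$ then $\{K : C'K \subseteq K\} \subseteq \{K : CK \subseteq K\}$, so for instance the list for $\clU$ is contained in that for $\clTcU$, which is contained in that for $\clMcU$, and likewise the lists for $\clAll$, $\clOX$, $\clXI$, $\clOI$, $\clMc$ (and $\clM \subseteq \clMo \subseteq \clMc$) nest along inclusions. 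Together these checks determine the eighteen lists uniquely and confirm the statement.
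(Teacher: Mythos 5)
Your proposal is correct and follows essentially the same route as the paper: reduce to the $93$ classes of Theorem~\ref{thm:SM-stable} via Lemma~\ref{lem:stable-impl-stable}, then select those $K$ with $C \subseteq C_2^K$ by reading off Table~\ref{table:SM-stable} through Theorem~\ref{thm:C1C2-stability}. The duality and monotonicity consistency checks you add are sensible bookkeeping but do not change the argument.
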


\begin{corollary}
\leavevmode
\begin{enumerate}[label=\upshape{(\alph*)}]
\item
The $\clSM$\hyp{}stable classes are
precisely the 93 $(\clIc,\clSM)$\hyp{}stable classes.

\item
The $\clMcU$\hyp{}stable classes are
$\clAll$, $\clEiio$, $\clEioi$, $\clEiii$, $\clEq$, $\clOXC$, $\clIXC$, $\clXOC$, $\clXIC$, $\clOX$, $\clIX$, $\clXO$, $\clXI$, $\clOOC$, $\clIIC$, $\clOO$, $\clII$, $\clOIC$, $\clIOC$, $\clOICO$, $\clIOCI$, $\clOICI$, $\clIOCO$, $\clOI$, $\clIO$, $\clM$, $\clMneg$, $\clMo$, $\clMoneg$, $\clMi$, $\clMineg$, $\clMc$, $\clMcneg$, $\clU$, $\clTcUCO$, $\clTcU$, $\clMU$, $\clMcU$, $\clUOO$, $\clVak$, $\clVako$, $\clVaki$, $\clEmpty$.

\item
The $\clMU$\hyp{}stable classes are
$\clAll$, $\clOXC$, $\clIXC$, $\clOX$, $\clM$, $\clMneg$, $\clMo$, $\clU$, $\clMU$, $\clVak$, $\clVako$, $\clEmpty$.

\item
The $\clTcU$\hyp{}stable classes are
$\clAll$, $\clEiii$, $\clEq$, $\clOX$, $\clIX$, $\clXO$, $\clXI$, $\clOO$, $\clII$, $\clOI$, $\clIO$, $\clU$, $\clTcU$, $\clUOO$, $\clVak$, $\clVako$, $\clVaki$, $\clEmpty$.

\item
The $\clU$\hyp{}stable classes are
$\clAll$, $\clOX$, $\clU$, $\clVak$, $\clVako$, $\clEmpty$.

\item
The $\clMcW$\hyp{}stable classes are
$\clAll$, $\clEiio$, $\clEioi$, $\clEioo$, $\clEq$, $\clOXC$, $\clIXC$, $\clXOC$, $\clXIC$, $\clOX$, $\clIX$, $\clXO$, $\clXI$, $\clOOC$, $\clIIC$, $\clOO$, $\clII$, $\clOIC$, $\clIOC$, $\clOICO$, $\clIOCI$, $\clOICI$, $\clIOCO$, $\clOI$, $\clIO$, $\clM$, $\clMneg$, $\clMo$, $\clMoneg$, $\clMi$, $\clMineg$, $\clMc$, $\clMcneg$, $\clW$, $\clTcWCI$, $\clTcW$, $\clMW$, $\clMcW$, $\clWII$, $\clVak$, $\clVako$, $\clVaki$, $\clEmpty$.

\item
The $\clMW$\hyp{}stable classes are
$\clAll$, $\clXOC$, $\clXIC$, $\clXI$, $\clM$, $\clMneg$, $\clMi$, $\clW$, $\clMW$, $\clVak$, $\clVaki$, $\clEmpty$.

\item
The $\clTcW$\hyp{}stable classes are
$\clAll$, $\clEioo$, $\clEq$, $\clOX$, $\clIX$, $\clXO$, $\clXI$, $\clOO$, $\clII$, $\clOI$, $\clIO$, $\clW$, $\clTcW$, $\clWII$, $\clVak$, $\clVako$, $\clVaki$, $\clEmpty$.

\item
The $\clW$\hyp{}stable classes are
$\clAll$, $\clXI$, $\clW$, $\clVak$, $\clVaki$, $\clEmpty$.

\item
The $\clSc$\hyp{}stable classes are
$\clAll$, $\clEq$, $\clNeq$, $\clOX$, $\clIX$, $\clXO$, $\clXI$, $\clOO$, $\clII$, $\clOI$, $\clIO$, $\clS$, $\clSc$, $\clScneg$, $\clRefl$, $\clReflOO$, $\clReflII$, $\clVak$, $\clVako$, $\clVaki$, $\clEmpty$.

\item
The $\clS$\hyp{}stable classes are
$\clAll$, $\clS$, $\clRefl$, $\clVak$, $\clEmpty$.

\item
The $\clMc$\hyp{}stable classes are
$\clAll$, $\clEioi$, $\clEiio$, $\clEq$, $\clOXC$, $\clIXC$, $\clXOC$, $\clXIC$, $\clOX$, $\clIX$, $\clXO$, $\clXI$, $\clOOC$, $\clIIC$, $\clOO$, $\clII$, $\clOIC$, $\clIOC$, $\clOICO$, $\clIOCI$, $\clOICI$, $\clIOCO$, $\clOI$, $\clIO$, $\clM$, $\clMneg$, $\clMo$, $\clMoneg$, $\clMi$, $\clMineg$, $\clMc$, $\clMcneg$, $\clVak$, $\clVako$, $\clVaki$, $\clEmpty$.

\item
The $\clMo$\hyp{}stable classes are
$\clAll$, $\clOXC$, $\clIXC$, $\clOX$, $\clM$, $\clMneg$, $\clMo$, $\clVak$, $\clVako$, $\clEmpty$.

\item
The $\clMi$\hyp{}stable classes are
$\clAll$, $\clXOC$, $\clXIC$, $\clXI$, $\clM$, $\clMneg$, $\clMi$, $\clVak$, $\clVaki$, $\clEmpty$.

\item
The $\clM$\hyp{}stable classes are
$\clAll$, $\clM$, $\clMneg$, $\clVak$, $\clEmpty$.

\item
The $\clOI$\hyp{}stable classes are
$\clAll$, $\clEq$, $\clOX$, $\clIX$, $\clXO$, $\clXI$, $\clOO$, $\clII$, $\clOI$, $\clIO$, $\clVak$, $\clVako$, $\clVaki$, $\clEmpty$.

\item
The $\clOX$\hyp{}stable classes are
$\clAll$, $\clOX$, $\clVak$, $\clVako$, $\clEmpty$.

\item
The $\clXI$\hyp{}stable classes are
$\clAll$, $\clXI$, $\clVak$, $\clVaki$, $\clEmpty$.

\item
The $\clAll$\hyp{}stable classes are
$\clAll$, $\clVak$, $\clEmpty$.
\end{enumerate}
\end{corollary}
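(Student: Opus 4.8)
The plan is to deduce the corollary from Theorem~\ref{thm:SM-stable} and Theorem~\ref{thm:C1C2-stability} together with Table~\ref{table:SM-stable}. Each clone $C$ occurring in the statement satisfies $\clSM \subseteq C$, so by Lemma~\ref{lem:stable-impl-stable} every $(C,C)$\hyp{}stable class is $(\clIc,\clSM)$\hyp{}stable, hence is one of the $93$ classes enumerated in Theorem~\ref{thm:SM-stable}. It therefore suffices to decide, for each such class $K$, whether both $K C \subseteq K$ and $C K \subseteq K$ hold.

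First I would invoke Theorem~\ref{thm:C1C2-stability}: it assigns to each $(\clIc,\clSM)$\hyp{}stable class $K$ clones $C_1^K$ and $C_2^K$ --- recorded in Table~\ref{table:SM-stable} --- such that $K C \subseteq K$ iff $C \subseteq C_1^K$ and $C K \subseteq K$ iff $C \subseteq C_2^K$. Thus $K$ is $(C,C)$\hyp{}stable precisely when $C \subseteq C_1^K \cap C_2^K$. So for each of the $93$ rows of Table~\ref{table:SM-stable} I would compute the meet $C_1^K \cap C_2^K$ in Post's lattice (Figure~\ref{fig:Post}), and then, for each of the $19$ clones $C$ with $\clSM \subseteq C$ (these are exactly the clones of the principal filter of $\clSM$, namely $\clSM$, $\clSc$, $\clS$, $\clMcU$, $\clMU$, $\clTcU$, $\clMcW$, $\clMW$, $\clTcW$, $\clMc$, $\clMo$, $\clMi$, $\clM$, $\clU$, $\clW$, $\clOI$, $\clOX$, $\clXI$, $\clAll$), list exactly those $K$ for which $C$ lies below $C_1^K \cap C_2^K$. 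Part~(a) comes out immediately: every clone appearing in the $C_1^K$\hyp{} or $C_2^K$\hyp{}columns of Table~\ref{table:SM-stable} contains $\clSM$ (the whole table lives in the filter above $\clSM$), so $\clSM \subseteq C_1^K \cap C_2^K$ for all $93$ classes, and all of them are $\clSM$\hyp{}stable.

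The verification is entirely mechanical; there is no conceptual obstacle, and the only real difficulty is keeping the enumeration complete and free of bookkeeping errors while repeatedly comparing clones in Post's lattice. The most convenient safeguard is to use the symmetries supplied by Lemma~\ref{lem:neg-dual} (together with the parallel statement for classes, Lemma~\ref{lem:IcSM-stable-negation}): applying $K \mapsto \overline{K}$, $K \mapsto K^{\mathrm{n}}$, or $K \mapsto K^{\mathrm{d}}$ and correspondingly replacing $C$ by $C$ or $C^{\mathrm{d}}$ forces the list for $\clMcU$ to be the mirror image of the list for $\clMcW$, and likewise for the pairs $\clMU/\clMW$, $\clTcU/\clTcW$, $\clU/\clW$, $\clMo/\clMi$, $\clOX/\clXI$ and $\clVako/\clVaki$, while for the duality\hyp{}invariant clones $\clSM$, $\clSc$, $\clS$, $\clMc$, $\clM$, $\clOI$, $\clAll$ the list must itself be closed under $K \mapsto \overline{K}$. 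Matching the produced lists against these constraints confirms that nothing has been omitted or duplicated.
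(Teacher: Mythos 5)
Your proposal is correct and is precisely the argument the paper intends: Lemma~\ref{lem:stable-impl-stable} confines the search to the 93 classes of Theorem~\ref{thm:SM-stable}, and Theorem~\ref{thm:C1C2-stability} with Table~\ref{table:SM-stable} reduces each case to checking $C \subseteq C_1^K \cap C_2^K$ in Post's lattice. Your observation that all entries of the table lie in the principal filter above $\clSM$ (giving part~(a) at once) and your use of the duality symmetries as a consistency check are exactly the intended bookkeeping.
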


It is noteworthy that $(\clIc,\clSM)$\hyp{}stability is equivalent to $\clSM$\hyp{}stability.
This is explained by the following proposition, which also provides other equivalences for clones containing $\mu$.

\begin{proposition}
\label{prop:simplify}
\leavevmode
\begin{enumerate}[label=\upshape{(\roman*)}]
\item\label{mu-in-out}
For any $f \in \clAll$, we have $f \ast \mu = \mu(f_{\sigma_1}, f_{\sigma_2}, f_{\sigma_3})$, where, for $i \in \nset{3}$, $\sigma_i \colon \nset{n} \to \nset{n+2}$, $1 \mapsto i$, $j \mapsto j + 2$ for $2 \leq j \leq n$.

\item\label{C1C2-C1C2'}
Let $G \subseteq \clAll$, let $C_1 := \clonegen{G \cup \{\mu\}}$, $C'_1 := \clonegen{G}$, and let $C_2$ be a clone containing $\mu$.
Then a class $F \subseteq \clAll$ is $(C_1,C_2)$\hyp{}stable if and only if it is $(C'_1,C_2)$\hyp{}stable.

\item\label{SM-simplify}
The following are equivalent for a class $F \subseteq \clAll$.
\begin{enumerate}[label=\upshape{(\alph*)}]
\item $F$ is $\clSM$\hyp{}stable.
\item $F$ is $(\clIc,\clSM)$\hyp{}stable.
\item $F$ is minor\hyp{}closed and $\mu(f,g,h) \in F$ whenever $f, g, h \in F$.
\end{enumerate}
\end{enumerate}
\end{proposition}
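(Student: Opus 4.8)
The plan is to prove the three parts in the order stated, with part \ref{mu-in-out} supplying the one genuine computation and parts \ref{C1C2-C1C2'} and \ref{SM-simplify} then following by assembling the preliminary lemmata.

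For part \ref{mu-in-out} I would argue by a pointwise check. Fix $f \in \clAll$ of arity $n$ and $\mathbf{a} = (a_1, \dots, a_{n+2}) \in \{0,1\}^{n+2}$. Unwinding the definition of $\ast$ gives $(f \ast \mu)(\mathbf{a}) = f(\mu(a_1,a_2,a_3), a_4, \dots, a_{n+2})$, and unwinding the definition of $f_\sigma$ gives $f_{\sigma_i}(\mathbf{a}) = f(a_i, a_4, a_5, \dots, a_{n+2})$ for each $i \in \nset{3}$ (the only mildly fiddly point is getting this index computation right, since $\sigma_i(1) = i$ while $\sigma_i$ shifts the remaining $n-1$ positions up by $2$). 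Now $\mu$ is a majority operation, so at least two of $a_1, a_2, a_3$ equal $c := \mu(a_1,a_2,a_3)$; pick $i \neq j$ in $\nset{3}$ with $a_i = a_j = c$. Then $f_{\sigma_i}(\mathbf{a}) = f_{\sigma_j}(\mathbf{a}) = f(c, a_4, \dots, a_{n+2}) = (f \ast \mu)(\mathbf{a})$, so two of the three values $f_{\sigma_1}(\mathbf{a}), f_{\sigma_2}(\mathbf{a}), f_{\sigma_3}(\mathbf{a})$ agree with $(f \ast \mu)(\mathbf{a})$, whence $\mu(f_{\sigma_1}, f_{\sigma_2}, f_{\sigma_3})(\mathbf{a}) = (f \ast \mu)(\mathbf{a})$. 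As $\mathbf{a}$ was arbitrary, the identity holds.

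For part \ref{C1C2-C1C2'}, the ``only if'' direction is immediate from $C'_1 \subseteq C_1$ via Lemma~\ref{lem:stable-impl-stable}. For the converse, assume $F$ is $(C'_1, C_2)$-stable; since $C_2 F \subseteq F$ is already given, it suffices to show $F C_1 \subseteq F$. Because $G \cup \{\mu\}$ generates $C_1$, Lemma~\ref{lem:right-stab-gen} reduces this to: $F$ is minor-closed, and $f \ast g \in F$ for all $f \in F$ and $g \in G \cup \{\mu\}$. Minor-closedness and the case $g \in G$ follow from $F C'_1 \subseteq F$ by another application of Lemma~\ref{lem:right-stab-gen} (with $G$ a generating set of $C'_1$). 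For $g = \mu$, part \ref{mu-in-out} gives $f \ast \mu = \mu(f_{\sigma_1}, f_{\sigma_2}, f_{\sigma_3})$; the minors $f_{\sigma_i}$ lie in $F$ by minor-closedness, and $\mu \in C_2$ together with $C_2 F \subseteq F$ yields $\mu(f_{\sigma_1}, f_{\sigma_2}, f_{\sigma_3}) \in F$. Hence $F C_1 \subseteq F$, and $F$ is $(C_1, C_2)$-stable.

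For part \ref{SM-simplify}, note $\clSM = \clonegen{\mu}$, so applying part \ref{C1C2-C1C2'} with $G = \emptyset$, $C_1 = \clSM$, $C'_1 = \clIc$, and $C_2 = \clSM$ (which contains $\mu$) gives the equivalence (a) $\Leftrightarrow$ (b). For (b) $\Leftrightarrow$ (c), observe that $(\clIc,\clSM)$-stability of $F$ means precisely $F \clIc \subseteq F$ and $\clSM F \subseteq F$; the first is minor-closedness by definition, and since $\{\mu\}$ generates $\clSM$, Lemma~\ref{lem:left-stab-gen} rewrites the second as the condition that $\mu(f_1,f_2,f_3) \in F$ whenever $f_1,f_2,f_3 \in F^{(m)}$ (the clause $G^{(n)} = \emptyset$ for $n \neq 3$ being vacuous), which is exactly the second clause of (c). I do not expect any serious obstacle: the whole proposition is a routine recombination of Lemmata~\ref{lem:stable-impl-stable}, \ref{lem:right-stab-gen}, and \ref{lem:left-stab-gen} once the elementary identity in part \ref{mu-in-out} is in hand, and the only place requiring care is the index bookkeeping there.
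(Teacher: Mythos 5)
Your proposal is correct and follows essentially the same route as the paper: a pointwise verification of the identity in part (i) (your unified observation that at least two of $a_1,a_2,a_3$ equal $\mu(a_1,a_2,a_3)$ replaces the paper's two-case split, but is the same argument), followed by the same assembly of Lemmata~\ref{lem:right-stab-gen} and \ref{lem:left-stab-gen} for parts (ii) and (iii). No gaps.
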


\begin{proof}
\ref{mu-in-out}
It is easy to verify that
\[
\begin{split}
\lhs
f(\mu(a_1, a_2, a_3), a_4, \dots, a_{n+2})
\\ &
= \mu(f(a_1, a_4, \dots, a_{n+2}), f(a_2, a_4, \dots, a_{n+2}), f(a_3, a_4, \dots, a_{n+2}))
\end{split}
\]
holds for all $a_1, \dots, a_{n+2} \in \{0,1\}$.
For, if $a_1 = a_2 = a_3$, then both left and right sides are equal to $f(a_1, a_4, \dots, a_{n+2})$.
If $a_1$, $a_2$, and $a_3$ are not all equal, then two of them must be equal while the third is distinct from the other two.
Assume, without loss of generality, that $a_1 = a_2 \neq a_3$.
Then $\mu(a_1, a_2, a_3) = a_1$, so the left side equals $f(a_1, a_4, \dots, a_{n+2})$.
Since $f(a_1, a_4, a_5, \dots, a_{n+2}) = f(a_2, a_4, a_5, \dots, a_{n+2})$, we see that also the right side equals $f(a_1, a_4, \dots, a_{n+2})$.

\ref{C1C2-C1C2'}
Since $C'_1 \subseteq C_1$, stability under right composition with $C_1$ implies stability under right composition with $C'_1$.
Assume now that $F$ is $(C'_1,C_2)$\hyp{}stable.
By Lemma~\ref{lem:right-stab-gen}, $F$ is minor\hyp{}closed and $f \ast g \in F$ whenever $f \in F$ and $g \in G$.
Moreover, $f \ast \mu = \mu(f_{\sigma_1},f_{\sigma_2},f_{\sigma_3})$, where $f_{\sigma_1}$, $f_{\sigma_2}$, $f_{\sigma_3}$ are the minors of $f$ specified in part~\ref{mu-in-out}.
Since $F$ is minor\hyp{}closed, we have $f_{\sigma_1}, f_{\sigma_2}, f_{\sigma_3} \in F$.
By our assumption, $\mu \in C_2$, and since $F$ is stable under left composition with $C_2$, it follows that $\mu(f_{\sigma_1},f_{\sigma_2},f_{\sigma_3}) \in F$.
It follows from Lemma~\ref{lem:right-stab-gen} that $F$ is stable under right composition with $C_1$.

\ref{SM-simplify}
This is a consequence of part~\ref{C1C2-C1C2'} and Lemma~\ref{lem:left-stab-gen}.
\end{proof}


\section{Concluding remarks}
\label{sec:concluding}

The results of this paper cover only a part of the clones $C$ of Boolean functions for which there are only a finite number of $(\clIc,C)$\hyp{}stable classes
(see Theorem~\ref{thm:Sparks}\ref{thm:Sparks:finite}).
The description of $(\clIc,C)$\hyp{}stable classes for the other clones $C$ containing a near\hyp{}unanimity operation (i.e., the clones $\clUk{k}$, $\clTcUk{k}$, $\clMUk{k}$, $\clMcUk{k}$, $\clWk{k}$, $\clTcWk{k}$, $\clMWk{k}$, $\clMcWk{k}$ for $k \geq 3$)
remains a topic for further research.


\section*{Acknowledgments}

The author would like to thank Miguel Couceiro and Sebastian Kreinecker for inspiring discussions.

This work is funded by National Funds through the FCT -- Funda\c{c}\~ao para a Ci\^encia e a Tecnologia, I.P., under the scope of the project UIDB/00297/2020 (Center for Mathematics and Applications).


\end{document}